\newcommand{\QQ}{\mathbb{Q}}
\newcommand{\CC}{\mathbb{C}}
\newcommand{\Hom}{\mathrm{Hom}}
\newcommand{\OO}{\mathcal{O}}
\newcommand{\ZZ}{\mathbb{Z}}
\newcommand{\Gal}{\mathrm{Gal}}
\newcommand{\Spec}{\mathrm{Spec}}
\newcommand\mapsfrom{\mathrel{\reflectbox{\ensuremath{\mapsto}}}}
\newtheorem{thm}{Theorem}[section]
\newtheorem{cor}[thm]{Corollary}
\newtheorem{prop}[thm]{Proposition}
\newtheorem{lem}[thm]{Lemma}
\newtheorem{conj}[thm]{Conjecture}
\theoremstyle{definition}
\newtheorem{defn}[thm]{Definition}
\newtheorem{exmp}[thm]{Example}
\newtheorem{exmps}[thm]{Examples}
\theoremstyle{remark}
\newtheorem{rem}[thm]{Remark}
\let\c@equation\c@thm
\numberwithin{equation}{section}
\title{A New Northcott Property for Faltings Height}
\author{Lucia Mocz}
\date{}
\begin{document}

\maketitle

\begin{abstract}
In this work we prove a new Northcott property for the Faltings height. Namely we show, assuming the Colmez Conjecture and the Artin Conjecture, that there are finitely many CM abelian varieties over $\CC$ of a fixed dimension which have bounded Faltings height. The technique developed uses new tools from integral p-adic Hodge theory to study the variation of Faltings height within an isogeny class of CM abelian varieties. In special cases, we are moreover able to use the technique to develop new Colmez-type formulas for the Faltings height.
\end{abstract}

\tableofcontents

\section{Introduction}

In this paper we develop an explicit new technique to study the change in Faltings 
height within an isogeny class of CM abelian varieties over $\CC$. Most importantly, we 
obtain a new formula for computing this change for certain 
``fundamental'' isogenies between CM abelian varieties. 
Assuming the Colmez conjecture and the Artin conjecture, we are able to deduce 
from these formulas a new Northcott property 
for the Faltings heights of CM abelian varieties, namely that there are finitely 
many isomorphism classes of 
CM abelian varieties over $\CC$ of a fixed dimension $g$ which have bounded Faltings 
height. In certain special small dimension cases, 
we use these computations to write a Colmez-type formula 
for the Faltings height.

The results in this paper are new, although the question of computing the Faltings 
height of CM elliptic curves with non-maximal CM order was considered in a paper by 
Nakkajima and Taguchi \cite{japanese}, and the 
variation of the Faltings height for 
non-CM elliptic curves was studied by 
Szpiro and Ullmo in \cite{Szpiro}. Our techniques, which are also 
new to questions 
of this type, are more powerful than those considered in \cite{japanese} 
and \cite{Szpiro}, both of which are not 
easily seen to be generalizable to abelian varieties of 
dimension greater than $1$. The 
main input for our formulas comes from integral $p$-adic Hodge theory, namely to 
use Kisin modules to analyze the ramification behavior of isogenies between CM 
abelian varieties. These new tools may be introduced to reinterpret some classical 
results of Faltings \cite{Faltings} and Raynaud \cite{Raynaud}, 
but we remark that this would 
not yield a strengthening of their results. The technique generalizes 
to study isogenies between CM motives with an appropriate formulation of 
motivic Faltings height. Work on motivic Faltings height has been done 
by Kato \cite{Kato} and Koshikawa \cite{Koshikawa}, and a recent 
preprint by Johannes Ansch\"utz \cite{Anschutz} 
shows how to define CM objects in 
any rigid Tannakian category, which gives the appropriate 
setting for posing the question on CM motives.

To define the Faltings height of an abelian variety, we let $K$ be 
a number field and $A$ to denote an abelian 
variety of dimension $g\ge 1$ defined over the field $K$ 
and we assume it to have semi-stable reduction everywhere. Let 
$\mathcal{A}/\mathcal{O}_K$ denote the corresponding N\'eron model with 
identity section $s:\Spec(\OO_K)\to \mathcal{A}$ extending 
the identity section on $A$. Denote by $\omega_{\mathcal{A}} := s^*\bigwedge^g\Omega^1_{\mathcal{A}/\OO_K}$ 
the Hodge bundle on $\mathcal{A}$, and $\overline{\omega}_{\mathcal{A}}$ 
the line bundle $\omega_{\mathcal{A}}$ endowed with Hermitian metrics at 
all infinite places 
(defined in Section \ref{height}). 
The Hermitian structure allows us to define an Arakelov-type intersection 
theory, and more precisely an Arakelov-degree of line bundles. The 
Arakelov degree of $\overline{\omega}_{\mathcal{A}}$ is a useful 
invariant for studying questions of diophantine approximations 
(e.g., the classical work of Faltings \cite{Faltings} or the more recent 
work of Tsimerman \cite{Tsimerman}), 
and goes under the name of \emph{Faltings height}. 
The Faltings height may also be written by the explicit formula
$$[K:\QQ]h_{\mathrm{Fal}}(A):=\widehat{\mathrm{deg}}(\overline{\omega}_{\mathcal{A}}) = \log\left(\#s^*(\omega_{\mathcal{A}})/\OO_K\cdot \alpha\right) - \sum_{\sigma:K\hookrightarrow\CC}\frac{1}{2}\log \int_{A^\sigma(\CC)}^{\mathrm{norm}}\alpha^\sigma\wedge \overline{\alpha^\sigma}$$
where $\alpha \in \omega_{\mathcal{A}}$ and the integral has 
an appropriate normalization constant (see \cite{Pazuki} for a discussion on 
various normalizations).  
We note that this explicit formula is base-change invariant and 
independent of the choice of $\alpha\in \omega_{\mathcal{A}}$ by the 
adelic product formula.

The original Northcott property for Faltings height shown 
by Faltings in \cite{Faltings} states that 
for a fixed positive integer $d$ and fixed positive real number $C$, there 
exist finitely many 
abelian varieties $A/K$ where $[K:\QQ]\le d$ 
and $h_{\mathrm{Fal}}(A)<C$. This property is one of the key propositions in 
Faltings' proof of the Shafarevich and Mordell conjectures \cite{Faltings}.

We assume now that all our abelian varieties have complex multiplication 
(see Section \ref{CM} for a definition). \emph{A priori}, 
a CM abelian variety is a complex point on a Siegel moduli space, but the 
theorem of Shimura--Taniyama asserts that every CM abelian variety descends to a 
number field and moreover has a model over a number field over which 
it has good reduction 
everywhere. Both of these assertions are key to transforming the 
an abstract geometric question of counting certain complex points 
on the Siegel moduli space exhibiting extra symmetric behavior 
to a question in arithmetic.

The first main theorem we prove is the following.

\begin{thm}[CM Northcott Property for Isogeny Classes]
\label{CMIsog}
Let $C>0$ be a constant and $g>0$ be a fixed 
integer. Then the number of isomorphism classes of 
CM abelian varieties $A$ of dimension $g$ 
with $h_{\mathrm{Fal}}(A)<C$ is finite within isogeny classes.
\end{thm}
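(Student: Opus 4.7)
The plan is to fix an isogeny class of CM abelian varieties, parametrize its members by lattices in the ambient CM algebra, and show that the Faltings height grows with a conductor-type invariant of the lattice, so that only finitely many members can lie below any bound $C$.

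First I would fix a base object $A_0$ in the isogeny class, chosen to have CM by the maximal order $\OO_E$ of its CM algebra $E$ relative to a CM type $\Phi$. Every other object $A$ in the isogeny class is $\QQ$-isogenous to $A_0$, and after descent to a number field (via Shimura--Taniyama) is determined up to isomorphism by a lattice $\Lambda\subset E$ projective over some sub-order $\OO\subset \OO_E$. Any $\QQ$-isogeny between two such objects factors as a composition of ``fundamental'' isogenies $\phi:B\to B'$ of prime degree whose kernel is a simple $\OO$-stable group scheme at a single rational prime $p$. Bounding $h_{\mathrm{Fal}}$ within the isogeny class therefore reduces to bounding the sum of the height changes along any chain of fundamental isogenies connecting $A_0$ to $A$.

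Next I would derive an explicit formula for $h_{\mathrm{Fal}}(B')-h_{\mathrm{Fal}}(B)$ along each fundamental $\phi$. The archimedean contribution is a direct linear-algebraic computation of how the period integral in the Faltings formula transforms under the prescribed change of lattice, with sign and size dictated by the CM type $\Phi$. The nonarchimedean contribution at the relevant prime $p$ equals the length of the cokernel of $\phi^*\omega_{\mathcal{B}'}\to \omega_{\mathcal{B}}$ on the N\'eron model, which I would compute via integral $p$-adic Hodge theory: $\phi$ corresponds to a short exact sequence of Kisin modules attached to the $p$-divisible groups of $B$ and $B'$, and the CM action of $E\otimes\QQ_p$ decomposes the sequence according to the embeddings $E\hookrightarrow \overline{\QQ}_p$, producing a formula for the cokernel as a sum of valuations indexed by $\Phi$.

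With this formula in hand, I would combine the archimedean and nonarchimedean contributions and show that the net change $h_{\mathrm{Fal}}(A)-h_{\mathrm{Fal}}(A_0)$ is bounded below by a positive multiple of $\log [\OO_E:\OO_\Lambda]$, where $\OO_\Lambda$ is the CM order of $A$. Since there are only finitely many orders $\OO\subset \OO_E$ of bounded conductor and only finitely many projective $\OO$-lattices of bounded index up to scaling (by finiteness of class groups of orders), only finitely many $A$ in the isogeny class can satisfy $h_{\mathrm{Fal}}(A)<C$. The main obstacle I expect is controlling the sign of the local Kisin-module contribution: a priori the archimedean and nonarchimedean pieces could partially cancel, and it is the fine information carried by Kisin modules---the Breuil--Kisin descent data and the Hodge--Tate weights forced by the CM type---that is needed to pin down the correct sign and ensure unbounded growth along any escape to infinity in the isogeny class.
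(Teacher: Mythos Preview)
Your outline is broadly on the right track and shares the paper's core strategy: reduce to a single prime $p$, study the kernel as a finite flat group scheme inside $\mathcal{A}[p^n]$, and compute the relative Hodge bundle via Kisin modules with the CM decomposition. But two points need correction.

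First, your archimedean/nonarchimedean framing is off. The paper does not compute period integrals directly; it invokes the Faltings Isogeny Lemma (Lemma~\ref{FaltingsIsog}), which already packages everything as
\[
h_{\mathrm{Fal}}(A_2)-h_{\mathrm{Fal}}(A_1)=\tfrac{1}{2}\log\deg\phi-\tfrac{1}{[K:\QQ]}\log\#s^*\Omega^1_{\mathcal{G}/\OO_K}.
\]
The first term is independent of the CM type; there is no separate archimedean computation ``dictated by $\Phi$'' to perform. The entire problem is to bound the second term, and \emph{that} is where the CM type enters.

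Second, and more seriously, you correctly name the obstacle---ensuring the Hodge-bundle term does not swamp $\tfrac{1}{2}\log\deg\phi$---but you do not supply the mechanism that resolves it. Vague appeal to ``Hodge--Tate weights forced by the CM type'' is not enough: one needs a reason why every non-$\OO_E$-stable subgroup has Hodge bundle strictly smaller than the semistable value. The paper's mechanism is Harder--Narasimhan theory for torsion Kisin modules. Because $\mathfrak{M}(\mathcal{G}[p^n])$ is projective of rank $1$ over $\mathfrak{S}_n\otimes\OO_E$, it is semistable of slope $(h-d)/h$ (Lemma~\ref{semistable}); then any saturated submodule corresponding to a kernel that changes the CM order has \emph{strictly smaller} slope (Lemmas~\ref{quentin} and~\ref{correctlem}). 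This already gives positivity. To get unbounded growth one then needs the explicit quasi-Kisin decomposition and the slope bounds of Theorems~\ref{unrambound} and~\ref{rambound}, summarized in Lemma~\ref{combinatorics}, which show the correction is $O(1)$ in $n$ while the degree term is $\tfrac{n}{2}\log p$. Your proposed decomposition into prime-degree ``fundamental'' isogenies is workable but less efficient than the paper's: it works with the whole $p^n$-torsion kernel at once and uses the HN filtration to control subquotients, rather than composing many steps and tracking error accumulation by hand.
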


This theorem is proved quantitatively: the change in Faltings height 
is numerically computed based on the combinatorics of prime 
splitting in the CM field, or more generally a CM algebra, 
which geometrically corresponds to 
a canonical decomposition of the isogeny. At each prime-power-degree 
isogeny, the key computation is that of the relative Hodge bundle, 
which is done locally using Kisin modules. 

Since our computations are precise, a 
closed formula for the change in Faltings height 
between any two isogenous CM abelian varieties may 
be obtained in any case where the splitting behavior of all primes 
dividing the degree of the isogeny is entirely 
understood. We compute one example of this formula, which 
is new to the literature, namely the Faltings height 
variation for simple CM abelian surfaces 
whose CM endomorphism ring is Galois. Namely, 
we let $A$ be a simple CM abelian surface over a number 
field $K$ with CM by $(\OO_E,\Phi)$ 
where $E/\QQ$ is a cyclic, Galois CM field of degree $4$ 
and $F\subseteq E$ its totally real subfield. Let 
$A_{p^n}$ be a simple CM abelian surface defined 
over a number field $K$ with CM by $(\OO,\Phi)$ 
where $\OO\subseteq \OO_E$ is a non-maximal order with conductor 
divisible by $p$, and an isogeny 
$\phi:A\to A_{p^n}$ has minimal degree $p^n$. 

\begin{thm}
\label{MaxForm}
\begin{enumerate}
\item If $p = \mathfrak{p}^2$ such that $p$ ramifies in $F/\QQ$ 
and $\mathfrak{p}$ remains inert in $E/F$,
$$h_{\mathrm{Fal}}(A_{p^n}) = h_{\mathrm{Fal}}(A) + \left[\frac{n}{2} - \frac{2}{p+1}\left(\frac{1-p^{-n}}{1-p^{-1}}\right)\right]\log p.$$
\item If $p = \mathfrak{p}$ is inert, let 
$(\lambda_1,\lambda_2,\lambda_3)$ be a non-increasing tuple 
characterizing $\mathcal{G}$ satisfying
$\lambda_1 + \lambda_2 + \lambda_3 = n$, 
$\mathcal{G}$ is $p^{\lambda_1}$-torsion, and the $p$-height 
$\iota\in \{1,2,3\}$ of $\mathcal{G}$ is the largest integer 
such that $\lambda_{\iota}\neq 0$. 
Then
$$h_{\mathrm{Fal}}(A_{p^n}) = h_{\mathrm{Fal}}(A) + \left[\frac{n}{2} - \frac{p-1}{p^4-1}\left(\sum_{i=\lambda_2+1}^{\lambda_1}\frac{p + 1}{p^i}\right)\left(\sum_{j=\lambda_3 +1}^{\lambda_2}\frac{(p+1)^2}{p^j}\right)\left(\sum_{k = 1}^{\lambda_3}\frac{(p^2 + 2)(p+1)}{p^k}\right)\right]\log p$$
where the sums are evaluated if and only if the difference in bounds is at 
least zero.
\end{enumerate}
\end{thm}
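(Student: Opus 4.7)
The plan is to reduce the theorem to a local computation at $p$, classify the possible isogeny kernels using Kisin modules, and invoke the local formula for the change of the Hodge bundle developed earlier in the paper. Since $A$ and $A_{p^n}$ have good reduction everywhere (they are CM) and are isogenous with kernel of $p$-power order, the only non-archimedean contribution to $h_{\mathrm{Fal}}(A_{p^n})-h_{\mathrm{Fal}}(A)$ comes from places of $K$ above $p$. The difference splits as the ``naive'' term $\tfrac{n}{2}\log p$, coming from the scaling of the complex periods on the archimedean components, minus a correction equal to $\tfrac{1}{[K:\QQ]}$ times the logarithm of the length of the Hodge cokernel $\omega_{\mathcal{A}}/\phi^*\omega_{\mathcal{A}_{p^n}}$ on the N\'eron models. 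Thus the task reduces to a prime-by-prime length computation above $p$.

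I would then invoke the integral $p$-adic Hodge-theoretic framework: the finite flat group scheme $\mathcal{G}=\ker(\phi)$ is controlled by its Kisin module $\MM(\mathcal{G})$ over a Breuil--Kisin ring, equipped with the semilinear action of $\OO_E\otimes_\ZZ\ZZ_p$ arising from the CM structure. The primes of $E$ above $p$ decompose $\OO_E\otimes_\ZZ\ZZ_p$ into a product of local factors and accordingly decompose $\MM(\mathcal{G})$. Within each local factor, the Hodge cokernel length is determined by the restriction of $\MM(\mathcal{G})$ together with the $p$-height filtration imposed by the CM type $\Phi$, and the total local correction is the sum of these contributions weighted by residue degrees and ramification indices.

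Next I split into the two cases. In case (1), where $p\OO_E=\mathfrak{P}^2$ with residue degree $f(\mathfrak{P}/p)=2$, there is a unique prime of $E$ above $p$, and the rigidity of the $\OO_E$-module structure at this prime forces the local contribution to depend only on $n$: up to isomorphism $\mathcal{G}$ is the cyclic $\OO_E/\mathfrak{P}^{2n}$-module scheme of the prescribed length. The factor $\tfrac{2}{p+1}$ arises from counting $\OO_E$-stable lines across the $n$ steps of the isogeny in the residue field $\FF_{p^2}$, and the geometric sum $\tfrac{1-p^{-n}}{1-p^{-1}}$ records this iteration. In case (2), where $p$ is inert in $E$ with residue field $\FF_{p^4}$, the kernel is no longer rigid at each length but is classified by the non-increasing tuple $(\lambda_1,\lambda_2,\lambda_3)$ of $p$-heights summing to $n$. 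The decomposition of $\MM(\mathcal{G})$ into its height-$1$, height-$2$, and height-$3$ graded pieces produces the three nested sums in the formula, and the coefficients $\tfrac{p+1}{p^i}$, $\tfrac{(p+1)^2}{p^j}$, and $\tfrac{(p^2+2)(p+1)}{p^k}$ come from counting admissible extensions at each height step compatible with the CM endomorphism condition.

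The main obstacle is the combinatorial bookkeeping in case (2): one must explicitly classify the admissible sub-Kisin modules at each stage of the height filtration, match them with the geometric partition $(\lambda_1,\lambda_2,\lambda_3)$, and verify that the triple product of geometric sums assembles correctly and degenerates as expected when some $\lambda_i=\lambda_{i+1}$ or when the height $\iota<3$ (so that one or more sums becomes empty). This is where the integral $p$-adic Hodge-theoretic input is essential: the Kisin module framework reduces the count of admissible extensions at each height step to an orbit calculation on a residue-field space with an $\OO_E$-action, and the three coefficients in the formula emerge from these orbit counts.
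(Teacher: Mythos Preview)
Your high-level strategy --- reduce via the Faltings isogeny lemma to computing $\#\omega_{\mathcal{G}}$ for the finite flat kernel $\mathcal{G}$, then analyze $\mathcal{G}$ through its Kisin module with the $\OO_E\otimes\ZZ_p$-action --- matches the paper. But the mechanism you describe for extracting the explicit constants is not the one the paper uses, and your description in case (1) contains an error that would block the argument.

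In case (1) you write that ``$\mathcal{G}$ is the cyclic $\OO_E/\mathfrak{P}^{2n}$-module scheme'' by rigidity. This is false: if $\mathcal{G}$ were $\OO_E$-stable the quotient $A_{p^n}$ would again have CM by the maximal order $\OO_E$, contradicting the hypothesis. The actual reduction in the paper is a Serre tensor step (Corollary~\ref{HodgeUnramType}): since $E_{\mathfrak{p}}/\QQ_p$ has residue degree $2$ and ramification degree $2$, one writes $\mathcal{G}_{\mathfrak{p}}\simeq\mathcal{G}^{\mathrm{ur}}\otimes_{\OO_{E^{\mathrm{ur}}}}\OO_{E_{\mathfrak{p}}}$ with $\mathcal{G}^{\mathrm{ur}}$ a height-$2$, dimension-$1$ CM $p$-divisible group for the unramified quadratic extension. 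The factor $2$ in $\tfrac{2}{p+1}$ is the ramification degree $\rho=2$ from that corollary, and the $\tfrac{1}{p+1}$ is then literally the elliptic-curve inert case (Theorem~\ref{ColmezEll}). No orbit count in $\FF_{p^2}$ enters.

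In case (2) the coefficients do not arise from ``counting admissible extensions'' or ``orbit calculations''. The paper's engine is the quasi-Kisin decomposition (Lemma~\ref{Fontaine}, Lemma~\ref{quasiquasi}): one replaces $\phi$ by $\phi^h$ so that the $\OO_E$-linear Kisin module of $\mathcal{G}[p]$ splits into $h$ rank-$1$ pieces indexed by $\tau\in\Hom(E,K_0)$, each with Frobenius of $u$-valuation $\sum_{s}p^{h-\tau^{-1}(\alpha_s)}$. A saturated rank-$k$ submodule corresponds to choosing $k$ of these pieces, and the Hodge term is governed by the \emph{minimum} of the resulting valuation sum over subsets $S_k$ (Proposition~\ref{unramp}). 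For $h=4$, $d=2$, $\Phi=\{1,2\}$ these minima for $k=1,2,3$ give exactly $p+1$, $(p+1)^2$, and $(p^2+2)(p+1)$ (times $\tfrac{p-1}{p^4-1}$), and the d\'evissage over $p$-power torsion levels (Proposition~\ref{formulaunram}) produces the three nested geometric sums indexed by $(\lambda_1,\lambda_2,\lambda_3)$. Your proposal does not identify this quasi-Kisin mechanism, which is the step where the actual numbers come from.

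A minor point: the $\tfrac{n}{2}\log p$ term is not archimedean; it is $\tfrac12\log(\deg\phi)$ from Lemma~\ref{FaltingsIsog}. The archimedean terms cancel in the isogeny formula.
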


Such formulas for $h_{\mathrm{Fal}}(A_{p^n})$ for all 9 possibilities of the splitting behavior for p in E are given in Theorem \ref{LASTTHING}. 
This theorem answers a question posed by Habegger and Pazuki in \cite{HabPaz}. 
In the paper, they ask whether there are finitely many curves 
$C$ of genus $2$ defined 
over $\overline{\QQ}$ with good reduction at all but a given 
finite set of places and for which 
$\mathrm{Jac}(C)$ has complex multiplication. They show this to be true when the 
curve has good reduction everywhere and a simple CM Jacobian with CM by a 
maximal order. To extend their result to a wider class of CM Jacobians, the 
problem is reduced to the stated Northcott property in the theorem above. 
Note that while the Northcott property is general, the result by 
Habegger and Pazuki and 
stated application is specific to genus $2$ curves, and highlights a 
unique property that may be shared only by curves of low genus.

We establish conditionally the CM Northcott property 
for isomorphism classes of CM abelian 
varieties by invoking a famous conjecture 
due to Colmez \cite{Colmez}.

\begin{conj}[Colmez Conjecture, \cite{Colmez}]
\label{colm}
Let $A$ be an abelian variety over $K$ of dimension $g$, and suppose 
that $A$ has CM by the ring of integers $\mathcal{O}_E\subseteq E$, 
where $E$ is a CM field with CM type $\Phi$ and $[E:\mathbb{Q}] = 2g$. 
Then:
$$h_{\mathrm{Fal}}(A) = -\sum_i c_i\left(\frac{L'(\chi_i,0)}{L(\chi_i,0)}+ \frac{1}{2}\log f_{\chi_i}\right) + \frac{g}{2}\log 2\pi$$
where the $\chi_i$ are a finite collection of 
irreducible Artin characters of $\mathrm{Gal}(K/\mathbb{Q})$ with 
Artin conductor $f_{\chi_i}$, and the $c_i$ are positive constants 
(see Section \ref{ColmezConjSect}).
\end{conj}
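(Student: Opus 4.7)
The identity in Conjecture~\ref{colm} is a statement comparing two very different invariants of a CM abelian variety: on one side the Arakelov degree of the metrized Hodge bundle, an arithmetic-geometric quantity; on the other a finite sum of logarithmic derivatives of Artin $L$-functions weighted by rational constants that depend only on the CM type $\Phi$. The plan is not to establish the equality directly, but rather to construct an intermediary object which computes both sides and to match the two computations term by term. Two broad strategies suggest themselves: a global strategy via arithmetic intersection theory on unitary Shimura varieties, in the style of Yuan--Zhang and Andreatta--Goren--Howard--Madapusi~Pera, and a local strategy in the spirit of Colmez's original work, computing the contribution at each finite prime by integral $p$-adic Hodge theory and reassembling at the end.

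For the global strategy, the first step is to realize $A$ as a special point on a Shimura variety $\mathcal{M}$ of unitary type $U(g-1,1)$ over $\Spec(\OO_E)$. The metrized Hodge bundle $\overline{\omega}_{\mathcal{A}}$ pulls back from a canonical metrized line bundle on $\mathcal{M}$, so $[K:\QQ]h_{\mathrm{Fal}}(A)$ becomes an arithmetic intersection of this line bundle with the CM $0$-cycle $[A]$. An arithmetic Siegel--Weil identity then expresses this intersection as the central derivative at $s=0$ of an incoherent Eisenstein series attached to $\Phi$. Its Fourier expansion decomposes along Hecke characters of $E$, and Artin formalism rewrites the answer in terms of the $L'(\chi_i,0)/L(\chi_i,0)$ and conductors $f_{\chi_i}$ appearing in the stated formula; the archimedean term $\tfrac{g}{2}\log 2\pi$ emerges from the Gamma factors in the functional equations.

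For the local strategy, fix a prime $p$ and work with the $p$-divisible group $A[p^\infty]$ together with its $\OO_E\otimes\ZZ_p$-action. Using Kisin modules (the same tool driving the rest of the paper), decompose $A[p^\infty]$ along the factorization of $p$ in $\OO_E$; on each factor the Hodge filtration is determined by the restriction of $\Phi$, and Fontaine's $p$-adic period pairing gives an explicit formula for the $p$-part of $\log\#(s^*\omega_{\mathcal{A}}/\OO_K\cdot\alpha)$ in terms of local $\epsilon$-factors of the Galois representations cut out by the CM type. Summing over $p$ recovers the product of the conductors $f_{\chi_i}$, and combining with a parallel archimedean computation of $\int\alpha^\sigma\wedge\overline{\alpha^\sigma}$ via Deligne periods and Chowla--Selberg yields the full right-hand side, once Artin's inductivity of $L$-functions is used to reorganize local contributions of characters of subgroups into irreducible characters of $\Gal(K/\QQ)$.

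The main obstacle, and the reason the conjecture is stated rather than proved, is the passage from an \emph{averaged} version of the identity over a Galois orbit of CM types, where a single Eisenstein series controls the geometry and where both strategies have been carried out unconditionally, to the \emph{individual} formula in which the constants $c_i$ distinguish CM types within that orbit. This refinement appears to require either a Brauer-induction reduction to the abelian CM case (where Colmez's original calculation, together with Obus's removal of the residual $\log 2$ ambiguity, suffices) combined with very precise compatibilities of local constants under induction, or alternatively an individual arithmetic Siegel--Weil identity that tracks the action of the reflex group on the incoherent Eisenstein series. Either route would constitute a new and substantial result, which is why Conjecture~\ref{colm} must be assumed as input for the Northcott property of this paper.
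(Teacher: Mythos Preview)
The statement you were asked to prove is a \emph{conjecture}, and the paper does not prove it. Conjecture~\ref{colm} is stated in the introduction, restated more precisely as Conjecture~\ref{colmez} in Section~\ref{ColmezConjSect}, and then \emph{assumed} (together with the Artin Conjecture) as a hypothesis for Theorem~\ref{CMNorth} and Corollary~\ref{primitive}. The paper's only discussion of its status is a brief survey of known cases: Colmez and Obus for abelian CM fields, Yang for certain abelian surfaces, and the averaged version of Yuan--Zhang and Andreatta--Goren--Howard--Madapusi~Pera.

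Your proposal is therefore not so much a proof attempt as a literature-aware sketch of the two main lines of attack, and you arrive at the correct conclusion in your final paragraph: the passage from the averaged identity to the individual formula is the genuine obstruction, and the conjecture must be taken as input. That final paragraph is the honest and accurate part of your write-up. The earlier paragraphs, while broadly faithful to the shape of the Yuan--Zhang and Colmez arguments, are too impressionistic to count as a proof even in the cases where the conjecture is known; phrases like ``an arithmetic Siegel--Weil identity then expresses this intersection'' or ``Fontaine's $p$-adic period pairing gives an explicit formula'' are placeholders for substantial theorems, not steps one can verify. If you intend this as an expository overview of why the conjecture is hard, it serves that purpose; if you intend it as a proof, it is not one, and neither is anything in the paper.
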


In \cite{Colmez2}, Colmez develops a lower bound on the Faltings 
height by invoking the Artin conjecture to bound the (logarithmic 
derivatives of) Artin $L$-functions in the formula. This lower bound is 
expressed in terms of Artin conductors of Galois representations, 
and together with Theorem \ref{CMIsog} is 
sufficient to establish the following theorem.

\begin{thm}[CM Northcott Property]
\label{CMNorth}
Let $C>0$ be a fixed constant and $g>0$ be a fixed integer. Then assuming 
Conjecture \ref{colm} and the Artin conjecture, the number of 
$\overline{\QQ}$-isomorphism classes of CM abelian varieties $A$ 
of dimension $g$ and with $h_{\mathrm{Fal}}(A)<C$ is finite.
\end{thm}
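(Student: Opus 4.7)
The plan is to use Conjecture \ref{colm} together with the Artin conjecture to convert the bound $h_{\mathrm{Fal}}(A) < C$ into a bound on the discriminant of the underlying CM algebra, and then to apply Theorem \ref{CMIsog} to each of the finitely many resulting isogeny classes.

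Every CM abelian variety $A/\overline{\QQ}$ of dimension $g$ is determined up to isogeny by a pair $(E,\Phi)$ consisting of a CM algebra $E$ of degree $2g$ and a CM type $\Phi$. Conjecture \ref{colm} expresses $h_{\mathrm{Fal}}(A)$ as a $\QQ$-linear combination, with positive coefficients $c_i$, of terms $L'(\chi_i,0)/L(\chi_i,0) + \tfrac{1}{2}\log f_{\chi_i}$, where $\chi_i$ runs over irreducible Artin characters of the Galois closure of the reflex field of $(E,\Phi)$. The first step, carried out by Colmez in \cite{Colmez2}, is to use the Artin conjecture together with the functional equation to obtain, for each nontrivial irreducible $\chi$, an analytic lower bound of the form
$$\frac{L'(\chi,0)}{L(\chi,0)} \;\geq\; -C_1(\dim\chi)\log f_\chi - C_2(\dim\chi).$$
Since every $\dim\chi$ is bounded in terms of $g$, the constants are uniform, and substituting into Colmez's formula yields a lower bound of the shape
$$h_{\mathrm{Fal}}(A) \;\geq\; \alpha \sum_i c_i \log f_{\chi_i} - C(g)$$
for some absolute $\alpha > 0$.

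The hypothesis $h_{\mathrm{Fal}}(A) < C$ then forces a uniform upper bound on each conductor $f_{\chi_i}$. By the conductor--discriminant formula the discriminant of the Galois closure $\widetilde{E}$ of $E/\QQ$ is bounded, and since $[\widetilde{E}:\QQ] \leq (2g)!$, Hermite--Minkowski produces only finitely many possibilities for $\widetilde{E}$, hence only finitely many CM algebras $E$ and finitely many CM types $\Phi$ attached to each. Each pair $(E,\Phi)$ cuts out a single $\overline{\QQ}$-isogeny class of CM abelian varieties, so Theorem \ref{CMIsog} applied to each of these finitely many classes bounds the total number of isomorphism classes with $h_{\mathrm{Fal}} < C$ and completes the argument.

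The principal obstacle is the first step: extracting from the Artin conjecture an analytic lower bound on $L'(\chi,0)/L(\chi,0)$ strong enough to dominate the $\tfrac{1}{2}\log f_\chi$ term in Colmez's formula, while keeping the dimension dependence of $\chi$ under control. This is Colmez's delicate analytic input in \cite{Colmez2}; granted that, the remainder of the proof is a routine combination of the conductor--discriminant formula, Hermite--Minkowski, and Theorem \ref{CMIsog}.
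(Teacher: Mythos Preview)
Your proposal is correct and follows essentially the same route as the paper. The paper packages your analytic step (Colmez's lower bound from \cite{Colmez2}, followed by the conductor--discriminant/Hermite--Minkowski argument) into Theorem~\ref{colmezbound} and Corollary~\ref{primitive}, and then concludes exactly as you do by invoking Theorem~\ref{CMIsog} on each of the finitely many remaining isogeny classes; you have simply unpacked the content of Corollary~\ref{primitive} rather than quoting it.
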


Cases where the Colmez conjecture are known to be true include abelian 
CM fields, which was shown up to a factor of $\log 2$ by Colmez in \cite{Colmez} 
and later cleared up by Obus in \cite{Obus}, and many classes of 
abelian surfaces shown by Yang \cite{TYang}. An average version has also been 
shown independently by Yuan--Zhang \cite{YuanZhang} and 
Andreatta--Goren--Howard--Madhapusi-Pera \cite{Howard}. We note that 
the average formula is not strong enough to establish our results unless 
all abelian varieties in computing the average have the same Faltings height.

The quantitative versions of Theorem \ref{CMIsog} and 
Theorem \ref{CMNorth} have applications to future questions 
requiring an understanding of the distribution of 
(CM) points on various moduli spaces of 
abelian varieties. For instance, it recovers the average 
growth of the Faltings height of the Hecke orbit of a CM abelian 
variety, a formula computed for some low-dimensional 
abelian varieties by Autissier in \cite{Autissier}. 
This Autissier growth formula was the key input to 
\cite{Charles} and \cite{ShankarTang} to construct thin sets of 
primes of certain reduction types on elliptic curves and abelian 
surfaces. In \cite{ShankarTang}, the authors show this 
growth formula computed on a CM abelian 
surface provides the growth formula for all abelian surfaces. 
This lemma is likely true in general, and these formulas 
then extend the Autissier growth formula to all dimensions, 
providing means to extend \cite{Charles} and \cite{ShankarTang} 
even further.

The organization of this paper is as follows. In Section 2, we introduce 
the Faltings height 
and cite the key theorems and conjectures: the Faltings Isogeny Lemma, 
the Colmez Conjecture, and the lower bound on Faltings height 
from the Colmez Conjecture and the Artin Conjecture. In Section 3, 
we review the theory of complex multiplication. The theory 
we require goes well 
beyond Shimura and Taniyama's introduction of the subject, since 
we require the CM theory of N\'eron 
models of abelian varieties and $p$-divisible groups, and is 
treated in vast amount of detail by 
Chai--Conrad--Oort \cite{Conrad}. Section 4 is 
devoted to the relevant recent developments 
in (integral) $p$-adic Hodge theory we need to 
compute the (relative) Hodge 
bundle in the change in Faltings height, most namely 
the development revolving around the Kisin modules originally introduced 
by Kisin in \cite{Kisin}. 
The brief Section 5 reviews the arithmetic and group theoretic 
properties encoded by Kisin modules, such as a characterization 
of finite flat subgroup schemes and the theory of their 
Harder-Narasimhan (HN) filtration.

Sections 6-8 are the heart of the paper and 
devoted to the proof of the CM Northcott Property and 
the application to Colmez-type formulae. 
In Section 6, 
we introduce the notion of an $\OO_E$-linear CM Kisin module 
and deduce decomposition theorems of their structure, 
including a general theorem on a weak characterization of 
unstable submodules of $\OO_E$-linear CM Kisin torsion modules. 
The work in this section inspired the recent development of 
CM Breuil-Kisin-Fargues (BKF) 
modules introduced by Johannes Ansch\"utz in \cite{Anschutz}, although 
he works with a certain ``isogeny'' category of 
BKF modules and does not recover (as we do) information on individual 
elements in the ``isogeny classes''. There is current work in progress 
by the present author to extend the present computations to his setting. 
In Section 7, we 
compute precisely the HN slopes of unstable submodules of 
$\OO_E$-linear torsion CM Kisin modules using 
explicit CM and Lubin-Tate theory. We emphasize that one 
may deduce from these 
computations the 
\emph{exact} change in Faltings height between any given two isogeneous 
CM abelian varieties. This is not yet possible 
to do without the CM hypothesis with current technology, 
but we show how to use some elementary bounds from 
transcendence theory to obtain bounds on this variation 
in a forthcoming paper. Finally, Section 8 is devoted to 
proving Theorems \ref{CMIsog}, 
\ref{MaxForm}, and \ref{CMNorth} from the developments in 
the rest of the paper.

\subsection*{Acknowledgements}
It is a pleasure for the author to thank her PhD advisor, Shouwu Zhang, 
for introducing her to this problem. She 
also thanks Johannes Ansch\"utz, Ana Caraiani, Brian Conrad, 
Ziyang Gao, Quentin Guignard, Peter Sarnak, Yunqing Tang, Salim Tayou, 
and Rafael Van K\"anel for discussions and/or comments. 
Especial thanks goes further to Johannes Ansch\"utz, 
Quentin Guignard, and Yunqing Tang for their thorough review of 
the mathematical content and helpful corrections to earlier versions of this paper. 
This work was prepared 
at Princeton University and IHES as part of the author's PhD dissertation, 
and the author is indebted to the hospitality offered at each location.

\section{Faltings height}

\subsection{Semi-stable Faltings height}
\label{height}

Let $K$ be a number field and $\OO_K\subseteq K$ be its ring of integers. We 
consider abelian varieties $A$ of dimension $g\ge 1$ defined over 
the field $K$ with semi-stable 
reduction everywhere and write $\mathcal{A}/\OO_K$ for their N\'eron model.  
Denote by $s:\Spec(\mathcal{O}_K)\to \mathcal{A}$ the extension 
of the zero 
section, and by $\omega_\mathcal{A}:=s^*\Omega^g_{\mathcal{A}/\OO_K}$ 
the canonical line bundle pulled back along the zero section to a line bundle 
on $\Spec(\OO_K)$.

Recall the following definition from Arakelov theory.

\begin{defn}
A \textbf{Hermitian line bundle} on $\mathrm{Spec}(\OO_K)$ is a pair 
$(\mathcal{L},\{||\cdot||_\sigma\}_{\sigma:K\hookrightarrow \CC})$ consisting of 
an invertible sheaf $\mathcal{L}$ on $\Spec(\OO_K)$ 
and a collection of continuous Hermitian metrics 
$||\cdot||_\sigma$ on $\mathcal{L}^{\text{an}}\otimes_\sigma \CC$ 
ranging over the complex embeddings $\sigma:K\hookrightarrow \CC$ 
such that  $|| s^c ||_{\sigma^c} = || s ||_{\sigma}$, where $c$ denotes 
complex conjugation.
\end{defn}

The line bundle $\omega_{\mathcal{A}}$ can be 
equipped with a collection of hermitian metrics to 
give it the structure of a hermitian line bundle. Define the 
hermitian norm $||\cdot ||_\sigma$ for each 
embedding $\sigma:K\hookrightarrow \mathbb{C}$ 
by the formula
$$||\alpha||_\sigma^2 = \frac{1}{(2\pi)^g}\left|\int_{\mathcal{A}_\sigma(\mathbf{C})}\alpha\wedge \overline{\alpha}\right|$$
where $\mathcal{A}_\sigma$ denotes the ``completion'' of the 
N\'eron model at the archimedean fiber, i.e., a complex manifold 
which descends to an algebraic model $A/K$. 
Here we consider $\alpha \in H^0(\mathcal{A}_\sigma,\Omega^g_{\mathcal{A}_\sigma})$ 
via the isomorphism $\omega_{\mathcal{A}}\otimes_\sigma \mathbf{C}\simeq H^0(\mathcal{A}_\sigma,\Omega^g_{\mathcal{A}_\sigma})$. 
Then the hermitian line bundle 
$\overline{\omega}_\mathcal{A}$ will denote the pair 
$(\omega_\mathcal{A}, \left\{||\cdot||_\sigma\right\}_{\sigma:K\hookrightarrow\mathbb{C}})$.

Recall as well the following definition from Arakelov theory.

\begin{defn}
The \textbf{Arakelov degree} of a Hermitian line bundle $(\mathcal{L}, \left\{||\cdot||_\sigma\right\}_{\sigma:K\hookrightarrow\mathbb{C}})$ on 
$\mathrm{Spec}(\OO_K)$ is the value
$$\widehat{\mathrm{deg}}(\overline{\mathcal{L}}) = \log\#(\mathcal{L}/\alpha) - \sum_{\sigma:K\hookrightarrow \mathbb{C}} \log||\alpha||_\sigma$$
where $\alpha$ is a non-zero global section of $\mathcal{L}$. 
\end{defn}

\begin{rem}
By the product formula, this is independent of the choice of $\alpha$, 
so the Arakelov degree is a well-defined invariant.
\end{rem}

Using the Arakelov degree, we define the Faltings height 
as follows. Note that the Faltings height is an arithmetic 
invariant on the isomorphism classes of abelian varieties.

\begin{defn}
The \textbf{(stable) Faltings height} of $A$ is
$$h_{\mathrm{Fal}}(A):=\frac{1}{[K:\mathbf{Q}]}\widehat{\mathrm{deg}}(\overline{\omega}_\mathcal{A}).$$
This is computed explicitly by 
the formula
$$\widehat{\mathrm{deg}}(\overline{\omega}_\mathcal{A}) = \log\#(\omega_{\mathcal{A}}/\mathcal{O}_K\alpha) - \sum_{\sigma:K\hookrightarrow \mathbf{C}}\log ||\alpha||_\sigma.$$
\end{defn}

The invariant receives its name as a height due to the fact it 
satisfies the following celebrated Northcott property demonstrated by 
Faltings in \cite{Faltings}. This Northcott property was a key insight in 
Faltings' proof of the Mordell conjecture.

\begin{thm}[Faltings' Northcott Property, \cite{Faltings}]
Let $C$ and $d$ be fixed positive constants and $g\ge 1$ be an integer. 
Then the set of isomorphism classes of abelian varieties 
$A/K$ of dimension $g$ with $[K:\QQ]< d$ and $h_{\mathrm{Fal}}(A)<C$ is finite.
\end{thm}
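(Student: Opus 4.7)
The plan is to reduce the statement to the classical Northcott property for Weil heights on projective space, via a suitable modular embedding of a moduli space of polarized abelian varieties. The guiding idea is that the Hodge bundle $\omega$ descends to a line bundle on the Siegel moduli space whose Hermitian structure is essentially the one defining $h_{\mathrm{Fal}}$, so a modular embedding using sections of a high power of $\omega$ will convert the boundedness hypothesis on $h_{\mathrm{Fal}}$ into boundedness of a Weil height.

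First I would reduce to the principally polarized case by Zarhin's trick: for any abelian variety $A/K$, the product $B := (A\times A^\vee)^4$ admits a principal polarization over $K$, and a direct computation shows that $h_{\mathrm{Fal}}(B) = 8\, h_{\mathrm{Fal}}(A)$ (one uses $h_{\mathrm{Fal}}(A)=h_{\mathrm{Fal}}(A^\vee)$ and additivity under products, both standard). Hence a finiteness result for principally polarized varieties of dimension $8g$ with bounded Faltings height and bounded degree immediately implies the corresponding statement for general $A$. Next, fix an auxiliary level $N\ge 3$ and work on the fine moduli scheme $\mathcal{A}_{g,N}$ of principally polarized abelian varieties of dimension $g$ with level-$N$ structure; after a bounded extension of the ground field, every $A$ in question gives a $K$-point of $\mathcal{A}_{g,N}$. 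The universal Hodge bundle $\omega$ on $\mathcal{A}_{g,N}$ specializes at each such point to $\omega_{\mathcal{A}}$.

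The key step is that a sufficiently divisible power $\omega^{\otimes k}$ extends to an ample line bundle on a suitable toroidal compactification $\overline{\mathcal{A}}_{g,N}$, and its global sections (Siegel modular forms of weight $k$) define a closed immersion into some projective space $\mathbb{P}^M$. Equipped with the canonical Petersson/Hodge metric, the bundle $\omega$ is a Hermitian line bundle in the sense of Arakelov geometry, and the induced height on points of $\mathcal{A}_{g,N}$ differs from the naive Weil height pulled back from $\mathbb{P}^M$ by a bounded function on the whole compactification. Consequently, $k\cdot h_{\mathrm{Fal}}(A)$ equals, up to an $O(1)$ depending only on $g$, the Weil height of the image point in $\mathbb{P}^M$. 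Applying the classical Northcott property on $\mathbb{P}^M$ to the set of $\overline{\mathbb{Q}}$-points of degree $<d$ and height $<C'$ (for a suitable $C'$ depending on $C,d,g$) produces finitely many such points, and hence finitely many isomorphism classes of $A$ after forgetting level and polarization data (both of which only multiply counts by bounded factors).

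The main obstacle will be the comparison between the Hodge metric and the Fubini--Study metric near the boundary of the toroidal compactification: the Hodge metric is only pre-log-singular along the boundary divisor, so one must check that these singularities contribute only a bounded discrepancy in the height comparison. This is precisely where the semi-stable reduction hypothesis enters, ensuring that each $A$ defines a point of $\mathcal{A}_{g,N}$ that extends to an $\mathcal{O}_K$-point of $\overline{\mathcal{A}}_{g,N}$ meeting the boundary transversally, so that the comparison of metrics yields a uniformly bounded error and the reduction to the Northcott property on $\mathbb{P}^M$ goes through.
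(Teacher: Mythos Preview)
The paper does not supply a proof of this theorem at all: it is stated with attribution to \cite{Faltings} and invoked as background, so there is no ``paper's own proof'' to compare against. Your outline is essentially the standard argument going back to Faltings' original paper (Zarhin's trick to reduce to the principally polarized case, embedding via sections of a high power of the Hodge bundle on a compactified Siegel space, and comparison of $h_{\mathrm{Fal}}$ with the pulled-back Weil height up to $O(1)$, followed by classical Northcott on $\mathbb{P}^M$).

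One caution on the final paragraph: the semi-stable reduction hypothesis is not what tames the metric comparison near the boundary. The point is rather that the Hodge metric has at worst logarithmic singularities along the boundary divisor, and one shows (as in Faltings' paper, or more systematically in the Faltings--Chai theory of arithmetic compactifications) that the associated arithmetic height differs from the Weil height by a function bounded on the interior $\mathcal{A}_{g,N}$ itself; points with bad reduction land on the boundary, but the height comparison one needs is on the open part. So the logic in your last paragraph is slightly off, though the overall strategy is sound.
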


The Faltings height is not 
an isogeny-invariant. This very property of variation of the 
Faltings height in isogeny classes 
was exploited heavily by Faltings in \cite{Faltings}, 
and is measured by the following formula. 

\begin{lem}[Faltings Isogeny Lemma, \cite{Faltings}]
\label{FaltingsIsog}
Let $\phi: A_1\to A_2$ be an isogeny over $K$, and let $\mathcal{G}$ be the finite flat group 
scheme kernel of the extension $\phi:\mathcal{A}_1\to \mathcal{A}_2$ 
over $\OO_K$, 
where $\mathcal{A}_i$ are the N\'eron models corresponding to $A_i$ for 
$i\in \{1,2\}$. Then
$$h_{\mathrm{Fal}}(A_2) = h_{\mathrm{Fal}}(A_1) + \frac{1}{2}\log(\deg(\phi)) - \frac{1}{[K:\mathbb{Q}]}\log(\#s^*\Omega_{\mathcal{G}/\OO_K}^1).$$
\end{lem}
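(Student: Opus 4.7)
The plan is to split the Arakelov-degree difference $\widehat{\mathrm{deg}}(\overline{\omega}_{\mathcal{A}_1})-\widehat{\mathrm{deg}}(\overline{\omega}_{\mathcal{A}_2})$ into its finite (algebraic) and archimedean (analytic) contributions and evaluate each directly. By the N\'eron mapping property, $\phi$ extends uniquely to a morphism $\phi\colon\mathcal{A}_1\to\mathcal{A}_2$ whose kernel is the finite flat $\OO_K$-group scheme $\mathcal{G}$. Since $\phi$ is \'etale on the generic fiber, the induced pullback of Hodge bundles
$$\psi:=\phi^*\colon\omega_{\mathcal{A}_2}\longrightarrow\omega_{\mathcal{A}_1}$$
is an injection of invertible $\OO_K$-modules with torsion cokernel. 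Evaluating the Arakelov-degree formula on any nonzero section $\beta$ of $\omega_{\mathcal{A}_2}$ and its image $\psi(\beta)\in\omega_{\mathcal{A}_1}$ then reduces the problem to computing $\log\#\bigl(\omega_{\mathcal{A}_1}/\psi(\omega_{\mathcal{A}_2})\bigr)$ and the ratio $\|\psi(\beta)\|_\sigma/\|\beta\|_\sigma$.

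For the finite part, I would pull the conormal sequence of the tower $\mathcal{A}_1\to\mathcal{A}_2\to\Spec\OO_K$ back along the zero section $s$ of $\mathcal{A}_1$, obtaining
$$0\longrightarrow s^*\phi^*\Omega^1_{\mathcal{A}_2/\OO_K}\longrightarrow s^*\Omega^1_{\mathcal{A}_1/\OO_K}\longrightarrow s^*\Omega^1_{\mathcal{A}_1/\mathcal{A}_2}\longrightarrow 0.$$
Injectivity of the leftmost arrow comes from the fact that $\phi$ is fppf-locally on $\mathcal{A}_2$ a trivial $\mathcal{G}$-torsor, so $\Omega^1_{\mathcal{A}_1/\mathcal{A}_2}$ is locally free on $\mathcal{A}_1$; the same torsor structure produces a canonical identification $s^*\Omega^1_{\mathcal{A}_1/\mathcal{A}_2}\cong s^*\Omega^1_{\mathcal{G}/\OO_K}$. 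Taking top exterior powers of this sequence and applying the elementary-divisor theorem to the resulting injection of invertible $\OO_K$-modules yields
$$\log\#\bigl(\omega_{\mathcal{A}_1}/\psi(\omega_{\mathcal{A}_2})\bigr)=\log\#s^*\Omega^1_{\mathcal{G}/\OO_K}.$$

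For the archimedean part, at each embedding $\sigma\colon K\hookrightarrow\CC$ the complexification $\phi_\sigma\colon\mathcal{A}_{1,\sigma}\to\mathcal{A}_{2,\sigma}$ is a finite \'etale covering of degree $\deg\phi$, so change of variables gives
$$\int_{\mathcal{A}_{1,\sigma}(\CC)}\phi^*\beta\wedge\overline{\phi^*\beta}=\deg(\phi)\int_{\mathcal{A}_{2,\sigma}(\CC)}\beta\wedge\overline{\beta},$$
whence $\|\psi(\beta)\|_\sigma^2=\deg(\phi)\cdot\|\beta\|_\sigma^2$. Summing over the $[K:\QQ]$ complex embeddings contributes a term of $-\tfrac{1}{2}[K:\QQ]\log\deg\phi$ to $\widehat{\mathrm{deg}}(\overline{\omega}_{\mathcal{A}_1})-\widehat{\mathrm{deg}}(\overline{\omega}_{\mathcal{A}_2})$. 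Combining this with the finite contribution via the explicit Arakelov-degree formula evaluated on $\beta$ and $\psi(\beta)$, and then dividing by $[K:\QQ]$ and rearranging, produces the claimed identity.

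The main obstacle is step 2: one must justify the conormal short exact sequence integrally, rather than merely on the generic fiber. Concretely, one must argue that after extension to N\'eron models under the semistable-reduction hypothesis, $\phi$ is fppf-locally a trivial $\mathcal{G}$-torsor, which is non-obvious and relies on the compatibility of N\'eron models with isogenies as developed in the Bosch--L\"utkebohmert--Raynaud framework. Everything else is essentially bookkeeping.
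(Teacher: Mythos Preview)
The paper does not give its own proof of this lemma: it is stated with a citation to Faltings and used as a black box throughout. Your argument is the standard one and is essentially correct; there is nothing to compare against.

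One small correction in your finite-part argument: you write that $\Omega^1_{\mathcal{A}_1/\mathcal{A}_2}$ is locally free on $\mathcal{A}_1$, but this is not true---it is a torsion sheaf supported over the bad fibers (the map is \'etale generically). What the fppf-local trivialization actually gives you is the identification $s^*\Omega^1_{\mathcal{A}_1/\mathcal{A}_2}\cong s^*\Omega^1_{\mathcal{G}/\OO_K}$, which is what you need. Injectivity on the left is more cheaply obtained by observing that $s^*\phi^*\Omega^1_{\mathcal{A}_2/\OO_K}\to s^*\Omega^1_{\mathcal{A}_1/\OO_K}$ is a map of free $\OO_K$-modules of the same rank that becomes an isomorphism over $K$, so its kernel is torsion inside a free module and hence zero. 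With that adjustment the argument goes through as written.
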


The techniques to be developed in this paper aim 
to compute the ramification term on the right side 
of this formula and show its contribution is, in all 
relevant cases, 
negligible compared to the term provided by the degree 
of the isogeny.

\subsection{Colmez Conjecture}
\label{ColmezConjSect}
We state here the Colmez Conjecture for the Faltings height of CM abelian varieties. 
The theory of complex multiplication for abelian varieties and $p$-divisible 
groups will be reviewed in Section \ref{CM}, so we invite the reader to 
briefly review the definitions if she wants first to 
familiarize herself with the subject. 
Apart from the basic definitions, however, 
familiarity with the theory discussed in 
the section will not be relevant to state the 
conjecture.

Let $(E,\Phi)$ be a CM pair where $E$ is a CM field 
such that $[E:\QQ]=2g$ 
and $\Phi = \{\phi_1,\ldots,\phi_g\}$ is a CM type of $E$. For 
$\phi:E\to \overline{\QQ}$, define the function $a_{E,\phi,\Phi}$ on 
$\mathrm{Gal}(\overline{\QQ}/\QQ)$ by
$$a_{E,\phi,\Phi}(g) = \left\{\begin{array}{ll}
1&\text{if }g\phi\in \Phi,\\
0&\text{otherwise.}\end{array}\right.$$
We construct the 
class function $A_{E,\Phi} := \sum_{\phi\in \Phi}a_{E,\phi,\Phi}$. 
Then, letting $E^*$ denote the reflex field of 
$(E,\Phi)$, this class function satisfies
$A_{E,\Phi}(g) = A_{E,\Phi}(1)$ if and 
only if $g \in \mathrm{Gal}(\overline{\QQ} / E^*).$
Since
irreducible Artin characters form a basis for the space of locally 
constant class functions on $\Gal(\overline{\QQ}/\QQ)$, 
$A_{E,\Phi}$ has a representation
$$A_{E,\Phi} = \sum_i c_i \chi_i$$ 
where the sum runs through all irreducible 
characters on $\Gal(\overline{\QQ}/\QQ)$ and $c_i$ denotes the 
multiplicity by which they occur. As demonstrated in 
\cite{Colmez2}, $c_i = 0$ for all but finitely 
many characters, and otherwise $c_i>0$. Moreover, $c_i>0$ only 
when $\chi_i$ is odd, and therefore  
$L(\chi_i,0)\neq 0$ for the characters that appear with 
a positive constant.

\begin{conj}[Colmez Conjecture, \cite{Colmez}]
\label{colmez}
Let $A$ be an abelian variety of dimension $g$ 
over a number field $K$, and suppose 
that $A$ has CM by the ring of integers $\mathcal{O}_E\subseteq E$, 
where $E$ is a CM field such that 
$[E:\QQ] = 2g$ and $\Phi$ is a CM type of $E$. 
Let $A_{E,\Phi} = \sum_i c_i \chi_i$ be as defined above, 
where the sum is over all irreducible 
characters $\chi_i\in \mathrm{Hom}(\Gal(\overline{\QQ}/\mathbb{Q}),\CC)$, 
each with corresponding (unique) Artin conductor $f_{\chi_i}$. Then:
$$h_{\mathrm{Fal}}(A) = -\sum_i c_i\left(\frac{L'(\chi_i,0)}{L(\chi_i,0)} + \frac{1}{2}\log f_{\chi_i}\right) + \frac{g}{2}\log 2\pi.$$
\end{conj}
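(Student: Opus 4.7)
The plan is to convert the analytic formula for $h_{\mathrm{Fal}}(A)$ into a sum of logarithmic derivatives of Artin $L$-functions at $s=0$, using the structural properties of the class function $A_{E,\Phi}$ that are recorded in the preamble to the statement. First I would exploit the fact that $A_{E,\Phi}(g)=A_{E,\Phi}(1)$ exactly for $g\in \Gal(\overline{\QQ}/E^*)$: this says $A_{E,\Phi}$ descends to a function on the set of embeddings $E^*\hookrightarrow\overline{\QQ}$ and so is (co)induced from a class function on $\Gal(\overline{\QQ}/E^*)$. Combined with the decomposition $A_{E,\Phi}=\sum_i c_i\chi_i$ into odd irreducible Artin characters, this reduces the conjecture to matching an archimedean period on $A^\sigma(\CC)$ --- built out of $\int_{A^\sigma(\CC)}\alpha^\sigma\wedge\overline{\alpha^\sigma}$ in the explicit formula for $h_{\mathrm{Fal}}$ --- with a weighted sum of $L'/L$-values, up to the conductor correction $\tfrac{1}{2}\log f_{\chi_i}$ and the archimedean constant $\tfrac{g}{2}\log 2\pi$.

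Next I would treat the abelian case as a baseline. When $E/\QQ$ is abelian, every $\chi_i$ is a Dirichlet or Hecke character and $L'(\chi_i,0)/L(\chi_i,0)$ is accessible through Lerch's formula and the Chowla--Selberg formula, which express certain products of $\Gamma$-values in terms of $L'(\chi,0)$. On the geometric side, the CM structure factors the period integral on $A^\sigma(\CC)$ into a product of $\Gamma$-values indexed by $\Phi$ (via Deligne's period conjecture for CM motives of dimension one, which is known in this abelian setting). Pairing the two factorizations should match the two sides; the normalization constant and the $\tfrac{g}{2}\log 2\pi$ correction fall out of the adelic product formula and the archimedean normalization of the period. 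This is essentially Colmez's route in \cite{Colmez}, with the residual $\log 2$ indeterminacy cleaned up by Obus in \cite{Obus}.

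The hard part is the non-abelian case. Here $A_{E,\Phi}$ can involve higher-dimensional irreducible Artin characters whose $L$-functions are not even known to be holomorphic at $s=0$, so $L'(\chi_i,0)/L(\chi_i,0)$ is a priori only a formal symbol --- exactly the point where the Artin conjecture enters. The natural attack is Brauer induction: write each $\chi_i$ as a $\ZZ$-linear combination of characters induced from one-dimensional characters of subgroups, which formally converts $L'(\chi_i,0)/L(\chi_i,0)$ into a \emph{virtual} sum of Hecke $L'/L$-values that one could hope to match against a Brauer-style decomposition of the CM period. The key obstacle is that Brauer's decomposition carries signs: the required analytic cancellations (convergence, positivity of the $c_i$, compatibility of auxiliary $\Gamma$-factors) are delicate, and in full generality are currently only established \emph{on average} over CM types by Yuan--Zhang \cite{YuanZhang} and Andreatta--Goren--Howard--Madapusi-Pera \cite{Howard} via arithmetic Siegel--Weil on unitary/orthogonal Shimura varieties. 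The remaining step --- refining the averaged identity to the individual-type identity --- is where substantial new work is needed; one plausible route is to combine the present paper's precise control of the variation of $h_{\mathrm{Fal}}$ within an isogeny class (Theorem \ref{CMIsog}) with the Galois behavior of Artin $L$-functions under twisting of $\Phi$, so that the difference between two CM types on the two sides of the conjecture can be matched term by term.
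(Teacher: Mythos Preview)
The statement you are addressing is a \emph{conjecture}, not a theorem, and the paper does not supply a proof of it. The Colmez Conjecture is stated in Section~\ref{ColmezConjSect} precisely as an open problem and is then used as a standing hypothesis (together with the Artin conjecture) in Theorem~\ref{colmezbound}, Corollary~\ref{primitive}, and the main Theorem~\ref{Northcott}. The paper explicitly records the known partial results --- abelian CM fields via Colmez and Obus, certain abelian surfaces via Yang, and the averaged version via Yuan--Zhang and Andreatta--Goren--Howard--Madapusi-Pera --- but makes no attempt to prove the conjecture itself. So there is no ``paper's own proof'' to compare your proposal against.

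Your write-up is a reasonable survey of the landscape around the conjecture, and you correctly identify that the abelian case is settled while the non-abelian case remains open and is blocked in part by analytic properties of Artin $L$-functions. But you should recognize that what you have written is not a proof proposal in the usual sense: you yourself flag that ``substantial new work is needed'' at the crucial step of passing from the averaged identity to the individual one. Your final suggestion --- using Theorem~\ref{CMIsog} to control variation within isogeny classes and thereby refine the averaged Colmez formula --- does not work as stated, since the Colmez formula concerns abelian varieties with CM by the \emph{maximal} order $\mathcal{O}_E$, and varying the CM type $\Phi$ does not move you within a single isogeny class in the relevant sense. The techniques of this paper compute height differences between orders inside a fixed $(E,\Phi)$, which is orthogonal to the averaging over $\Phi$ that the Yuan--Zhang and AGHM results perform.
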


In the original paper \cite{Colmez}, 
Colmez proves this conjecture for abelian CM fields satisfying a 
certain ramification 
condition above the prime $2$. Obus \cite{Obus} was later able to drop the 
ramification condition and generalize the result to all abelian CM fields. 
Yang \cite{TYang} through independent methods verified the conjecture for 
abelian surfaces whose CM field is not Galois over $\QQ$. In two different 
approaches towards this conjecture, Yuan--Zhang \cite{YuanZhang} and 
Andreatta--Goren--Howard--Madapusi-Pera\cite{Howard} proved an 
average version (where the average is over the CM types) of this is true.

Our main interest in the Colmez conjecture is to bound the Faltings height. 
In a separate work by Colmez \cite{Colmez2} the following bound was obtained.

\begin{thm}[Lower Bound on Faltings height, \cite{Colmez2}]
\label{colmezbound}
Assuming Conjecture \ref{colmez} and the Artin Conjecture, there 
exists an effective constant $C>0$ such that
$$h_{\mathrm{Fal}}(A) \ge \frac{1}{2}\left(-\log 2\pi - \log \pi + \frac{\Gamma'\left(\frac{1}{2}\right)}{\Gamma\left(\frac{1}{2}\right)}\right) + C\mu_{\mathrm{Art}}(A_{E,\Phi})$$
where $\mu_{\mathrm{Art}}(A_{E,\Phi}) = \sum_{\chi}\langle A_{E,\Phi},\chi\rangle \log f_{\chi}$ 
and $f_{\chi}$ is the Artin conductor corresponding to the character $\chi$.
\end{thm}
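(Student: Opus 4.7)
The plan is to start from Colmez's conjectural closed-form expression for $h_{\mathrm{Fal}}(A)$ and transform the analytically inaccessible $L'/L$ evaluations at $s=0$ into quantities at $s=1$, where, granted the Artin Conjecture, one has good control via classical estimates on logarithmic derivatives of Artin $L$-functions at the edge of the critical strip.

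Assuming Conjecture \ref{colmez}, the starting point is
$$h_{\mathrm{Fal}}(A) = -\sum_i c_i\left(\frac{L'(\chi_i,0)}{L(\chi_i,0)} + \frac{1}{2}\log f_{\chi_i}\right) + \frac{g}{2}\log 2\pi.$$
The first step is to apply the functional equation for the completed Artin $L$-function $\Lambda(\chi,s) = f_\chi^{s/2}\gamma(\chi,s)L(\chi,s)$, namely $\Lambda(\chi,s) = W(\chi)\Lambda(\bar\chi,1-s)$. Taking logarithmic derivatives and evaluating at $s = 0$ yields
$$\frac{L'(\chi,0)}{L(\chi,0)} = -\log f_\chi - \frac{\gamma'(\chi,0)}{\gamma(\chi,0)} - \frac{\gamma'(\bar\chi,1)}{\gamma(\bar\chi,1)} - \frac{L'(\bar\chi,1)}{L(\bar\chi,1)}.$$
Substituting this identity into Colmez's formula causes the $-\log f_\chi$ to combine with the $\tfrac{1}{2}\log f_\chi$ term, contributing a net $+\tfrac{1}{2}\mu_{\mathrm{Art}}(A_{E,\Phi})$.

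Second, I would collect the archimedean gamma factors. Because only characters $\chi_i$ with $c_i > 0$ appear and these are odd in the sense pertinent to $A_{E,\Phi}$, each such $\chi_i$ has its archimedean $L$-factor built from copies of $\Gamma_{\mathbb{R}}(s+1) = \pi^{-(s+1)/2}\Gamma((s+1)/2)$. Computing the log-derivative contributions at $s = 0$ and $s = 1$ per odd unit, summing against the $c_i$'s with total weight $\sum_i c_i \chi_i(1) = A_{E,\Phi}(1) = g$, and combining with the $\tfrac{g}{2}\log 2\pi$ correction collapses the constant term to precisely
$$\tfrac{1}{2}\!\left(-\log 2\pi - \log\pi + \frac{\Gamma'(1/2)}{\Gamma(1/2)}\right).$$

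The remaining and analytically delicate piece is the contribution $\sum_i c_i L'(\bar\chi_i,1)/L(\bar\chi_i,1)$. Under the Artin Conjecture, each $L(\chi_i,s)$ is a well-defined entire $L$-function nonvanishing on $\mathrm{Re}(s) \geq 1$, and effective estimates of Stark--Odlyzko type give a bound $|L'(\chi_i,1)/L(\chi_i,1)| \leq C_0\log f_{\chi_i} + O(1)$ for absolute constants. The hard part, and the main obstacle, will be securing such a bound with $C_0$ strictly less than $\tfrac{1}{2}$ (or keeping the net coefficient on $\mu_{\mathrm{Art}}$ positive and effective by a different device), uniformly across the finite family of odd characters appearing with positive multiplicity in $A_{E,\Phi}$. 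This requires a careful zero-density or zero-free region analysis of Artin $L$-functions assuming Artin, which is the heart of Colmez's argument in \cite{Colmez2}. Setting $C = \tfrac{1}{2} - C_0 > 0$ and collecting the three contributions yields the claimed lower bound.
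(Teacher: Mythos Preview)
The paper does not contain a proof of this theorem; it is stated as a citation from Colmez \cite{Colmez2} and used as a black box to deduce Corollary~\ref{primitive}. There is therefore no in-paper argument to compare your proposal against.

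That said, your outline is a faithful sketch of the structure of Colmez's argument: pass from $s=0$ to $s=1$ via the functional equation, compute the archimedean contribution using that the relevant characters are odd with total weight $A_{E,\Phi}(1)=g$, and then bound $L'/L(\bar\chi_i,1)$ under the Artin Conjecture. You correctly identify the only nontrivial step as establishing an effective constant $C_0<\tfrac{1}{2}$ in the estimate $|L'(\chi,1)/L(\chi,1)|\le C_0\log f_\chi + O(1)$, and you explicitly defer this to \cite{Colmez2}. In that sense your proposal is not a self-contained proof but a roadmap that points back to the cited source for its decisive ingredient; as a summary of how the bound is obtained, it is accurate.
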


The lower bound immediately gives the following (primitive) CM Northcott property.

\begin{cor}
\label{primitive}
Let $C$ be a fixed positive constant and $g\ge 1$ an integer. Then assuming 
Conjecture \ref{colmez} and the Artin Conjecture, the set of isomorphism classes of 
simple CM abelian varieties of dimension $g$ with CM by a maximal order $\OO_E\subseteq E$ 
with $[E:\QQ] = g$ and $h_{\mathrm{Fal}}(A)<C$ is finite.
\end{cor}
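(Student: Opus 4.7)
The plan is to combine the lower bound of Theorem \ref{colmezbound} with Hermite--Minkowski finiteness and the main theorem of complex multiplication. First, if $h_{\mathrm{Fal}}(A) < C$, then rearranging Theorem \ref{colmezbound} yields an effective bound
$$\mu_{\mathrm{Art}}(A_{E,\Phi}) = \sum_i c_i \log f_{\chi_i} \le C'$$
for some constant $C' = C'(C,g)$. Since each $c_i \ge 0$, each summand is individually bounded, so every irreducible Artin character $\chi_i$ appearing in the decomposition of $A_{E,\Phi}$ (i.e.\ those with $c_i > 0$) has conductor $f_{\chi_i}$ bounded in terms of $C$ and $g$ alone.

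Second, I would use this to cut down the possible CM fields to a finite set. The class function $A_{E,\Phi}$ satisfies $A_{E,\Phi}(\sigma) = A_{E,\Phi}(1)$ precisely when $\sigma \in \Gal(\overline{\QQ}/E^*)$, so the associated representation factors through $\Gal(L/\QQ)$, where $L$ is the Galois closure of the reflex field $E^*$, and is faithful there. Since $[E:\QQ] = 2g$, the degree $[L:\QQ]$ is bounded by a function of $g$ alone. The joint kernel of the characters $\chi_i$ with $c_i > 0$ is trivial in $\Gal(L/\QQ)$, so every irreducible character of $\Gal(L/\QQ)$ occurs as a constituent of some tensor product of these $\chi_i$ (the standard fact that a faithful representation of a finite group has every irreducible in some tensor power). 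Combining this with multiplicativity-type estimates on conductors under tensor product bounds the conductor of every irreducible character of $\Gal(L/\QQ)$; by the conductor-discriminant formula $|d_L| = \prod_{\chi} f_{\chi}^{\chi(1)}$, this bounds $|d_L|$. Hermite--Minkowski then produces finitely many such $L$, each containing finitely many CM subfields $E$ of degree $2g$.

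Third, for each such admissible CM field $E$ there are at most $2^g$ CM types $\Phi$, and by the main theorem of complex multiplication, for a fixed pair $(E,\Phi)$ the $\CC$-isomorphism classes of CM abelian varieties of dimension $g$ with CM by the maximal order $\OO_E$ and type $\Phi$ form a torsor under the finite ideal class group $\mathrm{Cl}(\OO_E)$. Assembling these three layers of finiteness yields the corollary.

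The main obstacle is the second step: extracting a bound on $|d_L|$ from conductor bounds on only those characters actually occurring in $A_{E,\Phi}$, since the conductor-discriminant formula a priori involves \emph{all} irreducibles of $\Gal(L/\QQ)$. Faithfulness of the representation with character $A_{E,\Phi}$, together with standard conductor estimates for tensor products, is the crucial ingredient that bridges this gap; the other two steps are essentially bookkeeping once step two is in hand.
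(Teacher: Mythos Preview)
Your approach is correct and is exactly the argument the paper has in mind; the paper itself gives no proof beyond the phrase ``immediately gives'', so you are supplying the details of the intended deduction from Theorem~\ref{colmezbound}, Hermite--Minkowski, and the finiteness of the class group.

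Two small points deserve tightening. First, to pass from a bound on $\sum_i c_i \log f_{\chi_i}$ to bounds on the individual $\log f_{\chi_i}$ you implicitly use that the positive coefficients $c_i$ are bounded below by a constant depending only on $g$. This holds because the $c_i$ are determined by the purely combinatorial datum of $\Gal(\tilde E/\QQ)$ acting on $\Hom(E,\overline{\QQ})$ together with the subset $\Phi$, and since $[\tilde E:\QQ]\le (2g)!$ there are only finitely many such data up to isomorphism. Second, your bridge in step two via faithfulness and tensor powers does work, but ``multiplicativity-type estimates'' should be made precise: one has $a_p(\rho_1\otimes\rho_2)\le (\dim\rho_2)\,a_p(\rho_1)+(\dim\rho_1)\,a_p(\rho_2)$ for Artin conductor exponents, which iterates to bound $a_p(\chi)$ for any constituent $\chi$ of $\rho^{\otimes n}$, and Burnside's theorem lets you take $n\le |\Gal(L/\QQ)|\le (2g)!$. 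Alternatively and more directly, faithfulness of $\rho=\bigoplus_{c_i>0}\chi_i$ forces the ramification filtration at each $p$ to terminate at an index bounded by $|\Gal(L/\QQ)|\cdot a_p(\rho)$, which already yields $\log|d_L|\le |\Gal(L/\QQ)|^2\sum_{c_i>0}\log f_{\chi_i}$ without invoking tensor products at all.
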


\section{Complex Multiplication}
\label{CM}
\subsection{CM Abelian Varieties}
\subsubsection{Definitions}
We give here a brief survey of the relevant theory of complex multiplication 
for abelian varieties. 
This theory is well-known, so the familiar reader can skip this section as it 
is entirely self-contained. For a comprehensive treatment of the subject, 
see Chapter 1 in \cite{Conrad}. We note that we need more than just the 
theory introduced in \cite{Shimura}, which does not develop the theory 
on integral models of CM abelian varieties and CM types valued 
in arbitrary algebraically closed fields.

\begin{defn}
A \textbf{CM pair} $(E,\Phi_L)$ consists of:
\begin{itemize}
\item A number field $E$, called the \textbf{CM field}, having a non-trivial involution 
$\tau\in \mathrm{Aut}(E)$ such that every embedding $i:E\hookrightarrow\CC$ 
satisfies $i(\tau(x)) = i(x)^c$ for all $x\in E$. Here $c$ denotes complex conjugation 
in $\CC$.
\item A subset $\Phi_L\subseteq \mathrm{Hom}_{\QQ\text{-alg}}(E,L)$, called a \textbf{CM type} valued in an algebraically-closed field $L$, such that 
$\mathrm{Hom}_{\QQ\text{-alg}}(E,L) = \Phi_L\sqcup\Phi_L\circ \tau$.
\end{itemize}

A \textbf{CM algebra} $P$ is a finite product of CM fields 
$\prod_i E_i$. We define on it a 
CM type by $\Phi_L = \coprod_i \Phi_{L,i}$ where the 
$\Phi_{L,i}$ is a CM type on $E_i$.
\end{defn}

\begin{rem}
When $L = \CC$ in the definition above, or the algebraically-closed 
field $L$ is otherwise evident, we denote the CM type 
by $\Phi$.
\end{rem}

The element $\tau$ in the definition is necessarily unique 
and its fixed field is a totally real subfield $F\subseteq E$ 
which satisfies $[E:F] = 2$. We call $F$ the \emph{maximal totally real subfield} 
of $E$.

\begin{defn}
An abelian variety $A$ of dimension $g$ defined over $\CC$ 
\textbf{has CM by $(P,\Phi)$} if:
\begin{itemize}
\item There is an injective $\QQ$-algebra homomorphism 
$$i:P\hookrightarrow\mathrm{End}^0(A):=\QQ\otimes_\ZZ \mathrm{End}(A)$$
of a CM algebra $P$ such that $[P:\QQ] = 2g$.
\item The $P\otimes_{\QQ} \mathbb{C}$-module structure 
of $\mathrm{Lie}(A)$ is given by the CM type $\Phi$ valued in $\CC$.
\end{itemize}
\end{defn}

By the Poincar\'e reducibility theorem (see, e.g., Theorem 1.2.1.3 in 
\cite{Conrad}), every abelian variety is isogeneous to a product of 
its distinct isotypic pieces. Hence, if an abelian variety $A$ has CM by 
$(P,\Phi)$, then each of the isotypic factors $A_i$ 
have CM by a sub-algebra $P_i\subseteq P$ 
such that $P = \prod_i P_i$. Moreover, each $P_i$ is a simple algebra 
of finite dimension over $\QQ$, and hence a CM field. We will not 
lose any content for establishing the remainder of our theory if we 
assume that $P = E$ is a CM field and even if we assume that 
$A$ is simple, so we will use the term \emph{CM abelian variety} in 
the remainder of this section to refer to this case.

\begin{exmp}
When $A$ has complex multiplication by $(E,\Phi)$, where $E$ is a CM field, 
then necessarily $\mathrm{End}(A) = \OO$ for some (not necessarily 
maximal) order $\OO\subseteq E$. Its dual abelian variety has complex 
multiplication by $(E,\Phi^c\circ \tau)$, where the $E$-action 
on $A$ is induced by the composition of the dual action with 
complex conjugation.
\end{exmp}

\begin{rem}
When we wish to specify the endomorphism order of a CM abelian variety, 
we say it has CM by $(\OO,\Phi)$. Note that when $\OO = \OO_E$, 
$\Phi$ gives a type on the integral structure $\OO_E$.
\end{rem}

The starting point for questions of arithmetic interest involving 
CM theory is the following theorem.

\begin{thm}[Shimura--Taniyama, \cite{Conrad}, \cite{Shimura}]
\label{shimtan}
Let $A/\CC$ be a CM abelian variety. Then there exists a number field $K$ 
such that $A$ descends to $K$. Moreover, there exists a finite extension 
$K'/K$ such that $A_{K'} := A\otimes_K K'$ has good reduction at all 
the finite places of $K'$.
\end{thm}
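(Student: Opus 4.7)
The plan is to treat the two assertions in turn, leveraging in each case the rigidity imposed by having a CM algebra of maximal rank acting on $A$. First one establishes the descent of $A$ to a number field $K$, and then one shows that after a further finite extension $K'/K$ the abelian variety acquires good reduction at every finite place of $K'$.

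For the descent, I would begin from the analytic uniformization: $A \simeq \CC^g/\Phi(\mathfrak{a})$ for some fractional invertible $\OO$-ideal $\mathfrak{a}\subseteq E$, so the isomorphism class of $A$ as a polarizable CM abelian variety of type $(\OO,\Phi)$ is controlled by finitely many discrete invariants---essentially a class in $\mathrm{Pic}(\OO)$ together with a polarization datum. More intrinsically, after imposing a rigidifying level structure, one assembles the CM abelian varieties of type $(\OO,\Phi)$ into a moduli problem representable by a finite-type $\QQ$-scheme, and verifies that fixing $\Phi$ rigidifies the Hodge decomposition of $H^1(A,\CC)$ enough to force this scheme to be zero-dimensional (equivalently, the period map is constant on the CM locus of fixed type). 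Since $A$ is a $\CC$-point of such a zero-dimensional finite-type $\QQ$-scheme, it is automatically defined over $\overline{\QQ}$, hence over a number field $K$.

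For the good-reduction claim, the key observation is that because the $E$-action on $V_\ell A$ is of full rank and commutes with $G_K$, the $\ell$-adic Galois representation factors as $\rho_\ell\colon G_K\to (E\otimes_\QQ \QQ_\ell)^\times$; in particular it is abelian. By N\'eron--Ogg--Shafarevich, it suffices to show that $\rho_\ell|_{I_v}$ is trivial at every finite place $v$ of $K'$ for some $\ell$ coprime to the residue characteristic of $v$. The main theorem of complex multiplication, combined with local class field theory, identifies $\rho_\ell|_{G_{K_v}}$ with a locally algebraic abelian character whose algebraic part is prescribed by $\Phi$, and Serre's structural result for such characters then shows their inertial restrictions have finite image. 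Absorbing all of these finite inertial characters globally into a single finite extension $K'/K$ yields good reduction everywhere over $K'$.

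The principal obstacle, and where I would spend the most care, is the explicit control of the inertial characters $\rho_\ell|_{I_v}$ in the second step: one needs both the existence of a $K$-model of $A$ from the first step (so that $\ell$-adic Galois representations are meaningful) and the precise description, via the reflex norm of $(E,\Phi)$, supplied by the main theorem of complex multiplication. Modulo these inputs---whose full development is given in \cite{Conrad}---the two pieces combine to prove the theorem.
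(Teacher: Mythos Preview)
The paper does not supply its own proof of this theorem; it is quoted as a classical result with citations to \cite{Conrad} and \cite{Shimura}. So there is no ``paper's proof'' to compare against, and your sketch must be judged on its own.

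Your descent argument is sound in outline: once a polarization and level structure are fixed, CM abelian varieties of a given type $(\OO,\Phi)$ are parametrized by a zero-dimensional scheme of finite type over a number field (the reflex field), so any $\CC$-point is automatically $\overline{\QQ}$-valued.

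The good-reduction step, however, has a genuine circularity. You invoke the Main Theorem of Complex Multiplication to identify $\rho_\ell|_{G_{K_v}}$ and then appeal to a ``locally algebraic'' description to force finite inertial image. But in the standard development (and in \cite{Conrad}, which the paper cites), the Main Theorem of CM is proved \emph{after} potential good reduction is established; indeed, part (3) of the Main Theorem as stated in this very paper already contains the good-reduction criterion. So you are assuming what you want to prove. There is also a mismatch of categories: ``locally algebraic'' is a notion for $\ell$-adic representations of $\ell$-adic groups, whereas you have explicitly taken $\ell$ prime to the residue characteristic of $v$.

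The standard argument (Serre--Tate) is both simpler and logically prior. From the CM hypothesis you only need that $\rho_\ell$ factors through the \emph{commutative} group $(E\otimes_\QQ \QQ_\ell)^\times$. Grothendieck's monodromy theorem says an open subgroup of $I_v$ acts unipotently on $V_\ell A$; but $(E\otimes_\QQ \QQ_\ell)^\times$ is a product of units of fields and contains no nontrivial unipotents, so that open subgroup acts trivially. Hence $\rho_\ell(I_v)$ is finite, and N\'eron--Ogg--Shafarevich gives potential good reduction at $v$. Since $A$ already has good reduction away from a finite set of places, taking a compositum of the finitely many required local extensions produces the global $K'/K$. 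This avoids the Main Theorem of CM entirely.
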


By this theorem, one may construct appropriate integral models of CM 
abelian varieties. Moreover, since CM abelian varieties have 
good reduction everywhere over some 
finite extension $K'/K$, their abelian group structure extends to the 
entire integral model (see \cite{Neron}). 
The CM structure is also preserved on the integral model, i.e., complex 
multiplication is a theory on the generic fiber. This last fact is shown in 
the following theorem.

\begin{prop}
Let $A/K$ be an abelian variety with CM by $(\OO,\Phi)$ and let 
$\mathcal{A}/\OO_K$ denote its N\'eron model. Then 
$\mathrm{End}(\mathcal{A}) = \mathrm{End}(A)$ and $\mathrm{Lie}(\mathcal{A})\otimes_{\OO_K}\CC$ 
has the structure of an $\OO\otimes_{\OO_K}\CC$-module under $\Phi$.
\end{prop}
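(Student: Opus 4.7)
The plan is to derive both assertions from the universal property defining $\mathcal{A}$ as the N\'eron model of $A$, together with functoriality of the Lie algebra. The CM structure enters only through the ring embedding $\OO \hookrightarrow \mathrm{End}(A)$ already present on the generic fiber; no further CM-theoretic input is required.

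For the equality $\mathrm{End}(\mathcal{A}) = \mathrm{End}(A)$, I would exhibit mutually inverse maps. Restriction to the generic fiber gives a natural map $r:\mathrm{End}(\mathcal{A}) \to \mathrm{End}(A)$; this is injective because $\mathcal{A}$ is $\OO_K$-flat and separated, so the generic fiber is scheme-theoretically dense in $\mathcal{A}$ and any two $\OO_K$-endomorphisms agreeing on $A$ must coincide. Conversely, since the N\'eron model is smooth over $\OO_K$ by construction, the defining universal property applies to $\mathcal{A}$ itself: any $\phi \in \mathrm{End}(A)$, viewed as a $K$-morphism from the generic fiber of $\mathcal{A}$ to $\mathcal{A}_K = A$, extends uniquely to an $\OO_K$-morphism $\widetilde{\phi}:\mathcal{A}\to \mathcal{A}$. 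Compatibility with the group law is then automatic: both $\widetilde{\phi}\circ m_{\mathcal{A}}$ and $m_{\mathcal{A}}\circ (\widetilde{\phi}\times \widetilde{\phi})$ are $\OO_K$-morphisms out of the smooth $\OO_K$-scheme $\mathcal{A}\times_{\OO_K}\mathcal{A}$ that agree on the generic fiber, so they coincide by the uniqueness clause of the N\'eron mapping property applied to $\mathcal{A}\times_{\OO_K}\mathcal{A}$. The same uniqueness identifies $r$ and $\phi\mapsto \widetilde{\phi}$ as mutual inverses.

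For the second assertion, transporting the $\OO$-action on $A$ to $\mathcal{A}$ via the identification just obtained and applying the $\mathrm{Lie}$ functor (pullback of $\Omega^1_{\mathcal{A}/\OO_K}$ along the identity section $s$, dualized) produces an $\OO$-action on the locally free $\OO_K$-module $\mathrm{Lie}(\mathcal{A})$ that restricts, on the generic fiber, to the original $\OO$-action on $\mathrm{Lie}(A)$. For each complex embedding $\sigma:K\hookrightarrow \CC$, flat base change supplies a natural isomorphism $\mathrm{Lie}(\mathcal{A})\otimes_{\OO_K,\sigma}\CC \simeq \mathrm{Lie}(A^{\sigma})$, and the resulting $\OO\otimes_{\ZZ}\CC$-module decomposition into $\phi$-eigenspaces for $\phi\in \Phi$ is precisely the defining data of the CM type.

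Rather than a serious obstacle, the main point to be careful about is that one must invoke the full N\'eron mapping property --- the existence of \emph{some} smooth model would not by itself be enough to extend endomorphisms without additional argument. Because the whole argument is purely formal, the conclusion holds uniformly with no a priori hypothesis on the reduction behavior of $A$ at finite places, and one never needs to appeal to the (generally stronger) statement of potentially good reduction from Shimura--Taniyama.
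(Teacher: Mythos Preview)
Your proof is correct and follows essentially the same approach as the paper: restriction to the generic fiber for injectivity, the N\'eron mapping property for surjectivity, and base change through $K$ for the Lie algebra statement. You supply a bit more detail than the paper does (e.g., checking that the extended morphism respects the group law, and justifying injectivity via scheme-theoretic density), but the underlying argument is the same.
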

\begin{proof}
The map $\mathrm{End}(\mathcal{A})\to \mathrm{End}(A)$ 
always exists by 
restricting endomorphisms on the integral model to the generic fiber, 
and is therefore also always injective. 
Surjectivity follows from the universal mapping 
property of N\'eron models (see \cite{Neron}). By the isomorphisms
$$\mathrm{Lie}(\mathcal{A})\otimes_{\OO_K}\CC\simeq 
(\mathrm{Lie}(\mathcal{A})\otimes_{\OO_K}K)\otimes_K \CC \simeq \mathrm{Lie}(A)\otimes_K\CC,$$
$\mathrm{Lie}(\mathcal{A})\otimes_{\OO_K}\CC$ has an 
$\OO\otimes_{\OO_K}\CC$-module structure under $\Phi$ inherited from 
the $\OO\otimes_{\OO_K}\CC$-module structure of $\mathrm{Lie}(A)\otimes_K\CC$.
\end{proof}

\subsubsection{Theory of the Reflex Norm}
Theorem \ref{shimtan} has a stronger form which provides more 
precise information on the field 
of definition $K$ to which a CM abelian variety descends. This is 
contingent on the \emph{theory of the reflex norm}, the 
celebrated player of CM theory. We introduce this theory starting 
from the following definition.

\begin{defn}
Let 
$$h_{\Phi, E} = \prod_{i\in \mathrm{Hom}(E,\CC)}h_{i, E}:\mathbb{G}_m\to \mathrm{Res}_{E/\QQ}(\mathbb{G}_m)_\CC = \prod_{i\in \mathrm{Hom}(E,\CC)} \mathbb{G}_{m,i}$$
be a cocharacter where each $h_{i,E}$ is defined such that
$$h_{i,E}:t\mapsto \left\{\begin{array}{ll}
t&\text{ if }i\in \Phi,\\
1&\text{ if }i\notin\Phi.\end{array}\right.$$
The minimal field of definition for $h_{\Phi,E}$ is the \textbf{reflex field} $E^*$ of 
$E$.
\end{defn}

The reader can check that the minimal field of definition of $h_{\Phi,E}$ over $\QQ$ 
is stabilized by those elements of $\Gal(\overline{\QQ}/\QQ)$ which 
fix the CM type $\Phi$ (this, in fact, was the classical definition of 
the reflex field in \cite{Shimura}). Note that the reflex field is 
always itself a CM field\footnote{Assume 
to the contrary that $E^*$ is not a CM field. As it is 
a subfield of a CM field (say, the algebraic closure of $E$) it 
must therefore be totally real. But if $E^*$ were totally real, complex conjugation 
would lie in $\Gal(\overline{\QQ}/E^*)$, which 
contradicts the characterization of the Galois group by the CM 
type of $E$. Hence $E^*$ must be a CM field.}.

\begin{defn}
\label{reflexcochar}
Let $(E,\Phi)$ be a CM field together with a $\overline{\QQ}$-valued CM type 
and let $E^*$ be its reflex field. For any finite extension $K/E^*$, 
the \textbf{reflex norm} $N_{\Phi,K}$ is the composite
$$\mathrm{Res}_{K/\QQ}(\mathbb{G}_m)\xrightarrow{\mathrm{Res}_{K/\QQ}(h_{\Phi,E})}\mathrm{Res}_{K/\QQ}(\mathrm{Res}_{E/\QQ}(\mathbb{G}_m)_K)\xrightarrow{\mathrm{Nm}_{K/\QQ}}\mathrm{Res}_{E/\QQ}(\mathbb{G}_m).$$
\end{defn}

\begin{rem}
For any $\QQ$-algebra $R$, the reflex norm induces a map
\begin{align*}
N_{\Phi,K\otimes R}:(K\otimes R)^\times&\to (E\otimes R)^\times\\
x&\mapsto \mathrm{det}_E(x|_{V_{\Phi,K}\otimes_\QQ R})\end{align*}
where $V_{\Phi,K}$ is any $E\otimes_\QQ K$-module satisfying 
$V_{\Phi,K}\otimes_K\overline{\QQ}\simeq \prod_{i\in \Phi}\overline{\QQ}$. 
\end{rem}

The stronger form of Theorem \ref{shimtan} can now be stated as follows. 
We will let below $r_K^{\mathrm{Art}}:\mathbb{A}_K^\times/K^\times\to\mathrm{Gal}(K^{\mathrm{ab}}/K)$ be the global Artin reciprocity map 
where $K^\mathrm{ab}$ is the standard notation for the maximal 
abelian extension of $K$. We let 
$K_v$ denote the localization of $K$ at a place $v$ with 
ring of integral elements 
$\OO_{K_v}$ and a specified uniformizer $\pi_v$, and 
$k_v = \OO_{K_v}/\pi_v\OO_{K_v}$ be the residue field at $v$ 
of size $q_v$.

\begin{thm}[Main Theorem of Complex Multiplication, \cite{Conrad}]
Let $A$ be an abelian variety with CM by $(E,\Phi)$ 
defined over a number field $K$. Then 
the following hold true:
\begin{enumerate}
\item The reflex field $E^*$ is contained in $K$.
\item There exists a unique algebraic Hecke character 
$$\epsilon:\mathbb{A}_K^\times\to E^\times$$
such that at each prime $\ell$, the continuous homomorphism
\begin{align*}
\phi_\ell:\Gal(K^\mathrm{ab}/K)&\to E_\ell^\times\\
r_K^{\mathrm{Art}}(a)&\mapsto \epsilon(a)\mathrm{N}_{\Phi,K}(a_\ell^{-1})
\end{align*}
is equal to the $\ell$-adic representation of $\Gal(K^{\mathrm{ab}}/K)$ 
on the $\ell$-adic Tate module $V_\ell(A)$.
\item $A$ has good reduction at a finite place $v$ of $K$ if and 
only if $\epsilon|_{\OO_{K_v}^\times} = 1$. In this case, for 
any choice of uniformizer $\pi_v\in \OO_{K_v}$, 
$\epsilon(\pi_v) = \mathrm{Fr}_{v}^{q_v}$, where $\mathrm{Fr}_{v}\in E$ 
denotes the endomorphism lifting the relative Frobenius on the 
reduction $A_v$ of $A$ defined over the finite field $k_v$.
\end{enumerate}
\end{thm}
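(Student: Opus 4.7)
The plan is to establish the three claims in order. For part (1), since $A/K$ has CM by $(E,\Phi)$, the Hodge cocharacter encoding the $E \otimes_\QQ K$-module structure of $\mathrm{Lie}(A)$ is defined over $K$; after base change to $\CC$ along any embedding $K \hookrightarrow \CC$, this cocharacter is identified with the base change of $h_{\Phi,E}$, whose minimal field of definition is $E^*$ by construction. Hence $E^* \subseteq K$.

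For part (2), the central observation is that the $E$-action on $A$ commutes with the Galois action on $V_\ell(A)$, and since $\dim_{\QQ_\ell} V_\ell(A) = 2g = [E:\QQ]$, the Tate module is free of rank one over $E \otimes_\QQ \QQ_\ell$. This forces the $\ell$-adic representation to factor through an abelian character
$$\rho_\ell: \Gal(\overline{K}/K)^{\mathrm{ab}} \to (E \otimes_\QQ \QQ_\ell)^\times.$$
One then defines $\epsilon(a) := \rho_\ell(r_K^{\mathrm{Art}}(a)) \cdot N_{\Phi,K}(a_\ell)$ for $a \in \AA_K^\times$ and must verify that (i) the right-hand side is independent of $\ell$, (ii) it takes values in $E^\times$, and (iii) it is an algebraic Hecke character. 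These compatibilities are checked at unramified places of good reduction, where the Shimura--Taniyama formula identifies the action of Frobenius on $V_\ell(A)$ with the endomorphism $\mathrm{Fr}_v \in E$ lifting the relative Frobenius on $A_v$; the local reflex-norm factor $N_{\Phi,K}(a_\ell^{-1})$ is engineered to cancel the infinity-type contribution, so that $\epsilon$ becomes algebraic with $E$-rational values. Uniqueness follows from the Chebotarev density theorem applied to the set of places of good reduction.

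For part (3), the equivalence between good reduction at $v$ and $\epsilon|_{\OO_{K_v}^\times} = 1$ follows from the N\'eron--Ogg--Shafarevich criterion: good reduction is equivalent to inertial triviality on $V_\ell(A)$ for some (equivalently, any) $\ell$ coprime to the residue characteristic, and under the definition of $\phi_\ell$ from (2) this inertial triviality translates exactly to $\epsilon$ being trivial on $\OO_{K_v}^\times$. The explicit formula for $\epsilon(\pi_v)$ at such a place is then precisely the Shimura--Taniyama congruence identifying $\rho_\ell(\Frob_v)$ with the lift $\mathrm{Fr}_v \in E$ of the relative Frobenius on $A_v$.

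The main obstacle is step (2): passing from the collection of $\ell$-adic abelian characters to a \emph{single} algebraic Hecke character valued in $E^\times$ is not formal. It requires the Shimura--Taniyama formula on the action of Frobenius on the reduction of a CM abelian variety, combined with Weil's theory of algebraic Hecke characters to package the resulting place-by-place compatibility into one global object. Once this compatibility is in place, parts (1) and (3) follow as essentially direct consequences.
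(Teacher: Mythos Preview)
The paper does not prove this theorem: it is stated as a citation to Chai--Conrad--Oort \cite{Conrad} with no accompanying argument, serving as background in the survey section on complex multiplication. So there is no in-paper proof to compare against.

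Your sketch is a reasonable outline of the standard argument one finds in the literature (Shimura--Taniyama, Serre--Tate, and the treatment in \cite{Conrad}). The reduction of (1) to the field of definition of the Hodge cocharacter, the rank-one freeness of $V_\ell(A)$ over $E\otimes\QQ_\ell$ forcing abelianness, the construction of $\epsilon$ by twisting away the reflex-norm contribution, and the appeal to N\'eron--Ogg--Shafarevich for (3) are all the right moves. The one place where your summary is slightly glib is the verification that $\epsilon$ lands in $E^\times$ and is independent of $\ell$: checking this only at unramified good-reduction places via Chebotarev gives agreement on a dense set of Frobenii, but one still needs continuity plus the algebraicity of the infinity type to conclude globally, and the passage from ``values at Frobenii lie in $E^\times$'' to ``$\epsilon$ is an algebraic Hecke character valued in $E^\times$'' genuinely uses Weil's characterization of type-$A_0$ Hecke characters. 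You acknowledge this as the main obstacle, which is appropriate; just be aware that in a full proof this step occupies most of the work.
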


\subsection{CM $p$-divisible Groups}
\subsubsection{Definitions}
The theory of complex multiplication for $p$-divisible groups arises naturally 
from the integral theory of complex multiplication for abelian varieties. 
Throughout this section, $K$ will be a local field of characteristic $0$ 
with finite residue field $k$ of characteristic $p>0$ and $\OO_K\subseteq K$ 
will be its ring of integers. 
Then let $A/K$ be an abelian variety with CM 
and $\mathcal{A}/\OO_K$ be a corresponding 
local N\'eron model and $\mathcal{A}[p^\infty]/\OO_{K_p}$ 
a $p$-divisible group. Since 
$\mathrm{End}(A)\otimes_\ZZ\ZZ_p\hookrightarrow\mathrm{End}(\mathcal{A}[p^\infty])$, 
one expects a theory of complex multiplication for $p$-divisible 
groups parallel to the theory for abelian varieties. 
We refer the reader to Chapter 3 of \cite{Conrad} for a 
comprehensive treatment of the theory; below we review 
relevant statements.

Recall first that for any $p$-divisible group $\mathcal{G}$ defined over 
$\OO_K$, the endomorphism 
ring $\mathrm{End}(\mathcal{G})$ is a finite 
free $\mathbb{Z}_p$-module which is 
necessarily a $\mathbb{Z}_p$-subalgebra of 
$\mathrm{End}(T_p\mathcal{G})$. Each $p$-divisible group 
also carries with it an invariant 
$h$ called the \emph{height} which is an integer always 
bounded below (and sometimes equal to) the 
dimension $d$ of the $p$-divisible group.

\begin{defn}
A $p$-divisible group $\mathcal{G}/\OO_K$ of dimension $d$ and height $h$ 
\textbf{has ($p$-adic) CM by $(E,\Phi)$} if:
\begin{itemize}
\item There is an injective $\QQ_p$-algebra homomorphism 
$$i:E\hookrightarrow \mathrm{End}^0(\mathcal{G}):=\QQ_p\otimes_{\ZZ_p}\mathrm{End}(\mathcal{G})$$
where $E/\QQ_p$, called the \textbf{($p$-adic) CM algebra}, is a commutative, semisimple $\QQ_p$-algebra such that $[E:\QQ_p] = h$.
\item The $E\otimes_{\QQ_p}K$-module structure of 
$\mathrm{Lie}(\mathcal{G})\otimes_{\ZZ_p}\overline{\QQ}_p$ is given by 
a subset $\Phi\subseteq \mathrm{Hom}_{\QQ_p\text{-}\mathrm{alg}}(E, \overline{\QQ}_p)$, 
called the \textbf{($p$-adic) CM type}, such that $\#\Phi = d$.
\end{itemize}
\end{defn}

\begin{exmp}
\label{abpdiv}
Let $\mathcal{A}/\OO_K$ be a local integral model of an abelian variety 
$A/K$ and suppose $A$ has CM by $(E,\Phi)$. 
Then $\mathcal{A}[p^\infty]$ has CM by $(E_p,\Phi_p)$ where 
$E_p:=\QQ_p\otimes_\QQ E$ and 
$\Phi_p := \Phi_{\overline{\QQ}}\cap \mathrm{Hom}_{\QQ_p\text{-}\mathrm{alg}}(E_p,\overline{\QQ_p})$. 
In fact, if $\mathrm{End}(A) = \OO$, then 
$\mathrm{End}\left(\mathcal{A}[p^\infty]\right)=\OO\otimes_{\ZZ}\ZZ_p$.
\end{exmp}

A good class of examples of CM $p$-divisible groups are those that arise 
from CM abelian varieties, as demonstrated above. We note that although 
a CM abelian variety might be simple, its $p$-divisible group 
$\mathcal{A}[p^\infty]$ may not be so. Analogous to 
CM abelian varieties, each CM $p$-divisible 
group is isogenous to one that isotypically 
decomposes along the simple subalgebras of 
its CM algebra. We will assume for the remainder of 
this section that all CM $p$-divisible groups that we 
consider are simple.

\begin{exmp}
Let $\mathcal{G}/\OO_K$ have ($p$-adic) CM by $(E,\Phi)$. Then its Cartier 
dual $\mathcal{G}^*$ has a complementary structure $\Phi^c$ such 
that $\mathrm{Hom}_{\QQ_p\text{-}\mathrm{alg}}(E,\overline{\QQ}_p) = \Phi\coprod \Phi^c$. 
Moreover, when $\mathcal{G} = \mathcal{A}[p^\infty]$ for a CM 
abelian variety $\mathcal{A}/\OO_K$, then 
$\mathcal{A}[p^\infty]^* = \mathcal{A}^\vee[p^\infty]$ and 
$\Phi^c$ agrees with the induced type from $\mathcal{A}^\vee$ as 
in Example \ref{abpdiv}.
\end{exmp}

\subsubsection{Theory of the Reflex Norm}
CM $p$-divisible groups carry an analogous theory of the reflex norm to 
abelian varieties. We begin with the definition of a reflex field for a CM 
$p$-divisible group and recall that we assume $E$ to be a ($p$-adic) CM 
field.

\begin{defn}
\label{cochar}
Let $\{\xi_i\}_{i\in \mathrm{Hom}_{\QQ_p\text{-}\mathrm{alg}}(E,\overline{\QQ}_p)}$ 
be a $\ZZ_p$-basis of the character group of $\mathrm{Res}_{E/\QQ_p}\mathbb{G}_m$. 
Let 
$$\mu_{\Phi,E}:\mathbb{G}_m\to \mathrm{Res}_{E/\QQ_p}\mathbb{G}_m$$ 
be defined such that
$$\langle \xi_i,\mu_{\Phi,E}\rangle = \left\{\begin{array}{ll}
1&\text{if }i\in \Phi,\\
0&\text{if }i\notin \Phi.\end{array}\right.$$
The minimal field of definition for $\mu_{\Phi,E}$ is the \textbf{($p$-adic) reflex field} 
$E^*$ of $E$.
\end{defn}

\begin{exmp}
Let $\mathcal{A}/\OO_K$ be a local integral model of an abelian variety 
and suppose it has CM by the global pair $(E,\Phi)$ and let $E^*$ be the corresponding 
global reflex 
field. Then the ($p$-adic) reflex field of $\mathcal{A}[p^\infty]$ is 
$E^*_p:=E^*\otimes_{\QQ}\QQ_p$. This follows from the compatibility of 
$h_{\Phi}$ with the local cocharacter $\mu_{\Phi_p}$.
\end{exmp}

\begin{defn}
\label{above}
Let $K/E^*$ be any finite field extension of the 
($p$-adic) reflex field $E^*$ of the ($p$-adic) CM field $(E,\Phi)$, and let 
$\mu:\mathbb{G}_m\to (\mathrm{Res}_{E/\QQ_p}\mathbb{G}_m)_{K}$ be the 
$K$-descent of $\mu_\Phi$, as in Definition \ref{cochar}. Then the 
\textbf{($p$-adic) reflex norm} 
$$N\mu_{\Phi}:\mathrm{Res}_{K/\QQ_p}\mathbb{G}_m\to \mathrm{Res}_{E/\QQ_p}\mathbb{G}_m$$
is the composition
$$\mathrm{Res}_{K/\QQ_p}\mathbb{G}_m\xrightarrow{\mathrm{Res}_{K/\QQ_p}(\mu)}\mathrm{Res}_{K/\QQ_p}((\mathrm{Res}_{E/\QQ_p}\mathbb{G}_m)_{K})\xrightarrow{\mathrm{Nm}_{K/\QQ_p}}\mathrm{Res}_{E/\QQ_p}(\mathbb{G}_m).$$
\end{defn}

\begin{rem}
For any $\QQ_p$-algebra $R$, the ($p$-adic) reflex norm induces a map
\begin{align*}
N_{\Phi,K\otimes R}:(K\otimes R)^\times&\to (E\otimes R)^\times\\
x&\mapsto\mathrm{det}_{E\otimes R}(x|_{V_{\Phi,K}\otimes_{\QQ_p}R})\end{align*}
where $V_{\Phi,K}$ is any $E\otimes_{\QQ_p} K$-module satisfying $V_{\Phi,K}\otimes_{K}\overline{\QQ}_p\simeq \prod_{i\in \Phi}\overline{\QQ}_{p,i}$. 
\end{rem}

We will assume in what follows that the fields $E$, $E^*$, and $K$ are 
as in Definition \ref{above}. Let moreover 
$r_F^{\mathrm{loc}}:F^\times\to \Gal(F^{\mathrm{ab}}/F)$ denote the local Artin 
reciprocity map from local class field theory defined on the 
units of a local field $F$, where 
$F^{\mathrm{ab}}$ is the maximal 
abelian extension of $F$. We also mention the facts, which can be found in 
\cite{Dieudonne} and \cite{Conrad}, that every Galois representation attached to a 
$p$-divisible group is crystalline and moreover all $1$-dimensional 
crystalline representations are locally algebraic.

\begin{lem}
\label{localMT}
Let $\rho:\Gal(K^{\mathrm{ab}}/K)\to E^\times$ 
be a crystalline (i.e., locally algebraic) representation on a $1$-dimensional 
$E$-vector space, and let 
$\chi:\mathrm{Res}_{K/\QQ_p}\mathbb{G}_m\to \mathrm{Res}_{E/\QQ_p}\mathbb{G}_m$
be a $\QQ_p$-homomorphism. Suppose that $\rho\circ r_E^{\mathrm{loc}}$ and $\chi$ agree on a 
restriction to an open neighborhood of $\OO_K^\times\to \OO_E^\times$. 
Then the two maps agree on the entirety of $\OO_K^\times$.
\end{lem}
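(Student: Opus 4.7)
My strategy is to combine the crystalline description of $\rho$ with a uniqueness principle for algebraic $\QQ_p$-homomorphisms of tori. The crystalline hypothesis will let me identify $\rho \circ r_K^{\mathrm{loc}}|_{\OO_K^\times}$ with a genuinely algebraic character $\chi'$ on all of $\OO_K^\times$, after which I only need show $\chi = \chi'$ as morphisms of $\QQ_p$-tori, which will follow from their agreement on the open neighborhood.

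First, I would invoke the classification of $1$-dimensional crystalline $E$-valued characters of $\Gal(K^{\mathrm{ab}}/K)$ cited from \cite{Dieudonne} and \cite{Conrad}: any such $\rho$ is the product of an unramified character and an algebraic character associated to a $\QQ_p$-homomorphism $\chi' : \mathrm{Res}_{K/\QQ_p}\mathbb{G}_m \to \mathrm{Res}_{E/\QQ_p}\mathbb{G}_m$. Since the unramified factor is trivial on the image $r_K^{\mathrm{loc}}(\OO_K^\times)$ of the units under local reciprocity, one obtains
$$\rho \circ r_K^{\mathrm{loc}}\big|_{\OO_K^\times} \;=\; \chi'\big|_{\OO_K^\times}$$
as maps $\OO_K^\times \to \OO_E^\times$ on the entirety of $\OO_K^\times$, not merely on an open subgroup.

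Combining this with the hypothesis, the two algebraic characters $\chi$ and $\chi'$ agree on the given open neighborhood $U \subseteq \OO_K^\times$ of $1$. Both $\chi$ and $\chi'$ are morphisms of connected $\QQ_p$-tori and are therefore determined by their differentials at the identity, which are $\QQ_p$-linear maps $K \to E$ on Lie algebras; these differentials can be read off from the restriction of $\chi$ and $\chi'$ to any $p$-adic neighborhood of $1$ in $\OO_K^\times$. The agreement on $U$ thus forces the differentials to coincide, so $\chi = \chi'$ as $\QQ_p$-homomorphisms of algebraic groups. Evaluating on the $\QQ_p$-points $\OO_K^\times \subseteq K^\times$ then gives $\rho \circ r_K^{\mathrm{loc}}|_{\OO_K^\times} = \chi|_{\OO_K^\times}$, as desired.

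The main obstacle is the first step, namely ensuring that the crystalline condition rules out any residual finite-order twist and really produces a genuinely algebraic description of $\rho \circ r_K^{\mathrm{loc}}$ on the whole of $\OO_K^\times$ (rather than merely up to a tame character on some open subgroup). Once this is granted, the remainder is the standard density argument (equivalently, a Lie-algebra calculation) for uniqueness of characters of a connected torus, and the lemma follows without further difficulty.
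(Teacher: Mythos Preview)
Your approach is correct and rests on the same $p$-adic Hodge theory input as the paper, but the packaging is different. The paper's proof is a one-liner: it applies directly the general fact (Theorem 3.1.1 in \cite{BBM}) that a vector in a crystalline representation fixed by an open subgroup of inertia is fixed by all of inertia, invoked for the ratio $\rho\cdot\chi^{-1}$ (which is crystalline since algebraic characters are crystalline). Local class field theory identifies open subgroups of $\OO_K^\times$ with open subgroups of inertia, and the conclusion is immediate.

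Your route instead first invokes the structure theorem for $1$-dimensional crystalline characters (crystalline $=$ unramified $\times$ algebraic), obtaining a second algebraic $\chi'$, and then runs a density/Lie-algebra argument to show $\chi=\chi'$. This is fine, but note that the ``main obstacle'' you flag---ruling out a finite-order ramified twist---is \emph{exactly} the content of the general inertia-fixed-vector fact the paper cites: the statement ``crystalline with all Hodge--Tate weights zero implies unramified'' is the $1$-dimensional specialization of that fact. So your first step and the paper's single step are equivalent inputs; you have simply split off an additional (and in this context unnecessary) torus-rigidity step afterward. The paper's formulation is more economical and applies beyond dimension one; yours makes the structure of the character more visible.
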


\begin{proof}
The assertion is a special case of a general result in $p$-adic Hodge theory 
that if an element in the representation space of a crystalline $p$-adic representation 
is fixed by an open subgroup of the inertia subgroup, then it is fixed by the 
whole inertia subgroup (see, e.g., Theorem 3.1.1 in \cite{BBM}). 
The open neighborhoods of $\OO_K^\times$ correspond 
to open subgroups of the inertia 
group by local class field theory.
\end{proof}

\begin{cor}[\cite{Conrad} Corollary 3.4.9]
\label{cristallinerep}
Every crystalline (i.e., locally algebraic) representation on a $1$-dimensional $E$-vector space 
arising from a CM $p$-divisible group is essentially given by the reflex 
norm. Precisely, for $L = K.W(\overline{k})$, a representation
$$\rho:\Gal(\overline{L}/L)\to E^\times$$
attached to a CM $p$-divisible group $\mathcal{G}$ equals the composition
$$\Gal(L^\mathrm{ab}/L)\to I_{E^*}^{\mathrm{ab}}\xleftarrow[\sim]{r_{E^*}^{\mathrm{loc}}}\OO_{E^*}^\times\xrightarrow{1/N\mu_{\Phi}}E^\times,$$
where $I_{E^*}^{\mathrm{ab}}$ is the abelian part of the inertia group of $E^*$.
\end{cor}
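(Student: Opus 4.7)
The plan is to reduce the statement to an equality of two continuous characters on a suitable open subgroup of $\OO_L^\times$ and then appeal to Lemma \ref{localMT}. The two essential inputs are the local algebraicity of one-dimensional crystalline characters, together with the identification of the Hodge--Tate cocharacter of the Tate module of $\mathcal{G}$ with the CM cocharacter $\mu_\Phi$.

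First I would observe that because $\mathcal{G}$ has height $h=[E:\QQ_p]$ and carries a faithful $E$-action by endomorphisms, the Tate module $T_p\mathcal{G}$ is free of rank one over $\OO_E$, so $\rho$ is a continuous character $\Gal(\overline L/L)\to E^\times$. Since $\mathcal{G}$ is a $p$-divisible group over $\OO_L$, the rational Tate module $V_p\mathcal{G}$ is crystalline, and hence $\rho$ is locally algebraic. Its algebraic part is determined by the Hodge--Tate decomposition of $V_p\mathcal{G}\otimes_L\CC_p$, and this decomposition is dictated by the $E$-module structure of $\mathrm{Lie}(\mathcal{G})\otimes\overline{\QQ}_p$, which by the very definition of a CM type is supported precisely on $\Phi$.

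Second, since $L\supseteq E^*$, the cocharacter $\mu_\Phi$ is defined over $L$, so $\mathrm{Res}_{L/\QQ_p}(\mu_\Phi)$ followed by the norm $\mathrm{Nm}_{L/\QQ_p}$ produces the local reflex norm $N\mu_\Phi$ as in Definition \ref{above}. Local class field theory then translates the local algebraicity statement into the assertion that, on some open subgroup of $\OO_L^\times$, the composition $\rho\circ r_L^{\mathrm{loc}}$ agrees with the character induced by $1/N\mu_\Phi$ (the inverse arising from the duality between the cocharacter on $V_p\mathcal{G}$ and the one acting on $\mathrm{Lie}(\mathcal{G})$). Applying Lemma \ref{localMT} to these two characters on $\OO_L^\times$, both extending uniquely from their agreement on the chosen open subgroup, yields agreement on all of $\OO_L^\times$; factoring through the maximal abelian quotient of the inertia group of $E^*$ is then formal, since $1/N\mu_\Phi$ is built from a cocharacter defined over $E^*$.

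The principal obstacle, I expect, is the careful sign bookkeeping: tracking why the formula involves $1/N\mu_\Phi$ rather than $N\mu_\Phi$. This requires comparing the Hodge cocharacter acting on $\mathrm{Lie}(\mathcal{G})$ with the one acting on the (co)tangent space and ultimately on the Tate module, together with the convention adopted in Definition \ref{cochar} for reading off $\mu_\Phi$ from the set $\Phi$. Once the conventions are aligned, the remainder of the argument is a relatively formal assembly of standard facts from $p$-adic Hodge theory and local class field theory.
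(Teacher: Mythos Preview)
Your approach is correct in outline but takes a genuinely different route from the paper. The paper's proof is a global-to-local reduction: it constructs a CM abelian variety whose $p$-divisible group is isogenous to $\mathcal{G}$, invokes the global Main Theorem of Complex Multiplication to compute the representation on inertia, and then performs an unramified twist to reduce the entire Galois group over $L$ to its inertia. Your argument, by contrast, stays entirely local: you read the algebraic part of the locally algebraic character $\rho$ directly from the Hodge--Tate decomposition of $V_p\mathcal{G}$, identify the resulting cocharacter with $\mu_\Phi$ via the $E$-structure on $\mathrm{Lie}(\mathcal{G})$, and then invoke Lemma~\ref{localMT} to pass from an open subgroup of $\OO_L^\times$ to all of it.

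Each approach has its cost. The paper's argument needs the nontrivial existence statement that every CM $p$-divisible group is locally isogenous to one arising from a CM abelian variety (this is where the references to \cite{Conrad} are doing real work), but once that is granted the computation is already packaged in the global theory. Your approach avoids the global lift entirely, but the step you flag as ``sign bookkeeping'' is more than that: you must actually prove that the algebraic homomorphism $\OO_L^\times\to E^\times$ attached by Sen/Hodge--Tate theory to a one-dimensional $E$-linear crystalline representation is exactly $1/N\mu_\Phi$. This is true, but it requires unwinding how the $E\otimes_{\QQ_p}\CC_p$-grading on $V_p\mathcal{G}\otimes\CC_p$ translates, via local class field theory, into a $\QQ_p$-rational homomorphism of tori --- essentially recovering the reflex norm construction from the Hodge--Tate side. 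If you supply that identification carefully, your argument goes through and is arguably more intrinsic.
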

\begin{proof}
Here we lay out the ideas; for details we refer to 
Proposition 3.4.3, Proposition 3.4.4, and Corollary 3.4.9 in \cite{Conrad} 
for the various parts. 
The strategy is to construct a CM abelian variety which locally at $p$ 
is isogenous to the given CM $p$-divisible group. Then the main theorem 
of complex multiplication for abelian varieties 
demonstrates that the restriction of $\rho$ to the inertia subgroup 
$I_L^{\mathrm{ab}}\subseteq \Gal(L^\mathrm{ab}/L)$ equals the reciprocal 
of the composition 
$$I_L^\mathrm{ab}\xrightarrow{\simeq}I_{E^*}^{\mathrm{ab}}\xleftarrow[\simeq]{r_{E^*}^{\mathrm{loc}}}\OO_{E^*}^\times\xrightarrow{N\mu_{\Phi}}E^\times.$$
We conclude with an isogeny between the given $p$-divisible group and one with 
an unramified twist so the image of the representation is given 
by inertia. Note that $L$ in the statement may be replaced by a 
finite extension of $\QQ_p$ 
given by the compositum $K.W(k')$ for a large enough field 
extension $k'\supseteq k$, so long as the appropriate unramified 
twist of $\mathcal{G}$ is defined over $K.W(k')$.
\end{proof}

We conclude by remarking 
that Lemma \ref{localMT} and Corollary \ref{cristallinerep} 
are the local ($p$-divisible group) analogues of 
the Main Theorem of Complex Multiplication for abelian varieties.

\section{Integral $p$-adic Hodge Theory}

\subsection{Deformation Theory of $p$-divisible Groups}
\label{BrSEC}
The original approach to classify $p$-divisible groups by Messing in 
\cite{Messing} was to use their deformation theory. This 
recovers useful invariants pertaining to a $p$-divisible group, 
such as the Hodge bundle $\omega_{\mathcal{G}}$ 
and the Lie algebra $\mathrm{Lie}(\mathcal{G})$. In subsequent work, 
Breuil \cite{Breuil} developed an algebraic 
theory of what are now called \emph{Breuil modules} to capture Messing's 
abstract theory in computations. 
Here we summarize some of these developments by \cite{Messing} and \cite{Breuil}, 
and show how to write the Hodge bundle $\omega_{\mathcal{G}}$ 
using Breuil modules.

Let $k$ be a perfect field of characteristic $p>0$ and $W = W(k)$ be its 
ring of Witt vectors. Let $T_0$ be a $W$-scheme on which $p$ is nilpotent and 
$T$ be a $W$-scheme on which $p$ is locally-nilpotent such that $T_0\hookrightarrow T$ 
is a closed $W$-subscheme. Then define 
$\mathcal{G}_0$ to be a $p$-divisible group on $T_0$ and $\mathcal{G}$ to be a lift 
of $\mathcal{G}_0$ to $T$, i.e., 
$\mathcal{G}$ is a \emph{$T$-deformation of $\mathcal{G}_0$}. 
We recall that such a $\mathcal{G}$ has a natural \emph{relative Frobenius} 
morphism acting on it, which is the pullback of the Frobenius acting 
on the base. 

In \cite{Messing}, Messing constructs a contravariant functor 
from $p$-divisible groups $\mathcal{G}$ defined over $T$ to 
\emph{deformation pairs}: 
$$\mathcal{G}\mapsto (\mathbb{D}_T(\mathcal{G}),\mathrm{Fil}^1(\mathbb{D}_T(\mathcal{G}))),$$
where the deformation pair consists of a Frobenius crystal $\mathbb{D}_T(\mathcal{G})$ on the crystalline site of $T$ together with a 
filtered piece $\mathrm{Fil}^1(\mathbb{D}_T(\mathcal{G}))\subseteq \mathbb{D}_T(\mathcal{G})$. 
For the definition of Frobenius crystals we encourage the reader to see 
\cite{Messing}. We summarize the properties we 
need about crystals from \emph{loc. cit.} in the following lemma.

\begin{lem}[\cite{Messing}]
\label{properties}
Let $T$ be a $W$-scheme on which $p$ is locally-nilpotent, $T_0\hookrightarrow T$ be a closed 
$W$-subscheme on which $p$ is nilpotent, and $T'\to T$ be any crystalline homomorphism of $W$-schemes.
\begin{enumerate}
\item (Crystalline Base Change). $\mathbb{D}_{T'}\simeq \mathbb{D}_T\otimes_T T'$, i.e., the Frobenius crystal 
is compatible with arbitrary crystalline base change.
\item (Frobenius Linearization). When $p$ is nilpotent on $T$, the relative Frobenius $\phi$ on $\mathcal{G}$ induces a map
$$\phi^*(\mathbb{D}_T(\mathcal{G}))\xrightarrow{\sim}\mathbb{D}_T(\phi^*(\mathcal{G}))\to \mathbb{D}_T(\mathcal{G}).$$
\item (Crystal Growth). If $\mathcal{G}/T$ is a lift of $\mathcal{G}_0/T_0$ then
$$\mathbb{D}_{T_0}(\mathcal{G}_0)(T)\xrightarrow{\sim}\mathbb{D}_{T}(\mathcal{G})(T).$$
\item (Isomorphism Invariance). If $\mathcal{G}$ and $\mathcal{G}'$ are non-isomorphic $p$-divisible 
groups over $T$, then a given isomorphism of crystals 
$\mathbb{D}_T(\mathcal{G}) \xrightarrow{\sim} \mathbb{D}_T(\mathcal{G}')$ does 
not map $\mathrm{Fil}^1(\mathbb{D}_T(\mathcal{G}))$ onto 
$\mathrm{Fil}^1(\mathbb{D}_T(\mathcal{G}'))$.
\item (Compatibility with Duality). $\mathbb{D}_T$ admits a dual $\mathbb{D}_T^*$ satisfying 
$\mathbb{D}_T^*(\mathcal{G}) = \mathbb{D}_T(\mathcal{G}^*)$, 
where $\mathcal{G}^*$ denotes the Cartier dual of $\mathcal{G}$.
\end{enumerate}
\end{lem}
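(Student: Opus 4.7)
The plan is to treat the five properties separately, relying on Messing's construction in \cite{Messing} of $\mathbb{D}_T$ as a contravariant functor from $p$-divisible groups on $T$ to filtered Frobenius crystals on the crystalline site of $T$. Most of the assertions are formal consequences of the construction; the genuine content lies in (4).

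Properties (1), (2), (3) and (5) are largely formal. For (1), the crystalline base change $\mathbb{D}_{T'}\simeq \mathbb{D}_T\otimes_T T'$ is tautological because $\mathbb{D}_T(\mathcal{G})$ is by definition a sheaf on the crystalline site, and such sheaves are built to be compatible with pullback along arbitrary crystalline morphisms. For (3), given a lift $\mathcal{G}/T$ of $\mathcal{G}_0/T_0$, the inclusion $T_0\hookrightarrow T$ is a PD thickening and the value of $\mathbb{D}_{T_0}(\mathcal{G}_0)$ on it agrees with $\mathbb{D}_T(\mathcal{G})(T)$, reflecting the defining property that a crystal depends only on its restriction modulo the PD ideal; this is in fact how Messing constructs $\mathbb{D}_T(\mathcal{G})$ in the first place. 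Property (2) then follows by applying contravariant functoriality to the relative Frobenius $\mathcal{G}\to \phi^*\mathcal{G}$ and combining the resulting map with the base-change isomorphism $\mathbb{D}_T(\phi^*\mathcal{G})\simeq \phi^*\mathbb{D}_T(\mathcal{G})$ from (1). For (5), I would construct a canonical perfect pairing $\mathbb{D}_T(\mathcal{G})\otimes \mathbb{D}_T(\mathcal{G}^*)\to \mathbb{D}_T(\mu_{p^\infty})$ from the biextension underlying Cartier duality of $p$-divisible groups, and verify that it identifies $\mathbb{D}_T(\mathcal{G}^*)$ with the linear dual of $\mathbb{D}_T(\mathcal{G})$, as is done explicitly in \cite{Messing}.

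The main obstacle is (4), which is essentially a restatement of the Grothendieck--Messing deformation theorem. The strategy is to invoke the equivalence between the groupoid of lifts of $\mathcal{G}_0$ from $T_0$ to $T$ and the set of admissible filtrations $\mathrm{Fil}\subseteq \mathbb{D}_{T_0}(\mathcal{G}_0)(T)$ whose reduction modulo the PD ideal recovers $\mathrm{Fil}^1(\mathbb{D}_{T_0}(\mathcal{G}_0))$; under this equivalence, a lift $\mathcal{G}$ corresponds to its own Hodge filtration $\mathrm{Fil}^1(\mathbb{D}_T(\mathcal{G}))\subseteq \mathbb{D}_T(\mathcal{G})(T) = \mathbb{D}_{T_0}(\mathcal{G}_0)(T)$ via (3). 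An isomorphism of crystals $\mathbb{D}_T(\mathcal{G})\simeq \mathbb{D}_T(\mathcal{G}')$ that additionally carried $\mathrm{Fil}^1(\mathbb{D}_T(\mathcal{G}))$ onto $\mathrm{Fil}^1(\mathbb{D}_T(\mathcal{G}'))$ would exhibit a single admissible filtration classifying both lifts, forcing $\mathcal{G}\simeq \mathcal{G}'$ and contradicting the hypothesis. The hard step of the whole lemma is establishing this Grothendieck--Messing equivalence itself, which requires a careful analysis of universal vector extensions of $p$-divisible groups and the infinitesimal nilpotence conditions on $p$ in $T$; this is precisely the content of Messing's deformation theory and is where I would spend the bulk of the effort.
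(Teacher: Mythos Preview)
The paper does not actually prove this lemma: it is stated as a summary of properties from Messing's thesis \cite{Messing} with only a citation, and the text immediately proceeds to use these properties without further argument. Your sketch is a reasonable outline of why the five assertions hold and correctly identifies (4) as the substantive content (the Grothendieck--Messing deformation theorem), so there is nothing to compare against and no gap to flag.
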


The deformation exact sequence of a $p$-divisible group $\mathcal{G}$ 
defined over $T$ is given by 
$$0\to \mathrm{Lie}(\mathcal{G})^\vee\to \mathbb{D}_T(\mathcal{G})(T)\to \mathrm{Lie}(\mathcal{G}^*)\to 0$$
where $\mathrm{Lie}(\mathcal{G})^\vee$ 
denotes the $\mathcal{O}_T$-linear dual of the module 
$\mathrm{Lie}(\mathcal{G})$ and $\mathcal{G}^*$ denotes the Cartier dual of the group $\mathcal{G}$. 
By Lemma \ref{properties}(3), this sequence corresponds to a 
deformation pair 
$(\mathbb{D}_{T_0}(\mathcal{G}_0)(T),\mathrm{Lie}(\mathcal{G})^\vee)$, and by Lemma \ref{properties}(1) and (4), 
each $T$-deformation of $\mathcal{G}_0/T_0$ uniquely determines such a pair up to crystalline 
base change. It turns out that this completely classifies all $T$-deformations of $\mathcal{G}_0/T_0$.

\begin{thm}[Messing's Thesis \cite{Messing}]
Let $T$ be a $W$-scheme on which $p$ is locally-nilpotent and $T_0\hookrightarrow T$ be a 
closed $W$-subscheme on which $p$ is nilpotent. Then there is a one-to-one correspondence
$$\{T\text{-deformations of }\mathcal{G}_0/T_0\}\leftrightarrow \{\text{pairs }(\mathbb{D}_{T_0}(\mathcal{G}_0)(T),L)\}$$
where $L$ is an $\OO_T$-submodule such that the quotient $\mathbb{D}_{T_0}(\mathcal{G}_0)(T)/L$ is $\OO_T$-free.
In particular, a given deformation $\mathcal{G}/T$ corresponds to the pair 
$(\mathbb{D}_T(\mathcal{G})(T),\mathrm{Lie}(\mathcal{G})^\vee)$.
\end{thm}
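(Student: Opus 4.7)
The plan is to establish a bijection by constructing both directions and then verifying full faithfulness via the properties already listed in Lemma \ref{properties}. The forward map is the natural one: send a deformation $\mathcal{G}/T$ to the pair $(\mathbb{D}_T(\mathcal{G})(T), \mathrm{Lie}(\mathcal{G})^\vee)$. Well-definedness of the target is almost automatic: Lemma \ref{properties}(3) identifies $\mathbb{D}_T(\mathcal{G})(T)$ with $\mathbb{D}_{T_0}(\mathcal{G}_0)(T)$, and the quotient $\mathbb{D}_T(\mathcal{G})(T)/\mathrm{Lie}(\mathcal{G})^\vee \simeq \mathrm{Lie}(\mathcal{G}^*)$ is $\OO_T$-free because the Lie algebra of a $p$-divisible group over $T$ is locally free of rank equal to its dimension.

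For injectivity, suppose two deformations $\mathcal{G},\mathcal{G}'/T$ of $\mathcal{G}_0/T_0$ give rise to the same pair. Then there is an isomorphism $\mathbb{D}_T(\mathcal{G}) \xrightarrow{\sim} \mathbb{D}_T(\mathcal{G}')$ carrying $\mathrm{Lie}(\mathcal{G})^\vee$ onto $\mathrm{Lie}(\mathcal{G}')^\vee$; the contrapositive of Lemma \ref{properties}(4) then forces $\mathcal{G} \simeq \mathcal{G}'$ as deformations. This step is formal once the crystalline machinery is in hand.

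The real content is essential surjectivity: given $L \subseteq \mathbb{D}_{T_0}(\mathcal{G}_0)(T)$ with $\OO_T$-free quotient, construct a deformation realizing it. I would reduce to the affine setting, cover $T_0 \hookrightarrow T$ by a sequence of square-zero closed immersions (possible because $p$ is locally nilpotent on $T$ and nilpotent on $T_0$, so a suitable PD-envelope structure exists), and then proceed by induction on the nilpotency order of the defining ideal $I$. The base case $T = T_0$ is tautological, since the pair $(\mathbb{D}_{T_0}(\mathcal{G}_0)(T_0), \mathrm{Lie}(\mathcal{G}_0)^\vee)$ is the Hodge-filtered evaluation of the crystal on $T_0$ itself. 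For the inductive step, one uses that deformations of $\mathcal{G}_0$ from one thickening to the next form a torsor under $\Hom_{\OO_{T_0}}(\mathrm{Lie}(\mathcal{G}_0)^\vee, \mathrm{Lie}(\mathcal{G}_0^*)) \otimes I$, matched by an identical torsor structure on the set of $\OO_T$-lifts of the filtration, with the torsor-map identified by the crystalline base change of Lemma \ref{properties}(1).

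The hardest step will be verifying that these two torsor actions are compatible at the level of obstruction classes, so that the inductive process produces a genuine $p$-divisible group and not merely a compatible family of filtrations on crystals. This requires the Grothendieck--Messing identification of the obstruction to lifting $\mathcal{G}_0$ across a PD-thickening with the obstruction to lifting its Hodge filtration — precisely the heart of the theory developed in \cite{Messing}. I would invoke this comparison directly, then glue the locally constructed deformations using the isomorphism invariance of Lemma \ref{properties}(4) to conclude that the global deformation over $T$ is unique, completing the bijection.
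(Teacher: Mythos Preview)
The paper does not prove this theorem; it is stated with attribution to \cite{Messing} and used as a black box for the subsequent development of Breuil and Kisin modules. Your sketch is a reasonable outline of the Grothendieck--Messing argument: the forward map and injectivity via Lemma~\ref{properties}(3)--(4) are correctly set up, and you rightly locate the real content in essential surjectivity, handled by matching the deformation torsor with the filtration-lifting torsor across square-zero PD-thickenings. Since you explicitly defer the obstruction comparison to \cite{Messing} for the hardest step, your proposal is less an independent proof than a structured summary pointing back to the source---which is entirely appropriate here, as the paper itself does no more.

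One small caution: your justification for the sequence of square-zero thickenings (``possible because $p$ is locally nilpotent on $T$\ldots so a suitable PD-envelope structure exists'') conflates two separate hypotheses. The factorization into square-zero steps comes from nilpotency of the ideal sheaf of $T_0$ in $T$, while the PD-structure on that ideal is an additional assumption Messing's theorem requires (the paper's statement suppresses it). Neither follows from $p$-nilpotency alone.
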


Breuil's theory encodes the abstract data of the pairs 
$(\mathbb{D}^*_{T_0}(\mathcal{G})(T),\mathrm{Lie}(\mathcal{G})^\vee)$ to 
computable 
algebraic structures now known as 
Breuil modules. We retain that $k$ is a perfect field of characteristic $p>0$ 
and $W = W(k)$ its ring of Witt vectors with fraction field denoted by 
$K_0$. Fix $K/K_0$ to be a finite, totally ramified extension with characteristic 
polynomial given by $\mathsf{Eis}(u) = u^e + a_{e-1}u^{e-1} + \ldots + a_0$, 
where we note that $p|a_i$ for all $0\le i\le e-1$ and $p^2\nmid a_0$.  
We define $S$ to be the $p$-adic completion of 
$W[u,\mathsf{Eis}(u)^n/n!]_{n\ge 1}$. 
Then $S$ has a natural Frobenius endomorphism $\phi$ extending the 
Frobenius on $W$ by mapping $u\mapsto u^p$. The ideal 
$\mathrm{Fil}^1(S) = \mathrm{ker}(S\xrightarrow{u\mapsto \pi} \OO_K)$ 
is also naturally equipped with divided powers so that $S$ is an object 
in the crystalline site of $\OO_K$. We also define the ring 
$S_\infty = S\otimes_W K_0/W$.

\begin{defn}
Let $\mathbf{BT}_{/S}^\phi$ be the category of pairs 
$(\mathcal{M},\mathrm{Fil}^1(\mathcal{M}))$, where $\mathcal{M}$ is a finite free 
$S$-module and $\mathrm{Fil}^1(\mathcal{M})\subseteq \mathcal{M}$ 
is an $S$-submodule, satisfying:
\begin{enumerate}
\item $\mathrm{Fil}^1(S)\cdot \mathcal{M}\subseteq \mathrm{Fil}^1(\mathcal{M})$,
\item $\mathcal{M}/\mathrm{Fil}^1(\mathcal{M})$ is a free $\OO_K$-module, and
\item there exists a $\phi$ semi-linear map 
$$\phi_\mathcal{M}:\mathrm{Fil}^1(\mathcal{M})\to \mathcal{M}$$
such that its induced map $\phi^*(\mathrm{Fil}^1(\mathcal{M}))\to \mathcal{M}$ is 
surjective.
\end{enumerate}
An object in this category is called a \textbf{Breuil module} and the 
map $\phi_{\mathcal{M}}$ its \textbf{Frobenius}. The morphisms 
in this category are natural morphisms of modules which preserve the 
Frobenius $\phi_{\mathcal{M}}$ and filtrations.
\end{defn}

We characterize the following 
objects in the category.

\begin{itemize}
\item An object $(\mathcal{M},\mathrm{Fil}^1\mathcal{M})\in\mathbf{BT}_{/S}^\phi$ is \emph{connected} if
$$m\mapsto \phi_{\mathcal{M}}(\mathsf{Eis}(u)m)$$
is topologically nilpotent on $\mathcal{M}/u\mathcal{M}$ for the $p$-adic 
topology.

\item The \emph{dual object} to a pair $(\mathcal{M},\mathrm{Fil}^1\mathcal{M})\in\mathbf{BT}_{/S}^\phi$ is defined as
$$(\mathcal{M}^*,\,\mathrm{Fil}^1\mathcal{M}^*):=\left(\mathrm{Hom}_S(\mathcal{M},S), \,\{f\in \mathcal{M}^*:f(\mathrm{Fil}^1\mathcal{M})\subseteq \mathrm{Fil}^1S_\infty\}\right)$$
and for all $f\in \mathrm{Fil}^1\mathcal{M}^*$, the Frobenius 
$\phi_{\mathcal{M}}^*$ is the unique morphism making the following 
diagram commute

\bigskip
\centerline{
\begin{xy}
(0,15)*+{\mathrm{Fil}^1\mathcal{M}}="a";
(20,15)*+{\mathcal{M}}="b";
(0,0)*+{\mathrm{Fil}^1 S}="c";
(20,0)*+{S}="d";
{\ar^{\phi_{\mathcal{M}}} "a";"b"};{\ar^{\phi_S} "c";"d"};
{\ar_f "a";"c"};{\ar^{\phi_\mathcal{M}^*(f)} "b";"d"};
\end{xy}.}
\bigskip

\noindent For a proof of uniqueness, we refer to Caruso \cite{Caruso}.

\end{itemize}

In what follows, we drop the subscript on the functor $\mathbb{D}$ for 
clarity of notation.

\begin{thm}[Breuil's Classification, \cite{Breuil}]
\label{Breuil}
Let $\mathcal{G}/\OO_K$ be a $p$-divisible group. Then there is 
a functor
\begin{align*}
\mathcal{M}:p\mathbf{-div}/\OO_K&\to \mathbf{BT}_{/S}^\phi\\
\mathcal{G}&\mapsto \left(\mathcal{M}(\mathcal{G}) = \mathbb{D}(\mathcal{G})(S), \;\mathrm{Fil}^1(\mathcal{M}(\mathcal{G})) = \mathrm{Fil}^1(\mathbb{D}(\mathcal{G}))(S)\right)\end{align*}
which induces an equivalence of categories when $p>2$. When $p = 2$, 
this is an equivalence on connected objects.
\end{thm}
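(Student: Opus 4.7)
The plan is to construct the functor from Messing's crystalline Dieudonn\'e theory applied to the particular PD-thickening $S \twoheadrightarrow \OO_K$, and then verify the equivalence in two separate steps: full faithfulness (a formal consequence of Messing's theorem together with the properties in Lemma \ref{properties}) and essential surjectivity (the hard part, requiring devissage to torsion objects plus Grothendieck--Messing deformation theory). The $p=2$ restriction to connected objects will be handled at the very end by tracking where the argument breaks.

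First, I would verify the functor is well-defined. The ring $S$ sits in the crystalline site of $\OO_K$: the surjection $u \mapsto \pi$ realizes $S \twoheadrightarrow \OO_K$ as a PD-thickening because $\mathrm{Fil}^1(S)$ is, by construction of $S$, the divided-power ideal generated by $\mathsf{Eis}(u)$. Hence Messing's crystal gives a finite free $S$-module $\mathcal{M}(\mathcal{G}) := \mathbb{D}(\mathcal{G})(S)$, and I define $\mathrm{Fil}^1(\mathcal{M}(\mathcal{G}))$ to be the preimage in $\mathcal{M}(\mathcal{G})$ of $\mathrm{Lie}(\mathcal{G})^\vee \subseteq \mathbb{D}(\mathcal{G})(\OO_K)$ under the base-change map from Lemma \ref{properties}(1). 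The Frobenius $\phi_\mathcal{M}$ is obtained from the Frobenius linearization in Lemma \ref{properties}(2), and axioms (1)--(3) of $\mathbf{BT}_{/S}^\phi$ follow directly: axiom (1) from the fact that $\mathrm{Fil}^1(S)$ kills $\mathcal{M}(\mathcal{G})/\mathrm{Fil}^1(\mathcal{M}(\mathcal{G}))$, axiom (2) from the freeness of $\mathrm{Lie}(\mathcal{G}^*)$ over $\OO_K$ in Messing's deformation sequence, and axiom (3) from divisibility of the crystalline Frobenius by $p$ on the Hodge filtration. For full faithfulness, a morphism $\mathcal{M}(\mathcal{G}) \to \mathcal{M}(\mathcal{G}')$ in $\mathbf{BT}_{/S}^\phi$ base-changes along $S \to \OO_K$ to a morphism of Messing deformation pairs, which by Messing's classification theorem lifts uniquely to a morphism $\mathcal{G} \to \mathcal{G}'$; conversely, faithfulness is immediate from Lemma \ref{properties}(4), and the Frobenius equivariance ensures the lift is consistent with the crystalline $\phi$-structure.

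The main obstacle is essential surjectivity. My approach would be devissage to the torsion case: given $(\mathcal{M}, \mathrm{Fil}^1\mathcal{M}) \in \mathbf{BT}_{/S}^\phi$, reduce modulo $p^n$ to obtain torsion Breuil modules $\mathcal{M}_n$, classify each by a finite flat commutative group scheme $\mathcal{G}_n$ of $p$-power order via a parallel classification for torsion objects (itself proved by reduction modulo $u$ to classical Dieudonn\'e theory over $k$, then lifting by Grothendieck--Messing using the filtration $\mathrm{Fil}^1\mathcal{M}_n$ as the required Hodge datum), and assemble the system $(\mathcal{G}_n)$ into a $p$-divisible group $\mathcal{G}/\OO_K$ whose associated Breuil module recovers $\mathcal{M}$. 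The crystal-growth property in Lemma \ref{properties}(3) is what ensures that the special-fiber Dieudonn\'e module, once lifted, reproduces $\mathcal{M}$ after evaluation at $S$. For $p=2$, Messing's lifting criterion and the torsion classification misbehave on \'etale-multiplicative extensions (the obstruction lives in the $p = 2$ failure of certain duality/exactness compatibilities for $\mathbb{D}^*$); restricting to connected objects \emph{forces} $\mathcal{G}$ to be the formal completion of its identity component, where the devissage proceeds by successive extensions of $\alpha_p$-like pieces and no non-connected étale subquotients appear, so the argument above goes through unchanged.
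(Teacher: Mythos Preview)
The paper does not prove this theorem: it is stated as a cited result from Breuil \cite{Breuil} and used as a black box, so there is no proof in the paper to compare against. Your sketch is a plausible outline of the strategy behind Breuil's original argument (construct the functor via the crystalline Dieudonn\'e functor evaluated on the PD-thickening $S\twoheadrightarrow\OO_K$, deduce full faithfulness from Grothendieck--Messing, and obtain essential surjectivity by a devissage to torsion objects), but since the paper supplies no proof of its own, there is nothing here for you to match or deviate from.

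That said, if your goal is an independent write-up, be aware that your essential surjectivity step is the genuinely hard part and your sketch underplays it: the torsion classification over $\OO_K$ is itself a substantial theorem (this is the content of Breuil's unpublished manuscript and later work of Kisin and Caruso), and ``reduce mod $u$ to Dieudonn\'e theory, then lift by Grothendieck--Messing'' hides real difficulties, since $S$ is not noetherian and one must control convergence in the $p$-adic completion of the divided-power envelope. Your explanation of the $p=2$ restriction is also imprecise: the actual obstruction is that for $p=2$ the divided powers on $\mathrm{Fil}^1(S)$ are not topologically nilpotent, so Messing's deformation theory does not apply directly; connectedness is what salvages the argument, not an issue with ``\'etale-multiplicative extensions'' per se.
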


Note that $\mathcal{M}$ identifies connected objects in each category and 
is compatible with duality, i.e., 
$\mathcal{M}(\mathcal{G}^*) = \mathcal{M}(\mathcal{G})^*$. These 
assertions are shown in \cite{Caruso}.

\begin{exmps}
Although we will not work with Breuil modules directly, it is helpful for the reader to 
see the following examples (see also \cite{CPadic}).
\begin{enumerate}
\item Let $\mathcal{G} = \varinjlim p^{-n}\mathbb{Z}/\mathbb{Z} = \QQ_p/\ZZ_p$. Then
$$\mathcal{M}(\mathcal{G}) = Se,\quad \mathrm{Fil}^1(\mathcal{M}(\mathcal{G})) = Se,\quad \phi_{\mathcal{M}(\mathcal{G})} = \phi,$$
where $e$ denotes a generator of the $S$-modules $\mathcal{M}(\mathcal{G})$.
\item Let $\mathcal{G} = \varinjlim \mu_{p^n} = \mu_{p^{\infty}}$. Then
$$\mathcal{M}(\mathcal{G}) = Se,\quad \mathrm{Fil}^1(\mathcal{M}(\mathcal{G})) = \mathrm{Fil}^1(S)e,\quad \phi_{\mathcal{M}(\mathcal{G})} = \frac{1}{p}\phi$$
where $e$ denotes a generator of the $S$-modules $\mathcal{M}(\mathcal{G})$.
\end{enumerate}
\end{exmps}

\begin{cor}
\label{deformation}
Let $\mathcal{M} = \mathcal{M}(\mathcal{G})$ and 
$\mathcal{M}^* =\mathcal{M}(\mathcal{G})^*$. Then
\begin{align*}\omega_{\mathcal{G}}&\simeq \mathrm{Fil}^1\mathcal{M}/(\mathrm{Fil}^1S)\mathrm{Fil}^1\mathcal{M},\\
\mathrm{Lie}(\mathcal{G})&\simeq \mathcal{M}^*/\mathrm{Fil}^1\mathcal{M}^*.\end{align*}
\end{cor}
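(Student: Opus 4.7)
The plan is to specialize the Breuil module along the crystalline surjection $S \twoheadrightarrow \OO_K$ defined by $u \mapsto \pi$, whose kernel is the PD-ideal $\mathrm{Fil}^1 S$, and then read off the Hodge and Lie structures from Messing's deformation exact sequence at the base $\OO_K$. By Lemma \ref{properties}(1), crystalline base change along this map gives $\mathbb{D}(\mathcal{G})(\OO_K) \simeq \mathcal{M}/(\mathrm{Fil}^1 S)\mathcal{M}$, and by Theorem \ref{Breuil} this identification carries the filtered piece $\mathrm{Fil}^1\mathbb{D}(\mathcal{G})(\OO_K) \subseteq \mathbb{D}(\mathcal{G})(\OO_K)$ to the image of $\mathrm{Fil}^1\mathcal{M} = \mathrm{Fil}^1\mathbb{D}(\mathcal{G})(S)$ under the quotient map $\mathcal{M} \twoheadrightarrow \mathcal{M}/(\mathrm{Fil}^1 S)\mathcal{M}$. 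Messing's deformation sequence
\[
0 \to \omega_{\mathcal{G}} \to \mathbb{D}(\mathcal{G})(\OO_K) \to \mathrm{Lie}(\mathcal{G}^*) \to 0
\]
then identifies $\omega_\mathcal{G}$ precisely with this image.

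To rewrite the image as $\mathrm{Fil}^1\mathcal{M}/(\mathrm{Fil}^1 S)\mathrm{Fil}^1\mathcal{M}$, I would tensor the short exact sequence
\[
0 \to \mathrm{Fil}^1\mathcal{M} \to \mathcal{M} \to \mathcal{M}/\mathrm{Fil}^1\mathcal{M} \to 0
\]
with $S/\mathrm{Fil}^1 S = \OO_K$ over $S$ and examine the resulting Tor sequence. The defining axiom of $\mathbf{BT}^\phi_{/S}$ that $\mathcal{M}/\mathrm{Fil}^1\mathcal{M}$ is $\OO_K$-free, combined with the structure of the PD-ideal $\mathrm{Fil}^1 S \subset S$, should force the vanishing of $\mathrm{Tor}^S_1(\OO_K, \mathcal{M}/\mathrm{Fil}^1\mathcal{M})$, and hence the injectivity of $\mathrm{Fil}^1\mathcal{M}\otimes_S \OO_K \hookrightarrow \mathcal{M}\otimes_S \OO_K$. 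This pins the image down as $\mathrm{Fil}^1\mathcal{M}/(\mathrm{Fil}^1 S)\mathrm{Fil}^1\mathcal{M}$ as stated.

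For the Lie algebra, I would run the same two steps on the Cartier dual $\mathcal{G}^*$. By the duality compatibility in Theorem \ref{Breuil}, $\mathcal{M}(\mathcal{G}^*) = \mathcal{M}^*$, and by biduality $\mathcal{G}^{**} = \mathcal{G}$; Messing's sequence applied to $\mathcal{G}^*$ reads
\[
0 \to \omega_{\mathcal{G}^*} \to \mathbb{D}(\mathcal{G}^*)(\OO_K) \to \mathrm{Lie}(\mathcal{G}) \to 0,
\]
so $\mathrm{Lie}(\mathcal{G})$ is the quotient of $\mathcal{M}^*/(\mathrm{Fil}^1 S)\mathcal{M}^*$ by the image of $\mathrm{Fil}^1\mathcal{M}^*$. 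Because axiom (1) for the dual Breuil module already gives $(\mathrm{Fil}^1 S)\mathcal{M}^* \subseteq \mathrm{Fil}^1\mathcal{M}^*$, this quotient collapses directly to $\mathcal{M}^*/\mathrm{Fil}^1\mathcal{M}^*$ and no Tor computation is needed. The main technical obstacle is thus concentrated in the Hodge bundle step: checking that the natural surjection $\mathrm{Fil}^1\mathcal{M}/(\mathrm{Fil}^1 S)\mathrm{Fil}^1\mathcal{M} \twoheadrightarrow \omega_\mathcal{G}$ is actually injective, which is the Tor-vanishing statement above. Once this is secured, both isomorphisms follow by bookkeeping from Messing and Breuil.
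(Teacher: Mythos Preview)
Your proposal is correct and follows essentially the same route as the paper: specialize the Breuil-module filtration along the crystalline map $S \twoheadrightarrow \OO_K$ using Lemma~\ref{properties}(1), identify the resulting short exact sequence with Messing's deformation sequence over $\OO_K$, and then dualize via $\mathcal{M}(\mathcal{G}^*)=\mathcal{M}^*$ for the Lie algebra. The only difference is that you make explicit the Tor-vanishing needed for left-exactness of the base-changed sequence, whereas the paper simply asserts the exact diagram.
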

\begin{proof}
By Theorem \ref{Breuil} we obtain the isomorphism of exact sequences

\bigskip
\centerline{
\begin{xy}
(0,15)*+{0}="a";
(20,15)*+{\mathrm{Lie}(\mathcal{G})^\vee}="b";
(50,15)*+{\mathbb{D}(\mathcal{G})(S)}="c";
(80,15)*+{\mathrm{Lie}(\mathcal{G}^*)}="d";
(100,15)*+{0}="e";
( 0,0)*+{0}="f";
(20,0)*+{\mathrm{Fil}^1\mathcal{M}}="g";
(50,0)*+{\mathcal{M}}="h";
(80,0)*+{\mathcal{M}/\mathrm{Fil}^1\mathcal{M}}="i";
(100,0)*+{0}="j";
{\ar "a";"b"};{\ar "b";"c"};{\ar "c";"d"};{\ar "d";"e"};
{\ar "f";"g"};{\ar "g";"h"};{\ar "h";"i"};{\ar "i";"j"};
{\ar_{\simeq} "b";"g"};{\ar_{\simeq} "c";"h"};{\ar_{\simeq} "d";"i"};
\end{xy}}
\bigskip

\noindent where we recall that we use the convention 
that $\mathrm{Lie}(\mathcal{G})^\vee$ denotes the 
$\OO_S$-linear dual of the first filtered piece of 
$\mathbb{D}(\mathcal{G})$ (see the paragraph 
following Lemma \ref{properties}). 
By Lemma \ref{properties}(1), since $S$ is in the crystalline 
site of $\OO_K$, we also obtain the exact diagram

\bigskip
\centerline{
\begin{xy}
(5,15)*+{0}="a";
(30,15)*+{\omega_{\mathcal{G}}}="b";
(65,15)*+{\mathbb{D}(\mathcal{G})(\OO_K)}="c";
(95,15)*+{\mathrm{Lie}(\mathcal{G}^*)}="d";
(110,15)*+{0}="e";
( 5,0)*+{0}="f";
(30,0)*+{\mathrm{Fil}^1\mathcal{M}/(\mathrm{Fil}^1S)\mathrm{Fil}^1\mathcal{M}}="g";
(65,0)*+{\mathcal{M}/(\mathrm{Fil}^1S)\mathcal{M}}="h";
(95,0)*+{\mathcal{M}/\mathrm{Fil}^1\mathcal{M}}="i";
(110,0)*+{0}="j";
{\ar "a";"b"};{\ar "b";"c"};{\ar "c";"d"};{\ar "d";"e"};
{\ar "f";"g"};{\ar "g";"h"};{\ar "h";"i"};{\ar "i";"j"};
{\ar_{\simeq} "b";"g"};{\ar_{\simeq} "c";"h"};{\ar_{\simeq} "d";"i"};
\end{xy}.}
\bigskip

\noindent The first vertical arrow is the isomorphism 
$\omega_{\mathcal{G}}\simeq \mathrm{Fil}^1\mathcal{M}/(\mathrm{Fil}^1S)\mathrm{Fil}^1\mathcal{M}$, and, repeating the construction with $\mathcal{G}^*$ 
in place of $\mathcal{G}$, the last vertical arrow is the isomorphism 
$\mathrm{Lie}(\mathcal{G})\simeq \mathcal{M}^*/\mathrm{Fil}^1\mathcal{M}^*$.
\end{proof}

\subsection{Kisin Modules}
A modern approach to classifying $p$-divisible groups is via their 
\emph{Kisin modules}. This improves on the classification by Messing and 
Breuil reviewed in the previous section in that it gives also a 
complete classification of all integral and torsion crystalline representations, not just 
those arising from $p$-divisible groups. However, to deduce the Hodge theory of 
$p$-divisible groups, one needs the correspondence between 
Kisin modules and Breuil modules (we do this in Section \ref{hodgeBK}).

Let $k$ be a perfect field of characteristic $p>0$ and 
$W=W(k)$ be its ring of Witt vectors. We construct a ring 
$\mathfrak{S} = W[[u]]$ which has a natural Frobenius endomorphism $\phi$ 
extending the Frobenius on $W$ by sending $u\mapsto u^p$. 
We also fix $K$ to be a finite, totally ramified extension of $K_0:=W[1/p]$ 
with ring of integers $\OO_K$ and we fix a choice of uniformizer 
$\pi_K$. Then let $\mathsf{Eis}(u)\in W[u]$ denote the 
Eisenstein polynomial corresponding to the minimal polynomial of the element $\pi_K$. 
In particular, $\mathsf{Eis}(u) = u^e + a_{e-1}u^{e-1} + \cdots + a_1u + a_0$ 
where $e$ is the ramification index of $K/K_0$, $p|a_i$ for all $i$, and 
$a_0 = cp$ for some $c\in W^\times$. We finally let 
$\OO_{C^\flat} = \varprojlim \OO_{C}/p$, 
where $C$ is an algebraic closure of $K$ and the 
limit is over maps $\alpha\mapsto \alpha^p$. We note that 
$\mathfrak{S}\hookrightarrow W(\OO_{C^\flat})$ with compatible Frobenius structures. 

\begin{defn}
Let $\mathbf{Mod}_{/\mathfrak{S}}^\phi$ be the category of finite 
$\mathfrak{S}$-modules $\mathfrak{M}$ equipped with a 
$\phi$ semi-linear isomorphism
$$1\otimes \phi_{\mathfrak{M}}:\phi^*(\mathfrak{M})\left[\frac{1}{\mathsf{Eis}(u)}\right]\xrightarrow{\sim}\mathfrak{M}\left[\frac{1}{\mathsf{Eis}(u)}\right]$$
where $\phi^*(\mathfrak{M}) := \mathfrak{S}\otimes_{\phi}\mathfrak{M}$. 
An object in this category is called a \textbf{Kisin module} 
and its endomorphism $\phi_\mathfrak{M}$ its \textbf{Frobenius}. Morphisms 
in this category are the naturally-defined morphisms of modules which commute 
with the semi-linear Frobenius isomorphism.
\end{defn}

We also introduce a category of modules which will behave as the 
``generic fiber'' of the Kisin modules defined above. We let 
$\OO_{\mathcal{E}}$ denote the $p$-adic completion of 
$\mathfrak{S}[1/u]$, which inherits its Frobenius map $\phi$ from 
$\mathfrak{S}$, and $\mathcal{E}$ be its fraction field. Note that 
the residue field of $\OO_{\mathcal{E}}$ is $k((u))$.

\begin{defn}
Let $\mathbf{Mod}_{/\OO_{\mathcal{E}}}^\phi$ be the category of 
finite $\mathcal{O}_{\mathcal{E}}$-modules $\mathit{M}$ equipped 
with a Frobenius semi-linear isomorphism
$$1\otimes \phi_{\mathit{M}}:\phi^*\mathit{M}\xrightarrow{\sim}\mathit{M}$$
where 
$\phi^*\mathit{M}:=\OO_{\mathcal{E}}\otimes_{\phi}\mathit{M}$. 
An object in this category is an \textbf{\'etale $\OO_{\mathcal{E}}$-module} 
and its endomorphism $\phi_M$ its \textbf{Frobenius}. 
Morphisms are morphisms of modules which commute with $\phi_{\mathit{M}}$.
\end{defn}

\begin{rem}
We will sometimes drop the subscript $\mathfrak{M}$ or 
$\mathit{M}$ of $\phi_{\mathfrak{M}}$ or $\phi_{\mathit{M}}$ in 
the definitions when the Kisin module or \'etale $\OO_{\mathcal{E}}$-module 
is specified or where its 
retention clutters notation. The distinction of $\phi$ from $\phi_{\mathfrak{M}}$ 
or $\phi_\mathit{M}$ will be clear in those situations.
\end{rem}

\begin{thm}[\cite{Kisin}, Proposition 2.1.12]
\label{GF}
The functor
\begin{align*}
\mathrm{GF}:\mathbf{Mod}_{/\mathfrak{S}}^\phi&\to \mathbf{Mod}_{/\mathcal{O}_{\mathcal{E}}}^\phi\\
\mathfrak{M}&\mapsto\mathfrak{M}\otimes_{\mathfrak{S}}\mathcal{O}_{\mathcal{E}}\end{align*}
is fully faithful.
\end{thm}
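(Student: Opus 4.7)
The plan is to decompose fully-faithfulness of $\mathrm{GF}$ into faithfulness and fullness, leveraging the ring embedding $\mathfrak{S}\hookrightarrow\OO_{\mathcal{E}}$ together with the Frobenius-semilinear isomorphism on each side after inverting $\mathsf{Eis}(u)$. First I would verify that $\mathfrak{S}\hookrightarrow\OO_{\mathcal{E}}$ is injective by factoring it as $\mathfrak{S}\hookrightarrow\mathfrak{S}[1/u]\hookrightarrow\OO_{\mathcal{E}}$, using that $u$ is a non-zero-divisor in $W[[u]]$ and that $p$-adic completion of the $p$-torsion-free ring $\mathfrak{S}[1/u]$ is injective. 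Next, note that $\mathsf{Eis}(u)\equiv u^e\pmod{p}$, so $\mathsf{Eis}(u)$ is a unit in $\OO_{\mathcal{E}}/p=k((u))$ and by Hensel's lemma a unit in $\OO_{\mathcal{E}}$ itself; consequently, base change of $1\otimes\phi_{\mathfrak{M}}$ upgrades from an isomorphism over $\mathfrak{S}[1/\mathsf{Eis}]$ to a genuine isomorphism $\phi^*(\mathfrak{M}\otimes\OO_{\mathcal{E}})\xrightarrow{\sim}\mathfrak{M}\otimes\OO_{\mathcal{E}}$, confirming the functor is well-defined into $\mathbf{Mod}_{/\OO_{\mathcal{E}}}^\phi$.

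For faithfulness, restricted to the natural subcategory of finite free (hence $u$-torsion-free) objects, the embedding $\mathfrak{N}\hookrightarrow\mathfrak{N}\otimes_{\mathfrak{S}}\OO_{\mathcal{E}}$ is injective, so any Frobenius-equivariant $f:\mathfrak{M}\to\mathfrak{N}$ vanishing after base change vanishes itself. For fullness, given Frobenius-equivariant $g:\mathfrak{M}\otimes\OO_{\mathcal{E}}\to\mathfrak{N}\otimes\OO_{\mathcal{E}}$, restrict to obtain an $\mathfrak{S}$-linear map $f:\mathfrak{M}\to\mathfrak{N}\otimes\OO_{\mathcal{E}}$ and aim to show $f(\mathfrak{M})\subseteq\mathfrak{N}$. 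The central tool is the intersection identity
\[
\mathfrak{N}\;=\;\mathfrak{N}[1/p]\;\cap\;\bigl(\mathfrak{N}\otimes_{\mathfrak{S}}\OO_{\mathcal{E}}\bigr)
\]
computed inside $\mathfrak{N}\otimes\OO_{\mathcal{E}}[1/p]$, which reduces at the ring level to $\mathfrak{S}=\mathfrak{S}[1/p]\cap\OO_{\mathcal{E}}$. This latter identity is a direct computation on Laurent-series representations: an element of $\OO_{\mathcal{E}}[1/p]$ lies in both $\mathfrak{S}[1/p]$ and $\OO_{\mathcal{E}}$ if and only if it is a power series in $u$ with $W$-integral coefficients. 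With this intersection formula in hand, fullness reduces to showing that $g$ descends rationally to $\Hom_{\mathfrak{S}[1/p],\phi}(\mathfrak{M}[1/p],\mathfrak{N}[1/p])$; after inverting both $p$ and $\mathsf{Eis}$, both sides are canonically identified with their base changes to $\OO_{\mathcal{E}}[1/p]$ via the Frobenius isomorphism, and Frobenius equivariance of $g$ forces this descent uniquely.

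The main obstacle is controlling the denominators in $\mathsf{Eis}(u)$ introduced by using the Frobenius semi-linear isomorphism over $\mathfrak{S}[1/\mathsf{Eis}]$ rather than over $\mathfrak{S}$ itself. I would handle this by iterating $\phi$: applying $\phi^k$ replaces $\mathsf{Eis}(u)$-denominators by $\phi^k(\mathsf{Eis}(u))$-denominators, whose $u$-adic valuation grows like $e\cdot p^k$, and the finite-$\mathsf{Eis}$-height condition implicit in Kisin's category (bounded cokernel of $1\otimes\phi_{\mathfrak{M}}$) together with the $p$-adic completeness of $\OO_{\mathcal{E}}$ absorb these denominators in the limit. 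This Frobenius-rigidity argument is the technical heart of the proof, and is precisely what forces Kisin's category to carry a finite-height hypothesis for the classification to work.
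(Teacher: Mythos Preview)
The paper gives no proof of this theorem; it is quoted from \cite{Kisin}, Proposition~2.1.12, with no accompanying argument. So there is nothing in the paper itself to compare against, only Kisin's original.

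Your sketch assembles correct ingredients---well-definedness via $\mathsf{Eis}(u)\in\OO_{\mathcal{E}}^\times$, faithfulness from $u$-torsion-freeness, the ring identity $\mathfrak{S}=\mathfrak{S}[1/p]\cap\OO_{\mathcal{E}}$, and a Frobenius-iteration principle---but the fullness argument has a real gap. The sentence asserting that ``after inverting both $p$ and $\mathsf{Eis}$, both sides are canonically identified with their base changes to $\OO_{\mathcal{E}}[1/p]$ via the Frobenius isomorphism'' does not parse: $\mathfrak{N}[1/p,1/\mathsf{Eis}]$ is a module over $\mathfrak{S}[1/p,1/\mathsf{Eis}]$, whereas $\mathfrak{N}\otimes\mathcal{E}$ lives over the genuinely larger ring $\mathcal{E}$, and no module-level Frobenius collapses that gap of base rings. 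Showing $g(\mathfrak{M})\subset\mathfrak{N}[1/p]$ is exactly the substantive step, not a formality, and your intersection formula only becomes useful after it is established.

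Kisin's actual argument bypasses the intersection formula. He sets $\mathfrak{N}':=g(\mathfrak{M})+\mathfrak{N}$ inside $\mathfrak{N}\otimes\OO_{\mathcal{E}}$, a finitely generated $\phi$-stable $\mathfrak{S}$-submodule with the same $\OO_{\mathcal{E}}$-span as $\mathfrak{N}$, and proves $\mathfrak{N}'=\mathfrak{N}$ directly. Reducing mod~$p$, the quotient becomes a finite-length $u$-torsion $k[[u]]$-module, and the $\phi$-semilinearity (which scales $u$-adic valuation by $p$) together with the finite-$\mathsf{Eis}$-height bound forces it to vanish; one then climbs the $p$-adic filtration. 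This is precisely the Frobenius-iteration mechanism you gesture at in your last paragraph, but it is the entire engine of fullness, not a denominator-clearing afterthought---and it does not pass through $\mathfrak{N}[1/p]$ at all.
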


In the category $\textbf{Mod}_{/\OO_{\mathcal{E}}}^\phi$, we can 
define the dual object to an object 
$\mathit{M}\in\textbf{Mod}_{/\OO_{\mathcal{E}}}^\phi$ as the module
$$\mathit{•}{M}^\vee:=\mathrm{Hom}_{\mathcal{E},\phi}(\mathit{M},\mathcal{O}_{\mathcal{E}})$$
with Frobenius endomorphism induced by
$$1\otimes \phi_{\mathit{M}^\vee}:\phi^*\mathit{M}^\vee =\left(\phi^*\mathit{M}\right)^\vee \xrightarrow{((1\otimes\phi_{\mathit{M}})^{\vee})^{-1}}\mathit{M}^\vee.$$
Note that duality induces an involution on all of 
$\mathbf{Mod}_{/\mathcal{O}_\mathcal{E}}^\phi$.

One cannot, however, specify an involution by duality on all of 
$\mathbf{Mod}_{/\mathfrak{S}}^\phi$, so we restrict to the subcategory 
$\mathbf{Mod}_{/\mathfrak{S}}^{\phi,\mathrm{fr}}\subseteq \mathbf{Mod}_{/\mathfrak{S}}^\phi$ 
of $\mathfrak{S}$-free modules to make this 
definition. Then given $\mathfrak{M}\in \mathbf{Mod}_{/\mathfrak{S}}^{\phi,\mathrm{fr}}$, its dual is the module
$$\mathfrak{M}^\vee:=\mathrm{Hom}_{\mathfrak{S}}(\mathfrak{M},\mathfrak{S})$$
with Frobenius
\begin{align*}
1\otimes \phi_{\mathfrak{M}^\vee}&:\phi^*\mathfrak{M}^\vee\left[\frac{1}{\mathsf{Eis}(u)}\right] =\left(\phi^*\mathfrak{M}\left[\frac{1}{\mathsf{Eis}(u)}\right]\right)^\vee \\
&\quad\quad\quad\quad\quad\quad\quad\quad\quad\quad\quad\quad\xrightarrow{((1\otimes\phi_{\mathfrak{M}})^{\vee})^{-1}}\left(\mathfrak{M}\left[\frac{1}{\mathsf{Eis}(u)}\right]\right)^\vee = \mathfrak{M}^\vee\left[\frac{1}{\mathsf{Eis}(u)}\right].\end{align*}
The reader can check that duality is 
preserved under $\mathrm{GF}$.

Let now $\mathbf{Rep}_{G_K}^{\mathrm{cris}\circ}$ denote the category of 
crystalline $G_K$-stable $\ZZ_p$-lattices spanning a crystalline 
$G_K$-representation.

\begin{thm}[\cite{Kisin},\cite{Liu}]
\label{Mfunc}
There is a fully faithful tensor functor
$$\mathfrak{M}:\mathbf{Rep}_{G_K}^{\mathrm{cris}\circ}\to \mathbf{Mod}_{/\mathfrak{S}}^{\phi}$$
which has the following properties. For any $L\in\mathbf{Rep}_{G_K}^{\mathrm{cris}\circ}$:

\begin{enumerate}
\item (Preservation of Rank). $\mathfrak{M}(L)$ is finite free over $\mathfrak{S}$, of rank equal to the 
rank of $L$.
\item(Compatibility with Tensor Powers).
There are canonical $\phi$-equivariant isomorphisms
\begin{align*}
\mathrm{Sym}^n(\mathfrak{M}(L))&\simeq \mathfrak{M}(\mathrm{Sym}^n(L)),\\
\bigwedge^n(\mathfrak{M}(L))&\simeq \mathfrak{M}(\bigwedge^n(L)).\end{align*}
\item(Compatibility with Unramified Base Change). 
If $k'/k$ is an algebraic extension of fields, then there exists a canonical $\phi$-equivariant isomorphism
$$\mathfrak{M}(L|_{G_{K'}})\xrightarrow{\sim}\mathfrak{M}(L)\otimes_{\mathfrak{S}}\mathfrak{S}'$$
where $\mathfrak{S}' = W(k')[[u]]$ and $G_{K'} = \Gal(\overline{K}\cdot W(k')[1/p]/K\cdot W(k')[1/p])$. In particular,
$$\mathfrak{M}(L|_{I_K})\xrightarrow{\sim}\mathfrak{M}(L)\otimes_{\mathfrak{S}}\mathfrak{S}^{\mathrm{ur}}$$
where $\mathfrak{S}^{\mathrm{ur}} := W(\overline{k})[[u]]$.
\item(Compatibility with Formation of Duals).
Let $L^\vee$ denote the dual representation to $L$. Then there exists a canonical 
$\phi$-equivariant isomorphism
$$\mathfrak{M}(L^\vee)\simeq \mathfrak{M}(L)^\vee.$$
\end{enumerate}
\end{thm}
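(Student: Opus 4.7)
The plan is to construct the functor $\mathfrak{M}$ by combining Kisin's classification of crystalline representations with the integral refinement due to Liu, and then to verify each of the four listed properties structurally rather than by independent computation. At the level of construction, I would proceed as follows. Given $L\in\mathbf{Rep}_{G_K}^{\mathrm{cris}\circ}$, first fix a compatible system $(\pi_K^{1/p^n})_{n\ge 0}$ of $p$-power roots of the chosen uniformizer $\pi_K$, and let $K_\infty = \bigcup_n K(\pi_K^{1/p^n})$, $G_\infty = \Gal(\overline{K}/K_\infty)$. The standard Fontaine machinery produces a natural embedding $\mathfrak{S}\hookrightarrow W(\OO_{C^\flat})$ compatible with $\phi$, under which $u$ is sent to the Teichm\"uller lift of $(\pi_K,\pi_K^{1/p},\ldots)$. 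I would then define
\[
\mathit{M}(L) := \bigl(L\otimes_{\ZZ_p} W(\OO_{C^\flat})[1/u]^{\wedge}\bigr)^{G_\infty},
\]
an \'etale $\OO_\mathcal{E}$-module, and take $\mathfrak{M}(L)$ to be the canonical $\mathfrak{S}$-lattice inside $\mathit{M}(L)$ singled out by the crystalline condition. Existence and uniqueness of this lattice is the content of Kisin's theorem, extended by Liu to the full crystalline range.

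Once the functor is defined, full faithfulness comes in two stages. First, the restriction functor $L\mapsto \mathit{M}(L)$ to \'etale $\OO_\mathcal{E}$-modules is fully faithful on the subcategory of $G_K$-stable lattices in crystalline representations, because the restriction $G_K\twoheadrightarrow G_K/G_\infty$ loses no information on the crystalline side (a consequence of the theorem of Breuil that crystalline representations are detected on $G_\infty$). Second, by Theorem \ref{GF} the passage $\mathrm{GF}:\mathfrak{M}\mapsto \mathfrak{M}\otimes_\mathfrak{S}\OO_\mathcal{E}$ is fully faithful, so the composite $L\mapsto \mathfrak{M}(L)\mapsto \mathit{M}(L)$ being fully faithful forces $\mathfrak{M}$ itself to be fully faithful.

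The four properties then follow from the nature of the construction. Preservation of rank is immediate: $\mathfrak{M}(L)$ is a finite free $\mathfrak{S}$-lattice in $\mathit{M}(L)$, which has $\OO_\mathcal{E}$-rank equal to the $\ZZ_p$-rank of $L$. Compatibility with symmetric and exterior powers is inherited from the fact that the formation of $L\otimes_{\ZZ_p} W(\OO_{C^\flat})$ and of $G_\infty$-invariants both commute with tensor operations; one must check that Kisin's canonical lattice respects these operations, which is a standard uniqueness argument using the characterizing crystalline condition. For unramified base change, when $k'/k$ is algebraic the Kummer tower $K_\infty$ is linearly disjoint from the unramified tower $K\cdot W(k')[1/p]$, so $G_{K'}$ contains the analogous $G_{\infty}'$ and the construction commutes with the flat base change $\mathfrak{S}\to\mathfrak{S}'=W(k')[[u]]$. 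Duality is obtained by combining compatibility with tensor products with preservation of the unit object, using that $\mathbf{Mod}_{/\mathfrak{S}}^{\phi,\mathrm{fr}}$ is rigid with duals defined as in the excerpt preceding the theorem.

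The main obstacle is the existence and canonicity of the $\mathfrak{S}$-lattice $\mathfrak{M}(L)$ inside $\mathit{M}(L)$: this is not formal and is precisely the heart of the Kisin--Liu theory. In Kisin's original argument for $p$-divisible group-style weights, one passes through Breuil modules (as in Section \ref{BrSEC} above) and uses the equivalence with modules over $S$ to transport the lattice. For general crystalline weights one needs Liu's refinement, which relies on detailed analysis of $\phi$-modules over a certain ring $\OO$ between $\mathfrak{S}[1/p]$ and $\mathfrak{S}[1/\mathsf{Eis}(u)]^\wedge$, together with a descent argument controlling the denominators in $\mathsf{Eis}(u)$. All four listed compatibilities would then be derived as essentially formal consequences of this construction, with uniqueness of the canonical lattice doing most of the work.
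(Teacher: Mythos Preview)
The paper does not supply a proof of this theorem: it is stated with citations to Kisin and Liu and used as a black box. Your sketch is a reasonable outline of the Kisin--Liu construction and correctly identifies the hard input (existence and uniqueness of the canonical $\mathfrak{S}$-lattice), so there is nothing to compare against here beyond noting that the paper simply imports the result.
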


The functor $\mathfrak{M}$ in the theorem easily extends to the category  
$\mathbf{Rep}^{\mathrm{cris},\mathrm{tors}}_{G_K}$ of 
torsion semi-stable $G_K$-modules with non-negative Hodge-Tate 
weights. We note that 
objects in $\mathbf{Rep}^{\mathrm{cris},\mathrm{tors}}_{G_K}$ have a 
2-term resolution by objects in 
$\mathbf{Rep}_{G_K}^{\mathrm{cris}\circ}$. Therefore, the main content 
of the following theorem is that $\mathfrak{M}$ extends to a functor 
on the category $\mathbf{Rep}^{\mathrm{cris},\mathrm{tors}}_{G_K}$ so that 
its image in $\textbf{Mod}_{/\mathfrak{S}}^\phi$ is 
independent of this choice of resolution.

\begin{thm}[\cite{Kisin},\cite{Liu}]
\label{extend}
The functor $\mathfrak{M}$ in Theorem \ref{Mfunc} extends to a 
fully faithful functor
$$\mathfrak{M}:\mathbf{Rep}_{G_K}^{\mathrm{cris},\mathrm{tors}}\to \mathbf{Mod}_{/\mathfrak{S}}^\phi$$
which is compatible with the formation of symmetric and tensor powers, 
unramified base change, and with the formation of duals.
\end{thm}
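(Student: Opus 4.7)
The plan is to define $\mathfrak{M}(T)$ for torsion objects via a two-term resolution by lattices (as the excerpt indicates is possible), and then verify well-definedness, full faithfulness, and the listed compatibilities by reducing them to the corresponding statements on lattices from Theorem \ref{Mfunc}.

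First I would fix, for each $T\in \mathbf{Rep}_{G_K}^{\mathrm{cris},\mathrm{tors}}$, a resolution
$$0\to L_1\xrightarrow{\iota}L_0\to T\to 0$$
by objects $L_0,L_1\in \mathbf{Rep}_{G_K}^{\mathrm{cris}\circ}$ of equal $\mathbb{Z}_p$-rank. Applying the functor from Theorem \ref{Mfunc} gives a morphism $\mathfrak{M}(\iota):\mathfrak{M}(L_1)\to \mathfrak{M}(L_0)$. By preservation of rank and the fact that $\iota$ becomes an isomorphism after inverting $p$, the map $\mathfrak{M}(\iota)$ becomes an isomorphism after inverting $\mathsf{Eis}(u)$ and $p$, hence is injective (since $\mathfrak{M}(L_1)$ is $\mathfrak{S}$-flat). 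I then set
$$\mathfrak{M}(T):=\mathrm{coker}\bigl(\mathfrak{M}(L_1)\xrightarrow{\mathfrak{M}(\iota)}\mathfrak{M}(L_0)\bigr),$$
with Frobenius induced from those on $\mathfrak{M}(L_0), \mathfrak{M}(L_1)$; the semi-linear isomorphism after inverting $\mathsf{Eis}(u)$ is inherited from the analogous isomorphisms on the resolving terms.

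Next I would check independence of resolution. Given two resolutions $(L_\bullet)$ and $(L_\bullet')$ of the same $T$, form the pullback $L_0\times_T L_0'$, which is still a crystalline lattice and sits in a third resolution mapping to both. Applying $\mathfrak{M}$ and using the full faithfulness of $\mathfrak{M}$ on $\mathbf{Rep}_{G_K}^{\mathrm{cris}\circ}$ gives canonical $\phi$-equivariant isomorphisms between the three resulting cokernels; these are compatible so that the resulting $\mathfrak{M}(T)$ is canonically defined. Functoriality follows along the same lines: a morphism $f:T\to T'$ lifts (non-canonically) to a map of resolutions, and the induced map on cokernels is independent of the lift since two lifts differ by a morphism $L_0\to L_1'$, whose image in the cokernel is zero.

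For full faithfulness, faithfulness follows because $\mathfrak{M}(f)=0$ means that a lift $L_0\to L_0'$ factors through $\mathfrak{M}(L_1')$ at the level of Kisin modules, and full faithfulness of $\mathfrak{M}$ on lattices promotes this to a factorization on $L$-modules, forcing $f=0$. Fullness goes through the same diagram: any morphism $\mathfrak{M}(T)\to \mathfrak{M}(T')$ lifts to a morphism of two-term complexes in $\mathbf{Mod}_{/\mathfrak{S}}^\phi$ (using projectivity of the free $\mathfrak{S}$-module $\mathfrak{M}(L_0)$), and full faithfulness on lattices transports it back to a morphism of resolutions of representations, descending to the desired $T\to T'$. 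Compatibility with symmetric and exterior powers and with unramified base change is immediate from the corresponding statements on lattices in Theorem \ref{Mfunc}, since $\mathrm{Sym}^n$, $\bigwedge^n$, and $-\otimes_\mathfrak{S}\mathfrak{S}'$ are right exact and so commute with the cokernel presentation of $\mathfrak{M}(T)$.

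The main obstacle will be compatibility with duality, since on the representation side the torsion dual is $T^\vee=\mathrm{Hom}_{\mathbb{Z}_p}(T,\mathbb{Q}_p/\mathbb{Z}_p)$ while on the Kisin module side one must define the torsion dual by $\mathfrak{M}(T)^\vee:=\mathrm{Ext}^1_\mathfrak{S}(\mathfrak{M}(T),\mathfrak{S})$ (or equivalently via the cokernel of the transposed map of lattice duals). To match them one dualizes the short exact sequence $0\to L_1\to L_0\to T\to 0$, obtaining $0\to L_0^\vee\to L_1^\vee\to T^\vee\to 0$ (after a Tate twist that is absorbed into the normalization), applies $\mathfrak{M}$ to each term using duality on lattices from Theorem \ref{Mfunc}(4), and identifies the cokernel with the above $\mathrm{Ext}^1$ via the long exact sequence for $\mathrm{Hom}_\mathfrak{S}(-,\mathfrak{S})$. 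The delicate point is checking that the Frobenius structures match under this identification, but this reduces to the compatibility already established on lattices.
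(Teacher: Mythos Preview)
The paper does not actually prove this theorem; it is stated with attribution to \cite{Kisin} and \cite{Liu}, and the surrounding text only sketches the idea (two-term resolutions by lattices, independence of the resolution, and a diagrammatic description of how duality is induced from the lattice case, citing Caruso \cite{Caruso} and Liu \cite{Liutors}). Your proposal is exactly a fleshed-out version of that sketch: define $\mathfrak{M}(T)$ as the cokernel coming from a resolution, compare resolutions via a common refinement, lift morphisms using projectivity and full faithfulness on lattices, and handle duality by dualizing the resolution and identifying the result with an $\mathrm{Ext}^1$. So there is no separate ``paper's proof'' to compare against, and your outline matches the intended approach.

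One small point worth tightening: in your full faithfulness argument you invoke projectivity of $\mathfrak{M}(L_0)$ to lift a map $\mathfrak{M}(T)\to\mathfrak{M}(T')$ to a map of two-term complexes in $\mathbf{Mod}_{/\mathfrak{S}}^\phi$. Projectivity as an $\mathfrak{S}$-module gives a lift of $\mathfrak{S}$-modules, but you need the lift to be $\phi$-equivariant in order to transport it back via full faithfulness on lattices. This is where the real content of the references lies (and is why the paper points to \cite{Liutors} for the torsion theory rather than treating it as formal); you should either cite that step or indicate how $\phi$-compatibility of the lift is arranged.
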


The theory of duality on torsion Kisin in this theorem 
is due to Caruso \cite{Caruso} and Liu \cite{Liutors}. 
Objects in $\textbf{Rep}_{G_K}^{\mathrm{cris},\mathrm{tors}}$ have image in 
the subcategory $\textbf{Mod}_{/\mathfrak{S}}^{\phi,\mathrm{tors}}\subseteq \textbf{Mod}_{/\mathfrak{S}}^{\phi}$, consisting of $p$-power torsion objects which 
have a $2$-term resolution by objects in 
$\textbf{Mod}_{/\mathfrak{S}}^{\phi,\mathrm{fr}}$. 
Then given $L_1,L_2\in \textbf{Rep}_{G_K}^{\mathrm{cris}\circ}$ and 
$L\in \textbf{Rep}_{G_K}^{\mathrm{cris,tors}}$ related by the 
short exact sequence
$$0\to L\to L_1\to L_2\to 0,$$
duality is the unique involution making the 
following diagram commute

\bigskip
\centerline{
\begin{xy}
(15,15)*+{0}="a";
(30,15)*+{\mathfrak{M}(L_1^\vee)}="b";
(50,15)*+{\mathfrak{M}(L_2^\vee)}="c";
(70,15)*+{\mathfrak{M}(L^\vee)}="d";
(85,15)*+{0}="e";
(15,0)*+{0}="f";
(30,0)*+{\mathfrak{M}(L_1)^\vee}="g";
(50,0)*+{\mathfrak{M}(L_2)^\vee}="h";
(70,0)*+{\mathfrak{M}(L)^\vee}="i";
(85,0)*+{0}="j";
{\ar "a";"b"};{\ar "b";"c"};{\ar "c";"d"};{\ar "d";"e"};
{\ar "f";"g"};{\ar "g";"h"};{\ar "h";"i"};{\ar "i";"j"};
{\ar_{\simeq} "b";"g"};{\ar_{\simeq} "c";"h"};{\ar_{\simeq} "d";"i"};
\end{xy}.}
\bigskip

\noindent Liu provides an intrinsic definition of duality on the category 
$\mathbf{Mod}_{/\mathfrak{S}}^{\phi,\mathrm{tors}}$ in 
\cite{Liutors}, which is shown to be compatible by the induced 
duality above. \cite{Liutors} also 
shows that $\mathrm{GF}$ restricted to this subcategory 
has image in  
$\mathbf{Mod}_{\OO_{\mathcal{E}}}^{\phi,\mathrm{tors}}$ 
consisting of $p$-power torson \'etale $\OO_{\mathcal{E}}$-modules. 
The above definition of duality commutes as well with 
duality on 
$\mathbf{Mod}_{\OO_{\mathcal{E}}}^{\phi,\mathrm{tors}}$.

Up to this point, the definition of a Kisin module relied on a choice of uniformizer 
and, therefore, 
using compatible choices of uniformizers to define duality and base change. It 
is a technical theorem of Liu \cite{Liu3} that defining a Kisin module is independent 
of this choice, and therefore compatible choices do not have to 
be made. We state the theorem below for reference.

\begin{thm}[Theorem 1.0.1, \cite{Liu3}]
\label{unifindep}
Let $\mathfrak{M}$ and $\mathfrak{M}'$ both be Kisin modules 
associated to the same object $L\in \mathrm{Rep}_{G_K}^{\mathrm{cris,}\circ}$ 
but constructed on different choices of 
uniformizers $\pi_K$ and $\pi'_K$ of $K$. Then 
$$W(\OO_{C^\flat})\otimes_{\phi,\pi_K}\mathfrak{M}\simeq W(\OO_{C^\flat})\otimes_{\phi,\pi'_{K}}\mathfrak{M}'$$
where the tensor product on each side is with respect to the embedding
$\mathfrak{S}\hookrightarrow W(\OO_{C^\flat})$ 
given by  
$u\mapsto [\pi]$ along the uniformizers $\pi = \pi_K$ 
and $\pi = \pi'_{K}$. 
In particular, different choices of 
uniformizers define isomorphic Kisin modules.
\end{thm}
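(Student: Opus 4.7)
The plan is to identify both sides of the claimed isomorphism with a common, intrinsically defined object attached to the crystalline representation $L$ alone. The natural candidate is a Breuil--Kisin--Fargues-type $\phi$-module $M_{\inf}(L)$ over $W(\OO_{C^\flat})$ whose construction uses only the period rings of $p$-adic Hodge theory together with the Galois representation $L$, and thus involves no choice of uniformizer at all. Once such an intrinsic object is available, the theorem will follow by identifying each of the two base-change modules with it.

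The core step is to show that, for any uniformizer $\pi$ of $K$ with associated embedding $\iota_\pi:\mathfrak{S}\hookrightarrow W(\OO_{C^\flat})$ sending $u\mapsto[\pi]$, there is a canonical $\phi$-equivariant isomorphism
$$W(\OO_{C^\flat}) \otimes_{\iota_\pi,\mathfrak{S}} \mathfrak{M}(L) \xrightarrow{\sim} M_{\inf}(L).$$
To construct this I would first pass to the \'etale realization $\mathrm{GF}(\mathfrak{M}(L))$, which by Theorem \ref{Mfunc} recovers $L$ as a $G_{K_\infty}$-representation for $K_\infty = \bigcup_n K(\pi^{1/p^n})$. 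Base-changing further along $\OO_{\mathcal{E}}\hookrightarrow W(C^\flat)$ and then using crystallinity of $L$ to descend the $G_{K_\infty}$-action to the full $G_K$-action, one realizes $W(\OO_{C^\flat})\otimes_{\iota_\pi}\mathfrak{M}(L)$ as a specific sub-$\phi$-lattice inside $L\otimes_{\ZZ_p} W(\OO_{C^\flat})[1/\mu]$ (for a standard period element $\mu$), which coincides with $M_{\inf}(L)$. The independence of this lattice from $\pi$ rests on the key computation that the images $\iota_\pi(\mathsf{Eis}(u))$ and $\iota_{\pi'}(\mathsf{Eis}(u))$ differ only by a unit in $W(\OO_{C^\flat})$, so that the Frobenius-stability condition cuts out the same lattice inside the $G_K$-representation regardless of which uniformizer was used to build the initial $\phi$-module over $\mathfrak{S}$.

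Granting the canonical identification above, the theorem follows formally: applying it separately to $\mathfrak{M}$ (built from $\pi_K$) and to $\mathfrak{M}'$ (built from $\pi'_K$) identifies each base change with the same intrinsic object $M_{\inf}(L)$, and composing the two identifications yields the asserted isomorphism, which is automatically $\phi$-equivariant. The main obstacle is the existence and recovery of $M_{\inf}(L)$; this is the technical heart of Liu's argument in \cite{Liu3} and uses in an essential way that $L$ is crystalline rather than merely semistable, since only then does Galois descent from $G_{K_\infty}$ to $G_K$ succeed and allow one to ``forget'' the uniformizer. Everything after this is formal manipulation with base change and duality for the functor $\mathfrak{M}$.
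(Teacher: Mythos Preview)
The paper does not prove this theorem; it is stated as a citation of Theorem~1.0.1 in \cite{Liu3} and no argument is given. So there is no ``paper's own proof'' to compare your proposal against.

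That said, your sketch is a reasonable outline of the strategy in \cite{Liu3}: one constructs an intrinsic $\phi$-module over $W(\OO_{C^\flat})$ attached to $L$ (what you call $M_{\inf}(L)$) and shows that each uniformizer-dependent Kisin module base-changes to it. Your description of the descent step is a bit loose---the actual mechanism in Liu's paper uses the $(\phi,\hat{G})$-module formalism rather than a direct ``Galois descent from $G_{K_\infty}$ to $G_K$'' as you phrase it---but the overall architecture (intrinsic target object, two comparison maps, compose) is correct. Since the present paper only quotes the result, your outline is adequate as a pointer to where the content lies.
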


\subsection{Classification of Barsotti-Tate Groups by Kisin Modules}
\label{KisSEC}
We define the subcategories of Kisin modules which were considered 
in \cite{Kisin} to classify Barsotti-Tate groups. We retain the notation from 
the previous two sections.

\begin{defn}
\label{BTDef}
Let $\mathbf{BT}^{\phi,r}_{/\mathfrak{S}}$ 
be the full subcategory of $\mathbf{Mod}_{/\mathfrak{S}}^{\phi}$ 
consisting of $\mathfrak{S}$-free modules $\mathfrak{M}$ such that the 
cokernel of $1\otimes \phi_{\mathfrak{M}}: \phi^*\mathfrak{M}\to \mathfrak{M}$ 
is killed by $\mathsf{Eis}^r(u)$. Such Kisin modules are said to have \textbf{height} 
at most $r$.
\end{defn}

A Kisin module of height $r$ comes naturally equipped with another map 
$\psi_{\mathfrak{M}}:\phi^*\mathfrak{M}\to \mathfrak{M}$, called the 
\textbf{Verschiebung}, defined so that it satisfies 
$$(1\otimes \phi_{\mathfrak{M}})\circ \psi_{\mathfrak{M}}=\mathsf{Eis}^r(u).$$
The reader can check this uniquely defines $\psi_{\mathfrak{M}}$.

Let $p\text{-}\mathbf{div}/\OO_K$ denote the category of $p$-divisible groups over $\OO_K$. 
Given an object $\mathcal{G}/\OO_K$ in this category, we let 
$\mathcal{G}^*/\OO_K$ denote its Cartier dual and $T_p\mathcal{G} := \varprojlim_n \mathcal{G}[p^n]$ denote its Tate module.

\begin{thm}[\cite{Kisin}, \cite{Liu}]
\label{GroupEquiv}
The functor $\mathfrak{M}$ of Theorem \ref{Mfunc} induces a fully faithful contravariant equivalence
\begin{align*}
\mathfrak{M}:(p\text{-}\mathbf{div}/\OO_K)&\to\mathbf{BT}^{\phi,1}_{/\mathfrak{S}}\\
\mathcal{G}&\mapsto \mathfrak{M}(\mathcal{G}):=\mathfrak{M}(T_p\mathcal{G}^*).\end{align*}
The functor $\mathfrak{M}$ moreover admits an 
inverse functor $\mathcal{G}$, which gives the isomorphism of $G_{K_\infty}$-modules
$$\epsilon_{\mathfrak{M}}:\mathcal{G}(\mathfrak{M})|_{G_{K_\infty}}\to T^*_{\mathfrak{S}}(\mathfrak{M}) := \mathrm{Hom}_{\mathfrak{S},\phi}(\mathfrak{M},W(\OO_{C^\flat}))$$
where we recall that $\OO_{C^\flat} = \varprojlim_n\OO_{C}/p^n$ and $W(\OO_{C^\flat})$ is equipped with the Frobenius morphism 
coming from the Witt construction.
\end{thm}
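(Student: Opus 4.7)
The plan is to prove the assertion in three steps: confirm that the Kisin module attached to $T_p\mathcal{G}^*$ lands in the height-$1$ subcategory, construct an essential inverse $\mathcal{G}$ by composing with Breuil's classification, and produce the Galois-equivariant comparison $\epsilon_{\mathfrak{M}}$ using the canonical embedding of $\mathfrak{S}$ into Fontaine's ring $W(\OO_{C^\flat})$. Contravariance of the whole functor is built in through the Cartier dual appearing in its definition.

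For the first step, the rational Tate module $V_p(\mathcal{G}^*) := T_p(\mathcal{G}^*)[1/p]$ is a crystalline $G_K$-representation whose Hodge--Tate weights lie in $\{0,1\}$, since the associated Hodge filtration has only two graded pieces ($\omega_{\mathcal{G}}$ in weight $1$ and $\mathrm{Lie}(\mathcal{G}^*)$ in weight $0$). Applying Theorem \ref{Mfunc} produces a finite free Kisin module $\mathfrak{M}(T_p\mathcal{G}^*)$, and under Kisin's dictionary the bound on Hodge--Tate weights is precisely equivalent to $\mathsf{Eis}(u)$ annihilating the cokernel of $1 \otimes \phi_{\mathfrak{M}}: \phi^*\mathfrak{M} \to \mathfrak{M}$, i.e.\ to the height-$1$ condition of Definition \ref{BTDef}. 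Full faithfulness on the restricted category is then inherited directly from Theorem \ref{Mfunc}.

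For the inverse functor, given $\mathfrak{M} \in \mathbf{BT}_{/\mathfrak{S}}^{\phi,1}$ I would base change to the Breuil setting: set $\mathcal{M} := S \otimes_{\phi,\mathfrak{S}} \mathfrak{M}$, let $\mathrm{Fil}^1 \mathcal{M}$ consist of those $x \in \mathcal{M}$ with $(1 \otimes \phi_{\mathfrak{M}})(x) \in \mathsf{Eis}(u)\mathcal{M}$, and equip $\mathrm{Fil}^1\mathcal{M}$ with the Frobenius induced by $\phi_{\mathfrak{M}}/p$. The height-$1$ hypothesis guarantees that $(\mathcal{M},\mathrm{Fil}^1\mathcal{M})$ is an object of $\mathbf{BT}_{/S}^\phi$, and Breuil's Theorem \ref{Breuil} then produces a $p$-divisible group $\mathcal{G}(\mathfrak{M})/\OO_K$. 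Compatibility of the base change $\mathfrak{S} \to S$ with the crystalline realization on both sides shows that $\mathcal{G}$ and $\mathfrak{M}$ are mutually quasi-inverse.

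For the comparison $\epsilon_{\mathfrak{M}}$, fix a compatible system of $p$-power roots $\pi_K^\flat = (\pi_K, \pi_K^{1/p}, \pi_K^{1/p^2}, \ldots) \in \OO_{C^\flat}$ and use the induced embedding $\mathfrak{S} \hookrightarrow W(\OO_{C^\flat})$ sending $u \mapsto [\pi_K^\flat]$. This map is not $G_K$-equivariant but is $G_{K_\infty}$-equivariant, where $K_\infty := \bigcup_n K(\pi_K^{1/p^n})$ and $G_{K_\infty} := \mathrm{Gal}(\overline{K}/K_\infty)$ fixes the chosen roots. Evaluation on $\phi$-equivariant homomorphisms then provides a natural $G_{K_\infty}$-equivariant map $T^*_{\mathfrak{S}}(\mathfrak{M}) \to T_p\mathcal{G}(\mathfrak{M})^*$, which is an isomorphism by Theorem \ref{GF} since both sides are computed by the \'etale $\OO_{\mathcal{E}}$-module $\mathfrak{M}\otimes_{\mathfrak{S}}\OO_{\mathcal{E}}$. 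The principal obstacle is essential surjectivity together with uniformizer-independence (Theorem \ref{unifindep}) of the construction; these form the technical heart of \cite{Kisin} and its refinement \cite{Liu}, requiring a detailed analysis of scheme-theoretic closures of finite flat group schemes to promote the rational crystalline classification to an integral equivalence.
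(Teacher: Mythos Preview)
The paper does not supply its own proof of this theorem; it is stated with attribution to \cite{Kisin} and \cite{Liu} and used as a black box throughout. So there is no in-paper argument to compare your proposal against.

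That said, your outline is a faithful sketch of the Kisin--Liu strategy and would serve well as an expository summary. A few remarks on points you glossed over. First, in your second step the passage through Breuil's Theorem~\ref{Breuil} only yields an equivalence for $p>2$ (and for connected objects when $p=2$); the extension to all $p$-divisible groups at $p=2$ is precisely the contribution of \cite{Liu}, which bypasses Breuil modules via a direct analysis of $\mathfrak{S}$-lattices. You gesture at this in your final sentence but should flag it earlier, since as written your inverse functor is incomplete at $p=2$. Second, the sentence ``Compatibility of the base change $\mathfrak{S}\to S$ with the crystalline realization on both sides shows that $\mathcal{G}$ and $\mathfrak{M}$ are mutually quasi-inverse'' hides the real content: one must check that the filtration on $\mathcal{M}(\mathfrak{M})$ matches the Hodge filtration on $\mathbb{D}(\mathcal{G})(S)$, which is the substance of Theorem~\ref{GroupExt} and requires the comparison of Kisin's $\mathrm{Fil}^1\phi^*\mathfrak{M}$ with Messing's deformation-theoretic filtration. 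Third, your claim that $\epsilon_{\mathfrak{M}}$ is an isomorphism ``by Theorem~\ref{GF}'' is not quite right: Theorem~\ref{GF} gives full faithfulness of $\mathfrak{M}\mapsto\mathfrak{M}\otimes_{\mathfrak{S}}\OO_{\mathcal{E}}$, but the isomorphism you need is Fontaine's equivalence between \'etale $\phi$-modules over $\OO_{\mathcal{E}}$ and $G_{K_\infty}$-representations, together with the fact that $W(\OO_{C^\flat})$ computes the same functor after base change. These are standard but distinct inputs.
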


\begin{exmps}
The following examples can be computed by elementary means.
\begin{enumerate}
\item Let $\mathcal{G} = \varinjlim p^{-n}\mathbb{Z}/\mathbb{Z} = \QQ_p/\ZZ_p$. Then 
$$\mathfrak{M}(\mathcal{G})\simeq \mathfrak{S}e,\quad \phi_{\mathfrak{M}(\mathcal{G})}(e) = \frac{1}{c}\mathsf{Eis}(u)e,\quad \psi_{\mathfrak{M}(\mathcal{G})}(e) = c\otimes e$$ 
where $e$ denotes a generator of the $\mathfrak{S}$-module $\mathfrak{M}$. 
We denote this Kisin module by $\mathfrak{S}(1)$.
\item Let $\mathcal{G} = \varinjlim \mu_{p^n} = \mu_{p^\infty}$. Then 
$$\mathfrak{M}(\mathcal{G})\simeq \mathfrak{S}e,\quad \phi_{\mathfrak{M}(\mathcal{G})}(e) = \frac{1}{c}e,\quad \psi_{\mathfrak{M}(\mathcal{G})}(e) = c\otimes \mathsf{Eis}(u)e$$ 
where $e$ denotes a generator of the $\mathfrak{S}$-module $\mathfrak{M}$. 
We denote this Kisin module by $\mathfrak{S}$.
\end{enumerate}
\end{exmps}

\begin{cor}
\label{thecor}
Let $\mathcal{G}$ be an object in $p\text{-}\mathbf{div}/\OO_K$ 
and $\mathfrak{M}(\mathcal{G})^*$ 
denote the module $\mathrm{Hom}_{\mathfrak{S}}(\mathfrak{M}(\mathcal{G}),\mathfrak{S}(1))$.
\begin{enumerate}
\item The rank of $\mathfrak{M}(\mathcal{G})$ encodes the height of the 
$p$-divisible group $\mathcal{G}$, i.e., $\mathrm{rk}_{\mathfrak{S}}\mathfrak{M}(\mathcal{G}) = \mathrm{ht}(\mathcal{G})$. Moreover, 
$\phi_{\mathfrak{M}(\mathcal{G})}$ and $\psi_{\mathfrak{M}(\mathcal{G})}$ encode the 
relative Frobenius and relative Verschiebung endomorphisms on $\mathcal{G}$.
\item $\mathfrak{M}$ commutes with $*$-duality, i.e., 
$\mathfrak{M}(\mathcal{G})^*\simeq\mathfrak{M}(\mathcal{G}^*)$. In particular, 
the Frobenius and Verschiebung endomorphisms on $\mathfrak{M}(\mathcal{G})^*$ are given by
$$\phi_{\mathfrak{M}(\mathcal{G})^*}(T):=\frac{1}{c}(1\otimes T)\circ\psi_{\mathfrak{M}(\mathcal{G})},$$
$$\psi_{\mathfrak{M}(\mathcal{G})^*}(T):=cT\circ (1\otimes \phi_{\mathfrak{M}(\mathcal{G})}).$$
\end{enumerate}
\end{cor}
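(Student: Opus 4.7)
The plan is to derive part (1) immediately from the results already quoted, and to prove part (2) by leveraging the tensor‐functor structure of $\mathfrak{M}$ together with the Cartier duality pairing on Tate modules.

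For part (1), the rank assertion follows at once from the definition $\mathfrak{M}(\mathcal{G}) := \mathfrak{M}(T_p\mathcal{G}^*)$ and Theorem \ref{Mfunc}(1): the $\mathfrak{S}$-rank of $\mathfrak{M}(T_p\mathcal{G}^*)$ equals the $\mathbb{Z}_p$-rank of $T_p\mathcal{G}^*$, which is $\mathrm{ht}(\mathcal{G}^*) = \mathrm{ht}(\mathcal{G})$. The identification of $\phi_{\mathfrak{M}(\mathcal{G})}$ and $\psi_{\mathfrak{M}(\mathcal{G})}$ with the relative Frobenius and Verschiebung is then a consequence of the contravariant functoriality in Theorem \ref{GroupEquiv}, applied to the endomorphisms $F,V \in \mathrm{End}(\mathcal{G})$ satisfying $FV = VF = [p]$, combined with the defining identity $(1\otimes\phi_{\mathfrak{M}})\circ \psi_{\mathfrak{M}} = \mathsf{Eis}(u)$ which mirrors the height-one relation $FV = [p]$ after passage through the $\mathsf{Eis}(u)\leftrightarrow p$ correspondence implicit in the Kisin classification.

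For part (2), the key observation is the perfect Cartier pairing of Tate modules
\[
T_p\mathcal{G} \otimes_{\mathbb{Z}_p} T_p\mathcal{G}^* \longrightarrow T_p\mu_{p^\infty} = \mathbb{Z}_p(1),
\]
which yields a Galois-equivariant isomorphism $T_p\mathcal{G} \simeq (T_p\mathcal{G}^*)^\vee \otimes_{\mathbb{Z}_p} \mathbb{Z}_p(1)$. Since $\mathfrak{M}$ is a tensor functor that commutes with the formation of $\mathbb{Z}_p$-linear duals by Theorem \ref{Mfunc}(2) and (4), applying it gives
\[
\mathfrak{M}(T_p\mathcal{G}) \;\simeq\; \mathfrak{M}(T_p\mathcal{G}^*)^\vee \otimes_{\mathfrak{S}} \mathfrak{M}(\mathbb{Z}_p(1)).
\]
The example computations identify $\mathfrak{M}(\mathbb{Z}_p(1)) = \mathfrak{M}(T_p\mu_{p^\infty}) = \mathfrak{M}(\mathbb{Q}_p/\mathbb{Z}_p)$ with $\mathfrak{S}(1)$, and since $\mathfrak{M}(\mathcal{G}^*) = \mathfrak{M}(T_p\mathcal{G}^{**}) = \mathfrak{M}(T_p\mathcal{G})$, we conclude
\[
\mathfrak{M}(\mathcal{G}^*) \;\simeq\; \mathrm{Hom}_{\mathfrak{S}}\bigl(\mathfrak{M}(T_p\mathcal{G}^*),\,\mathfrak{S}\bigr) \otimes_{\mathfrak{S}} \mathfrak{S}(1) \;=\; \mathrm{Hom}_{\mathfrak{S}}\bigl(\mathfrak{M}(\mathcal{G}),\,\mathfrak{S}(1)\bigr) \;=\; \mathfrak{M}(\mathcal{G})^*,
\]
which is the stated compatibility with $*$-duality.

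It remains to extract the explicit formulas for $\phi_{\mathfrak{M}(\mathcal{G})^*}$ and $\psi_{\mathfrak{M}(\mathcal{G})^*}$. For $T \in \mathrm{Hom}_{\mathfrak{S}}(\mathfrak{M}(\mathcal{G}),\mathfrak{S}(1))$, the dual Frobenius is characterized, as in the torsion case of the previous section, by commutativity of the square pairing $\mathfrak{M}(\mathcal{G})$ with $\mathfrak{S}(1)$ through their respective Frobenii. Using the example formulas $\phi_{\mathfrak{S}(1)}(e) = \tfrac{1}{c}\mathsf{Eis}(u)e$ and $\psi_{\mathfrak{S}(1)}(e) = c\otimes e$, and the height-one relation $(1\otimes\phi_{\mathfrak{M}(\mathcal{G})})\circ\psi_{\mathfrak{M}(\mathcal{G})} = \mathsf{Eis}(u)$, one verifies directly that the assignments
\[
\phi_{\mathfrak{M}(\mathcal{G})^*}(T) = \tfrac{1}{c}(1\otimes T)\circ \psi_{\mathfrak{M}(\mathcal{G})},\qquad \psi_{\mathfrak{M}(\mathcal{G})^*}(T) = c\,T\circ (1\otimes \phi_{\mathfrak{M}(\mathcal{G})})
\]
land in the right target modules and satisfy $(1\otimes \phi_{\mathfrak{M}(\mathcal{G})^*})\circ \psi_{\mathfrak{M}(\mathcal{G})^*} = \mathsf{Eis}(u)$. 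The main obstacle is precisely this last step: carefully tracking the Frobenius/Verschiebung structures through the chain of tensor and Hom identifications, keeping the factors of $c$ and $\mathsf{Eis}(u)$ consistent with the twist $\mathfrak{S}(1)$. Uniqueness of the dual Frobenius, together with the explicit formulas on the generators established by the examples, pins down the final expressions.
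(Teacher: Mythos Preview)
Your proof is correct and rests on the same underlying input as the paper's, namely the Cartier duality pairing between $\mathcal{G}$ and $\mathcal{G}^*$, but you run the argument in the opposite direction. The paper starts from the tautological pairing $\mathfrak{M}(\mathcal{G})\times\mathfrak{M}(\mathcal{G})^*\to\mathfrak{S}(1)$ on the Kisin side, transports it via the inverse functor $\mathcal{G}$ and the isomorphisms $\epsilon_{\mathfrak{M}}$, $\epsilon_{\mathfrak{M}^*}$ of Theorem~\ref{GroupEquiv} to a pairing of $G_{K_\infty}$-modules, and then recognizes that pairing as the Cartier pairing of Tate modules. You instead begin with the Cartier pairing $T_p\mathcal{G}\otimes T_p\mathcal{G}^*\to\mathbb{Z}_p(1)$ on the Galois side and push it forward through $\mathfrak{M}$ using only the tensor-functor and duality compatibilities of Theorem~\ref{Mfunc}. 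Your route is somewhat more economical in that it never invokes the inverse functor or the $T^*_{\mathfrak{S}}$ description, at the cost of leaning on the tensor-functor axiom (which is stated in Theorem~\ref{Mfunc} but whose listed consequences mention only symmetric and exterior powers, not general tensor products). The paper's route, by contrast, makes explicit the diagram linking the two pairings, which is useful if one later needs to track how specific elements or Frobenii match up. For part (1) and for the explicit Frobenius/Verschiebung formulas on $\mathfrak{M}(\mathcal{G})^*$, the two arguments are essentially identical.
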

\begin{proof}
\begin{enumerate}
\item Since $\mathfrak{M}$ preserves rank, 
and the rank of the Tate module of a $p$-divisible group equals the height of 
the group, the first part of the statement follows. 
The correspondence between the Frobenius and Verschiebung morphisms 
is implicit in Theorem \ref{GroupEquiv}.

\item By definition, there is a perfect pairing 
on the $\mathfrak{S}$-modules
$$\langle,\rangle_{\mathfrak{M}}:\mathfrak{M}(\mathcal{G})\times \mathfrak{M}(\mathcal{G})^*\to \mathfrak{S}(1).$$
The inverse functor $\mathcal{G}$ therefore induces 
a perfect pairing on the $G_{K_\infty}$-modules 
$$\langle,\rangle_{T^*_{\mathfrak{S}}(\mathfrak{M})}:T^*_{\mathfrak{S}}(\mathfrak{M})\times T^*_{\mathfrak{S}}(\mathfrak{M}^*)\to \mathfrak{S}(1).$$
Using the isomorphisms $\epsilon_{\mathfrak{M}}$ and 
$\epsilon_{\mathfrak{M}^*}$ from Theorem \ref{GroupEquiv}, 
we obtain the exact diagram

\bigskip
\centerline{
\begin{xy}
(0,15)*+{\mathcal{G}(\mathfrak{M})}="a";
(20,15)*+{\mathcal{G}(\mathfrak{M}^*)}="b";
(50,15)*+{\mathcal{G}(\mathfrak{S}(1))}="c";
( 0,0)*+{T_\mathfrak{S}^*(\mathfrak{M})}="d";
(20,0)*+{T_\mathfrak{S}^*(\mathfrak{M}^*)}="e";
(50,0)*+{T_\mathfrak{S}^*(\mathfrak{S}(1))}="f";
(10,15)*+{\times}="g";
(10,0)*+{\times}="h";
{\ar_{\epsilon_{\mathfrak{M}}}^{\simeq} "a";"d"};{\ar^{\epsilon_{\mathfrak{M}^*}}_{\simeq} "b";"e"};{\ar^{\simeq} "c";"f"};
{\ar "b";"c"};{\ar "e";"f"};
\end{xy}.}
\bigskip

By this diagram, to show that $*$-duality commutes 
with $\mathfrak{M}$ is equivalent to showing that $*$-duality 
commutes with $\mathcal{G}$. 
This follows from the fact that the pairing in the top row of the diagram 
is identified with pairing of the $p$-divisible group $\mathcal{G}$ with its 
Cartier dual $\mathcal{G}^*$.

The Frobenius and 
Verschiebung morphisms are computed from these diagrams.
\end{enumerate}
\end{proof}

\begin{rem}
The module $\mathfrak{M}^*$ from the above 
corollary relates to the Kisin module dual $\mathfrak{M}^\vee$ 
by a twist: $\mathfrak{M}^*\simeq \mathfrak{M}^\vee\otimes_{\mathfrak{S}}\mathfrak{S}(1)$. We prefer to introduce this notation to 
make it clear this module is identified with the Cartier dual 
of the underlying $p$-divisible group.
\end{rem}

This classification extends naturally to finite flat group schemes. 
It is well-known (see, e.g., \cite{BBM}) that any finite flat group 
scheme $\mathcal{G}$ defined over $\OO_K$ 
has a $2$-term resolution by $p$-divisible groups 
$\mathcal{G}_1$ and $\mathcal{G}_2$ each also defined over $\OO_K$.
Thus $\mathfrak{M}(\cdot)$ extends to the category of 
finite flat group schemes in such a way that on the exact sequence
$$0\to \mathcal{G}\to \mathcal{G}_1\to \mathcal{G}_2\to 0$$
it induces the commutative diagram

\bigskip
\centerline{
\begin{xy}
(15,15)*+{0}="a";
(30,15)*+{\mathfrak{M}(\mathcal{G}_2)}="b";
(50,15)*+{\mathfrak{M}(\mathcal{G}_1)}="c";
(70,15)*+{\mathfrak{M}(\mathcal{G})}="d";
(85,15)*+{0}="e";
(15,0)*+{0}="f";
(30,0)*+{\mathfrak{M}(\mathcal{G}_2)}="g";
(50,0)*+{\mathfrak{M}(\mathcal{G}_1)}="h";
(70,0)*+{\mathfrak{M}(\mathcal{G})}="i";
(85,0)*+{0}="j";
{\ar "a";"b"};{\ar "b";"c"};{\ar "c";"d"};{\ar "d";"e"};
{\ar "f";"g"};{\ar "g";"h"};{\ar "h";"i"};{\ar "i";"j"};
{\ar_{\phi_{\mathfrak{M}(\mathcal{G}_2)}} "b";"g"};{\ar_{\phi_{\mathfrak{M}(\mathcal{G}_1)}} "c";"h"};{\ar_{\phi_{\mathfrak{M}(\mathcal{G})}} "d";"i"};
\end{xy}}
\bigskip

\noindent and the construction of $\mathfrak{M}(\mathcal{G})$ is 
independent of the choice of $p$-divisible groups providing a 
resolution of $\mathcal{G}$. 
We have already seen that $\mathfrak{M}$ 
extends in this way in Theorem \ref{extend}. Here we identify the 
precise subcategory 
of $\mathbf{Mod}_{/\mathfrak{S}}^{\phi,\mathrm{tors}}$ 
which classifies finite flat group schemes.

\begin{defn}
\label{finitedef}
Let $\mathbf{Mod}^{\phi,r}_{/\mathfrak{S}}$, 
be the full subcategory of $\mathbf{Mod}_{/\mathfrak{S}}^\phi$ 
consisting of $\mathfrak{S}$-modules of projective dimension $1$ 
which are $p$-power torsion and and such that the 
cokernel of $1\otimes \phi_{\mathfrak{M}}: \phi^*\mathfrak{M}\to \mathfrak{M}$ 
is killed by $\mathsf{Eis}^r(u)$. This category is often called the category of \textbf{finite Kisin modules of height at most $r$}.
\end{defn}

Let $p\text{-}\mathbf{Gr}/\OO_K$ denote the category of finite flat group schemes over $\OO_K$ of $p$-power order.

\begin{thm}[\cite{Kisin},\cite{Liu}]
\label{equivfgroup}
Let $0\to \mathcal{G}\to \mathcal{G}_1\to \mathcal{G}_2\to 0$ be any 
resolution of the finite flat group scheme $\mathcal{G}/\OO_K$ by $p$-divisible 
groups $\mathcal{G}_1/\OO_K$ and $\mathcal{G}_2/\OO_K$. 
The functor $\mathfrak{M}$ of Theorem \ref{Mfunc} extends to induce a fully faithful functor
\begin{align*}
\mathfrak{M}:(p\text{-}\mathbf{Gr}/\OO_K)&\to\mathbf{Mod}^{\phi,1}_{/\mathfrak{S}}\\
\mathcal{G}&\mapsto \mathfrak{M}(\mathcal{G}):=\mathrm{coker}(\mathfrak{M}(\mathcal{G}_1)\to \mathfrak{M}(\mathcal{G}_2))\end{align*}
which is an equivalence when $p>2$. When $p = 2$, $\mathfrak{M}$ gives an equivalence 
on the full subcategories of connected objects.
\end{thm}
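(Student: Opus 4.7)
The plan is to follow the standard Kisin strategy: first show the functor is well-defined on finite flat group schemes via resolutions, then verify its image lies in $\mathbf{Mod}^{\phi,1}_{/\mathfrak{S}}$, and finally establish the equivalence by separately checking full faithfulness and essential surjectivity.

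First I would verify that every finite flat group scheme $\mathcal{G}/\OO_K$ of $p$-power order admits a two-term resolution $0 \to \mathcal{G} \to \mathcal{G}_1 \to \mathcal{G}_2 \to 0$ by $p$-divisible groups over $\OO_K$, a classical fact from \cite{BBM}. Applying the contravariant equivalence $\mathfrak{M}$ of Theorem \ref{GroupEquiv} to this resolution yields a map $\mathfrak{M}(\mathcal{G}_1) \to \mathfrak{M}(\mathcal{G}_2)$ of finite free $\mathfrak{S}$-modules, and we define $\mathfrak{M}(\mathcal{G})$ as its cokernel. Independence from the choice of resolution is essentially the content of Theorem \ref{extend}: the assignment $\mathcal{G} \mapsto T_p \mathcal{G}^*$ embeds $p$-power order finite flat group schemes into $\mathbf{Rep}_{G_K}^{\mathrm{cris},\mathrm{tors}}$, on which $\mathfrak{M}$ is already a well-defined functor, and the cokernel construction is one explicit realization of that functor.

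Next I would verify that the output lies in $\mathbf{Mod}^{\phi,1}_{/\mathfrak{S}}$. Projective dimension one is automatic from the cokernel presentation between finite free $\mathfrak{S}$-modules, while the $p$-power torsion condition follows from $\mathcal{G}$ having $p$-power order and the compatibility of $\mathfrak{M}$ with the two-term resolution. The height bound is the main computation: I would apply the snake lemma to the diagram whose two rows are the short exact sequence of Kisin modules, joined by the vertical maps $1\otimes \phi_{\mathfrak{M}(\mathcal{G}_i)}$ and $1\otimes \phi_{\mathfrak{M}(\mathcal{G})}$. Since the cokernels in the first two columns are killed by $\mathsf{Eis}(u)$ by Definition \ref{BTDef}, the induced cokernel in the third column is killed by $\mathsf{Eis}(u)$ as well. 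Full faithfulness of the extended functor then follows formally from the full faithfulness recorded in Theorem \ref{extend} combined with functoriality of the resolutions and the five-lemma.

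For essential surjectivity, given $\mathfrak{M} \in \mathbf{Mod}^{\phi,1}_{/\mathfrak{S}}$, the projective-dimension-one hypothesis produces a short exact sequence $0 \to \mathfrak{M}_2 \to \mathfrak{M}_1 \to \mathfrak{M} \to 0$ with each $\mathfrak{M}_i$ finite free over $\mathfrak{S}$. The delicate step is to arrange this presentation so that each $\mathfrak{M}_i$ lies in $\mathbf{BT}^{\phi,1}_{/\mathfrak{S}}$, i.e.\ carries a Frobenius structure of height at most $1$ compatible with that on $\mathfrak{M}$. Once achieved, Theorem \ref{GroupEquiv} converts the presentation into an isogeny of $p$-divisible groups over $\OO_K$, and $\mathcal{G}$ is recovered as its scheme-theoretic kernel. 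The hard part will be this height-preserving lifting for essential surjectivity, and especially the case $p = 2$, where the Breuil-theoretic machinery of Section \ref{BrSEC} only yields an equivalence after restriction to connected objects. For $p > 2$ the lifting goes through directly as in \cite{Kisin}, \cite{Liu}, whereas for $p = 2$ one must argue that a connected finite Kisin module of height at most $1$ admits a resolution by connected free Kisin modules of height at most $1$ whose image under $\mathfrak{M}$ is a map of connected $p$-divisible groups, ensuring that the kernel is a connected finite flat group scheme.
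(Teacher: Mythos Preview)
The paper does not supply its own proof of this theorem; it is stated as a cited result from \cite{Kisin} and \cite{Liu} and used as a black box thereafter. Your outline is a faithful sketch of the standard argument in those references: well-definedness via two-term resolutions by $p$-divisible groups together with Theorem \ref{extend}, the height-$1$ bound via a diagram chase on the cokernels of Frobenius, and essential surjectivity by lifting a free presentation of a finite Kisin module to objects of $\mathbf{BT}^{\phi,1}_{/\mathfrak{S}}$, with the $p=2$ restriction to connected objects forced by the corresponding restriction in Breuil's classification (Theorem \ref{Breuil}).
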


To study finite flat group schemes of $p^n$-torsion for a 
fixed positive integer $n$ we introduce further subcategories 
of $\mathbf{Mod}_{/\mathfrak{S}}^\phi$ and modify $\mathfrak{M}$ in 
the following way. Recall that 
$\mathfrak{S} = W[[u]]$. Then $\mathfrak{S}$ projects onto the rings 
$\mathfrak{S}_n = W_n[[u]]$ for each integer $n>0$, where $W_n$ 
is the $n$th truncation of the Witt ring $W$. 
We note that $\mathfrak{S}_n$ has a Frobenius extending the Frobenius 
on $W_n$ by $u\mapsto u^p$.

\begin{defn}
Let $\mathbf{Mod}^{\phi,r}_{/\mathfrak{S}_n}$ 
be the subcategory of $\mathbf{Mod}^{\phi,r}_{/\mathfrak{S}}$ 
consisting of $p^n$-torsion objects for a fixed positive integer $n$. These modules 
are defined over the ring $\mathfrak{S}_n$ and are called 
\textbf{finite $\mathfrak{S}_n$-modules of height $r$}.
\end{defn}

For each $n$ we construct the functor 
\begin{align*}
-\otimes\mathfrak{S}_n:\mathbf{Mod}^{\phi,1}_{/\mathfrak{S}}&\to \mathbf{Mod}^{\phi,1}_{/\mathfrak{S}_n}\\
\mathfrak{M}&\mapsto \mathfrak{M}\otimes_{\mathfrak{S}}\mathfrak{S}_n=:\mathfrak{M}_{\mathfrak{S}_n}.\end{align*}
It is clear this commutes with $\phi$.

\begin{prop}
\label{fingroup}
The functor $-\otimes\mathfrak{S}_n$ is full, and faithful on 
$p^n$-torsion objects in $\mathbf{Mod}^{\phi,1}_{/\mathfrak{S}}$. In particular, 
there is an equivalence between finite flat $p^n$-torsion 
group schemes and the category $\mathbf{Mod}^{\phi,1}_{/\mathfrak{S}_n}$.
\end{prop}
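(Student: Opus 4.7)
The plan is to identify $\mathbf{Mod}^{\phi,1}_{/\mathfrak{S}_n}$ with the full subcategory of $\mathbf{Mod}^{\phi,1}_{/\mathfrak{S}}$ consisting of $p^n$-torsion objects, observe that $-\otimes\mathfrak{S}_n$ restricts to the identity functor on this subcategory, and then combine with Theorem \ref{equivfgroup} to deduce the equivalence with finite flat $p^n$-torsion group schemes.

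First I would record the short exact sequence $0 \to \mathfrak{S} \xrightarrow{p^n} \mathfrak{S} \to \mathfrak{S}_n \to 0$ of $\mathfrak{S}$-modules, which shows $\mathfrak{S}_n$ has projective dimension $1$ over $\mathfrak{S}$. This lets me view any object of $\mathbf{Mod}^{\phi,1}_{/\mathfrak{S}_n}$ as a $p^n$-torsion $\mathfrak{S}$-module whose projective dimension over $\mathfrak{S}$ is still $1$ (since a two-term $\mathfrak{S}_n$-free resolution can be spliced with the above sequence). The height condition on the cokernel of $1\otimes\phi$ is intrinsic and unchanged by viewing the module over either base. Conversely, a $p^n$-torsion object of $\mathbf{Mod}^{\phi,1}_{/\mathfrak{S}}$ factors through $\mathfrak{S}_n$ and yields an object of $\mathbf{Mod}^{\phi,1}_{/\mathfrak{S}_n}$; these two constructions are mutually inverse, so they identify $\mathbf{Mod}^{\phi,1}_{/\mathfrak{S}_n}$ with the full subcategory of $p^n$-torsion objects inside $\mathbf{Mod}^{\phi,1}_{/\mathfrak{S}}$.

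Next, under this identification, for $\mathfrak{M}$ with $p^n\mathfrak{M}=0$ one has $\mathfrak{M}\otimes_\mathfrak{S}\mathfrak{S}_n=\mathfrak{M}/p^n\mathfrak{M}=\mathfrak{M}$ canonically, so $-\otimes\mathfrak{S}_n$ is literally the identity on this subcategory, which immediately gives fullness and faithfulness on $p^n$-torsion objects. Faithfulness fails outside this subcategory (e.g.\ multiplication by $p^n$ on a larger-torsion module is nonzero but becomes zero after tensoring), which explains why the faithfulness assertion is restricted.

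Finally, I would invoke Theorem \ref{equivfgroup}, which provides an equivalence (for $p>2$; on connected objects for $p=2$) between $p\text{-}\mathbf{Gr}/\OO_K$ and $\mathbf{Mod}^{\phi,1}_{/\mathfrak{S}}$. Since $\mathfrak{M}$ is an exact contravariant equivalence that converts the multiplication-by-$p^n$ endomorphism on one side into multiplication-by-$p^n$ on the other, a finite flat group scheme is $p^n$-torsion if and only if its Kisin module is $p^n$-torsion. Restricting the equivalence of Theorem \ref{equivfgroup} to the respective full subcategories of $p^n$-torsion objects, and then composing with the identification from the first step, produces the claimed equivalence between finite flat $p^n$-torsion group schemes over $\OO_K$ and $\mathbf{Mod}^{\phi,1}_{/\mathfrak{S}_n}$.

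I expect the main delicate point to be the verification that the projective-dimension condition in Definition \ref{finitedef} is genuinely preserved by passage between $\mathfrak{S}$- and $\mathfrak{S}_n$-structures. This reduces to the homological algebra fact that a finite $p^n$-torsion $\mathfrak{S}$-module of projective dimension $1$ over $\mathfrak{S}$ is exactly one admitting a two-term free resolution by finite free $\mathfrak{S}$-modules, which in turn descends to a two-term $\mathfrak{S}_n$-free resolution and vice versa via the standard sequence for $\mathfrak{S}_n$ recorded above.
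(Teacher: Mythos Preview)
Your proposal is correct and follows essentially the same approach as the paper, only much more explicitly: the paper's two-line proof simply asserts that fullness and faithfulness follow from compatibility of $-\otimes\mathfrak{S}_n$ with the Frobenius morphisms and then cites an equivalence theorem. The one minor difference is that you invoke Theorem~\ref{equivfgroup} (the direct equivalence for finite flat group schemes) whereas the paper cites Theorem~\ref{GroupEquiv} (the equivalence for $p$-divisible groups); your choice is arguably the more natural reference here, and your added care about the projective-dimension condition is a genuine clarification rather than a deviation.
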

\begin{proof}
Each part of this statement is easily checked by the compatibility of 
$-\otimes \mathfrak{S}_n$ with the Frobenius morphisms. The equivalence 
then follows from Theorem \ref{GroupEquiv}.
\end{proof}

\begin{rem}
Every object in $\mathbf{Mod}_{/\mathfrak{S}_1}^{\phi,r}$ is 
$\mathfrak{S}_1$-free. This, however, is not true of objects 
in $\mathbf{Mod}_{/\mathfrak{S}_n}^{\phi,r}$ when $n>1$.
\end{rem}

\begin{exmps}
Let $\mathsf{Eis}_n(u)\in W_n[u]$ be the projection of 
$\mathsf{Eis}(u)\in W[u]$.
\begin{enumerate}
\item Let $\mathcal{G}_n = p^{-n}\mathbb{Z}/\mathbb{Z}$. Then 
$$\mathfrak{M}(\mathcal{G}_n)\simeq \mathfrak{S}_ne,\quad \phi_{\mathfrak{M}(\mathcal{G}_n)}(e) = \frac{1}{c}\mathsf{Eis}_n(u)e,\quad \psi_{\mathfrak{M}(\mathcal{G}_n)}(e) = c\otimes e$$ 
where $e$ denotes a generator of the $\mathfrak{S}_n$-module 
$\mathfrak{M}(\mathcal{G}_n)$. 
We denote this Kisin module by $\mathfrak{S}_n(1)$.
\item Let $\mathcal{G}_n = \mu_{p^n}$. Then 
$$\mathfrak{M}(\mathcal{G}_n)\simeq \mathfrak{S}_ne,\quad \phi_{\mathfrak{M}(\mathcal{G}_n)}(e) = \frac{1}{c}e,\quad \psi_{\mathfrak{M}(\mathcal{G}_n)}(e) = c\otimes \mathsf{Eis}_n(u)e$$ 
where $e$ denotes a generator of the $\mathfrak{S}_n$-module 
$\mathfrak{M}(\mathcal{G}_n)$. 
We denote this Kisin module by $\mathfrak{S}_n$.
\end{enumerate}

We note that in these examples $\mathcal{G} = \varinjlim_n\mathcal{G}_n$ is a 
$p$-divisible group, and 
$\mathfrak{M}(\mathcal{G}) = \varprojlim_n\mathfrak{M}(\mathcal{G}_n)$ since 
the $\mathfrak{M}(\mathcal{G}_n)$ have compatible Frobenius structures.
\end{exmps}

\begin{rem}
In light of Proposition \ref{fingroup}, we will write $\mathfrak{M}_{\mathfrak{S}_n}(\mathcal{G}):= \mathfrak{M}(\mathcal{G})\otimes_{\mathfrak{S}}\mathfrak{S}_n$. Then 
$\mathfrak{M}_{\mathfrak{S}_n}$ is a functor on any Barsotti-Tate group 
and its image is the Kisin module corresponding to its maximal $p^n$-torsion 
subgroup scheme.
\end{rem}

\subsection{Deformation Theory with Kisin Modules}
\label{hodgeBK}
We interpret the deformation theory of $p$-divisible groups in 
the language of Kisin modules. 
Recall that $S$ is the $p$-adic completion of $W[u,\mathsf{Eis}(u)^n/n!]_{n\ge 1}$ 
with a Frobenius extending the Frobenius on $W$ by $u\mapsto u^p$ and 
a filtration given by $\mathrm{Fil}^1(S) = \mathrm{ker}(S\xrightarrow{u\mapsto\pi}\OO_K)$. 
We also recall that $\mathfrak{S} = W[[u]]$ with a Frobenius extending the 
Frobenius on $W$ by $u\mapsto u^p$. The categories $\mathbf{BT}_{/S}^{\phi}$ and $\mathbf{BT}_{/\mathfrak{S}}^{\phi,1}$ denoted Breuil modules and Kisin modules of height $1$, respectively, and were defined in Sections \ref{BrSEC} and \ref{KisSEC}.

The ring $S$ has a natural structure as an $\mathfrak{S}$-algebra 
under $u\mapsto u$, which induces the functor
\begin{align*}
\mathcal{M}:\mathbf{BT}^{\phi,1}_{/\mathfrak{S}}&\to \mathbf{BT}^{\phi}_{/S}\\
\mathfrak{M}&\mapsto \mathcal{M}(\mathfrak{M}):=S\otimes_{\mathfrak{S}}\phi^*\mathfrak{M}
\end{align*}
where the $S$-module filtration 
$\mathrm{Fil}^1\mathcal{M}(\mathfrak{M})\subseteq \mathcal{M}(\mathfrak{M})$ 
is defined by the Cartesian diagram

\bigskip
\centerline{
\begin{xy}
(0,15)*+{\mathrm{Fil}^1\mathcal{M}(\mathfrak{M})}="a";
(35,15)*+{\mathcal{M}(\mathfrak{M}) = S\otimes_{\mathfrak{S}}\phi^*\mathfrak{M}}="b";
( 0,0)*+{\mathrm{Fil}^1S\otimes_{\mathfrak{S}}\mathfrak{M}}="c";
(35,0)*+{S\otimes_{\mathfrak{S}}\mathfrak{M}}="d";
{\ar@{^{(}->} "a";"b"};{\ar^{1\otimes\phi_{\mathfrak{M}}} "b";"d"};{\ar@{^{(}->} "c";"d"};{\ar "a";"c"};
\end{xy}.}
\bigskip

\noindent This filtered submodule is also written as
$$\mathrm{Fil}^1\mathcal{M}(\mathfrak{M}) = S\cdot \mathrm{Fil}^1\phi^*(\mathfrak{M}) + \mathrm{Fil}^1 S\cdot \mathcal{M}(\mathfrak{M})$$
where $\mathrm{Fil}^1\phi^*(\mathfrak{M})$ is defined by the Cartesian diagram

\bigskip
\centerline{
\begin{xy}
(0,15)*+{\mathrm{Fil}^1\phi^*(\mathfrak{M})}="a";
(35,15)*+{\phi^*\mathfrak{M} = \mathfrak{S}\otimes_{\phi,\mathfrak{S}}\mathfrak{M}}="b";
( 0,0)*+{\mathsf{Eis}(u)\cdot \mathfrak{M}}="c";
(35,0)*+{\mathfrak{M}}="d";
{\ar@{^{(}->} "a";"b"};{\ar^{1\otimes\phi_{\mathfrak{M}}} "b";"d"};{\ar@{^{(}->} "c";"d"};{\ar "a";"c"};
\end{xy}.}
\bigskip

\begin{thm}[\cite{Kisin}]
\label{GroupExt}
Let $\mathcal{G}/\OO_K$ be a $p$-divisible group. Then there 
exists an isomorphism
$$\mathbb{D}(\mathcal{G})(S)\xrightarrow{\sim}\mathcal{M}(\mathfrak{M}(\mathcal{G}))$$
which is compatible with $\phi$.
\end{thm}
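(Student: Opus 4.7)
The plan is to construct a canonical $\phi$-equivariant comparison morphism between the two $S$-modules and verify that it is an isomorphism by reducing to universal cases.

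First, I would construct the comparison. The ring $S$ is the $p$-adic completion of the divided power envelope of the surjection $W[u]\twoheadrightarrow \OO_K$, $u\mapsto \pi$, so it is naturally an object of the crystalline site of $\OO_K$ and receives a map from $\mathfrak{S}=W[[u]]$ compatible with the projections to $\OO_K$. The Kisin module $\mathfrak{M}(\mathcal{G})$ is defined from the crystalline Galois representation $T_p\mathcal{G}^*$ via embeddings $\mathfrak{S}\hookrightarrow W(\OO_{C^\flat})$ that factor through the crystalline period ring $A_{\mathrm{cris}}$. Combined with the crystalline base-change property of Messing's crystal (Lemma \ref{properties}(1)), this provides a natural $\phi$-equivariant morphism
\[
\mathbb{D}(\mathcal{G})(S)\longrightarrow S\otimes_{\mathfrak{S}}\phi^*\mathfrak{M}(\mathcal{G}) = \mathcal{M}(\mathfrak{M}(\mathcal{G})).
\]

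Next, I would show this morphism is an isomorphism by d\'evissage. Both sides are contravariantly exact in the $p$-divisible group variable and compatible with Cartier duality, the former by Lemma \ref{properties}(5) and the latter by Corollary \ref{thecor}(2). By Breuil's classification (Theorem \ref{Breuil}) and Kisin's equivalence (Theorem \ref{GroupEquiv}), both source and target categories are equivalent to $p\text{-}\mathbf{div}/\OO_K$, so it suffices to verify the comparison on a class of generators. The natural candidates are $\mu_{p^\infty}$ and $\QQ_p/\ZZ_p$, for which I would use the explicit formulas given in the excerpt for both the Breuil and Kisin modules to perform a direct computation; the remaining cases follow by the five lemma together with Cartier duality and compatibility with filtered colimits.

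The main obstacle will be verifying that the comparison morphism respects the filtered structure, not merely the underlying modules with their Frobenii. On the Breuil side, $\mathrm{Fil}^1\mathbb{D}(\mathcal{G})(S)$ lifts the Hodge filtration $\omega_{\mathcal{G}}\subseteq \mathbb{D}(\mathcal{G})(\OO_K)$ through the PD thickening $S\twoheadrightarrow \OO_K$. On the Kisin side, $\mathrm{Fil}^1\mathcal{M}(\mathfrak{M}(\mathcal{G}))$ is defined through the Cartesian diagram involving $1\otimes \phi_{\mathfrak{M}}$, whose cokernel is killed by $\mathsf{Eis}(u)$. Reconciling these is essentially the crystalline content of Kisin's theorem: the Frobenius twist $\phi^*\mathfrak{M}$ and the appearance of $\mathsf{Eis}(u)$ interact with the divided powers on $\mathrm{Fil}^1S=\ker(S\to \OO_K)$ in just the right way to identify the Hodge filtration via $1\otimes \phi_{\mathfrak{M}}$. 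This is the technical core of Kisin's classification theorem, and when $p=2$ it requires the restriction to connected objects inherent in Breuil's classification.
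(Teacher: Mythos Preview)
The paper does not give its own proof of this statement: Theorem~\ref{GroupExt} is stated with a citation to \cite{Kisin} and used as a black box. There is therefore no paper-proof to compare your proposal against.

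That said, your proposal has a genuine gap in the d\'evissage step. The groups $\mu_{p^\infty}$ and $\QQ_p/\ZZ_p$ do \emph{not} generate $p\text{-}\mathbf{div}/\OO_K$ under extensions, Cartier duality, and filtered colimits, so checking the comparison on these two objects is insufficient. For instance, a connected one-dimensional formal group of height $h>1$ (a Lubin--Tate group) is simple as a $p$-divisible group over $\OO_K$: it admits no nontrivial sub-$p$-divisible group, and in particular cannot be written as an iterated extension of copies of $\mu_{p^\infty}$ and $\QQ_p/\ZZ_p$. The five-lemma argument therefore does not reach such objects. More generally, the connected--\'etale sequence only splits off the \'etale part; the connected part can have arbitrary Newton slopes in $(0,1)$ and is not built from $\mu_{p^\infty}$.

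Kisin's actual argument in \cite{Kisin} does not proceed by reduction to generators. He first constructs, for any crystalline representation, a functorial $\phi$-equivariant isomorphism between $\phi^*\mathfrak{M}\otimes_{\mathfrak{S}}S$ and the Breuil module coming from the filtered $\varphi$-module $D_{\mathrm{cris}}$, using his classification of $\mathfrak{S}$-lattices via the rigid-analytic vector bundle machinery on the open unit disk. The compatibility with $\mathbb{D}(\mathcal{G})(S)$ then comes from the identification of $D_{\mathrm{cris}}(V_p\mathcal{G}^*)$ with the Dieudonn\'e crystal evaluated on $S$, which is a separate comparison theorem (Faltings, Breuil). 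The filtration compatibility you correctly flag as the delicate point is handled in that framework, not by an explicit check on rank-one objects.
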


\begin{cor}
There is a canonical $\phi$-equivariant isomorphism 
$$\phi^*(\mathfrak{M}(\mathcal{G})/u\mathfrak{M}(\mathcal{G}))\xrightarrow{\sim}\mathbb{D}(\mathcal{G}_0)(W)$$
where $\mathcal{G}_0 = \mathcal{G}\otimes_{\OO_K} k$. In particular, the linear Frobenius and 
Verschiebung endomorphisms on the Dieudonn\'e module are given by $1\otimes\phi_{\mathfrak{M}(\mathcal{G})}\mod u$ 
and $1\otimes\psi_{\mathfrak{M}(\mathcal{G})}\mod u$.
\end{cor}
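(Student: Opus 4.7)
The plan is to obtain the isomorphism by applying a crystalline base change to the isomorphism supplied by Theorem \ref{GroupExt}, and then to trace through the Frobenius structures on both sides.

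First I would invoke Theorem \ref{GroupExt} to identify $\mathbb{D}(\mathcal{G})(S)$ with $\mathcal{M}(\mathfrak{M}(\mathcal{G})) = S\otimes_{\mathfrak{S}}\phi^*\mathfrak{M}(\mathcal{G})$ as $\phi$-equivariant $S$-modules. The next step is to reduce the base: the $W$-algebra map $S\to W$ defined by $u\mapsto 0$ fits into a commutative square of PD-thickenings, with $S$ a PD-thickening of $\OO_K$ and $W$ a PD-thickening of $k$ (with trivial PD-structure), and it is compatible with Frobenius since both $\phi_S$ and $\phi_W$ restrict from the Frobenius on $\mathfrak{S}$ via $u\mapsto u^p$. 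This morphism realizes a crystalline base change of the pair $(\mathcal{G}_0\hookrightarrow\mathcal{G})$ to the pair $(\mathcal{G}_0\hookrightarrow\mathcal{G}_0)$, so by the Crystalline Base Change property (Lemma \ref{properties}(1))
$$\mathbb{D}(\mathcal{G})(S)\otimes_S W \;\xrightarrow{\sim}\; \mathbb{D}(\mathcal{G}_0)(W).$$

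On the Kisin side I would then compute
$$\mathcal{M}(\mathfrak{M}(\mathcal{G}))\otimes_S W \;=\; \bigl(S\otimes_{\mathfrak{S}}\phi^*\mathfrak{M}(\mathcal{G})\bigr)\otimes_S W \;=\; W\otimes_{\mathfrak{S}}\phi^*\mathfrak{M}(\mathcal{G}),$$
and then use that $\phi^*\mathfrak{M}(\mathcal{G}) = \mathfrak{S}\otimes_{\phi,\mathfrak{S}}\mathfrak{M}(\mathcal{G})$ to rewrite the right-hand side (since $W\otimes_{\mathfrak{S}}\mathfrak{S} = W$ via $u\mapsto 0$, and this commutes with the Frobenius twist because $\phi(u)=u^p$ lies in the kernel of $u\mapsto 0$) as
$$W\otimes_{\phi,W}\bigl(\mathfrak{M}(\mathcal{G})/u\mathfrak{M}(\mathcal{G})\bigr) \;=\; \phi^*\bigl(\mathfrak{M}(\mathcal{G})/u\mathfrak{M}(\mathcal{G})\bigr).$$
Composing the three isomorphisms gives the stated canonical identification.

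For the identification of the Dieudonn\'e operators, I would appeal to the Frobenius Linearization property (Lemma \ref{properties}(2)): the linearized Frobenius on $\mathbb{D}(\mathcal{G}_0)(W)$ is precisely the reduction modulo $u$ of the map $\phi^*\mathcal{M}(\mathfrak{M}(\mathcal{G}))\to \mathcal{M}(\mathfrak{M}(\mathcal{G}))$ induced by the crystal structure on $\mathbb{D}(\mathcal{G})(S)$, which under Theorem \ref{GroupExt} is the map coming from $1\otimes\phi_{\mathfrak{M}(\mathcal{G})}$. Reducing modulo $u$ then yields the Frobenius $1\otimes\phi_{\mathfrak{M}(\mathcal{G})}\bmod u$, and the Verschiebung assertion follows either by the same argument applied to $\psi_{\mathfrak{M}(\mathcal{G})}$ or from the relation $(1\otimes\phi_{\mathfrak{M}(\mathcal{G})})\circ\psi_{\mathfrak{M}(\mathcal{G})}=\mathsf{Eis}(u)$, using that $\mathsf{Eis}(u)\equiv \mathsf{Eis}(0)= a_0 = cp$ modulo $u$ so that the linear Verschiebung on $\mathbb{D}(\mathcal{G}_0)(W)$ matches $1\otimes\psi_{\mathfrak{M}(\mathcal{G})}\bmod u$ up to the unit $c$ absorbed in the dual normalizations.

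The main obstacle is the bookkeeping in the third step: one must check carefully that all arrows commute with the Frobenius structure, and that the identification $W\otimes_{\mathfrak{S}}\phi^*\mathfrak{M}(\mathcal{G})\cong\phi^*(\mathfrak{M}(\mathcal{G})/u\mathfrak{M}(\mathcal{G}))$ is the correct one (i.e.\ that taking $\phi^*$ commutes with reduction modulo $u$ in this specific setting, which uses $\phi(u)\in u\mathfrak{S}$). The other genuinely non-formal input is the Crystalline Base Change from Lemma \ref{properties}(1), which is what allows us to replace $\mathbb{D}(\mathcal{G})(S)\otimes_S W$ by $\mathbb{D}(\mathcal{G}_0)(W)$ without losing any of the crystal structure.
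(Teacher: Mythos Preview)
Your proposal is correct and follows essentially the same approach as the paper: apply Theorem \ref{GroupExt}, base change along $S\to W$ via $u\mapsto 0$ using the crystalline base change property (Lemma \ref{properties}(1)), and reduce $\phi^*\mathfrak{M}(\mathcal{G})$ modulo $u$. Your treatment is slightly more detailed than the paper's (you spell out why $\phi^*$ commutes with reduction mod $u$ and invoke Lemma \ref{properties}(2) explicitly for the Frobenius), but the argument is the same.
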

\begin{proof}
The ring $W$ has the structure of an $S$-algebra by mapping $u\mapsto 0$. 
We compute
\begin{align*}
\mathbb{D}(\mathcal{G}_0)(W)\xrightarrow{\sim}\mathbb{D}(\mathcal{G})(S)\otimes_S W&\xrightarrow{\sim}\mathcal{M}(\mathfrak{M}(\mathcal{G}))\otimes_S W\\
&=\phi^*(\mathfrak{M}(\mathcal{G}))\otimes_{\mathfrak{S}} W\\
&=\phi^*(\mathfrak{M}(\mathcal{G}))/u\phi^*(\mathfrak{M}(\mathcal{G}))
\end{align*}
where the first isomorphism follows from the crystalline base change 
property of the Frobenius 
crystal in Lemma \ref{properties} (1) and the second isomorphism follows from Theorem \ref{GroupExt}. 
Since each step in the computation commutes with the Frobenius operator, 
we conclude that the Frobenius and Verschiebung operators on the Dieudonn\'e 
module are reductions of (the linearization of) the original modulo $u$.
\end{proof}

\begin{lem}
\label{diffmod}
\label{Liemod}
Let $\mathcal{G}/\OO_K$ be a $p$-divisible group. There are natural isomorphisms
\begin{align*}
\omega_{\mathcal{G}}&\simeq \mathfrak{M}(\mathcal{G})/\phi_{\mathfrak{M}(\mathcal{G})}(\mathfrak{M}(\mathcal{G}))\\
\mathrm{Lie}(\mathcal{G})&\simeq \phi^*\mathfrak{M}(\mathcal{G})^*/\mathrm{Fil}^1\phi^*\mathfrak{M}(\mathcal{G})^*.\end{align*}
\end{lem}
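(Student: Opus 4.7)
The plan is to transfer the Breuil-module description of $\omega_{\mathcal{G}}$ and $\mathrm{Lie}(\mathcal{G})$ from Corollary~\ref{deformation} across the functor $\mathcal{M}$ of Section~\ref{hodgeBK}, using the isomorphism $\mathbb{D}(\mathcal{G})(S)\simeq \mathcal{M}(\mathfrak{M}(\mathcal{G}))=S\otimes_{\mathfrak{S}}\phi^{*}\mathfrak{M}(\mathcal{G})$ of Theorem~\ref{GroupExt}, together with its Cartier-dual analogue $\mathbb{D}(\mathcal{G}^{*})(S)\simeq\mathcal{M}(\mathfrak{M}^{*})$ obtained by combining Theorem~\ref{GroupExt} with the duality identification $\mathfrak{M}(\mathcal{G})^{*}\simeq\mathfrak{M}(\mathcal{G}^{*})$ from Corollary~\ref{thecor}. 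After this reduction it suffices to prove the two purely algebraic identities
$$\mathrm{Fil}^{1}\mathcal{M}(\mathfrak{M})/(\mathrm{Fil}^{1}S)\mathcal{M}(\mathfrak{M})\simeq\mathfrak{M}/\phi_{\mathfrak{M}}(\mathfrak{M})\quad\text{and}\quad\mathcal{M}(\mathfrak{M}^{*})/\mathrm{Fil}^{1}\mathcal{M}(\mathfrak{M}^{*})\simeq\phi^{*}\mathfrak{M}^{*}/\mathrm{Fil}^{1}\phi^{*}\mathfrak{M}^{*}.$$

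For the first identity I will use $S/\mathrm{Fil}^{1}S\simeq\OO_K$ to identify $\mathcal{M}(\mathfrak{M})/(\mathrm{Fil}^{1}S)\mathcal{M}(\mathfrak{M})$ with $\OO_K\otimes_{\mathfrak{S}}\phi^{*}\mathfrak{M}\simeq\phi^{*}\mathfrak{M}/\mathsf{Eis}(u)\phi^{*}\mathfrak{M}$. Combined with the explicit description $\mathrm{Fil}^{1}\mathcal{M}(\mathfrak{M})=S\cdot\mathrm{Fil}^{1}\phi^{*}\mathfrak{M}+(\mathrm{Fil}^{1}S)\mathcal{M}(\mathfrak{M})$ from Section~\ref{hodgeBK}, together with the containment $\mathsf{Eis}(u)\phi^{*}\mathfrak{M}\subseteq\mathrm{Fil}^{1}\phi^{*}\mathfrak{M}$ (immediate from $(1\otimes\phi_{\mathfrak{M}})(\mathsf{Eis}(u)\phi^{*}\mathfrak{M})\subseteq\mathsf{Eis}(u)\mathfrak{M}$), the image of $\mathrm{Fil}^{1}\mathcal{M}(\mathfrak{M})$ in this quotient collapses to $\mathrm{Fil}^{1}\phi^{*}\mathfrak{M}/\mathsf{Eis}(u)\phi^{*}\mathfrak{M}$. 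I then introduce the divided Frobenius $\phi_{1}\colon\mathrm{Fil}^{1}\phi^{*}\mathfrak{M}\to\mathfrak{M}$ defined by $\phi_{1}(x)=(1\otimes\phi_{\mathfrak{M}})(x)/\mathsf{Eis}(u)$, which is well-defined because $\mathfrak{M}$ is $\mathfrak{S}$-free and hence $\mathsf{Eis}(u)$-torsion free. Invoking the height-one hypothesis (which forces $\mathsf{Eis}(u)\mathfrak{M}\subseteq(1\otimes\phi_{\mathfrak{M}})(\phi^{*}\mathfrak{M})$) and the injectivity of $1\otimes\phi_{\mathfrak{M}}$ on the $\mathfrak{S}$-free module $\phi^{*}\mathfrak{M}$, I verify that $\phi_{1}$ is an isomorphism carrying $\mathsf{Eis}(u)\phi^{*}\mathfrak{M}$ onto $(1\otimes\phi_{\mathfrak{M}})(\phi^{*}\mathfrak{M})=\phi_{\mathfrak{M}}(\mathfrak{M})$; passing to quotients yields the desired isomorphism.

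The second identity is more direct. I will quotient $\mathcal{M}(\mathfrak{M}^{*})=S\otimes_{\mathfrak{S}}\phi^{*}\mathfrak{M}^{*}$ in two stages: first mod out by the $S$-submodule generated by $\mathrm{Fil}^{1}\phi^{*}\mathfrak{M}^{*}$ to obtain $S\otimes_{\mathfrak{S}}(\phi^{*}\mathfrak{M}^{*}/\mathrm{Fil}^{1}\phi^{*}\mathfrak{M}^{*})$, then mod out by the residual $\mathrm{Fil}^{1}S$-action to obtain $\OO_K\otimes_{\mathfrak{S}}(\phi^{*}\mathfrak{M}^{*}/\mathrm{Fil}^{1}\phi^{*}\mathfrak{M}^{*})$. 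Since the height-one condition applied to $\mathfrak{M}^{*}$ forces $\phi^{*}\mathfrak{M}^{*}/\mathrm{Fil}^{1}\phi^{*}\mathfrak{M}^{*}$ to already be annihilated by $\mathsf{Eis}(u)$, the base change along $\mathfrak{S}\to\OO_K$ changes nothing, giving $\phi^{*}\mathfrak{M}^{*}/\mathrm{Fil}^{1}\phi^{*}\mathfrak{M}^{*}$ as required. The main technical obstacle lies in the first identity, specifically verifying that the $S$-span $S\cdot\mathrm{Fil}^{1}\phi^{*}\mathfrak{M}$ maps to the same image as $\mathrm{Fil}^{1}\phi^{*}\mathfrak{M}$ inside $\phi^{*}\mathfrak{M}/\mathsf{Eis}(u)\phi^{*}\mathfrak{M}$; this reduces to the observation that the $\OO_K$-action on this quotient factors through the composition $\mathfrak{S}\hookrightarrow S\twoheadrightarrow\OO_K$, and $\mathrm{Fil}^{1}\phi^{*}\mathfrak{M}$ is already $\mathfrak{S}$-stable, so no new elements appear when extending scalars from $\mathfrak{S}$ to $S$ modulo $\mathrm{Fil}^{1}S$.
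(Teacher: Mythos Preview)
Your proof is correct and follows essentially the same route as the paper: both reduce via Corollary~\ref{deformation} and Theorem~\ref{GroupExt} to computing $\mathrm{Fil}^{1}\mathcal{M}(\mathfrak{M})$ modulo the $\mathrm{Fil}^{1}S$-part and then unwinding the formula $\mathrm{Fil}^{1}\mathcal{M}(\mathfrak{M})=S\cdot\mathrm{Fil}^{1}\phi^{*}\mathfrak{M}+(\mathrm{Fil}^{1}S)\mathcal{M}(\mathfrak{M})$. Your argument is in fact more detailed than the paper's, which asserts the final isomorphism $\mathrm{Fil}^{1}\phi^{*}\mathfrak{M}/\mathsf{Eis}(u)\phi^{*}\mathfrak{M}\simeq\mathfrak{M}/\phi_{\mathfrak{M}}(\mathfrak{M})$ without justification; your divided-Frobenius map $\phi_{1}$ supplies exactly this missing step.
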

\begin{proof}
Let $\mathfrak{M} = \mathfrak{M}(\mathcal{G})$. By Theorem \ref{GroupExt} 
and Corollary \ref{deformation}, we have the exact diagram

\bigskip
\centerline{
\begin{xy}
(10,15)*+{0}="a";
(40,15)*+{\omega_{\mathcal{G}}}="b";
(87,15)*+{\mathbb{D}(\mathcal{G})(\OO_K)}="c";
(125,15)*+{\mathrm{Lie}(\mathcal{G}^*)}="d";
(147,15)*+{0}="e";
(10,0)*+{0}="f";
(40,0)*+{\mathrm{Fil}^1\mathcal{M}(\mathfrak{M})/(\mathrm{Fil}^1S)\mathrm{Fil}^1\mathcal{M}(\mathfrak{M})}="g";
(87,0)*+{\mathcal{M}(\mathfrak{M})/(\mathrm{Fil}^1S)\mathcal{M}(\mathfrak{M})}="h";
(125,0)*+{\mathcal{M}(\mathfrak{M})/\mathrm{Fil}^1\mathcal{M}(\mathfrak{M})}="i";
(147,0)*+{0}="j";
{\ar "a";"b"};{\ar "b";"c"};{\ar "c";"d"};{\ar "d";"e"};
{\ar "f";"g"};{\ar "g";"h"};{\ar "h";"i"};{\ar "i";"j"};
{\ar_{\simeq} "b";"g"};{\ar_{\simeq} "c";"h"};{\ar_{\simeq} "d";"i"};
\end{xy}.}
\bigskip

The first vertical arrow gives the isomorphisms
\begin{align*}
\omega_{\mathcal{G}}&\simeq \mathrm{Fil}^1\mathcal{M}(\mathfrak{M}(\mathcal{G}))/(\mathrm{Fil}^1S)\mathrm{Fil}^1\mathcal{M}(\mathfrak{M}(\mathcal{G}))\\
&=\left[S\cdot \mathrm{Fil}^1\phi^*(\mathfrak{M}) + \mathrm{Fil}^1 S\cdot \mathcal{M}(\mathfrak{M})\right]/(\mathrm{Fil}^1S)\left[S\cdot \mathrm{Fil}^1\phi^*(\mathfrak{M}) + \mathrm{Fil}^1 S\cdot \mathcal{M}(\mathfrak{M})\right]\\
&\simeq \mathfrak{M}(\mathcal{G})/\phi_{\mathfrak{M}(\mathcal{G})}(\mathfrak{M}(\mathcal{G})).
\end{align*}
Replacing the objects in the diagram with their dual objects, the last vertical arrow 
gives the isomorphisms
\begin{align*}
\mathrm{Lie}(\mathcal{G})&\simeq \mathcal{M}(\mathfrak{M}(\mathcal{G}^*))/\mathrm{Fil}^1\mathcal{M}(\mathfrak{M}(\mathcal{G}^*))\\
&= \left[S\otimes \phi^*\mathfrak{M}(\mathcal{G}^*)\right]/\left[S\cdot \mathrm{Fil}^1\phi^*(\mathfrak{M}(\mathcal{G}^*)) + \mathrm{Fil}^1S\cdot (S\otimes \phi^*\mathfrak{M}(\mathcal{G}^*))\right]\\
&=\phi^*\mathfrak{M}(\mathcal{G}^*)/\mathrm{Fil}^1\phi^*\mathfrak{M}(\mathcal{G}^*).
\end{align*}
The isomorphism $\mathfrak{M}(\mathcal{G}^*)\simeq \mathfrak{M}(\mathcal{G})^*$ from Corollary \ref{thecor} now gives the statement.
\end{proof}

Recall now that $\mathbf{Mod}^{\phi,1}_{/\mathfrak{S}_n}$ is equivalent to the category of 
$p^n$-torsion finite flat group schemes. For each $n$ we can also define the 
category $\mathbf{Mod}_{/S_n}^{\phi}$ in the natural way, where $S_n$ 
is the level-$n$ quotient of $S$ constructed in the obvious way on the 
$n$th truncation of $W$. Then we can construct a functor
\begin{align*}
-\otimes S_n: \mathbf{Mod}^{\phi}_{/S}&\to \mathbf{Mod}^{\phi}_{/S_n}\\
\mathcal{M}&\mapsto \mathcal{M}\otimes_{S}S_n.\end{align*}
It is clear this commutes with $\phi$ and with filtrations.

\begin{prop}
\label{blah}
The functor $\mathcal{M}$ induces a map
$$\mathcal{M}:\mathbf{Mod}^{\phi,1}_{/\mathfrak{S}_n}\to \mathbf{Mod}^{\phi}_{/S_n}.$$ 
When $n=1$, this is an equivalence when $p>2$ and an equivalence on connected objects 
when $p=2$.
\end{prop}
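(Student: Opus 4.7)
First I would check the functor $\mathcal{M}$ is well-defined at level $n$. If $\mathfrak{M} \in \mathbf{Mod}^{\phi,1}_{/\mathfrak{S}_n}$, then $\mathcal{M}(\mathfrak{M}) = S \otimes_{\mathfrak{S}} \phi^*\mathfrak{M}$ inherits $p^n$-torsion from $\mathfrak{M}$, so its $S$-action factors through the level-$n$ quotient $S_n$. The Cartesian squares defining $\mathrm{Fil}^1 \phi^*\mathfrak{M}$ and $\mathrm{Fil}^1 \mathcal{M}(\mathfrak{M})$ in Section \ref{hodgeBK} are functorial in $\mathfrak{M}$ and compatible with the base change $\mathfrak{S} \to \mathfrak{S}_n$, so the filtration and $\phi$-structure descend to give an object of $\mathbf{Mod}^\phi_{/S_n}$. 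In place of the $\OO_K$-freeness of $\mathcal{M}(\mathfrak{M})/\mathrm{Fil}^1\mathcal{M}(\mathfrak{M})$ that the definition of a Breuil module demands in the free setting, here we get $\OO_K/p^n\OO_K$-freeness, which follows by reducing modulo $p^n$ the same property for each of the two free Kisin modules providing a projective resolution of $\mathfrak{M}$.

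\textbf{Equivalence at $n=1$.} When $n=1$, both categories classify (connected, when $p=2$) finite flat $p$-torsion group schemes over $\OO_K$. On the Kisin side, this identification is Proposition \ref{fingroup} at $n=1$ together with the $p$-torsion case of Theorem \ref{equivfgroup}. On the Breuil side, the equivalence of Theorem \ref{Breuil} extends from $p$-divisible groups to finite flat group schemes by taking cokernels in $2$-term resolutions, and passes to the level-$1$ quotient $\mathbf{Mod}^\phi_{/S_1}$ on restriction to $p$-torsion objects. The compatibility of the two equivalences via $\mathcal{M}$ is provided by Theorem \ref{GroupExt}: for a $p$-divisible group $\mathcal{G}$ there is a $\phi$-equivariant isomorphism $\mathcal{M}(\mathfrak{M}(\mathcal{G})) \simeq \mathbb{D}(\mathcal{G})(S)$ respecting filtrations. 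Applying this to a resolution of a $p$-torsion finite flat group scheme and reducing modulo $p$ exhibits $\mathcal{M}: \mathbf{Mod}^{\phi,1}_{/\mathfrak{S}_1} \to \mathbf{Mod}^\phi_{/S_1}$ as the composition of the inverse of the Kisin classification with the Breuil classification. Since both of these are equivalences on the relevant (connected, if $p=2$) subcategories, so is $\mathcal{M}$.

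\textbf{Main obstacle.} The principal technical point is verifying that the level-$1$ extensions of the Kisin and Breuil classifications to $p$-torsion group schemes match under $\mathcal{M}$, independently of the choice of $2$-term resolution. The Cartesian-square definitions of the filtration take care of the compatibility once one has produced a well-defined functor on cokernels. For $p=2$ the extra subtlety is that both sides only classify \emph{connected} objects; one must verify that $\mathcal{M}$ preserves and reflects connectedness, which reduces to the identification $\mathcal{M}(\mathfrak{M})/u\mathcal{M}(\mathfrak{M}) \simeq \phi^*(\mathfrak{M}/u\mathfrak{M})$ together with the description of $\phi_{\mathcal{M}(\mathfrak{M})}$ in terms of $\phi_\mathfrak{M}$.
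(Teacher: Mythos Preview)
Your proposal is correct and follows essentially the same approach as the paper, which simply observes that $\mathcal{M}$ commutes with the projection functors $-\otimes\mathfrak{S}_n$ and $-\otimes S_n$ and then deduces the equivalence from Theorem~\ref{GroupExt}. You have unpacked this terse argument by making explicit the passage from $p$-divisible groups to $p$-torsion finite flat group schemes via $2$-term resolutions and by separately invoking the Kisin and Breuil classifications (Theorem~\ref{equivfgroup}, Proposition~\ref{fingroup}, and the torsion analogue of Theorem~\ref{Breuil}) before appealing to Theorem~\ref{GroupExt} for their compatibility; the paper leaves all of this implicit.
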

\begin{proof}
The functor $\mathcal{M}$ commutes with 
$-\otimes\mathfrak{S}_n$ and $-\otimes S_n$. We deduce the 
equivalence from Theorem \ref{GroupExt}. 
\end{proof}

\begin{prop}
\label{deg}
Suppose $k = \mathbb{F}_q$, $K_0 = W(k)[1/p]$, and $K/K_0$ is a finite, totally ramified 
extension. For any $p$-torsion finite flat group scheme $\mathcal{G}/\OO_K$, 
$$\#\omega_{\mathcal{G}} = q^{v_u\left(\det \phi_{\mathfrak{M}_{\mathfrak{S}_1}(\mathcal{G})}\right)}.$$
\end{prop}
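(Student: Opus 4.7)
The plan is to identify $\omega_{\mathcal{G}}$ with the cokernel of the linearized Frobenius on the mod-$p$ Kisin module, and then read off its cardinality over the DVR $\mathfrak{S}_1=\mathbb{F}_q[[u]]$ using Smith normal form.

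First, I would extend Lemma \ref{diffmod} from $p$-divisible groups to $p$-torsion finite flat group schemes. Pick a $2$-term resolution $0\to \mathcal{G}\to \mathcal{G}_1\to \mathcal{G}_2\to 0$ by $p$-divisible groups over $\OO_K$ (available by \cite{BBM}), and apply the functor $\mathfrak{M}(\cdot)$ from Theorem \ref{equivfgroup}. Since Hodge bundles are compatible with short exact sequences of (finite flat) group schemes and the Kisin functor carries the exact sequence to a presentation of $\mathfrak{M}(\mathcal{G})$, a snake-lemma comparison against Lemma \ref{diffmod} applied to each $\mathcal{G}_i$ yields the analog
$$\omega_{\mathcal{G}}\;\simeq\;\mathfrak{M}(\mathcal{G})/(1\otimes\phi_{\mathfrak{M}(\mathcal{G})})\bigl(\phi^*\mathfrak{M}(\mathcal{G})\bigr).$$
Since $\mathcal{G}$ is $p$-torsion both sides are annihilated by $p$, so we may tensor with $\mathfrak{S}_1$ and replace $\mathfrak{M}(\mathcal{G})$ by $\mathfrak{M}_{\mathfrak{S}_1}(\mathcal{G})$ on the right without changing the cokernel.

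Second, I would compute this cokernel algebraically. By the remark following Proposition \ref{fingroup}, $\mathfrak{M}_{\mathfrak{S}_1}(\mathcal{G})$ is a free $\mathfrak{S}_1$-module of some rank $r$, and so is $\phi^*\mathfrak{M}_{\mathfrak{S}_1}(\mathcal{G})$. Choose bases on both sides so that $1\otimes\phi_{\mathfrak{M}_{\mathfrak{S}_1}(\mathcal{G})}$ is represented by a matrix $A\in M_r(\mathfrak{S}_1)$; note $\det A\ne 0$ because the Verschiebung relation $(1\otimes\phi)\circ\psi=\mathsf{Eis}(u)\equiv u^e\pmod p$ shows $1\otimes\phi$ is injective with cokernel killed by $u^e$. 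The ring $\mathfrak{S}_1=\mathbb{F}_q[[u]]$ is a DVR with residue field $\mathbb{F}_q$, so Smith normal form gives
$$\mathrm{coker}(A)\;\simeq\;\bigoplus_{i=1}^{r}\mathfrak{S}_1/(u^{a_i}),\qquad \sum_{i=1}^{r}a_i=v_u(\det A).$$
Each summand has cardinality $\#(\mathfrak{S}_1/(u^{a_i}))=q^{a_i}$, so
$$\#\omega_{\mathcal{G}}\;=\;\prod_{i=1}^{r}q^{a_i}\;=\;q^{\sum_i a_i}\;=\;q^{v_u(\det A)}\;=\;q^{v_u(\det \phi_{\mathfrak{M}_{\mathfrak{S}_1}(\mathcal{G})})},$$
which is the claimed formula.

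The main obstacle is the first step: carefully upgrading the Hodge-bundle description of Lemma \ref{diffmod} from the $p$-divisible setting to finite flat $p$-torsion group schemes, and verifying that the resulting cokernel is naturally computed on $\mathfrak{M}_{\mathfrak{S}_1}(\mathcal{G})$ rather than on any particular resolution. Once this compatibility is in hand, the cardinality computation is a standard invariant-factor argument over a DVR.
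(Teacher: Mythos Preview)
Your proposal is correct and follows essentially the same line as the paper: identify $\omega_{\mathcal{G}}$ with the cokernel of the (linearized) Frobenius on $\mathfrak{M}_{\mathfrak{S}_1}(\mathcal{G})$, then compute its cardinality via elementary divisors over the DVR $\mathfrak{S}_1=\mathbb{F}_q[[u]]$. The only difference is how the extension of Lemma~\ref{diffmod} to finite flat $p$-torsion group schemes is justified: the paper invokes Proposition~\ref{blah} (the Breuil-module bridge descends to $\mathfrak{S}_1$ and is an equivalence there), whereas you argue via a $2$-term resolution by $p$-divisible groups together with the snake lemma and compatibility of $\omega_{(\cdot)}$ with short exact sequences; both routes give the same identification and the remaining Smith-normal-form step is identical.
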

\begin{proof}
By Proposition \ref{blah}, the isomorphisms of 
Lemma \ref{diffmod} extend to the 
category $\mathbf{Mod}_{/\mathfrak{S}_1}^{\phi,1}$, so
$$\omega_\mathcal{G}\simeq \mathfrak{M}_{\mathfrak{S}_1}(\mathcal{G})/\phi_{\mathfrak{M}_{\mathfrak{S}_1}(\mathcal{G})}\mathfrak{M}_{\mathfrak{S}_1}(\mathcal{G}).$$
Now choose a basis $e_1,\ldots,e_h$ of 
$\mathfrak{M}_{\mathfrak{S}_1}(\mathcal{G})$ and let 
$u^{s_1},\ldots,u^{s_h}$ denote the elementary divisors of the endomorphism 
$\phi_{\mathfrak{M}_{\mathfrak{S}_1}(\mathcal{G})}$. 
Then the degree of the characteristic ideal of $\omega_{\mathcal{G}}$ 
is $\sum_{i=1}^h s_i = v_u(\mathrm{det}\phi_{\mathfrak{M}_{\mathfrak{S}_1}(\mathcal{G})})$. The statement now follows as a comparison of 
modules over $\mathbb{F}_q$.
\end{proof}

\begin{rem}
Recall that the category of Kisin modules is a tensor category 
and that $\mathfrak{M}$ commutes with symmetric and 
exterior powers. Therefore, we may equivalently state 
Proposition \ref{deg} in the following way: 
let $\mathfrak{L}:=\bigwedge^h \mathfrak{M}(\mathcal{G})$ 
where $h$ denotes the height of the $p$-torsion finite flat group scheme 
$\mathcal{G}/\OO_K$, then
$$\#\omega_{\mathcal{G}} = q^{v_u(\phi_{\mathfrak{L}})}.$$
\end{rem}

\section{Finite Group Theory with Kisin Modules}
\subsection{Classifying Finite Flat Subgroup Schemes}
\label{classification}
We restrict our attention in this section to the category 
$\mathbf{Mod}_{/\mathfrak{S}_n}^{\phi,r}$ introduced in Section \ref{KisSEC}. 
We recall this is the category of $p^n$-torsion Kisin modules for a 
fixed positive integer $n$. 
When $r=1$ the category is equivalent to the category of 
$p^n$-torsion finite flat group schemes by Proposition \ref{fingroup}. 

\begin{defn}
Let $\mathfrak{M}\in \mathbf{Mod}_{/\mathfrak{S}_n}^{\phi,r}$. A submodule $\mathfrak{N}\subseteq\mathfrak{M}$ 
is \textbf{saturated in $\mathfrak{M}$} (or just \textbf{saturated}) if 
$\mathfrak{N} = (\mathfrak{N}\otimes_\mathfrak{S}\mathfrak{S}[1/u])\cap\mathfrak{M}$.
\end{defn}

\begin{lem}[Proposition 2.3.2, \cite{Liutors}]
\label{projdimone}
Let $\mathfrak{M}\in \textbf{Mod}_{/\mathfrak{S}_n}^{\phi,r}$ and 
$\mathfrak{N}\subseteq \mathfrak{M}$ be a $\phi$-stable submodule. 
Then the following are equivalent:
\begin{enumerate}
\item $\mathfrak{N}$ is saturated in $\mathfrak{M}$,
\item $\mathfrak{N}$ and $\mathfrak{M}/\mathfrak{N}$ are both objects in 
$\textbf{Mod}_{/\mathfrak{S}_n}^{\phi,r}$, i.e., both have projective 
dimension $1$,
\item $\mathrm{GF}(\mathfrak{N}) = \mathfrak{N}\otimes_{\mathfrak{S}}\OO_{\mathcal{E}}$ is a 
subobject of $\mathrm{GF}(\mathfrak{M}) = \mathfrak{M}\otimes_{\mathfrak{S}}\OO_{\mathcal{E}}$.
\end{enumerate}
\end{lem}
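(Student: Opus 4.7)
The plan is to prove the three equivalences by reducing everything to a single condition: $u$-torsion-freeness of $\mathfrak{M}/\mathfrak{N}$. The main tool will be the Auslander--Buchsbaum formula applied to the 2-dimensional regular local ring $\mathfrak{S} = W[[u]]$, together with a snake-lemma computation that controls the height of the sub and the quotient, and a Tor computation over $\mathfrak{S}$ to handle the passage to $\OO_{\mathcal{E}}$.

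First I would establish the key algebraic dictionary: a finitely generated, $p$-power-torsion $\mathfrak{S}$-module $P$ has projective dimension $\le 1$ over $\mathfrak{S}$ if and only if $P$ is $u$-torsion-free. Indeed, $\mathfrak{S}$ is regular of Krull dimension $2$, so depth equals $2$, and Auslander--Buchsbaum gives $\mathrm{pd}_{\mathfrak{S}}(P) + \mathrm{depth}(P) = 2$; since $p$ is a zero-divisor on $P$, $\mathrm{depth}(P) \ge 1$ if and only if $u$ acts as a nonzero-divisor. This gives the translation $\textrm{(pd }\le 1\textrm{)} \iff (u\textrm{-torsion-free})$, which is precisely what is needed to identify the projective-dimension part of condition (2).

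Next I would establish (1)$\Leftrightarrow$(2). Since $\mathfrak{M}$ itself is $u$-torsion-free, any submodule $\mathfrak{N}\subseteq \mathfrak{M}$ is automatically $u$-torsion-free; hence $\mathfrak{N}$ has projective dimension $\le 1$ unconditionally. The content is therefore entirely on the quotient: $\mathfrak{M}/\mathfrak{N}$ is $u$-torsion-free if and only if the preimage in $\mathfrak{M}$ of the $u^\infty$-torsion of $\mathfrak{M}/\mathfrak{N}$ coincides with $\mathfrak{N}$, i.e., if and only if $\mathfrak{N} = (\mathfrak{N}\otimes_{\mathfrak{S}}\mathfrak{S}[1/u])\cap \mathfrak{M}$. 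This gives the equivalence of (1) with the pd-$\le 1$ part of (2). The height $\le r$ condition still has to be checked for both $\mathfrak{N}$ and $\mathfrak{M}/\mathfrak{N}$: since $\phi$ is flat on $\mathfrak{S}$, the short exact sequence $0\to \mathfrak{N}\to \mathfrak{M}\to \mathfrak{M}/\mathfrak{N}\to 0$ pulls back to an exact sequence under $\phi^*$, and a snake-lemma applied to the diagram
\[
\begin{array}{ccccccccc}
0 & \to & \phi^*\mathfrak{N} & \to & \phi^*\mathfrak{M} & \to & \phi^*(\mathfrak{M}/\mathfrak{N}) & \to & 0 \\
  &     & \downarrow        &     & \downarrow        &     & \downarrow                   &     &   \\
0 & \to & \mathfrak{N}       & \to & \mathfrak{M}       & \to & \mathfrak{M}/\mathfrak{N}    & \to & 0
\end{array}
\]
shows that $\mathrm{coker}(\phi_{\mathfrak{M}/\mathfrak{N}})$ is a quotient of $\mathrm{coker}(\phi_{\mathfrak{M}})$, hence killed by $\mathsf{Eis}(u)^r$, while $\mathrm{coker}(\phi_{\mathfrak{N}})$ sits in a sequence between $\ker(\phi_{\mathfrak{M}/\mathfrak{N}})$ and $\mathrm{coker}(\phi_{\mathfrak{M}})$. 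Once we know $\mathfrak{M}/\mathfrak{N}$ is itself an object of $\mathbf{Mod}_{/\mathfrak{S}_n}^{\phi,r}$ (which under (1) one can verify by induction on the length), the duality theory of Caruso--Liu invoked in Theorem \ref{extend} controls $\ker(\phi_{\mathfrak{M}/\mathfrak{N}})$, and we conclude that $\mathfrak{N}$ also has height $\le r$.

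Finally, for (1)$\Leftrightarrow$(3), I would tensor the short exact sequence $0\to \mathfrak{N}\to \mathfrak{M}\to \mathfrak{M}/\mathfrak{N}\to 0$ with $\OO_{\mathcal{E}}$ over $\mathfrak{S}$ to obtain
\[
\mathrm{Tor}_1^{\mathfrak{S}}(\mathfrak{M}/\mathfrak{N},\OO_{\mathcal{E}}) \to \mathfrak{N}\otimes_{\mathfrak{S}}\OO_{\mathcal{E}} \to \mathfrak{M}\otimes_{\mathfrak{S}}\OO_{\mathcal{E}} \to (\mathfrak{M}/\mathfrak{N})\otimes_{\mathfrak{S}}\OO_{\mathcal{E}} \to 0.
\]
Since $\OO_{\mathcal{E}}$ is the $p$-adic completion of $\mathfrak{S}[1/u]$ and $\mathfrak{M}/\mathfrak{N}$ is $p^n$-torsion, the completion plays no role, and the Tor group is identified with the $u^\infty$-torsion submodule of $\mathfrak{M}/\mathfrak{N}$ (computed via the Koszul resolution of $\mathfrak{S}[1/u]$ against $u$). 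Therefore $\mathrm{GF}(\mathfrak{N})\hookrightarrow \mathrm{GF}(\mathfrak{M})$ if and only if $\mathfrak{M}/\mathfrak{N}$ has no $u$-torsion, i.e.\ if and only if $\mathfrak{N}$ is saturated.

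The main obstacle I anticipate is the height bookkeeping in step 3: one must be careful that $\mathfrak{N}$, not just $\mathfrak{M}/\mathfrak{N}$, inherits height $\le r$, since the snake-lemma estimate a priori only gives a bound involving a kernel term that is controlled by duality in the torsion category. Everything else is formal manipulation of finitely generated modules over the regular local ring $\mathfrak{S}$.
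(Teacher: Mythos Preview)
Your proposal is correct and follows essentially the same route as the paper: both reduce everything to the single criterion that $\mathfrak{M}/\mathfrak{N}$ be $u$-torsion-free. The paper packages this via the auxiliary Lemma~\ref{utorsfree} (objects of $\mathbf{Mod}_{/\mathfrak{S}_n}^{\phi,r}$ are $u$-torsion-free, argued directly from a two-term free resolution), whereas you derive the dictionary $\mathrm{pd}\le 1 \Leftrightarrow u$-torsion-free from Auslander--Buchsbaum; these are equivalent formulations. Your Tor argument for (1)$\Leftrightarrow$(3) is a cleaner and more explicit version of the bijection the paper simply asserts. One genuine difference: the paper does not address the height~$\le r$ condition for $\mathfrak{N}$ and $\mathfrak{M}/\mathfrak{N}$ at all, treating condition~(2) as purely about projective dimension, while you correctly identify this as extra bookkeeping and sketch how the snake lemma and duality handle it. So your argument is, if anything, more complete on that point.
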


The equivalence between (1), (2), and (3) 
relies on the following lemma.

\begin{lem}[Proposition 2.3.2, \cite{Liutors}]
\label{utorsfree}
Any object $\mathfrak{M}\in \mathbf{Mod}_{/\mathfrak{S}_n}^{\phi,r}$ is $u$-torsion free.
\end{lem}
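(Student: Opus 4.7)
The plan is to use the Auslander--Buchsbaum formula on the regular local ring $\mathfrak{S} = W[[u]]$, which has Krull dimension $2$ and maximal ideal $(p,u)$. For any finitely generated $\mathfrak{S}$-module $\mathfrak{M}$ of projective dimension $\leq 1$, Auslander--Buchsbaum gives
\[
\operatorname{depth}_{\mathfrak{S}}(\mathfrak{M}) = \operatorname{depth}(\mathfrak{S}) - \operatorname{pd}_{\mathfrak{S}}(\mathfrak{M}) \geq 2 - 1 = 1.
\]
Equivalently, since $\mathfrak{S}$ is a 2-dimensional regular local ring, $\mathfrak{M}$ contains no nonzero submodule of finite length (that is, no submodule supported only at the maximal ideal).

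The main step is then to show that a failure of $u$-torsion freeness would produce exactly such a forbidden finite-length submodule. Suppose for contradiction that there exists $0 \neq m \in \mathfrak{M}$ with $u\cdot m = 0$. Since $\mathfrak{M}$ lies in $\mathbf{Mod}^{\phi,r}_{/\mathfrak{S}_n}$, it is annihilated by $p^n$, so also $p^n \cdot m = 0$. Therefore the cyclic submodule $\mathfrak{S}\cdot m \subseteq \mathfrak{M}$ is annihilated by the ideal $(u, p^n)$, making it a module over the Artinian quotient ring
\[
\mathfrak{S}/(u, p^n) \;=\; W_n.
\]
Any finitely generated $W_n$-module has finite length, so $\mathfrak{S}\cdot m$ is a nonzero finite-length submodule of $\mathfrak{M}$.

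This contradicts the depth bound from Auslander--Buchsbaum, completing the proof. The only genuinely non-formal ingredient is the depth/projective-dimension dictionary on the 2-dimensional regular local ring $\mathfrak{S}$; everything else is a direct computation with the defining ideal of $m$. I expect no serious obstacle here, since the hypothesis ``projective dimension $1$'' in Definition \ref{finitedef} is precisely tailored to exclude finite-length submodules, and $u$-torsion on a $p^n$-torsion module is the canonical source of such submodules.
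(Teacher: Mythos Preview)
Your proof is correct, but it takes a different route from the paper's. The paper argues directly from a two-term free resolution $0 \to \mathfrak{M}_2 \xrightarrow{f} \mathfrak{M}_1 \to \mathfrak{M} \to 0$: writing $f$ as a matrix $A$ and using that $p^n$ kills $\mathfrak{M}$ to produce a matrix $B$ with $AB = p^n I$, one checks by an explicit lift-and-cancel computation that any element killed by a power of $u$ already lies in the image of $f$, hence is zero in $\mathfrak{M}$. Your argument instead packages the same content into the Auslander--Buchsbaum formula on the two-dimensional regular local ring $\mathfrak{S}$: projective dimension $\le 1$ forces depth $\ge 1$, so $\mathfrak{M}$ admits no nonzero submodule of finite length, and a $u$-torsion element in a $p^n$-torsion module would generate exactly such a submodule over the Artinian quotient $\mathfrak{S}/(u,p^n)\cong W_n$. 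Your version is shorter and more conceptual, and it makes transparent that the Frobenius structure plays no role---only the module-theoretic hypothesis of projective dimension one is used. The paper's version has the virtue of being entirely elementary, requiring no depth theory, and of making the mechanism (the matrix $B$ ``inverting'' $A$ up to $p^n$) visible by hand; but both are complete proofs of the same fact.
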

\begin{proof}
Each object $\mathfrak{M}\in \mathbf{Mod}_{/\mathfrak{S}}^{\phi,r}$ 
has projective dimension $1$, hence has a resolution
\begin{equation}
\label{exact}
0\to\mathfrak{M}_2\xrightarrow{f}\mathfrak{M}_1\to \mathfrak{M}\to 0
\end{equation}
by finite, free $\mathfrak{S}$-modules $\mathfrak{M}_1$ and $\mathfrak{M}_2$.
Let $\{e_1,\ldots, e_d\}$, resp. $\{f_1,\ldots,f_d\}$, be a choice of 
$\mathfrak{S}$-basis for $\mathfrak{M}_1$, resp. $\mathfrak{M}_2$, such that 
$(e_1,\ldots,e_d) = (f_1,\ldots,f_d)A$, where $A$ is the transition matrix 
corresponding to $f$ in (\ref{exact}). Since $\mathfrak{M}$ is killed 
by $p^n$ for some positive integer $n$, there exists a matrix $B$ with $AB = p^nI$.

Assume for the sake of contradiction that there exists a non-zero element 
$\overline{a}\in \mathfrak{M}$ 
which is killed by $u^k$ for some $k\ge 1$. 
Let $a = \sum_{i=1}^d a_ie_i$ 
be a lift of $\overline{a}$ to $\mathfrak{M}_1$. Then we have 
$$u^k(a_1,\ldots,a_d) = (b_1,\ldots,b_d)A$$
for some $b_i\in \mathfrak{S}$. Since $AB = p^nI$, we further have
$$(b_1,\ldots,b_d) = u^k(p^{-n})(a_1,\ldots,a_d)B.$$
Let $(c_1,\ldots,c_d) = p^{-n}(a_1,\ldots,a_d)B$. Then $c_i\in \mathfrak{S}$ and 
$(a_1,\ldots,a_d) = (c_1,\ldots,c_d)A$. But by the exactness of (\ref{exact}), 
this implies $\overline{a} = 0$, a contradiction.

To conclude the statement for the category 
$\mathbf{Mod}_{/\mathfrak{S}_n}^{\phi,r}$, we apply the functor 
$-\otimes \mathfrak{S}_n$ from Proposition \ref{fingroup}. Then an object in $\mathbf{Mod}_{/\mathfrak{S}_n}^{\phi,r}$ is 
$u$-torsion free if its lift in $\mathbf{Mod}_{/\mathfrak{S}}^{\phi,r}$ is 
$u$-torsion free.
\end{proof}

\begin{proof}[Proof of Lemma \ref{projdimone}]
By Lemma \ref{utorsfree}, objects of $\mathbf{Mod}_{/\mathfrak{S}_n}^{\phi,r}$ are 
$u$-torsion free, and hence $\mathrm{GF}$ induces the bijection
\begin{align*}
\{\text{saturated submodules of }\mathfrak{M}\}&\xrightarrow{\sim}\{\text{subobjects of }\mathrm{GF}(\mathfrak{M})\}\\
\mathfrak{N}&\mapsto\mathfrak{N}\otimes_{\mathfrak{S}}\OO_{\mathcal{E}}\\
\mathcal{N}\cap\mathfrak{M}\otimes 1&\mapsfrom\mathcal{N}.\end{align*}
This shows the equivalence between (1) and (3).

For the equivalence between (1) and (2), by the proof of Lemma \ref{utorsfree}, 
an object $\mathfrak{M}$ in 
$\mathbf{Mod}_{/\mathfrak{S}}^{\phi,r}$ has projective dimension 
$1$ if and only if $u$ is regular for $\mathfrak{M}$. 
Therefore, any submodule $\mathfrak{N}\subseteq \mathfrak{M}$ 
necessarily has projective dimension $1$. To establish the statement, we must 
relate the condition that the quotient $\mathfrak{M}/\mathfrak{N}$ 
has projective dimension $1$ to the condition that $\mathfrak{N}$ 
is saturated in $\mathfrak{M}$. If $\mathfrak{N}$ is saturated in 
$\mathfrak{M}$, it is easy to see that $u$ is regular on 
$\mathfrak{M}/\mathfrak{N}$, and conversely, and so we 
apply Lemma \ref{utorsfree} again.
\end{proof}

Saturated submodules have a geometric interpretation on objects 
in $\textbf{Mod}_{\mathfrak{S}_n}^{\phi,1}$. In particular, they describe 
finite flat subgroup schemes under the equivalence in Theorem \ref{equivfgroup}. 
We prove this in the following proposition.

\begin{prop}
\label{saturation}
Let $\mathcal{G}/\OO_K$ be a finite flat group scheme of height $g$. 
There is a one-to-one correspondence between saturated 
submodules $\mathfrak{N}\subseteq \mathfrak{M}_{\mathfrak{S}_1}(\mathcal{G})$ and 
finite, flat subgroup schemes $\mathcal{H}\subseteq\mathcal{G}$; 
in particular, 
$$\mathfrak{M}_{\mathfrak{S}_1}(\mathcal{H}) = \mathfrak{M}_{\mathfrak{S}_1}(\mathcal{G})/\mathfrak{N}.$$ 
\end{prop}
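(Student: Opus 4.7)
The plan is to leverage the fully faithful contravariant equivalence $\mathfrak{M}_{\mathfrak{S}_1}$ of Theorem~\ref{equivfgroup} and Proposition~\ref{fingroup}, combined with the characterization of saturated submodules in Lemma~\ref{projdimone}, to set up a bijection in both directions.

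First I would go from subgroup schemes to saturated submodules. Given a closed immersion $\mathcal{H}\hookrightarrow\mathcal{G}$ of finite flat $p$-torsion group schemes over $\OO_K$, the short exact sequence
\begin{equation*}
0\to \mathcal{H}\to \mathcal{G}\to \mathcal{G}/\mathcal{H}\to 0
\end{equation*}
is taken by the contravariant functor $\mathfrak{M}_{\mathfrak{S}_1}$ (see the commutative square just before Definition~\ref{finitedef}) to a short exact sequence
\begin{equation*}
0\to \mathfrak{M}_{\mathfrak{S}_1}(\mathcal{G}/\mathcal{H})\to \mathfrak{M}_{\mathfrak{S}_1}(\mathcal{G})\to \mathfrak{M}_{\mathfrak{S}_1}(\mathcal{H})\to 0
\end{equation*}
of objects in $\mathbf{Mod}_{/\mathfrak{S}_1}^{\phi,1}$. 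Setting $\mathfrak{N}:=\mathfrak{M}_{\mathfrak{S}_1}(\mathcal{G}/\mathcal{H})$, the submodule $\mathfrak{N}$ and the quotient $\mathfrak{M}_{\mathfrak{S}_1}(\mathcal{G})/\mathfrak{N}\simeq \mathfrak{M}_{\mathfrak{S}_1}(\mathcal{H})$ both lie in $\mathbf{Mod}_{/\mathfrak{S}_1}^{\phi,1}$, i.e. have projective dimension $1$. By the equivalence $(1)\Leftrightarrow (2)$ in Lemma~\ref{projdimone}, this forces $\mathfrak{N}$ to be saturated in $\mathfrak{M}_{\mathfrak{S}_1}(\mathcal{G})$, and yields directly the identification $\mathfrak{M}_{\mathfrak{S}_1}(\mathcal{H})=\mathfrak{M}_{\mathfrak{S}_1}(\mathcal{G})/\mathfrak{N}$ claimed in the statement.

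Conversely, given a saturated submodule $\mathfrak{N}\subseteq \mathfrak{M}_{\mathfrak{S}_1}(\mathcal{G})$, Lemma~\ref{projdimone} again produces a short exact sequence
\begin{equation*}
0\to \mathfrak{N}\to \mathfrak{M}_{\mathfrak{S}_1}(\mathcal{G})\to \mathfrak{M}_{\mathfrak{S}_1}(\mathcal{G})/\mathfrak{N}\to 0
\end{equation*}
inside the category $\mathbf{Mod}_{/\mathfrak{S}_1}^{\phi,1}$. Invoking the essential surjectivity part of Proposition~\ref{fingroup} (equivalently, the $n=1$ case of Theorem~\ref{equivfgroup}), there exist uniquely determined finite flat group schemes $\mathcal{H}$ and $\mathcal{G}'$ over $\OO_K$ with $\mathfrak{M}_{\mathfrak{S}_1}(\mathcal{H})\simeq \mathfrak{M}_{\mathfrak{S}_1}(\mathcal{G})/\mathfrak{N}$ and $\mathfrak{M}_{\mathfrak{S}_1}(\mathcal{G}')\simeq \mathfrak{N}$. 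By the full faithfulness in the same theorem, the morphisms in the Kisin exact sequence lift to morphisms $\mathcal{H}\to \mathcal{G}\to \mathcal{G}'$ in $p\text{-}\mathbf{Gr}/\OO_K$, and exactness of this sequence on the group scheme side can be checked after reducing modulo $u$ on Kisin modules (or by comparing orders), which shows that $\mathcal{H}\to \mathcal{G}$ is a closed immersion with quotient $\mathcal{G}'$.

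The two constructions are clearly mutually inverse, which yields the desired bijection. The main subtlety I expect is making precise the claim that the contravariant functor $\mathfrak{M}_{\mathfrak{S}_1}$ identifies short exact sequences of finite flat group schemes with short exact sequences in $\mathbf{Mod}_{/\mathfrak{S}_1}^{\phi,1}$; this is essentially the content of the commutative resolution diagram preceding Definition~\ref{finitedef} combined with Lemma~\ref{projdimone} ensuring that forming the natural sub/quotient in Kisin modules stays within the subcategory of height $\leq 1$ modules, rather than a new verification.
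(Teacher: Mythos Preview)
Your proposal is correct and follows essentially the same approach as the paper: apply the contravariant functor $\mathfrak{M}_{\mathfrak{S}_1}$ to the short exact sequence $0\to\mathcal{H}\to\mathcal{G}\to\mathcal{G}/\mathcal{H}\to 0$, use Lemma~\ref{projdimone} to identify saturation with both sub and quotient having projective dimension~$1$, and invoke the equivalence of Theorem~\ref{equivfgroup} for the converse. The paper handles the converse in a single sentence (``The converse follows from the equivalence in Theorem~\ref{equivfgroup}''), whereas you spell it out more carefully, but the content is the same.
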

\begin{proof}
Let $\mathcal{H}\subseteq \mathcal{G}$ be a finite, flat subgroup scheme. Then 
we necessarily have the short exact sequence of finite flat subgroup schemes
$$0\to \mathcal{H}\to \mathcal{G}\to \mathcal{G}/\mathcal{H}\to 0.$$
Applying the contravariant functor $\mathfrak{M}_{\mathfrak{S}_1}$, we obtain
$$0\to \mathfrak{M}_{\mathfrak{S}_1}(\mathcal{G}/\mathcal{H})\to \mathfrak{M}_{\mathfrak{S}_1}(\mathcal{G})\to \mathfrak{M}_{\mathfrak{S}_1}(\mathcal{H})\to 0.$$
Since $\mathfrak{M}_{\mathfrak{S}_1}$ is an exact functor and quotients exist in the category of 
Kisin modules, it follows that
$$\mathfrak{M}_{\mathfrak{S}_1}(\mathcal{H})\simeq \mathfrak{M}_{\mathfrak{S}_1}(\mathcal{G})/\mathfrak{M}_{\mathfrak{S}_1}(\mathcal{G}/\mathcal{H}).$$
By Lemma \ref{projdimone}, $\mathfrak{M}_{\mathfrak{S}_1}(\mathcal{G}/\mathcal{H})$ must 
be saturated in $\mathfrak{M}_{\mathfrak{S}_1}(\mathcal{G})$ for 
$\mathfrak{M}_{\mathfrak{S}_1}(\mathcal{H})$ to have projective dimension $1$. The converse 
follows from the equivalence in Theorem \ref{equivfgroup}.
\end{proof}

\subsection{Harder-Narasimhan Theory of Finite Kisin Modules}
\label{HNSect}
\label{submoduledecomp}
We now introduce the Harder-Narasimhan (HN) theory for 
the category $\mathbf{Mod}_{/\mathfrak{S}}^{\phi,r}$.  
HN theory was originally developed for finite flat group 
schemes by Fargues \cite{Fargues} and extended to Kisin 
modules more broadly by Levin and Wang-Erickson \cite{LevinWang}. 
The study of the 
Harder-Narasimhan filtration of finite flat group schemes, and more 
broadly finite Kisin modules, is intimately linked with 
understanding the variation of (semi-stable) Faltings height 
(see Remark \ref{fheighthn}).

We start with a warm-up lemma, which will be relevant to the 
computation of the HN slopes.

\begin{lem}
\label{dettrick}
Suppose $k = \mathbb{F}_q$, $K_0 = W(k)[1/p]$, and $K/K_0$ is a 
finite, totally ramified extension. 
Let $\mathcal{G}/\OO_K$ be a finite flat $p$-torsion group scheme of height $g$ and 
$\mathcal{H}\subseteq \mathcal{G}$ be a flat subgroup scheme of 
height $d$. Then there is a line $\mathfrak{L}\subseteq \bigwedge^{g-d}\mathfrak{M}(\mathcal{G})$ such that 
$$\#\omega_{\mathcal{H}} = q^{v_u\left(\mathrm{det}(\phi_{\mathfrak{M}(\mathcal{G})})\right) - v_u\left(\phi_{\mathfrak{L}}\right)}.$$
\end{lem}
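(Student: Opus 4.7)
The plan is to realize the claimed line $\mathfrak{L}$ as the top exterior power of the Kisin submodule associated to the quotient group scheme $\mathcal{G}/\mathcal{H}$, and then to reduce the statement to a bookkeeping identity for determinants of Frobenii along a short exact sequence. First, I would apply Proposition \ref{saturation} to the finite flat subgroup $\mathcal{H} \subseteq \mathcal{G}$: this produces a saturated submodule $\mathfrak{N} \subseteq \mathfrak{M}(\mathcal{G})$, isomorphic to $\mathfrak{M}(\mathcal{G}/\mathcal{H})$, sitting in a short exact sequence
$$0 \longrightarrow \mathfrak{N} \longrightarrow \mathfrak{M}(\mathcal{G}) \longrightarrow \mathfrak{M}(\mathcal{H}) \longrightarrow 0$$
of Kisin modules in $\mathbf{Mod}_{/\mathfrak{S}_1}^{\phi,1}$, with ranks $g-d$, $g$, and $d$ respectively. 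By the remark following Proposition \ref{fingroup} all three are $\mathfrak{S}_1$-free (here $\mathfrak{S}_1 = \mathbb{F}_q[[u]]$ is a DVR), and the saturation hypothesis (Lemma \ref{projdimone}) ensures that $\mathfrak{M}(\mathcal{H}) = \mathfrak{M}(\mathcal{G})/\mathfrak{N}$ is $u$-torsion free, so $\mathfrak{N}$ is in fact a direct summand of $\mathfrak{M}(\mathcal{G})$.

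Next, I would set $\mathfrak{L} := \bigwedge^{g-d}\mathfrak{N}$. Because $\mathfrak{N}$ is a $\phi$-stable direct summand of rank $g-d$, $\mathfrak{L}$ sits as a rank-$1$ $\phi$-stable submodule of $\bigwedge^{g-d}\mathfrak{M}(\mathcal{G})$, and the Frobenius $\phi_{\mathfrak{L}}$ is the restriction of the exterior-power Frobenius, which on this top wedge is by definition $\det(\phi_{\mathfrak{N}})$. Applying Proposition \ref{deg} (in the determinantal formulation given in its subsequent remark) to $\mathcal{H}$ yields
$$\#\omega_{\mathcal{H}} = q^{v_u(\det\phi_{\mathfrak{M}(\mathcal{H})})}.$$
Multiplicativity of the determinant across the short exact sequence of finite free $\mathfrak{S}_1$-modules, compatible with the Frobenius structure, gives
$$\det(\phi_{\mathfrak{M}(\mathcal{G})}) \;=\; \det(\phi_{\mathfrak{N}}) \cdot \det(\phi_{\mathfrak{M}(\mathcal{H})}) \;=\; \phi_{\mathfrak{L}} \cdot \det(\phi_{\mathfrak{M}(\mathcal{H})}),$$
and taking $u$-valuations gives $v_u(\det\phi_{\mathfrak{M}(\mathcal{H})}) = v_u(\det\phi_{\mathfrak{M}(\mathcal{G})}) - v_u(\phi_{\mathfrak{L}})$, which is the claim after substitution.

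The main technical point, and the one I would double-check carefully, is the determinantal multiplicativity across the short exact sequence. This ultimately rests on the fact that the sequence splits as $\mathfrak{S}_1$-modules (guaranteed by saturation combined with the DVR structure of $\mathfrak{S}_1$): once a splitting is fixed, $\phi_{\mathfrak{M}(\mathcal{G})}$ becomes upper triangular in a basis adapted to $\mathfrak{N}$ and $\mathfrak{M}(\mathcal{G})/\mathfrak{N}$, and its determinant is literally the product of the determinants of the diagonal blocks, so passing to $u$-adic valuations is immediate. Everything else is formal manipulation of the Kisin-module dictionary established in Sections \ref{KisSEC}--\ref{classification}.
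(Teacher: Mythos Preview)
Your proof is correct and follows essentially the same approach as the paper: both take $\mathfrak{L}$ to be the top exterior power of $\mathfrak{N}=\mathfrak{M}(\mathcal{G}/\mathcal{H})$ inside $\bigwedge^{g-d}\mathfrak{M}(\mathcal{G})$, invoke additivity/multiplicativity of determinants along the short exact sequence from Proposition~\ref{saturation}, and conclude via Proposition~\ref{deg}. Your explicit justification of the determinant multiplicativity through the $\mathfrak{S}_1$-splitting (using that $\mathfrak{S}_1$ is a DVR and that saturation makes the quotient free) is a welcome elaboration of what the paper handles with the one-line remark that determinants are additive in the tensor category.
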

\begin{proof}
As in Proposition \ref{saturation}, we have a short exact sequence of 
$\mathfrak{S}$-modules
$$0\to \mathfrak{M}_{\mathfrak{S}_1}(\mathcal{G}/\mathcal{H})\to \mathfrak{M}_{\mathfrak{S}_1}(\mathcal{G})\to \mathfrak{M}_{\mathfrak{S}_1}(\mathcal{H})\to 0$$
where the terms in the sequence have $\mathfrak{S}$-rank $g-d$, $g$, 
and $d$, respectively, corresponding to the height of the groups. Since 
$\mathbf{Mod}_{/\mathfrak{S}}^\phi$ is a tensor category, determinants are 
additive on the short exact sequence. Moreover, the $\mathfrak{S}$-linear 
determinant of $\mathfrak{M}(\mathcal{G}/\mathcal{H})$ will be a 
line $\mathfrak{L}\subseteq \bigwedge^{g-d}\mathfrak{M}(\mathcal{G})$. The statement now follows 
from Proposition \ref{deg}.
\end{proof}

We are now ready to introduce the Harder-Narasimhan filtration on 
$\textbf{Mod}_{\mathfrak{S}_n}^{\phi,r}$. 

\begin{defn}
\label{slope}
The \textbf{Harder-Narasimhan slope} (or \textbf{HN slope}) of an 
object $\mathfrak{M}\in \textbf{Mod}_{\mathfrak{S}_n}^{\phi,r}$ is
$$\mu(\mathfrak{M}):=\frac{\mathrm{deg}(\mathfrak{M})}{\mathrm{rk}(\mathfrak{M})}$$
where
\begin{enumerate}
\item $\mathrm{deg}(\mathfrak{M})$ is the \textbf{degree} of $\mathfrak{M}$, 
defined as
$$\mathrm{deg}(\mathfrak{M}):=\frac{1}{[K:\QQ_p]}\ell_{\ZZ/p}(\mathrm{coker}(\phi_{\mathfrak{M}})),$$
\item $\mathrm{rk}(\mathfrak{M})$ is the \textbf{rank} of $\mathfrak{M}$, 
defined as
$$\mathrm{rk}(\mathfrak{M}) = \ell_{\OO_\mathcal{E}/p}(\mathrm{GF}(\mathfrak{M})).$$
\end{enumerate}
\end{defn}

\begin{rem}
\label{fheighthn}
When $\mathfrak{M}\in \textbf{Mod}_{\mathfrak{S}_n}^{\phi,1}$, it can 
be shown that the degree and rank in Definition \ref{slope} 
equal to the degree and rank defined for 
finite flat group schemes in \cite{Fargues} for objects in 
$\mathbf{Mod}_{\mathfrak{S}_n}^{\phi,1}$. Thus, if we let 
$\mathcal{A}_1/\OO_K$ and $\mathcal{A}_2/\OO_K$ 
denote N\'eron models of abelian varieties with potential good 
reduction and $f:\mathcal{A}_1\to \mathcal{A}_2$ be an isogeny 
of $p$-power degree,  
then $\mathcal{G}/\OO_K = \mathrm{ker}(f)$ is a finite flat $p$-group 
scheme and $\mu(\mathfrak{M}(\mathcal{G}^*))$ is the ratio of 
the terms in $\log(\mathrm{deg}(f))$ and 
$\frac{1}{[K:\QQ]}\log(\#s^*\Omega^1_{\mathcal{G}/\OO_K})$ 
appearing in Lemma \ref{FaltingsIsog}. Thus, the Harder-Narasimhan 
theory of finite flat group schemes relates in a natural way 
to understanding their Faltings height, an observation 
that has already been made in \cite{Fargues}.
\end{rem}

\begin{defn}
An object $\mathfrak{M}\in \mathbf{Mod}_{/\mathfrak{S}_n}^{\phi,r}$ is 
\textbf{semistable} if for all saturated submodules 
$\mathfrak{N}\subseteq \mathfrak{M}$, their slopes satisfy the inequality 
$\mu(\mathfrak{N}) \ge \mu(\mathfrak{M})$. 
If an object is not semistable, then it is \textbf{unstable}.
\end{defn}

\begin{exmp}
All objects $\mathfrak{M}\in \mathrm{Mod}_{\mathfrak{S}_1}^{\phi,1}$ of 
rank $1$ are semistable.
\end{exmp}

\begin{lem}[Corollary 2.3.12, \cite{LevinWang}]
Let $\mathfrak{M}\in \mathbf{Mod}_{/\mathfrak{S}_n}^{\phi,r}$ be semistable. 
Then a saturated submodule (resp., quotient by a saturated submodule) 
$\mathfrak{N}\subseteq \mathfrak{M}$ (resp., $\mathfrak{M}\twoheadrightarrow \mathfrak{N}$) such that $\mu(\mathfrak{N}) = \mu(\mathfrak{M})$ is 
again semistable.
\end{lem}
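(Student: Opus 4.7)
The plan is to handle the submodule case and the quotient case separately, using the additivity of degree and rank on saturated short exact sequences together with a standard weighted-average argument.

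For the submodule case, let $\mathfrak{N} \subseteq \mathfrak{M}$ be saturated with $\mu(\mathfrak{N}) = \mu(\mathfrak{M})$, and let $\mathfrak{N}' \subseteq \mathfrak{N}$ be any saturated submodule. The first key observation is that saturation is transitive: using Lemma \ref{projdimone}, $\mathfrak{N}'$ saturated in $\mathfrak{N}$ corresponds under $\mathrm{GF}$ to a subobject of $\mathrm{GF}(\mathfrak{N}) \subseteq \mathrm{GF}(\mathfrak{M})$, hence a subobject of $\mathrm{GF}(\mathfrak{M})$, so $\mathfrak{N}'$ is also saturated in $\mathfrak{M}$. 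Semistability of $\mathfrak{M}$ then gives $\mu(\mathfrak{N}') \ge \mu(\mathfrak{M}) = \mu(\mathfrak{N})$, proving $\mathfrak{N}$ is semistable.

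For the quotient case, let $\mathfrak{K} \subseteq \mathfrak{M}$ be saturated with quotient $\mathfrak{N} = \mathfrak{M}/\mathfrak{K}$ satisfying $\mu(\mathfrak{N}) = \mu(\mathfrak{M})$. By Lemma \ref{projdimone}, the short exact sequence $0 \to \mathfrak{K} \to \mathfrak{M} \to \mathfrak{N} \to 0$ stays inside $\mathbf{Mod}_{/\mathfrak{S}_n}^{\phi,r}$, and both $\deg$ (additivity of lengths of cokernels via the snake lemma applied to $\phi$) and $\mathrm{rk}$ (additivity of $\OO_\mathcal{E}$-length under the exact functor $\mathrm{GF}$) are additive on it. An elementary computation then shows $\mu(\mathfrak{K}) = \mu(\mathfrak{M})$ as well. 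Now given any saturated $\mathfrak{N}' \subseteq \mathfrak{N}$, pull it back to $\mathfrak{M}' \subseteq \mathfrak{M}$ containing $\mathfrak{K}$ with $\mathfrak{M}'/\mathfrak{K} = \mathfrak{N}'$; the module $\mathfrak{M}'$ is saturated in $\mathfrak{M}$ since its image in $\mathrm{GF}(\mathfrak{M})$ is the preimage of the subobject $\mathrm{GF}(\mathfrak{N}') \subseteq \mathrm{GF}(\mathfrak{N})$.

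The final step is the weighted-average argument. Writing $a = \mu(\mathfrak{M}) = \mu(\mathfrak{K})$ and using additivity on $0 \to \mathfrak{K} \to \mathfrak{M}' \to \mathfrak{N}' \to 0$,
$$\mu(\mathfrak{M}') = \frac{\mathrm{rk}(\mathfrak{K})\cdot a + \mathrm{rk}(\mathfrak{N}')\cdot \mu(\mathfrak{N}')}{\mathrm{rk}(\mathfrak{K}) + \mathrm{rk}(\mathfrak{N}')}.$$
Semistability of $\mathfrak{M}$ gives $\mu(\mathfrak{M}') \ge a$, and since this is a convex combination of $a$ and $\mu(\mathfrak{N}')$ with positive weights, we conclude $\mu(\mathfrak{N}') \ge a = \mu(\mathfrak{N})$, establishing semistability of $\mathfrak{N}$. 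The main obstacle, and really the only technical content, is verifying that $\deg$ is additive on a short exact sequence $0 \to \mathfrak{K} \to \mathfrak{M} \to \mathfrak{N} \to 0$ of saturated terms; I would invoke the snake lemma applied vertically to $\phi_{\mathfrak{K}}, \phi_{\mathfrak{M}}, \phi_{\mathfrak{N}}$, noting that $u$-torsion freeness from Lemma \ref{utorsfree} ensures the kernel terms vanish so only cokernels contribute — this is already implicit in the framework of \cite{Fargues} and \cite{LevinWang}, which the authors are citing.
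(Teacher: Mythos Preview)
The paper does not give its own proof of this lemma: it is stated with a citation to \cite{LevinWang}, Corollary 2.3.12, and no argument is supplied. Your proof is correct and is precisely the standard Harder--Narasimhan argument one expects here --- transitivity of saturation (via the $\mathrm{GF}$ characterization of Lemma \ref{projdimone}) for the submodule case, and pullback plus a weighted-average computation for the quotient case, with additivity of degree and rank on saturated short exact sequences providing the bookkeeping. There is nothing to compare against in the paper itself, but your write-up would serve as a self-contained substitute for the external citation.
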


\begin{thm}[Harder-Narasimhan Filtration, \cite{Fargues}, \cite{LevinWang}]
\label{HNthm}
Let $\mathfrak{M}\in \mathbf{Mod}_{\mathfrak{S}_n}^{\phi,r}$. Then 
there exists a unique filtration
$$0 = \mathfrak{M}_0\subseteq \mathfrak{M}_1\subseteq \cdots\subseteq \mathfrak{M}_k = \mathfrak{M}$$
by saturated subobjects such that for all $i$, $\mathfrak{M}_{i+1}/\mathfrak{M}_i$ 
is semi-stable and $\mu(\mathfrak{M}_{i+1}/\mathfrak{M}_i)>\mu(\mathfrak{M}_i/\mathfrak{M}_{i-1})$. Moreover, this filtration is preserved under endomorphisms.
\end{thm}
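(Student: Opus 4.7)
The plan is to adapt the classical Harder--Narasimhan construction for vector bundles on curves, following Fargues \cite{Fargues} for finite flat group schemes and Levin--Wang-Erickson \cite{LevinWang} for Kisin modules. The argument has four stages: verify the foundational properties of the pair $(\deg,\mathrm{rk})$; construct a minimal-slope, maximal-rank saturated subobject $\mathfrak{M}_1$; iterate; and check uniqueness together with endomorphism invariance.

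First I would establish the additivity of $\deg$ and $\mathrm{rk}$ on saturated short exact sequences $0\to \mathfrak{N}\to \mathfrak{M}\to \mathfrak{Q}\to 0$ in $\mathbf{Mod}_{/\mathfrak{S}_n}^{\phi,r}$ (where Lemma \ref{projdimone} guarantees that $\mathfrak{Q}$ again lies in the category), together with the fact that the saturation of a $\phi$-stable submodule preserves rank and does not decrease slope. Both properties reduce to snake-lemma computations on the cokernels of $\phi$, with the saturation hypothesis providing the requisite exactness.

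Second, I would construct $\mathfrak{M}_1$ as a saturated $\phi$-stable submodule realizing the infimum $\mu_{\min}:=\inf\{\mu(\mathfrak{N})\}$ over all saturated $\phi$-stable $\mathfrak{N}\subseteq \mathfrak{M}$, having maximal rank among such minimizers. The infimum is attained because the height-$r$ bound $0\le \deg(\mathfrak{N})\le r\cdot \mathrm{rk}(\mathfrak{N})\le r\cdot \mathrm{rk}(\mathfrak{M})$ restricts the possible slopes to a finite set of rationals with bounded denominators. Uniqueness of $\mathfrak{M}_1$ follows from the standard sum-and-intersection (Newton polygon) argument applied to pairs of minimizers. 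Semistability of $\mathfrak{M}_1$ is then immediate, since any saturated subobject of $\mathfrak{M}_1$ is saturated in $\mathfrak{M}$ and therefore has slope $\ge \mu_{\min}=\mu(\mathfrak{M}_1)$.

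Third, I would iterate by letting $\mathfrak{M}_{i+1}$ be the preimage in $\mathfrak{M}$ of the analogously constructed minimal-slope, maximal-rank saturated subobject of $\mathfrak{M}/\mathfrak{M}_i$. Termination is by strict rank increase, and the strict inequality $\mu(\mathfrak{M}_{i+1}/\mathfrak{M}_i)>\mu(\mathfrak{M}_i/\mathfrak{M}_{i-1})$ is forced by the maximal-rank property of $\mathfrak{M}_i$ inside $\mathfrak{M}/\mathfrak{M}_{i-1}$ at the previous stage: equality would allow $\mathfrak{M}_{i+1}/\mathfrak{M}_{i-1}$ to supersede $\mathfrak{M}_i/\mathfrak{M}_{i-1}$ as a minimal-slope saturated subobject of strictly larger rank. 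Uniqueness of the whole filtration is then induction on length from the uniqueness of the first step, and endomorphism invariance follows the standard Hom-vanishing pattern between semistable objects of strictly ordered slopes, applied inductively to the graded pieces. The most delicate step is the initial additivity of $\deg$ on saturated short exact sequences: tracking lengths of Frobenius cokernels over $p^n$-torsion coefficients in mixed characteristic requires care, but once this is in hand, the remainder is a formal replay of the classical Harder--Narasimhan template.
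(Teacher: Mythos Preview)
The paper does not give its own proof of this theorem: it is stated with attribution to Fargues \cite{Fargues} and Levin--Wang-Erickson \cite{LevinWang} and used as a background result, with no accompanying argument. Your sketch is a faithful outline of the standard Harder--Narasimhan construction as carried out in those references, and the ingredients you isolate (additivity of $\deg$ and $\mathrm{rk}$ on saturated short exact sequences via Lemma~\ref{projdimone}, finiteness of the set of attainable slopes from the height-$r$ bound, the sum/intersection see-saw for uniqueness of the first step, and Hom-vanishing between semistable objects of distinct slopes) are exactly what those papers use. One small point of care with the conventions here: since semistability in this paper is defined by $\mu(\mathfrak{N})\ge\mu(\mathfrak{M})$ for saturated subobjects, your choice to build $\mathfrak{M}_1$ as the \emph{minimal}-slope maximal-rank saturated subobject is the correct orientation, and the see-saw step should read that the (saturation of the) sum of two minimal-slope subobjects again has slope $\le\mu_{\min}$, hence $=\mu_{\min}$; your phrase ``saturation \ldots does not decrease slope'' should likewise be checked against this sign convention, but once oriented correctly the argument goes through as you describe.
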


\begin{defn}
The \textbf{Harder-Narasimhan polygon} (or \textbf{HN polygon}) of an 
object $\mathfrak{M}\in \mathbf{Mod}_{/\mathfrak{S}}^{\phi,r}$ is the 
convex polygon whose segments have slope $\mu(\mathfrak{M}_{i+1}/\mathfrak{M}_i)$ 
and length $\mathrm{rk}(\mathfrak{M}_{i+1}/\mathfrak{M}_i)$, where 
the modules $\mathfrak{M}_i$ are those occuring in the Harder-Narasimhan filtration. 
We represent the HN-polygon as a piecewise linear function
$$\mathrm{HN}(\mathfrak{M}):[0,\mathrm{rk}(\mathfrak{M})]\to [0,\mathrm{deg}(\mathfrak{M})]$$
such that $\mathrm{HN}(\mathfrak{M})(0) = 0$ and $\mathrm{HN}(\mathfrak{M})(\mathrm{rk}(\mathfrak{M})) = \mathrm{deg}(\mathfrak{M})$.
\end{defn}

\begin{exmp}
The Harder-Narasimhan polygon of a semistable finite Kisin module 
is a single line of slope $\mu(\mathfrak{M})$.
\end{exmp}

\begin{rem}
When $\mathfrak{M}\in \mathbf{BT}_{\mathfrak{S}}^{\phi,r}$ is 
\emph{flat} over $\mathfrak{S}$, we can extend the Harder Narasimhan 
theory on its projections $\mathfrak{M}\otimes_{\mathfrak{S}}\mathfrak{S}_n$ 
to $\mathfrak{M}$ as follows. We define
$$\mathrm{deg}_{\mathfrak{S}}(\mathfrak{M}) := \mathrm{deg}(\mathfrak{M}/p^n)/n$$
$$\mu_{\mathfrak{S}}(\mathfrak{M}) := \mu(\mathfrak{M}/p^n).$$
That this is independent of $n$ is the content of Proposition 6.3.5 in \cite{LevinWang}. 
Alternatively, if we let $\mathrm{HN}(\mathfrak{M}/p^n)$ denote the HN 
polygon of the object $\mathfrak{M}/p^n\in \mathbf{Mod}_{/\mathfrak{S}}^{\phi,r}$, 
the HN polygon (and hence the HN filtration) 
of $\mathfrak{M}$ is obtained as the limit of the maps
$$x\mapsto \frac{1}{n}\mathrm{HN}(\mathfrak{M}/p^n)(nx)$$
which converges uniformly increasingly as $n\to \infty$ (Theorem 6.2.3, \cite{LevinWang}).
It is shown in \cite{LevinWang} that these two definitions are equivalent 
and moreover the isogeneous flat Kisin modules have the same HN 
filtration. We leave it to the reader to work check for herself the details or consult 
\cite{LevinWang} for reference.
\end{rem}

\section{CM Kisin Modules}

\subsection{$\OO_E$-Linear CM Kisin Modules}
\label{OProjConstr}
Fix $E$ to be a ($p$-adic) CM field such that $[E:\QQ_p]=h$ 
and let $E^*$ be its ($p$-adic) reflex field. 
Denote by $\OO_E\subseteq E$ 
the ring of integral elements. Let $K/\QQ_p$ be a finite field extension with 
ring of integers $\OO_K$, a choice of uniformizer 
$\pi_K$, and residue field $k = \OO_K/\pi_K\OO_K$, and 
assume that $K\supseteq E^*$. Let $W = W(k)$ and 
$K_0 = W[1/p]\subseteq K$ be the maximal unramified 
subextension of $K$ over $\QQ_p$. Then the characteristic 
polynomial of the extension $K/K_0$ is an 
Eisenstein polynomial $\mathsf{Eis}(u)\in W[u]$, i.e., 
$\mathsf{Eis}(u) = u^e + a_{e-1} + \cdots + a_1 u + a_0$, where $e$ is the ramification 
index of $K/K_0$, $p|a_i$ for all $i$, and $a_0 = cp$ for some $c\in W(k)^\times$.

Now let $\mathcal{G}/\OO_K$ be a CM $p$-divisible group 
with ($p$-adic) CM by $(\OO_E,\Phi)$, and 
we assume that $\mathcal{G}$ has an $\OO_E$-linear structure, 
i.e., $\mathrm{Lie}(\mathcal{G})\simeq \OO_K\otimes \OO_E$. 
We note that $\mathcal{G}$ has 
height $h$ and denote its dimension by $d$.

\begin{defn}
\label{OEDef}
An \textbf{$\OO_E$-linear CM Kisin Module} of type $(\OO_E,\Phi)$ is an object 
of $\mathbf{BT}_{/\mathfrak{S}}^{\phi,1}$ 
isomorphic to $\mathfrak{M}(\mathcal{G})$ for a CM $p$-divisible group $\mathcal{G}$ 
with ($p$-adic) CM by $(\OO_E,\Phi)$ and 
an $\OO_E$-linear structure.
\end{defn}

Recall the following definition of the reflex norm by evaluating 
on $\QQ$-points the map from Definition \ref{reflexcochar}:
\begin{align*}
N_{\Phi,K}:K^\times&\to(K\otimes E)^\times\to E^\times\\
x&\mapsto\mathrm{det}_E(x|_{V_{\Phi,K}}).\end{align*}
Here, $V_{\Phi,K}$ is a finitely-generated $K\otimes_{\QQ_p}E$-module 
and $x\in K$ induces a $K\otimes_{\QQ_p}E$-endomorphism on 
$V_{\Phi,K}$. The subscript $E$ indicates we take 
the $E$-linear determinant of the endomorphism induced by 
$x$ on $V_{\Phi,K}$ considered as an $E$-vector space.

For any $p$-adic local subfield $L\subseteq K$ of finite index, 
we can define a \emph{relative reflex norm map} to be
\begin{align*}
N_{\Phi,K,L\otimes E}:K^\times&\to(K\otimes E)^\times\to (L\otimes E)^\times = \prod_i E_i^\times\\
x&\mapsto\mathrm{det}_{L\otimes E}(x|_{V_{\Phi,K,L\otimes E}}) := \prod_i\mathrm{det}_{E_i}(x|_{V_{\Phi,K,E_i}}),\end{align*}
where $V_{\Phi,K,L\otimes E} \simeq V_{\Phi,K}$ as an $L\otimes E$-module, 
$V_{\Phi,K,E_i}$ is a finitely-generated $K\otimes E_i$-module, and 
$V_{\Phi,K,L\otimes E} \simeq \bigoplus_i V_{\Phi,K,E_i}$ is induced from 
the decomposition of $L\otimes E \simeq \prod_i E_i$ into a product of fields. 
Note that the $E_i$ do not all have to be isomorphic fields, so as $\QQ_p$-vector 
spaces the $V_{\Phi,K,E_i}$ could all have different dimension. 
This determinant is well-defined up to permutation of the fields 
$E_i$. By the following lemma, it also has the more intrinsic definition.

\begin{lem}
Let $A$ be a ring and $M$ a finite projective $A$-module with 
an endomorphism $x$. Let $F$ be a finite free $A$-module such 
that $F\simeq M\oplus M'$ and $I_x$ the extension by 
the identity of $x$ to $F$. Then setting 
$\mathrm{det}_A(x|_M) = \mathrm{det}_A(I_x|_F)$ is 
independent of the choice of $F$.
\end{lem}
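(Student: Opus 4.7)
The plan is to show that $\det_A(I_x \mid F)$ does not depend on the pair $(F, M')$ by comparing any two such presentations through a single enveloping free module. Let $F_1 \simeq M \oplus M_1'$ and $F_2 \simeq M \oplus M_2'$ be two choices of free module with complementary summand, and let $I_x^{(1)} \colon F_1 \to F_1$ and $I_x^{(2)} \colon F_2 \to F_2$ be the corresponding extensions of $x$ by the identity. I want to show $\det_A(I_x^{(1)}) = \det_A(I_x^{(2)})$.

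First I would pass to the finite free module $F_1 \oplus F_2$ and consider the two endomorphisms $T_1 := I_x^{(1)} \oplus \mathrm{id}_{F_2}$ and $T_2 := \mathrm{id}_{F_1} \oplus I_x^{(2)}$. Since the determinant of an endomorphism of a finite free $A$-module is multiplicative across direct sum decompositions and the identity on a finite free summand contributes $1$, one has $\det_A(T_i) = \det_A(I_x^{(i)})$ for $i = 1, 2$. So the problem reduces to proving $\det_A(T_1) = \det_A(T_2)$ as endomorphisms of the single finite free module $F_1 \oplus F_2$.

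The heart of the argument is to exhibit an $A$-module automorphism $\sigma$ of $F_1 \oplus F_2$ conjugating $T_1$ to $T_2$. Identifying $F_1 \oplus F_2 = M^{(1)} \oplus M_1' \oplus M^{(2)} \oplus M_2'$, where $M^{(1)}$ and $M^{(2)}$ denote the two distinguished copies of $M$, I would take $\sigma$ to be the canonical swap $M^{(1)} \leftrightarrow M^{(2)}$ (via $\mathrm{id}_M$) extended by the identity on $M_1' \oplus M_2'$. A direct check on each summand shows $\sigma T_1 \sigma^{-1} = T_2$: both act as $x$ on $M^{(2)}$ and as the identity on $M^{(1)} \oplus M_1' \oplus M_2'$. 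Since $\det_A$ is conjugation-invariant on endomorphisms of a finite free module, this yields $\det_A(T_1) = \det_A(T_2)$ and hence the independence claim.

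There isn't really a hard step: the only thing to verify is the formal setup, namely that $\det_A$ on endomorphisms of a finite free $A$-module is well defined, multiplicative across direct sums, and invariant under conjugation by automorphisms of the ambient free module, all of which are standard. The content of the lemma is essentially the observation that adjoining the identity on a free summand and conjugating by the evident swap preserve the determinant, so $\det_A(x \mid M)$ is a genuine invariant of the pair $(M, x)$.
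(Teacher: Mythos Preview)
Your proof is correct and follows essentially the same approach as the paper: both pass to the enveloping free module $F_1 \oplus F_2$, extend each $I_x^{(i)}$ by the identity on the other summand, and conjugate by the automorphism swapping the two copies of $M$ to conclude via conjugation-invariance of the determinant. The only difference is that you spell out more carefully the standard properties of $\det_A$ being invoked.
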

\begin{proof}
Let $F_1$ and $F_2$ be two choices of free $A$-modules with 
respective decompositions $M \oplus M_1$ and $M\oplus M_2$ 
into projective $A$-modules. 
Then for $i\in \{1,2\}$ let $x$ extend to endomorphisms 
$I_{x,i}$ on $F_i$ which acts on $M$ by $x$ and on $M_i$ by $1$. 
Construct the module $F_1\oplus F_2 = M\oplus M_1 \oplus M\oplus M_2$, 
and consider the automorphism $u$ which switches the two factors 
of $M$ and behaves as identity on the other two factors. This 
automorphism satisfies
$$u(I_{x,1}\oplus \mathrm{id})u^{-1} = \mathrm{id}\oplus I_{x,2}.$$
Taking determinants on each side of this equality shows that 
$\mathrm{det}_A(I_{x,1}|_{F_1}) = \mathrm{det}_A(I_{x,2}|_{F_2})$.
\end{proof}

Apply the lemma with $A = L\otimes E$ and 
$M = V_{\Phi,K,L\otimes E}$. There is a natural 
choice of a free $L\otimes E$-module $F = K\otimes E$ 
which has $L\otimes E$-rank $[K:L]$ and splits as
$$K\otimes E\simeq V_{\Phi,K,L\otimes E}\oplus V_{\Phi^c,K,L\otimes E}.$$ 
For $x\in K^\times$, 
define $I_x$ as in the proof of the lemma to be the endomorphism of $F$ 
which acts on $V_{\Phi,K,L\otimes E}$ by $x$ 
and on $V_{\Phi^c,K,L\otimes E}$ by $1$. Then by the 
lemma 
$N_{\Phi,K,L\otimes E}(x) = \mathrm{det}_{L\otimes E}(I_x|_{K\otimes E})$, 
and is easily seen to coincide with the definition above, 
up to a permutation of the $E_i$-factors. 
Note that $N_{\Phi^c,K,L\otimes E}(x) = \mathrm{det}_{L\otimes E}(I_x^c|_{K\otimes E})$ 
where $I_x^c$ acts on $V_{\Phi,K,L\otimes E}$ by $1$ 
and on $V_{\Phi^c,K,L\otimes E}$ by $x$. By multiplicativity 
of the determinant, this moreover induces the decomposition
\begin{equation}
\label{multiplicativity}
\mathrm{Nm}_{K/L}(x) =  N_{\Phi,K,L \otimes E}(x) N_{\Phi^c,K,L \otimes E}(x).
\end{equation}
When $L = \QQ_p$, the definition easily 
recovers the classic definition of the reflex norm.

For each finite extension $K/E^*$, the relative reflex norm extends to 
a multiplicative map on polynomial rings after a base extension by 
$\QQ_p[u]$ defined by
\begin{align*}
R_{\Phi,K,L\otimes E}:K[u]&\to (L\otimes E)[u]\\
f(u)&\mapsto N_{\Phi,K,L\otimes E}(f(u)).\end{align*}
The multiplicativity of this map comes from the multiplicativity of the 
relative reflex norm. 
We let $P_{\Phi,x,L\otimes E}(u)$ denote the image of the polynomial $u-x$ 
for $x\in K$ under this map.

\begin{prop}
\label{CMMods}
Let $\mathfrak{M}(\mathcal{G})$ be an $\OO_E$-linear CM Kisin module of type $(\OO_E,\Phi)$. Then $\mathfrak{M}(\mathcal{G})$ is 
a rank $1$ projective $\mathfrak{S}\otimes\mathcal{O}_E$-module with 
generator $e$ with Frobenius and Verschiebung endomorphisms given by
$$\phi_{\mathfrak{M}(\mathcal{G})}(e) = \frac{1}{c}P_{\Phi^c,\pi_K,K_0\otimes E}(u)e,$$
$$\psi_{\mathfrak{M}(\mathcal{G})}(e) = \phi^*\left(cP_{\Phi,\pi_K,K_0\otimes E}(u)\right)e.$$
In particular, 
$$\mathsf{Eis}(u) = P_{\Phi,\pi_K,K_0\otimes E}\cdot P_{\Phi^c,\pi_K,K_0\otimes E}(u).$$
\end{prop}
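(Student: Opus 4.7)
The plan is to first establish that $\mathfrak{M}(\mathcal{G})$ is projective of rank one over $R:=\mathfrak{S}\otimes_{\ZZ_p}\OO_E$, then to pin down the Frobenius explicitly, and finally to deduce the Verschiebung formula and the factorization of $\mathsf{Eis}(u)$ as corollaries.

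For the rank-one projective structure, Theorem \ref{Mfunc}(1) together with Corollary \ref{thecor}(1) gives that $\mathfrak{M}(\mathcal{G})$ is free of rank $h=[E:\QQ_p]$ over $\mathfrak{S}$, while functoriality of the Kisin module construction upgrades the $\OO_E$-action on $\mathcal{G}$ to a commuting, $\phi$-equivariant $\OO_E$-action on $\mathfrak{M}(\mathcal{G})$, giving an $R$-module structure. To obtain rank-one projectivity, I would argue in two steps: by Corollary \ref{cristallinerep} the associated Galois representation is one-dimensional over $E$, so $\mathfrak{M}(\mathcal{G})[1/p]$ is free of rank one over $R[1/p]$; a local analysis at the primes of $R$ containing $p$ then upgrades this to full projectivity, the $\OO_E$-linearity hypothesis on $\mathrm{Lie}(\mathcal{G})$ being used to rule out nontrivial annihilators.

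For the Frobenius, after fixing an $R$-generator $e$ of $\mathfrak{M}(\mathcal{G})$, the Frobenius acts by $\phi_{\mathfrak{M}(\mathcal{G})}(e)=\alpha\cdot e$ for some $\alpha\in R$, unique up to rescaling $e$ by a unit of $R$. The element $\alpha$ is pinned down by two constraints: it must divide $\mathsf{Eis}(u)$ in $R$ (since the Frobenius becomes an isomorphism after inverting $\mathsf{Eis}(u)$), and the cokernel $R/\alpha\simeq \mathfrak{M}(\mathcal{G})/\phi_{\mathfrak{M}(\mathcal{G})}\mathfrak{M}(\mathcal{G})\simeq\omega_{\mathcal{G}}$ (Lemma \ref{diffmod}) must match the cotangent bundle whose $\OO_K\otimes\OO_E$-structure is prescribed by the CM type. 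By definition, $P_{\Phi^c,\pi_K,K_0\otimes E}(u)$ is the $K_0\otimes E$-characteristic polynomial of multiplication by $\pi_K$ on $V_{\Phi^c,K,K_0\otimes E}$, whose specialization modulo $u-\pi_K$ captures exactly the $\Phi^c$-component of the cotangent space. This forces $\alpha=\tfrac{1}{c}P_{\Phi^c,\pi_K,K_0\otimes E}(u)$ up to a unit of $R$ (which may be absorbed into $e$), with the factor $\tfrac{1}{c}$ coming from the normalization $\mathsf{Eis}(u)\equiv cp\bmod u$, matched against the baseline example $\mathfrak{M}(\mu_{p^\infty})$.

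The Verschiebung formula is then immediate from the defining relation $(1\otimes\phi_{\mathfrak{M}})\circ\psi_{\mathfrak{M}}=\mathsf{Eis}(u)$ together with the factorization $\mathsf{Eis}(u)=P_{\Phi,\pi_K,K_0\otimes E}(u)\cdot P_{\Phi^c,\pi_K,K_0\otimes E}(u)$. This factorization is itself the final claim, and it follows directly from the multiplicativity relation \eqref{multiplicativity} applied to $x=u-\pi_K$ with $L=K_0$, since $\mathrm{Nm}_{K/K_0}(u-\pi_K)$ is the characteristic polynomial of $\pi_K\in K$ over $K_0$, namely $\mathsf{Eis}(u)$. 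The main obstacle I expect is the precise tracking of normalizations when identifying $\alpha$: the appearance of $\Phi^c$ rather than $\Phi$ ultimately reflects the convention $\mathfrak{M}(\mathcal{G})=\mathfrak{M}(T_p\mathcal{G}^*)$ of Theorem \ref{GroupEquiv}, and one must cross-check against $\mathfrak{M}(\mu_{p^\infty})$ and $\mathfrak{M}(\QQ_p/\ZZ_p)$ to ensure the $\tfrac{1}{c}$ factor and the $\Phi$-versus-$\Phi^c$ placement are consistent.
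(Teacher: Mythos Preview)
Your approach is correct in spirit but genuinely different from the paper's, and it is worth contrasting the two. You work \emph{forward}: starting from $\mathfrak{M}(\mathcal{G})$, you first argue it is rank-one projective over $\mathfrak{S}\otimes\OO_E$, and then pin down the Frobenius by matching the cokernel against $\omega_{\mathcal{G}}$ via Lemma~\ref{diffmod}. The paper instead works \emph{backward}: it writes down the explicit rank-one module $\mathfrak{M}$ with the stated Frobenius, invokes Theorem~\ref{GroupEquiv} to realize it as $\mathfrak{M}(\mathcal{G}')$ for some $p$-divisible group $\mathcal{G}'$, computes $\mathrm{Lie}(\mathcal{G}')$ via Lemma~\ref{Liemod} to verify that $\mathcal{G}'$ has CM type $(\OO_E,\Phi)$, and then appeals to Corollary~\ref{cristallinerep} to conclude that (after an unramified extension) any two $\OO_E$-linear CM $p$-divisible groups of the same type are isomorphic, whence $\mathfrak{M}(\mathcal{G})\simeq\mathfrak{M}$.

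The paper's route is shorter and more robust: projectivity comes for free from the explicit construction, and the hard uniqueness step is outsourced to the Main Theorem of Complex Multiplication for $p$-divisible groups. Your route is more self-contained but carries real burdens you have not fully discharged. The phrase ``a local analysis at the primes of $R$ containing $p$ then upgrades this to full projectivity'' hides genuine content---the ring $\mathfrak{S}\otimes\OO_E$ is not regular in general, and knowing $\mathfrak{M}(\mathcal{G})[1/p]$ is free of rank one does not by itself give projectivity integrally without invoking something like the $\OO_E$-linearity hypothesis in a precise way. Similarly, ``this forces $\alpha=\tfrac{1}{c}P_{\Phi^c,\pi_K,K_0\otimes E}(u)$ up to a unit'' requires knowing the $\mathfrak{S}\otimes\OO_E$-module structure of $\omega_{\mathcal{G}}$ exactly, not merely its $\OO_K\otimes\OO_E$-structure after specialization, and you should say why two divisors of $\mathsf{Eis}(u)$ in $R$ with the same quotient must agree up to a unit. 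Your derivations of the Verschiebung formula and the factorization of $\mathsf{Eis}(u)$ from~\eqref{multiplicativity} match the paper's.
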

\begin{proof}
Let $\mathfrak{M}$ be a rank $1$ projective $\mathfrak{S}\otimes\OO_E$-module 
with Frobenius and Verschiebung endomorphisms $\phi_{\mathfrak{M}}$ and 
$\psi_{\mathfrak{M}}$ defined as in the statement. By Theorem \ref{GroupEquiv}, 
$\mathfrak{M} \simeq \mathfrak{M}(\mathcal{G}')$ for some 
$p$-divisible group $\mathcal{G}'$. By Lemma \ref{Liemod}, we 
compute $\mathrm{Lie}(\mathcal{G}')$ to be
$$\phi^*\mathfrak{M}(\mathcal{G}'^*)/\mathrm{Fil}^1\phi^*\mathfrak{M}(\mathcal{G}'^*)\simeq\phi^*\mathfrak{M}^*/\mathrm{Fil}^1\phi^*\mathfrak{M}^*\simeq \mathfrak{S}\otimes_{\mathbb{Z}_p}\OO_E/(P_{\Phi,\pi_K,K_0\otimes E}(u)).$$
This induces the $E$-linear isomorphism
$$\mathrm{Lie}(\mathcal{G}')\otimes \overline{\QQ}_p\simeq \left[\mathfrak{S}\otimes_{\mathbb{Z}_p} \mathcal{O}_E / (P_{\Phi,\pi_K,K_0\otimes E}(u))\right]\otimes\overline{\QQ}_p\simeq \prod_{i\in \Phi}(\overline{\mathbb{Q}}_p)_i$$
and hence $\mathcal{G}'$ is an $\OO_E$-linear CM $p$-divisible group with CM by 
$(\OO_E,\Phi)$. By Corollary \ref{cristallinerep}, after 
passing to an appropriate unramified extension, any two $\OO_E$-linear CM Kisin 
modules of the same type are isomorphic, so $\mathcal{G}\simeq \mathcal{G}'$ 
and $\mathfrak{M}(\mathcal{G})\simeq \mathfrak{M}$.
The decomposition of $\mathsf{Eis}(u)$ is induced by the 
decomposition formula (\ref{multiplicativity}).
\end{proof}

\begin{cor}
\label{decompCMMod}
Let $\mathfrak{M}(\mathcal{G})$ be an $\OO_E$-linear CM Kisin module of 
type $(\OO_E,\Phi)$. Suppose that $K$ contains all 
$\overline{\QQ_p}$-embeddings of $E$, and let 
$E^{\mathrm{ur}}\subseteq E$ denote the 
the maximal unramified subfield. Then under the identification
$$\mathfrak{M}(\mathcal{G}):=(\mathfrak{S}\otimes_{\ZZ_p} \OO_E) e\xrightarrow{\simeq}\bigoplus_{\tau\in\mathrm{Hom}(E^{\mathrm{ur}},K_0)}(\mathfrak{S}\otimes_{\OO_{E^{\mathrm{ur}},\tau}}\OO_E) e_\tau$$
the Frobenius and Verschibung are explicitly
$$\phi_{\mathfrak{M}(\mathcal{G})} e_\tau = f_{\sigma(\tau)}(u)e_{\sigma(\tau)}$$
$$\psi_{\mathfrak{M}(\mathcal{G})} e_\tau = \phi^*(v_{\sigma^{-1}(\tau)}(u))e_{\sigma^{-1}(\tau)}$$
where
\begin{align*}
f_\tau(u) = \prod_{\nu\in \Phi_\tau}h_{\nu}(u),&\quad\quad \Phi_\tau = \{\nu\in \Phi:\nu|_{E^{\mathrm{ur}}} = \tau\}\\
v_\tau(u) = \prod_{\nu\in \Phi_\tau^c}h_{\nu}(u),&\quad\quad\Phi_\tau^c = \{\nu\in \Phi^c:\nu|_{E^{\mathrm{ur}}} = \tau\}\end{align*}
and the polynomials $h_\nu(u)\in \mathfrak{S}\otimes_{\OO_{E^{\mathrm{ur}}},\nu}\OO_E$ satisfy $\mathsf{Eis}(u) = \prod_{\nu\in \mathrm{Hom}_\tau(E,K)}h_{\nu}(u)$.
\end{cor}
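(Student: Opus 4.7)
The plan is to decompose the coefficient ring $\mathfrak{S} \otimes_{\ZZ_p} \OO_E$ using the unramified splitting supplied by the hypothesis that $K$ contains all $\overline{\QQ_p}$-embeddings of $E$, and then to read off how the Frobenius and Verschiebung of Proposition \ref{CMMods} behave on each resulting block.

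Since $K \supseteq E^*$, the maximal unramified subfield $K_0 \subseteq K$ contains $E^{\mathrm{ur}}$, so $\OO_{E^{\mathrm{ur}}}$ embeds into $W$ and the standard product splitting
$$\OO_{E^{\mathrm{ur}}} \otimes_{\ZZ_p} W \;\xrightarrow{\sim}\; \prod_{\tau \in \Hom(E^{\mathrm{ur}}, K_0)} W, \quad a \otimes w \mapsto (\tau(a) w)_\tau,$$
yields orthogonal idempotents $\varepsilon_\tau$. Extending scalars from $\OO_{E^{\mathrm{ur}}}$ to $\OO_E$ and from $W$ to $\mathfrak{S}$ gives $\mathfrak{S} \otimes_{\ZZ_p} \OO_E \simeq \bigoplus_\tau \mathfrak{S} \otimes_{\OO_{E^{\mathrm{ur}}}, \tau} \OO_E$, and setting $e_\tau := \varepsilon_\tau e$ delivers the asserted direct sum decomposition of $\mathfrak{M}(\mathcal{G})$.

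For the Frobenius, the defining relation $(a \otimes 1) \varepsilon_\tau = (1 \otimes \tau(a)) \varepsilon_\tau$ combined with the fact that $\phi$ restricted to $W$ is the Witt Frobenius $\sigma$ forces $\phi(\varepsilon_\tau) = \varepsilon_{\sigma \tau}$. Semilinearity of $\phi_{\mathfrak{M}(\mathcal{G})}$ together with Proposition \ref{CMMods} then yields
$$\phi_{\mathfrak{M}(\mathcal{G})}(e_\tau) \;=\; \tfrac{1}{c}\,\varepsilon_{\sigma\tau} P_{\Phi^c, \pi_K, K_0 \otimes E}(u) \cdot e_{\sigma\tau},$$
so $\phi_{\mathfrak{M}(\mathcal{G})}$ is block-permuting with shift $\tau \mapsto \sigma\tau$. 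The analogous calculation with $\psi_{\mathfrak{M}(\mathcal{G})}(e) = \phi^*(c\, P_{\Phi, \pi_K, K_0 \otimes E}(u))\, e$ delivers the Verschiebung side with shift $\tau \mapsto \sigma^{-1}\tau$.

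It finally remains to identify the $\sigma\tau$-block of $\tfrac{1}{c} P_{\Phi^c, \pi_K, K_0 \otimes E}(u)$ with $f_{\sigma\tau}(u) = \prod_{\nu \in \Phi_{\sigma\tau}} h_\nu(u)$. Under the parallel splitting $K_0 \otimes_{\QQ_p} E \simeq \prod_\tau K_0 \otimes_{\OO_{E^{\mathrm{ur}}}, \tau} E$, the relative reflex norm $N_{\Phi^c, K, K_0 \otimes E}$ factors as a product of partial norms indexed by those embeddings $\nu : E \hookrightarrow K$ whose restriction to $E^{\mathrm{ur}}$ equals $\tau$; evaluating at $u - \pi_K$ produces one linear factor $h_\nu(u) \in \mathfrak{S} \otimes_{\OO_{E^{\mathrm{ur}}}, \tau} \OO_E$ per such $\nu$, and the identity $\mathsf{Eis}(u) = P_{\Phi, \pi_K, K_0 \otimes E}(u) \cdot P_{\Phi^c, \pi_K, K_0 \otimes E}(u)$ from Proposition \ref{CMMods} then forces $\mathsf{Eis}(u) = \prod_{\nu \in \Hom_\tau(E, K)} h_\nu(u)$ in each $\tau$-block. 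The main obstacle is this last step: carefully carrying out the factorization of the relative reflex norm through the unramified decomposition, correctly grouping the half-type embeddings by their restriction to $E^{\mathrm{ur}}$, and keeping account of the $c$-normalization together with the permutation induced by $\phi$; the remaining work is routine bookkeeping once the idempotent decomposition is set up.
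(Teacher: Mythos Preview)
Your proposal is correct and follows exactly the route the paper intends: the paper's own proof is the single sentence ``This follows from Proposition \ref{CMMods} and the definition of $R_{\Phi,E^*,L\otimes E}$,'' and what you have written is precisely the unpacking of that sentence---the idempotent decomposition of $\mathfrak{S}\otimes_{\ZZ_p}\OO_E$ along $\mathrm{Hom}(E^{\mathrm{ur}},K_0)$, the permutation $\varepsilon_\tau\mapsto\varepsilon_{\sigma\tau}$ induced by the Witt Frobenius, and the blockwise factorization of the relative reflex norm polynomial.
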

\begin{proof}
This follows from Proposition \ref{CMMods} and the definition of 
$R_{\Phi,E^*,L\otimes E}$.
\end{proof}

\begin{exmp}
\label{inertcase}
Assume that $E/\QQ_p$ is unramified and that 
$K_0$ contains all embeddings of $E$ into $\overline{\QQ}_p$. 
Then the module $\mathfrak{M}$ 
from Proposition \ref{CMMods} has the explicit $\mathfrak{S}$-basis 
$\{e_1,\ldots,e_h\}$ so that its Frobenius $\phi_\mathfrak{M}$ 
has the presentation
$$\phi_{\mathfrak{M}}:e_i\mapsto\left\{\begin{array}{ll}\frac{1}{c}
\textsf{Eis}(u)e_{\sigma(i)}&\text{if }i\in \Phi\\
e_{\sigma(i)}&\text{if }i\notin \Phi\end{array}\right.$$
where $\sigma = \phi|_{W}$. Since $\sigma$ is cyclic, we can choose an 
ordering such that $\sigma(i) = i+\delta \mod h$ for $1\le i\le h$ and $1\le \delta\le h$. In fact, since $\sigma$ has order $h$ on the residue field, we can take $\delta = 1$. 
An element $a\in\OO_E$ acts on the vector 
$x = \sum_i x_i$ by $a(x) = \sum_i i(a)x_i$, where $i:\OO_E\hookrightarrow W$. 
It is clear therefore that 
$\phi_{\mathfrak{M}}$ 
commutes with the $\OO_E$-action, and that such a module 
must correspond to a CM $p$-divisible group with CM by $(\OO_E,\Phi)$. 
We let the reader construct the analogous 
explicit form of $\psi_{\mathfrak{M}}$ as an easy exercise.
\end{exmp}

\begin{exmp}
\label{ramex}
Assume that $E/\QQ_p$ is a totally ramified Galois extension. Then we 
can write the (reduced) module $\mathfrak{M}_{\mathfrak{S}_1}$ 
from Proposition \ref{CMMods} 
with an explicit basis $\{e_1,\ldots,e_h\}$ as follows. We let $\pi = \pi_E$ 
and note that 
$$\mathsf{Eis}(u) = u^e - ph(u) = \prod_{g\in \Gal(E/\QQ_p)}\left(u^{\frac{e}{h}}-g(\pi)h_g(u)\right) = \prod_{i=1}^h\left(u^{\frac{e}{h}}-c_i\pi h_i(u)\right)$$
where $g(\pi) = c_i\pi$ for some $c_i\in \OO_E^\times$ and 
$h_i(u)\in W_E[[u]]$, where $W_E = W\otimes_{W\cap E}E$ and 
the degree of $h_i(u)$ is strictly less than $e/h$. Then
$$P_{K,\pi_K,K_0\otimes E}(u)e = \prod_{i\in \Phi^c}\left(u^{\frac{e}{h}}-c_i\pi h_i(u)\right)e.$$
We let 
$H_n(u) = \displaystyle\sum_{\substack{S\subseteq \Phi^c \\ \#S = n}}\prod_{i\in S}c_ih_i(u) = \sum_{k\ge 0} \pi^k H_{n,k}(u)$, where 
$H_{n,k}(u)$ are defined to have unit coefficients in $W_E$ 
(note, in particular, that $H_{0,0}(u) = 1$ and 
$H_{0,k} = 0$ for $k>0$). 
Define 
$$G_{n,j}(u) = \displaystyle\sum_{\substack{k\ge 0 \\ k\equiv j\mod d}}p^{\lfloor\frac{k}{h}\rfloor}H_{n,k}(u),$$ 
and
$$P_k(u) = \sum_{\substack{j,n\ge 0\\j + n = k}} G_{n,j}(u)u^{\frac{e}{h}(h-d-n)},$$
so that
$$P_{K,\pi_K,K_0\otimes E}(u)e = \sum_{j=0}^{d-1} \pi^j\left(\sum_{n=0}^{h-d}\pi^{n}G_{n,j}(u) u^{\frac{e}{h}(h-d-n)}\right)e  = \sum_{k=0}^{h-1}\pi^kP_k(u)e.$$

Using the basis $e_i = \pi^{i-1}$ for $1\le i\le h$, we may write
$$\phi_{\mathfrak{M}}(e_{i}) = \sum_{j=0}^{h-1-i}\left[P_j(u)\right]e_{j+i} + p\sum_{j=h-i}^{h-1}\left[P_j(u)\right]e_{j-(h-i-1)}.$$
Then, reducing the coefficients modulo $p$, 
$G_{n,j}(u) = H_{n,j}(u)$ in the above matrix 
presentation and $\phi_{\mathfrak{M}}$ is represented 
by an upper-triangular matrix.
\end{exmp}

The Galois representation attached to $\mathcal{G}[p^s]$ 
is a torsion-crystalline representation on a one-dimensional $E$-vector space, 
hence abelian. 
By Corollary \ref{cristallinerep}, the image 
agrees with the map
$$N_{\Phi,K}:\mathcal{O}_K^\times\rightarrow (\mathcal{O}_E/p^s\mathcal{O}_E)^\times,$$
and by local class field theory, the representation factors through 
a totally ramified abelian field extension $K^s/K$ of degree $\#(\mathcal{O}_E/p^s\mathcal{O}_E)^\times$. 
Let $s\textsf{-Eis}(u) = u^{e_s} + \ldots + c_sp$ be the characteristic 
polynomial of the extension $K^s/K_0$.

\begin{defn}
\label{smodeldef}
The \textbf{level-$s$ model} of an $\OO_E$-linear CM Kisin module $\mathfrak{M}\simeq \mathfrak{M}(\mathcal{G})$ 
is a rank $1$ projective $\mathfrak{S}\otimes\mathcal{O}_E$-module $\mathfrak{M}^s$ with 
generator $e$ and its Frobenius and Verschiebung endomorphisms given by
$$\phi_{\mathfrak{M}^s}(e) = \frac{1}{c_s}P_{\Phi^c,\pi_{K^s},K_0\otimes E}(u)e,$$
$$\psi_{\mathfrak{M}^s}(e) = \phi^*\left(c_sP_{\Phi,\pi_{K^s},K_0\otimes E}(u)\right)e.$$
In particular, 
$$s\textsf{-Eis}(u) = P_{\Phi,\pi_{K^s},K_0\otimes E}\cdot P_{\Phi^c,\pi_{K^s},K_0\otimes E}(u).$$
\end{defn}

\begin{lem}
\label{smodel}
Let $\mathfrak{M}\simeq \mathfrak{M}(\mathcal{G})$ be an $\OO_E$-linear 
CM Kisin module and $\mathfrak{M}^s$ be 
its level $s$ module. Then $\mathfrak{M}^s\simeq \mathfrak{M}(\mathcal{G}_s)$ 
where $\mathcal{G}\otimes_{\mathcal{O}_K}\OO_{K^s}\simeq \mathcal{G}_s$. 
In particular, all $p^s$-torsion on $\mathcal{G}$ is defined over the field $K^s$.
\end{lem}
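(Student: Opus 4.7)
The plan is to establish the isomorphism $\mathfrak{M}^s \simeq \mathfrak{M}(\mathcal{G}_s)$ by a direct computation, applying Proposition \ref{CMMods} to the base-changed $p$-divisible group $\mathcal{G}_s := \mathcal{G}\otimes_{\OO_K}\OO_{K^s}$, and then to deduce the rationality of $p^s$-torsion from the class-field-theoretic description of $K^s$ recorded in the paragraph preceding Definition \ref{smodeldef}.

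First I would observe that the extension $K^s/K$ is totally ramified (as stated in the discussion preceding Definition \ref{smodeldef}), so the residue field $k$ is unchanged and consequently $W(k_{K^s}) = W$. Hence the base ring $\mathfrak{S} = W[[u]]$ for the Kisin theory over $\OO_K$ coincides with that over $\OO_{K^s}$. Next, flat base change along $\OO_K \hookrightarrow \OO_{K^s}$ preserves the $\OO_E$-action and the CM type, so $\mathcal{G}_s$ is an $\OO_E$-linear CM $p$-divisible group of type $(\OO_E,\Phi)$ over $\OO_{K^s}$, and since $K^s \supseteq K \supseteq E^*$, Proposition \ref{CMMods} applies with $K$ replaced by $K^s$, uniformizer $\pi_{K^s}$, and Eisenstein polynomial $s\textsf{-Eis}(u)$. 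The explicit Frobenius and Verschiebung produced in this way are exactly those used to define $\mathfrak{M}^s$ in Definition \ref{smodeldef}, so $\mathfrak{M}(\mathcal{G}_s) \simeq \mathfrak{M}^s$.

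For the ``in particular'' clause, I would appeal to the class-field-theoretic characterization of $K^s$: by Corollary \ref{cristallinerep} together with local class field theory, the Galois representation on $\mathcal{G}[p^s]$ is abelian and factors through the map $\OO_K^\times \to (\OO_E/p^s\OO_E)^\times$ given by the reflex norm, and $K^s$ is \emph{defined} as the fixed field of the kernel of this mod-$p^s$ representation. Hence $\Gal(\overline{K}/K^s)$ acts trivially on $\mathcal{G}[p^s]$, which is exactly the assertion that every $p^s$-torsion point of $\mathcal{G}$ is $K^s$-rational.

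The only substantive obstacle is the compatibility claim in the first step: verifying that the Kisin functor applied to $\mathcal{G}_s$ really yields the module described by the reflex-norm formulas of Proposition \ref{CMMods} with the substituted data $(\pi_{K^s}, s\textsf{-Eis})$. This is essentially bookkeeping, combining the intrinsic nature of the Kisin functor on $p$-divisible groups from Theorem \ref{GroupEquiv}, the independence of the Kisin module from the choice of uniformizer from Theorem \ref{unifindep}, and the fact that the formulas in Proposition \ref{CMMods} depend on the ambient local field only through its uniformizer and Eisenstein polynomial, both of which transport cleanly under the totally ramified extension $K^s/K$.
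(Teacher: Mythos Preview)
Your proposal is correct and follows essentially the same approach as the paper: both invoke Proposition \ref{CMMods} applied over $K^s$ together with Theorem \ref{unifindep} to identify $\mathfrak{M}^s$ with $\mathfrak{M}(\mathcal{G}_s)$, and both extract the rationality of $p^s$-torsion from the class-field-theoretic description of $K^s$ given just before Definition \ref{smodeldef}. The only cosmetic difference is that the paper phrases the rationality step as a counting argument (the $p^s$-torsion has $p^{s[E:\QQ_p]}$ points, matching the height), whereas you state it directly as the triviality of the Galois action on $\Gal(\overline{K}/K^s)$; these are equivalent reformulations of the same observation.
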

\begin{proof}
There are $\#(\OO_E/p^s\OO_E) = p^{s[E:\QQ_p]}$ different $p$-torsion points on 
$\mathcal{G}_s$. Since the height of $\mathcal{G}_s$ equals $[E:\QQ_p]$, 
all the $p^s$-torsion points are rational over $K^s$. The isomorphism 
$\mathfrak{M}^s \simeq \mathfrak{M}(\mathcal{G}_s)$ follows from 
Proposition \ref{CMMods} and Theorem \ref{unifindep}.
\end{proof}

\begin{rem}
\label{LT}
The abelian extensions of $K$ can be described by Lubin-Tate theory. 
Specifically, for the uniformizer $\pi = \pi_E$ of $E$ one can define a formal group 
law by
$$[\pi](u) = \pi u + u^{q}$$
where $q$ denotes the size of the residue field of $E$. Then the class 
extension $K^s/K$ is defined by the Eisenstein polynomial
$$h_{\pi,s}(u) = \frac{[\pi^s](u)}{[\pi^{s-1}](u)} = \pi + \left([\pi^{s-1}](u)\right)^{q-1}$$
where we define $[\pi^0](u) = u$. 
This further explicates Corollary \ref{decompCMMod} 
by setting 
$$f_\tau(u) = \prod_{g\in \Sigma_\tau}(g_*h_{\pi,s})(u)$$
$$v_\tau(u) = \prod_{g\in \Sigma_{\tau}^c}(g_*h_{\pi,s})(u),$$
where
\begin{align*}\Sigma_\tau = \{g\in \Gal(\overline{\QQ}_p/K_0)/\Gal(\overline{\QQ}_p/K):g^{-1}\circ \tau\in \Phi_\tau\},&\quad \Phi_\tau = \{i\in \Phi: i|_{E^{\mathrm{ur}}} = \tau\}\\
\Sigma_\tau^c = \{g\in \Gal(\overline{\QQ}_p/K_0)/\Gal(\overline{\QQ}_p/K):g^{-1}\circ \tau\in \Phi_\tau^c\},&\quad \Phi_\tau^c = \{i\in \Phi^c: i|_{E^{\mathrm{ur}}} = \tau\}.\end{align*}
\end{rem}

\subsection{Quasi-Kisin Modules}
We introduce the notion of a \emph{quasi-Kisin module} 
to simplify computations involving $\OO_E$-linear CM 
Kisin modules, which were introduced in Proposition \ref{CMMods} and  
Corollary \ref{decompCMMod}. The target computation is 
that of the relative Hodge bundle in Lemma \ref{FaltingsIsog}, 
related to Kisin modules by Lemma \ref{diffmod} and Proposition \ref{deg}.

Fix $k$ to be a finite field containing the subfield $\mathbb{F}_q$ 
where $q = p^f$. Define 
$\mathfrak{S}^{(q)}$ to be the module $W(k)[[u]]$ 
with a $q$-Frobenius, i.e., $\sigma^{(q)}|_{W(k)}:\alpha\mapsto \alpha^q$ 
and extends to $u\mapsto u^q$ on $W(k)[[u]]$.

\begin{defn}
A \textbf{quasi-Kisin module} (or \textbf{q-Kisin Module}) 
is a finite $\mathfrak{S}^{(q)}$-module $\mathfrak{M}^{(q)}$ equipped with the 
$q$-Frobenius semi-linear isomorphism
$$1\otimes\phi^{(q)}_{\mathfrak{M}}:\phi^{(q),*}(\mathfrak{M}^{(q)})[1/\mathsf{Eis}^{(q)}(u)]\xrightarrow{\simeq}\mathfrak{M}^{(q)}[1/\mathsf{Eis}^{(q)}(u)]$$
where $\phi^{(q),*}(\mathfrak{M}):=\mathfrak{S}^{(q)}\otimes_{\phi^{(q)}}\mathfrak{M}$ 
and $\mathsf{Eis}^{(q)}(u) = \phi^{f}(\mathsf{Eis}(u))$.
\end{defn}

There is no correspondence such as Theorem \ref{GroupEquiv} for 
quasi-Kisin modules, so they do not describe geometric objects in the 
way the category of Kisin modules does. 
However, one can 
show that any $\OO_E$-linear CM Kisin module is composed of quasi-Kisin pieces.

Fix $E$ to be a ($p$-adic) CM field such that $E/\QQ_p$ has ramification 
degree $r\ge 1$ and inertia degree $f$, and let $\OO_E\subseteq E$ be its ring of integers with 
choice of uniformizer $\pi_E$. Denote by $\mathcal{G}/\OO_K$ an $\OO_E$-linear CM 
$p$-divisible group with ($p$-adic) CM type $(\OO_E,\Phi)$ and let 
$h = [E:\QQ_p]$ be its height and $d$ be its dimension. 
Let $I:=\mathrm{Hom}(E^\mathrm{ur},K_0)$. Then recall by Lemma 
\ref{decompCMMod} that, after enlarging $K$ if necessary, we have the decomposition
$$\mathfrak{M}(\mathcal{G})=(\mathfrak{S}\otimes_{\ZZ_p} \OO_{E}) e\xrightarrow{\simeq}\bigoplus_{i\in I}(\mathfrak{S}\otimes_{\OO_{E^{\mathrm{ur}},i}}\OO_{E}) e_i$$
into projective $\mathfrak{S}\otimes_{\OO_{E^{\mathrm{ur}},i}}\OO_E$-modules 
of rank $1$. We will write
$$\mathfrak{M}(\mathcal{G})_i:=\mathfrak{S}\otimes_{\OO_{E^{\mathrm{ur}},i}}\OO_{E}$$
and use the subscript $i$ to denote the $i$-isotypic piece in this decomposition.

\begin{lem}
\label{Fontaine}
Let $E/\QQ_p$ be a ($p$-adic) CM field, and assume that 
$\mathcal{G}/\OO_K$ is a CM $p$-divisible group with ($p$-adic) CM by $(\OO_E,\Phi)$. 
If we let $\mathfrak{M}^{(q)}\simeq \mathfrak{M}(\mathcal{G})\otimes_{\mathfrak{S}}\mathfrak{S}^{(q)}$ as modules 
be equipped with a $\phi_{\mathfrak{M}}^{(q)}$-Frobenius $\phi_{\mathfrak{M}^{(q)}}^{(q)} = \phi_{\mathfrak{M}}^f$, then there is a decomposition into 
$\mathfrak{S}^{(q)}$-modules
$$\mathfrak{M}^{(q)}\simeq \bigoplus_{i\in I}\mathfrak{M}^{(q)}_i$$
where $\phi_{\mathfrak{M}^{(q)}}^{(q)} = \bigoplus_{i\in I}\phi_{\mathfrak{M}^{(q)}_i}^{(q)}$. If $E/\QQ_p$ is Galois, $\mathfrak{M}_i^{(q)}$ all have the same 
$\mathfrak{S}^{(q)}$-rank.
\end{lem}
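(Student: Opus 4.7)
The plan is to exhibit the decomposition directly from the explicit description of $\phi_{\mathfrak{M}(\mathcal{G})}$ in Corollary \ref{decompCMMod}, using that iterating $\phi$ exactly $f$ times returns every embedding $\tau \in I$ to itself.

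After enlarging $K$ along an unramified extension---permissible by the unramified base change compatibility in Theorem \ref{Mfunc}(3)---we may assume that $K$ contains every $\overline{\QQ}_p$-embedding of $E$, so that Corollary \ref{decompCMMod} applies and furnishes the algebraic decomposition
$$\mathfrak{M}(\mathcal{G}) = \bigoplus_{\tau\in I}\mathfrak{M}(\mathcal{G})_\tau, \qquad \mathfrak{M}(\mathcal{G})_\tau = (\mathfrak{S}\otimes_{\OO_{E^{\mathrm{ur}}},\tau}\OO_E)e_\tau,$$
in which each summand is a free $\mathfrak{S}$-module of rank $r = [E:E^{\mathrm{ur}}]$, together with the explicit Frobenius action $\phi_{\mathfrak{M}(\mathcal{G})}(e_\tau) = f_{\sigma(\tau)}(u)e_{\sigma(\tau)}$, where $\sigma$ denotes the Witt Frobenius of $W$ acting on $I$ by post-composition.

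The decisive observation is that $\sigma$ restricts on the residue field of $E^{\mathrm{ur}}$ to the absolute Frobenius of $\FF_q$, so $\sigma$ acts on $I$ as a cyclic permutation of order exactly $f = |I|$. In particular $\sigma^f = \mathrm{id}_I$, and iterating the explicit formula $f$ times gives
$$\phi_{\mathfrak{M}(\mathcal{G})}^f(e_\tau) = \Bigl(\prod_{j=0}^{f-1}\phi^{f-1-j}(f_{\sigma^{j+1}(\tau)}(u))\Bigr)e_\tau,$$
so $\phi^f$ sends each summand $\mathfrak{M}(\mathcal{G})_\tau$ into itself. Since $\mathfrak{S}^{(q)}$ and $\mathfrak{S}$ coincide as rings (differing only in the designated Frobenius structure), this same decomposition realizes $\mathfrak{M}^{(q)} = \bigoplus_\tau \mathfrak{M}^{(q)}_\tau$ as $\mathfrak{S}^{(q)}$-modules, and by construction $\phi^{(q)}_{\mathfrak{M}^{(q)}} = \phi^f_{\mathfrak{M}}$ respects the decomposition. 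The $q$-Frobenius isomorphism condition on each summand---invertibility of $1\otimes \phi^f$ after inverting $\mathsf{Eis}^{(q)}(u)$---is obtained by iterating the height-$1$ condition $\mathsf{Eis}(u)\mid \mathrm{coker}(1\otimes\phi)$ for $\mathfrak{M}(\mathcal{G}) \in \mathbf{BT}^{\phi,1}_{/\mathfrak{S}}$. The main technical point in the argument is the comparison between the iterated ideal $\prod_{j=0}^{f-1}\phi^j(\mathsf{Eis}(u))$ and $\mathsf{Eis}^{(q)}(u)=\phi^f(\mathsf{Eis}(u))$ over $\mathfrak{S}[1/\mathsf{Eis}^{(q)}(u)]$.

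For the rank statement, each summand $\mathfrak{M}^{(q)}_\tau$ is free of $\mathfrak{S}^{(q)}$-rank $[E:E^{\mathrm{ur}}] = r$, independent of $\tau$; this is forced by the algebraic structure of $\OO_E$ as an $\OO_{E^{\mathrm{ur}}}$-module and does not strictly require $E/\QQ_p$ Galois (the Galois hypothesis ensures further that the $\mathfrak{M}^{(q)}_\tau$ are isomorphic as $q$-Kisin modules, via the transitive action of $\mathrm{Gal}(E/\QQ_p)$ permuting the factors in the decomposition of $\OO_E\otimes_{\ZZ_p}W$).
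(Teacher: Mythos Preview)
Your proof is correct and follows essentially the same route as the paper: both extract the decomposition directly from Corollary~\ref{decompCMMod} by observing that the Witt Frobenius $\sigma$ permutes $I$ cyclically with order $f$, so that $\phi_{\mathfrak{M}}^{f}$ stabilizes each summand $\mathfrak{M}(\mathcal{G})_\tau$. Your version simply fills in details the paper leaves implicit, and your observation that the equal-rank statement already holds without the Galois hypothesis (each summand being free of rank $[E:E^{\mathrm{ur}}]$) is a mild sharpening of what the paper asserts.
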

\begin{proof}
This decomposition is the quasi-Kisin analogue of Corollary \ref{decompCMMod}. 
In particular, in \emph{loc. cit.}, the Frobenius acts cyclically of 
order $r$ on elements of $I$. The statement on the rank is a basic fact from 
Galois theory.
\end{proof}

We will refer to the decomposition in Lemma \ref{Fontaine} as the 
\textbf{quasi-Kisin decomposition} of the Kisin module $\mathfrak{M}$.

\begin{cor}
\label{Fontainecor}
If $\mathfrak{N}\subseteq \mathfrak{M}$ is a saturated 
$\mathfrak{S}$-submodule, then there exists a saturated 
$\mathfrak{S}^{(q)}$-submodule 
$\mathfrak{N}^{(q)}\subseteq\mathfrak{M}^{(q)}$ with a 
decomposition 
$$\mathfrak{N}^{(q)}\simeq\bigoplus_{i\in I}\mathfrak{N}_i^{(q)}$$
such that $\phi_{\mathfrak{N}^{(q)}}^{(q)} = \bigoplus_{i\in I}\phi_{\mathfrak{N}_i^{(q)}}^{(q)}$ and 
$$v_u(\det \phi_{\mathfrak{N}}) = \left(\frac{p-1}{p^f-1}\right)v_u\left(\det\phi_{\mathfrak{N}^{(q)}}\right) =  \left(\frac{p-1}{p^f-1}\right)\sum_{i\in I}v_u\left(\phi_{\mathfrak{N}^{(q)}_i}\right)$$
where $f$ denotes the inertia degree of $E/\QQ_p$. Conversely, 
a saturated $\mathfrak{S}^{(q)}$-module 
$\mathfrak{N}^{(q)}\subseteq\mathfrak{M}^{(q)}$ corresponds to 
a saturated $\mathfrak{S}$-module 
$\mathfrak{N}\subseteq \mathfrak{M}$ 
if its Frobenius respects the quasi-Kisin decomposition of 
$\mathfrak{M}$.
\end{cor}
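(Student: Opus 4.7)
The plan is to set $\mathfrak{N}^{(q)}:=\mathfrak{N}\otimes_{\mathfrak{S}}\mathfrak{S}^{(q)}$ as an $\mathfrak{S}^{(q)}$-submodule of $\mathfrak{M}^{(q)}$, endowed with the $q$-Frobenius $\phi^{(q)}_{\mathfrak{N}}:=\phi_{\mathfrak{N}}^{f}$. Since $\mathfrak{S}$ and $\mathfrak{S}^{(q)}$ share the same underlying ring $W(k)[[u]]$, the module $\mathfrak{N}^{(q)}$ literally agrees with $\mathfrak{N}$, and its saturation in $\mathfrak{M}^{(q)}$ is inherited from that of $\mathfrak{N}$ in $\mathfrak{M}$. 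The two substantive claims are then the valuation identity and the isotypic decomposition.

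For the valuation identity I would argue at the level of semilinear algebra. Choose any $\mathfrak{S}$-basis of $\mathfrak{N}$ and let $A$ denote the matrix of $\phi_{\mathfrak{N}}$. Then $\phi^{f}_{\mathfrak{N}}$ has matrix $A\cdot\sigma(A)\cdots\sigma^{f-1}(A)$, and since $\sigma$ sends $u\mapsto u^{p}$, multiplicativity of the determinant together with $v_{u}(\sigma^{j}(g)) = p^{j}v_{u}(g)$ yields
\begin{align*}
v_{u}(\det\phi_{\mathfrak{N}}^{f}) &= \sum_{j=0}^{f-1} v_{u}\bigl(\sigma^{j}(\det A)\bigr) \\
 &= \sum_{j=0}^{f-1} p^{j}\cdot v_{u}(\det A) \\
 &= \frac{p^{f}-1}{p-1}\cdot v_{u}(\det\phi_{\mathfrak{N}}).
\end{align*}
Rearranging gives the first equality of the statement, and the last equality will follow by multiplicativity of the determinant across the direct-sum decomposition.

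To produce that decomposition, I would use that $\phi_{\mathfrak{M}}$ cycles the summands through $\phi:\mathfrak{M}_{i}\to\mathfrak{M}_{\sigma(i)}$ as in Corollary \ref{decompCMMod}, so that $\phi^{(q)}=\phi^{f}$ preserves each $\mathfrak{M}_{i}^{(q)}$. The $\OO_{E}$-action on $\mathfrak{M}$ commutes with $\phi$ and hence with $\phi^{(q)}$, so the $\OO_{E^{\text{ur}}}$-idempotents inducing the isotypic decomposition of $\mathfrak{M}^{(q)}$ act as endomorphisms of $\mathfrak{M}$ and preserve the Harder--Narasimhan filtration by Theorem \ref{HNthm}. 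This forces $\mathfrak{N}^{(q)}$ to split as $\mathfrak{N}^{(q)}=\bigoplus_{i}\mathfrak{N}_{i}^{(q)}$ with $\mathfrak{N}_{i}^{(q)}:=\mathfrak{N}^{(q)}\cap\mathfrak{M}_{i}^{(q)}$, and $\phi^{(q)}_{\mathfrak{N}^{(q)}}$ splits accordingly. The converse direction is a direct unwinding: a saturated $\mathfrak{S}^{(q)}$-submodule of $\mathfrak{M}^{(q)}$ whose Frobenius respects the decomposition is in particular $\phi^{f}$-stable, and because $\phi$ cycles the $\mathfrak{M}_{i}$ compatibly with the decomposition, the restriction of $\phi_{\mathfrak{M}}$ also makes it $\phi$-stable.

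The main obstacle is the claim that the idempotents preserve $\mathfrak{N}^{(q)}$, equivalently that $\mathfrak{N}$ is $\OO_{E^{\text{ur}}}$-stable. For an arbitrary saturated $\phi$-stable submodule this need not be automatic, so the argument must exploit the CM hypothesis: one reduces to HN-pieces of $\mathfrak{M}$, each of which is $\OO_{E}$-stable by endomorphism invariance of the HN filtration, or alternatively one replaces $\mathfrak{N}$ by its $\OO_{E}$-saturation while controlling the $u$-valuation of $\det\phi_{\mathfrak{N}}$. Carrying out this reduction cleanly is the one step I expect to require genuine care.
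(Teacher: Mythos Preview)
Your construction and valuation argument match the paper's approach exactly, but with more detail. The paper sets $\mathfrak{N}^{(q)}$ to be $\mathfrak{N}$ equipped with $\phi_{\mathfrak{N}}^{f}$, inherits saturation, and for the valuation identity says only that it ``follows by the definition $\phi_{\mathfrak{N}^{(q)}}^{(q)} = \phi_{\mathfrak{N}}^f$ and following its effect on the leading coefficient of the Eisenstein polynomial at each iteration.'' Your matrix computation $v_u\bigl(\det(A\,\sigma(A)\cdots\sigma^{f-1}(A))\bigr) = (1+p+\cdots+p^{f-1})\,v_u(\det A)$ is precisely what that sentence means, spelled out.

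On the decomposition you have isolated a genuine subtlety that the paper's proof does not address either. The paper writes only that ``we moreover obtain the decomposition on $\mathfrak{N}^{(q)}$ from the decomposition on $\mathfrak{M}^{(q)}$,'' with no further justification. Your diagnosis is correct: splitting $\mathfrak{N}^{(q)}$ along the isotypic pieces amounts to $\OO_{E^{\mathrm{ur}}}$-stability of $\mathfrak{N}$, which is not automatic for an arbitrary saturated $\phi$-stable submodule, and your HN-filtration argument only yields this for HN-pieces (Theorem~\ref{HNthm} gives endomorphism-invariance of the filtration, not of every saturated subobject). So neither your sketch nor the paper's proof closes this at the level of generality stated. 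What actually rescues the applications (Proposition~\ref{unramp}, Corollary~\ref{HodgeUnramType}) is the additional structure available there: each $\mathfrak{M}_\tau^{(q)}$ has $\mathfrak{S}^{(q)}$-rank~$1$ with pairwise distinct $u$-weights for $\phi^{(q)}$, and it is this that forces saturated $\phi^{(q)}$-stable submodules to split along the decomposition. Your instinct to flag this step as the one requiring care is well placed.
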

\begin{proof}
We construct the $\mathfrak{S}^{(q)}$ module $\mathfrak{N}^{(q)}$ from 
$\mathfrak{N}$ as we constructed $\mathfrak{M}^{(q)}$ from $\mathfrak{M}$ 
in Lemma \ref{Fontaine}. Then $\mathfrak{N}^{(q)}$ is 
necessarily a submodule of $\mathfrak{M}^{(q)}$ as $\mathfrak{N}$ is a 
submodule of $\mathfrak{M}$, and saturation also follows. We moreover obtain 
the decomposition on $\mathfrak{N}^{(q)}$ from the decomposition on 
$\mathfrak{M}^{(q)}$, and the statement on the determinants 
follows by the definition $\phi_{\mathfrak{N}^{(q)}}^{(q)} = \phi_{\mathfrak{N}}^f$ 
and following its effect on the leading coefficient of the Eisenstein polynomial 
at each iteration. Finally, the converse part of the corollary is clear 
by reversing the steps in the above construction.
\end{proof}

We say that a ($p$-adic) CM field $E$ is ramified along a CM 
type $\Phi$ if it is ramified along the embeddings corresponding 
to the elements in $\Phi$.

\begin{cor}
\label{ram11}
If $E/\QQ_p$ is Galois and ramified along the CM type $\Phi$, then the 
$\mathfrak{M}_i^{(q)}$ are all isomorphic as $\mathfrak{S}^{(q)}$-modules.
\end{cor}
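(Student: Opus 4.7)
The plan is to exploit the Galois action on $\mathfrak{M}(\mathcal{G})$ to produce explicit isomorphisms between its isotypic pieces. Since $E/\QQ_p$ is Galois, the group $\Gal(E/\QQ_p)$ acts on $\mathcal{G}$ by twisting the $\OO_E$-action, and by Corollary \ref{cristallinerep} the twisted $p$-divisible group $\mathcal{G}^g$ is isomorphic to $\mathcal{G}$ whenever $g$ preserves the CM type $\Phi$, i.e.\ whenever $g\in \Gal(E/E^*)$. Such a $g$ therefore lifts to an automorphism of the Kisin module $\mathfrak{M}(\mathcal{G})$ which permutes the isotypic pieces $\mathfrak{M}(\mathcal{G})_i$ according to the induced action of $g|_{E^{\mathrm{ur}}}$ on $I = \mathrm{Hom}(E^{\mathrm{ur}}, K_0)$.

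First I would make this automorphism explicit using Corollary \ref{decompCMMod}: for $g\in\Gal(E/E^*)$ inducing the permutation $i\mapsto g\cdot i$ on $I$, the resulting $\mathfrak{S}$-automorphism of $\mathfrak{M}(\mathcal{G})$ restricts to an $\mathfrak{S}$-linear isomorphism $\mathfrak{M}(\mathcal{G})_i \xrightarrow{\sim} \mathfrak{M}(\mathcal{G})_{g\cdot i}$ compatible with $\phi_{\mathfrak{M}(\mathcal{G})}$. Passing to the quasi-Kisin structure defined by $\phi^{(q)} = \phi^f$, this restriction descends to an isomorphism $\mathfrak{M}_i^{(q)} \simeq \mathfrak{M}_{g\cdot i}^{(q)}$ of $\mathfrak{S}^{(q)}$-modules with Frobenius.

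The role of the ``ramified along $\Phi$'' hypothesis is to ensure that $\Gal(E/E^*)$ acts transitively on $I$. The image of $\Gal(E/E^*)$ in $\Gal(E^{\mathrm{ur}}/\QQ_p)$ is $\Gal(E^{\mathrm{ur}}/(E^*)^{\mathrm{ur}})$, so transitivity on the set $I$ (of size $[E^{\mathrm{ur}}:\QQ_p]$) is equivalent to $(E^*)^{\mathrm{ur}} = \QQ_p$, i.e.\ to $E^*$ being totally ramified over $\QQ_p$. I read ``$E$ ramified along $\Phi$'' as precisely this condition on $\Phi$ (the reflex field being purely ramified is the natural condition forcing the CM data to be concentrated in the ramified direction). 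Granted transitivity, any two indices $i,i'\in I$ are linked by some $g\in\Gal(E/E^*)$, and the preceding paragraph supplies the required isomorphism $\mathfrak{M}_i^{(q)}\simeq \mathfrak{M}_{i'}^{(q)}$.

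The main obstacle will be pinning down the precise formulation of ``ramified along $\Phi$'' and confirming that it translates into $(E^*)^{\mathrm{ur}} = \QQ_p$, together with verifying the Frobenius compatibility of the Galois-twisted isomorphism after the $f$-fold iteration used to form $\phi^{(q)}$. Once that dictionary is in place, the statement follows essentially tautologically from the CM automorphisms furnished by Corollary \ref{cristallinerep} and the explicit decomposition of Corollary \ref{decompCMMod}; the underlying calculation is simply that the product formula $g_i(u) = \prod_{j=0}^{f-1}\phi^j(f_{\sigma^{j+1}(i)})(u)$ for the quasi-Kisin Frobenius is transported to the product formula for $g_{g\cdot i}(u)$ by a cyclic relabeling of its factors mediated by $g$.
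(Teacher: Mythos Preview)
Your approach is more elaborate than the paper's. The paper's proof is a two-line inspection of the explicit formulas already obtained in Corollary~\ref{decompCMMod}: since $E/\QQ_p$ is Galois the pieces $\mathfrak{M}_i^{(q)}$ all have the same $\mathfrak{S}^{(q)}$-rank, and the quasi-Frobenius on the $i$th piece is (a Frobenius twist of) $f_i(u)$; the hypothesis ``ramified along $\Phi$'' is then used to say the polynomials $f_i(u)$ are all $E^{\mathrm{ur}}$-conjugate to one another, so the resulting $\mathfrak{S}^{(q)}$-modules with Frobenius are isomorphic. No appeal to Corollary~\ref{cristallinerep} or to twisting the $p$-divisible group is made; everything happens at the level of the factorization of $\mathsf{Eis}(u)$.

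Your route through Galois-twisting $\mathcal{G}$ is a legitimate conceptual alternative, and your final sentence correctly identifies the same underlying product-formula computation. Two points to watch, though. First, twisting the $\OO_E$-action by $g\in\Gal(E/\QQ_p)$ changes the CM type by \emph{right} multiplication $\Phi\mapsto \Phi g$, whereas $\Gal(E/E^*)$ is the \emph{left} stabilizer of $\Phi$; these coincide only when $\Gal(E/\QQ_p)$ is abelian, so in the general Galois case your identification of the twist-stabilizer with $\Gal(E/E^*)$ needs adjustment. Second, your reading of ``ramified along $\Phi$'' as $(E^*)^{\mathrm{ur}}=\QQ_p$ is a reasonable guess, but the paper's own definition (just above the corollary) phrases it as ramification along the embeddings in $\Phi$, and the proof uses this directly to get conjugacy of the $f_i(u)$ rather than passing through $E^*$ at all. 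The paper's argument sidesteps both issues by staying at the level of the explicit polynomials.
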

\begin{proof}
Since $E/\QQ_p$ is Galois, each of the $\mathfrak{M}_i^{(q)}$ have the same 
rank. Therefore, one needs to check their Frobenius endomorphisms 
$\phi_{\mathfrak{M}_i^{(q)}}^{(q)}$ induce isomorphic $\mathfrak{S}^{(q)}$-modules. 
By Corollary \ref{decompCMMod}, $\phi_{\mathfrak{M}_i^{(q)}}^{(q)} = f_i(u)^{\phi^f}$, 
and since $E/\QQ_p$ ramifies along the CM type, the $f_i(u)$ are all 
$E^{\mathrm{ur}}$-conjugate to each other.
\end{proof}

We use the quasi-Kisin decomposition to reduce the computation 
of the relative Hodge bundle to either the case where the 
($p$-adic) CM field $E/\QQ_p$ is unramified or 
where it is totally ramified and Galois. In each instance 
we need a form of the Serre tensor construction 
on CM $p$-divisible groups. A general introduction to the 
Serre tensor construction is given by \cite{Conrad}.

\begin{prop}
\label{SerreTens}
Let $E'/E$ be a totally ramified extension of degree $\rho$ between 
($p$-adic) CM fields, and choose a uniformizer $\pi_{E'}$ for $E'$ and 
a uniformizer $\pi_E$ for $E$ such that $\mathrm{Nm}_{E'/E}(\pi_{E'}) = \pi_{E}$. 
Let $\mathcal{G}$ be an $\OO_{E}$-linear CM $p$-divisible group and 
$\mathcal{G}' := \mathcal{G} \otimes_{\OO_{E}}\OO_{E'}$ be the 
Serre tensor construction. Denote by $\mathcal{G}_j$ the image of 
$\mathcal{G}$ under $\pi_{E'}^j$ for $j = 0,\ldots,\rho-1$. 
Then every $\OO_{E'}$-stable finite subgroup 
$\mathcal{H}'\subseteq \mathcal{G}'$ has a decomposition 
$\mathcal{H}' = \prod_{j=0}^{\rho -1}\mathcal{H}_j$ where 
$\mathcal{H}_j\subseteq \mathcal{G}_j$ is an $\OO_{E}$-stable 
finite subgroup. In particular,
$$v_u(\det\phi_{\mathfrak{M}_{\mathfrak{S}_1}(\mathcal{H}')}) = \sum_j v_u(\det\phi_{\mathfrak{M}_{\mathfrak{S}_1}(\mathcal{H}_j)}).$$
Moreover, when $\mathcal{H}'\cap \mathcal{G}'[\pi_{E'}^r]$ has 
the same height 
for all $1\le r<\rho$, 
$$v_u(\det\phi_{\mathfrak{M}_{\mathfrak{S}_1}(\mathcal{H}')}) =\rho v_u(\det\phi_{\mathfrak{M}_{\mathfrak{S}_1}(\mathcal{H}_0)}).$$
\end{prop}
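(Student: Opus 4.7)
The proof proceeds in two stages: first establishing the decomposition of $\mathcal{H}'$ along the direct summands $\mathcal{G}_j$, and then deriving the valuation identities. Since $E'/E$ is totally ramified of degree $\rho$, $\OO_{E'}$ is free of rank $\rho$ over $\OO_E$ with basis $\{1, \pi_{E'}, \ldots, \pi_{E'}^{\rho-1}\}$, and the Serre tensor construction gives
$$\mathcal{G}' = \mathcal{G} \otimes_{\OO_E} \OO_{E'} = \bigoplus_{j=0}^{\rho-1} \mathcal{G}_j, \qquad \mathcal{G}_j := \pi_{E'}^j \cdot \mathcal{G} \simeq \mathcal{G},$$
as $\OO_E$-linear $p$-divisible groups. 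Multiplication by $\pi_{E'}$ shifts $\mathcal{G}_j \to \mathcal{G}_{j+1}$ for $j < \rho-1$, with the wraparound at $j = \rho-1$ governed by the Eisenstein relation of $\pi_{E'}$ over $\OO_E$ (whose constant term is $\pm\pi_E$ thanks to $\mathrm{Nm}_{E'/E}(\pi_{E'}) = \pi_E$).

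Next I pass to Kisin modules. By Proposition \ref{saturation}, the $\OO_{E'}$-stable subgroup $\mathcal{H}'$ corresponds to a saturated $\mathfrak{S}$-submodule $\mathfrak{N}' \subseteq \mathfrak{M}_{\mathfrak{S}_1}(\mathcal{G}')$ that is moreover stable under the induced $\OO_{E'}$-action. Since $\mathfrak{M}(\mathcal{G}')$ is a rank-$1$ projective $\mathfrak{S} \otimes \OO_{E'}$-module by Proposition \ref{CMMods}, I analyze $\mathfrak{N}'$ through its $\pi_{E'}$-adic filtration: the $j$-th graded piece is a saturated $\OO_E$-stable submodule of $\pi_{E'}^j \mathfrak{M}_{\mathfrak{S}_1}(\mathcal{G}')/\pi_{E'}^{j+1}\mathfrak{M}_{\mathfrak{S}_1}(\mathcal{G}') \simeq \mathfrak{M}_{\mathfrak{S}_1}(\mathcal{G}_j)$, and hence corresponds to an $\OO_E$-stable subgroup $\mathcal{H}_j \subseteq \mathcal{G}_j$. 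The $\pi_{E'}$-stability of $\mathfrak{N}'$, together with its cyclic rank-$1$ structure over $\mathfrak{S} \otimes \OO_{E'}$, then guarantees that $\mathfrak{N}'$ is recovered faithfully from its graded pieces, yielding $\mathcal{H}' = \prod_j \mathcal{H}_j$.

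The first valuation identity is now immediate: the functor $\mathfrak{M}_{\mathfrak{S}_1}$ converts short exact sequences of finite flat group schemes into short exact sequences of Kisin modules, and $v_u(\det \phi)$ is additive under such sequences, so the graded decomposition forces
$$v_u(\det \phi_{\mathfrak{M}_{\mathfrak{S}_1}(\mathcal{H}')}) = \sum_{j=0}^{\rho-1} v_u(\det \phi_{\mathfrak{M}_{\mathfrak{S}_1}(\mathcal{H}_j)}).$$
For the second identity, the hypothesis that $\mathcal{H}' \cap \mathcal{G}'[\pi_{E'}^r]$ has constant height for $1 \le r < \rho$ forces the $\pi_{E'}$-shift $\mathcal{G}_j \xrightarrow{\sim} \mathcal{G}_{j+1}$ to restrict to an isomorphism $\mathcal{H}_j \xrightarrow{\sim} \mathcal{H}_{j+1}$ for $0 \le j < \rho - 1$. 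Consequently all $\mathcal{H}_j$ are $\OO_E$-isomorphic, so $v_u(\det \phi_{\mathfrak{M}_{\mathfrak{S}_1}(\mathcal{H}_j)})$ is independent of $j$, yielding the factor of $\rho$.

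The main obstacle is the verification that $\mathfrak{N}'$ respects the $\pi_{E'}$-adic filtration in a way that makes the sequence of graded pieces reconstruct $\mathfrak{N}'$ as a genuine direct sum. Since $\mathfrak{S} \otimes \OO_{E'}$ has no nontrivial idempotents, this cannot be extracted from a purely algebraic projector argument; instead it requires combining the $\OO_{E'}$-stability of $\mathfrak{N}'$ with the explicit Frobenius data from Proposition \ref{CMMods} and carefully tracking the Eisenstein wraparound action of $\pi_{E'}^\rho$, whose normalization $\mathrm{Nm}_{E'/E}(\pi_{E'}) = \pi_E$ is precisely what allows the inductive step from graded pieces back to $\mathfrak{N}'$ to close up cleanly.
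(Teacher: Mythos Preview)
Your overall architecture is sound, but the step you yourself flag as ``the main obstacle'' is in fact the entire content of the proposition, and you have not supplied it. Saying that $\pi_{E'}$-stability together with the rank-$1$ structure ``guarantees that $\mathfrak{N}'$ is recovered faithfully from its graded pieces'' is exactly what needs to be proven; a $\pi_{E'}$-stable submodule of a filtered module is not in general the direct sum of its associated graded pieces, and nothing you have written rules out extensions. Your last paragraph correctly diagnoses this but offers only a sketch of where the missing work would go.

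The paper closes this gap by an explicit computation rather than an abstract filtration argument. Since $\mathcal{H}'$ is $\OO_{E'}$-stable and $\mathfrak{M}(\mathcal{G}')$ is projective of rank $1$ over $\mathfrak{S}\otimes\OO_{E'}$, the paper first pins down $\mathfrak{M}(\mathcal{H}')$ concretely as $\bigoplus_{i\in I}\pi_{E'}^{-k_i}\mathfrak{M}(\mathcal{G}')_i/\mathfrak{M}(\mathcal{G}')_i$ for integers $k_i\ge 0$, using the decomposition of Corollary~\ref{decompCMMod} indexed by $I=\mathrm{Hom}(E^{\mathrm{ur}},K_0)$. It then decomposes each piece as an $\OO_E$-module via the basis $\{1,\pi_{E'},\dots,\pi_{E'}^{\rho-1}\}$, obtaining $\pi_{E'}^{-k_i}\OO_{E'}/\OO_{E'}\simeq\bigoplus_{j=0}^{\rho-1}\pi_E^{-\lfloor(k_i+j)/\rho\rfloor}\OO_E/\OO_E$. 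The crucial remaining point is that each summand $\mathfrak{M}_j:=\bigoplus_i\pi_E^{-\lfloor(k_i+j)/\rho\rfloor}\mathfrak{M}(\mathcal{G}_j)_i/\mathfrak{M}(\mathcal{G}_j)_i$ is itself $\phi$-stable; this is checked by hand via the elementary inequality $\lfloor(k_i+j)/\rho\rfloor-\ell_i\le\lfloor(k_{\sigma(i)}+j)/\rho\rfloor$, which follows from the $\phi$-stability condition $k_i-\rho\ell_i\le k_{\sigma(i)}$ for $\mathfrak{M}(\mathcal{H}')$. This floor-function verification is precisely what your filtration language is unable to supply without descending to the same level of explicitness. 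The determinant identities then follow immediately from multiplicativity, and the final clause from the floors $\lfloor(k_i+j)/\rho\rfloor$ being independent of $j$ under the constant-height hypothesis.
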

\begin{proof}
Enlarge $K$ so that it contains all embeddings of $E'$. Define
\begin{align*}
\mathfrak{M}(\mathcal{G})_i&:=\mathfrak{S}\otimes_{\OO_{E^{\mathrm{ur}},i}}\OO_{E},\\
\mathfrak{M}(\mathcal{G}')_i&:= (\mathfrak{S}\otimes_{\OO_{E^{\mathrm{ur}},i}}\OO_{E})\otimes_{\OO_{E}}\OO_{E'}=\mathfrak{M}(\mathcal{G})_i\otimes_{\OO_{E^{\mathrm{ur}}}}\OO_{E'}.\end{align*}
By Lemma \ref{decompCMMod}, 
$$\mathfrak{M}(\mathcal{G}')=\left((\mathfrak{S}\otimes_{\ZZ_p} \OO_{E})\otimes_{\OO_{E}}\OO_{E'}\right) e\xrightarrow{\simeq}\bigoplus_{i\in I}\left((\mathfrak{S}\otimes_{\OO_{E^{\mathrm{ur}},i}}\OO_{E})\otimes_{\OO_{E}}\OO_{E'}\right) e_i = \bigoplus_i\mathfrak{M}(\mathcal{G}')_i.$$
Moreover,
$$\mathfrak{M}(\mathcal{G}_j)=(\mathfrak{S}\otimes_{\ZZ_p} \OO_{E}\pi^j_{E'})e\xrightarrow{\simeq}\bigoplus_{i\in I}(\mathfrak{S}\otimes_{\OO_{E^{\mathrm{ur}},i}}\OO_{E}\pi^j_{E'}) e_i.$$

Since $\mathcal{H}'$ is $\OO_{E'}$-stable by hypothesis, its Kisin module 
is of the form
$$\mathfrak{M}(\mathcal{H}') = \bigoplus_{i\in I}(\mathfrak{S}\otimes_{\OO_{E^\mathrm{ur}},i}\pi_{E'}^{-k_i}\OO_{E'}/\OO_{E'})e_i \simeq \bigoplus_{i\in I}\pi_{E'}^{-k_i}\mathfrak{M}(\mathcal{G}')_i/\mathfrak{M}(\mathcal{G}')_i$$
for some collection of non-negative integers $k_i$. This induces the 
module isomorphism
$$\mathfrak{M}(\mathcal{H}')_i := \pi_{E'}^{-k_i}\mathfrak{M}(\mathcal{G}')_i/\mathfrak{M}(\mathcal{G}')_i \simeq \bigoplus_{j=0}^{\rho-1}\pi_{E}^{-\lfloor \frac{k_i+j}{\rho}\rfloor}\mathfrak{M}(\mathcal{G}_j)_i/\mathfrak{M}(\mathcal{G}_j)_i.$$
We wish to show that 
$$\mathfrak{M}_j:=\bigoplus_{i\in I} \pi_{E}^{-\lfloor \frac{k_i+j}{\rho}\rfloor}\mathfrak{M}(\mathcal{G}_j)_i/\mathfrak{M}(\mathcal{G}_j)_i$$
is a finite $\OO_{E}$-stable Kisin module so that it corresponds to 
an $\OO_E$-stable finite subgroup $\mathcal{H}_j\subseteq \mathcal{G}_j$ 
under Theorem \ref{GroupExt}.

Since $\mathfrak{M}(\mathcal{G}_j)$ is $\OO_{E}$-linear, there exist 
non-negative integers $\ell_i$ such that
$$\phi_{\mathfrak{M}(\mathcal{G}_j)}:\mathfrak{M}(\mathcal{G}_j)_i\mapsto \pi_{E}^{\ell_i}\mathfrak{M}(\mathcal{G}_j)_{\sigma(i)}.$$
Then 
$$\phi_{\mathfrak{M}(\mathcal{G}')}:\mathfrak{M}(\mathcal{G}')_i\mapsto \pi_{E'}^{\rho\ell_i}\mathfrak{M}(\mathcal{G}')_{\sigma(i)}$$
and since $\mathfrak{M}(\mathcal{H}')$ is $\OO_{E'}$-stable,
$$k_i-\rho\ell_i\le k_{\sigma(i)}.$$
But this implies that
$$\lfloor \frac{k_i+j}{\rho}\rfloor - \ell_i \le \lfloor \frac{k_{\sigma(i)}+j}{\rho}\rfloor$$
so that
$$\phi_{\mathfrak{M}(\mathcal{G}_j)}:\pi_{E}^{-\lfloor \frac{k_i+j}{\rho}\rfloor}\mathfrak{M}(\mathcal{G}_j)_i/\mathfrak{M}(\mathcal{G}_j)_i\mapsto \pi_{E}^{-\lfloor \frac{k_i+j}{\ell}\rfloor + \ell_i}\mathfrak{M}(\mathcal{G}_j)_{\sigma(i)}/\mathfrak{M}(\mathcal{G}_j)_{\sigma(i)}$$
and the image is contained in $\pi_{E}^{-\lfloor \frac{k_{\sigma(i)}+j}{\rho}\rfloor}\mathfrak{M}(\mathcal{G}_j)_{\sigma(i)}/\mathfrak{M}(\mathcal{G}_j)_{\sigma(i)}$, which demonstates stability by the Frobenius endomorphism. 
Thus, by Theorem \ref{GroupEquiv}, 
$\mathfrak{M}_j\simeq \mathfrak{M}(\mathcal{H}_j)$ for some 
$\OO_{E}$-stable finite subgroup $\mathcal{H}_j$ of $\mathcal{G}_j$. The rest of 
the proposition now follows from the multiplicativity of the determinant.
\end{proof}

\begin{cor}
\label{HodgeUnramType}
Let $E/\QQ_p$ be a ($p$-adic) CM field and $\mathcal{G}/\OO_K$ a 
CM $p$-divisible group with ($p$-adic) CM by $(\OO_E,\Phi)$, and we 
assume that $E/\QQ_p$ does not ramify along the CM type $\Phi$. 
We moreover define $\rho = [E:E^{\mathrm{ur}}]$ and let $\pi_E$ be a 
choice of uniformizer for $E$. Then 
$\mathcal{G}\simeq \mathcal{G}^{\mathrm{ur}}\otimes_{\OO_{E^{\mathrm{ur}}}}\OO_E$, 
and for any saturated submodule $\mathfrak{N}\subseteq\mathfrak{M}(\mathcal{G}[p^n])$ corresponding to a group scheme $\mathcal{H}\subseteq \mathcal{G}$ 
such that $\mathcal{H}\cap \mathcal{G}[\pi_E^r]$ has the same height for all 
$1\le r<\rho$, 
there exists a saturated submodule $\mathfrak{N}_0\subseteq \mathfrak{M}(\mathcal{G}^{\mathrm{ur}})$ such that
$$v_u(\det\phi_{\mathfrak{N}}) = \rho v_u(\det\phi_{\mathfrak{N}_0}) = \rho\left(\frac{p-1}{p^{f}-1}\right)\sum_{i\in I}v_u\left(\phi_{\mathfrak{N}_{0,i}^{(q)}}\right)$$
where $\bigoplus_{i\in I}\mathfrak{N}_{0,i}^{(q)}$ is the quasi-Kisin decomposition 
of $\mathfrak{N}_0$.
\end{cor}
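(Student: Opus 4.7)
The plan is to combine the Serre tensor construction from Proposition \ref{SerreTens} applied to the totally ramified extension $E/E^{\mathrm{ur}}$ with the quasi-Kisin decomposition from Corollary \ref{Fontainecor} applied over $E^{\mathrm{ur}}$. First, I would construct $\mathcal{G}^{\mathrm{ur}}$ and identify $\mathcal{G}$ with its Serre tensor. Since $\Phi$ does not ramify along $E$, for every $\tau \in \mathrm{Hom}(E^{\mathrm{ur}},\overline{\QQ}_p)$ the preimage $\Phi_\tau$ is either empty or consists of all $\rho$ extensions of $\tau$ to $E$; hence $\Phi$ is induced from a CM type $\Phi^{\mathrm{ur}}$ on $E^{\mathrm{ur}}$. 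Proposition \ref{CMMods} (applied to the unramified field $E^{\mathrm{ur}}$) produces an $\OO_{E^{\mathrm{ur}}}$-linear CM $p$-divisible group $\mathcal{G}^{\mathrm{ur}}$ of type $(\OO_{E^{\mathrm{ur}}}, \Phi^{\mathrm{ur}})$. A direct computation of the Lie algebra of $\mathcal{G}^{\mathrm{ur}} \otimes_{\OO_{E^{\mathrm{ur}}}} \OO_E$ via Lemma \ref{Liemod} shows this Serre tensor has CM type $(\OO_E, \Phi)$, so by Corollary \ref{cristallinerep} it is isomorphic to $\mathcal{G}$ after possibly enlarging $K$ by an unramified extension.

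With that identification in hand, I would apply Proposition \ref{SerreTens} to $E/E^{\mathrm{ur}}$ with $\mathcal{H} \subseteq \mathcal{G}$ playing the role of $\mathcal{H}' \subseteq \mathcal{G}'$. The constant-height hypothesis on $\mathcal{H} \cap \mathcal{G}[\pi_E^r]$ for $1 \le r < \rho$ is precisely the one required by the second equation of that proposition, which delivers
$$v_u\bigl(\det\phi_{\mathfrak{M}_{\mathfrak{S}_1}(\mathcal{H})}\bigr) \;=\; \rho\, v_u\bigl(\det\phi_{\mathfrak{M}_{\mathfrak{S}_1}(\mathcal{H}_0)}\bigr),$$
where $\mathcal{H}_0 \subseteq \mathcal{G}^{\mathrm{ur}}$ is the zeroth piece in the decomposition $\mathcal{H} = \prod_j \mathcal{H}_j$ furnished by the proposition. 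Setting $\mathfrak{N}_0 := \mathfrak{M}(\mathcal{H}_0) \subseteq \mathfrak{M}(\mathcal{G}^{\mathrm{ur}})$, which is saturated by Proposition \ref{saturation}, gives the first equality in the statement.

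The second equality then follows immediately by applying Corollary \ref{Fontainecor} to $\mathfrak{N}_0 \subseteq \mathfrak{M}(\mathcal{G}^{\mathrm{ur}})$: since $E^{\mathrm{ur}}/\QQ_p$ is unramified of inertia degree $f$, that corollary provides the quasi-Kisin decomposition $\mathfrak{N}_0^{(q)} = \bigoplus_{i \in I} \mathfrak{N}_{0,i}^{(q)}$ and the identity
$$v_u\bigl(\det\phi_{\mathfrak{N}_0}\bigr) \;=\; \Bigl(\tfrac{p-1}{p^f - 1}\Bigr)\sum_{i \in I} v_u\bigl(\phi_{\mathfrak{N}_{0,i}^{(q)}}\bigr),$$
and combining the two equalities yields the claim.

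The main obstacle will be the bookkeeping for the Serre tensor identification: one must check that the Lie type and the $\OO_E$-action on $\mathcal{G}^{\mathrm{ur}} \otimes_{\OO_{E^{\mathrm{ur}}}} \OO_E$ match those on $\mathcal{G}$, and that the $\pi_E$-filtration of $\mathcal{G}$ agrees with the $\pi_{E'}$-filtration appearing in Proposition \ref{SerreTens}, so that the constant-height hypothesis transfers correctly. Once these identifications are in place, both equalities are immediate consequences of the two cited results.
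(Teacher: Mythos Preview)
Your proposal is correct and follows essentially the same route as the paper: identify $\mathcal{G}$ as the Serre tensor $\mathcal{G}^{\mathrm{ur}}\otimes_{\OO_{E^{\mathrm{ur}}}}\OO_E$ using the hypothesis that $\Phi$ is induced from $E^{\mathrm{ur}}$, invoke Proposition \ref{SerreTens} (together with Proposition \ref{saturation}) for the factor $\rho$, and then apply the quasi-Kisin decomposition (the paper cites Lemma \ref{Fontaine} directly, you cite its immediate consequence Corollary \ref{Fontainecor}). One small bookkeeping point: $\mathfrak{M}(\mathcal{H}_0)$ is a \emph{quotient} of $\mathfrak{M}(\mathcal{G}^{\mathrm{ur}}[p^n])$, not a submodule, since $\mathfrak{M}$ is contravariant; the saturated submodule $\mathfrak{N}_0$ you want is $\mathfrak{M}(\mathcal{G}^{\mathrm{ur}}[p^n]/\mathcal{H}_0)$, and the determinant identity then transfers via additivity on the short exact sequence.
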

\begin{proof}
By the structure theorem for local fields and the hypothesis that 
$E/\QQ_p$ does not ramify along the CM type $\Phi$, 
$\mathcal{G}$ is the Serre tensor 
construction of a $p$-divisible group $\mathcal{G}^{\mathrm{ur}}$ 
with ($p$-adic) CM by $(\OO_{E^\mathrm{ur}},\Phi^{\mathrm{ur}} = \Phi|_{E^{\mathrm{ur}}})$. 
By Proposition \ref{SerreTens} and Proposition \ref{saturation}, 
we then obtain the existence of $\mathfrak{N}_0$ and 
are able to compute the constant $\rho$ appearing in 
the formula. Finally, the decomposition 
of $\mathfrak{N}_0$ is an immediate application of Lemma \ref{Fontaine}.
\end{proof}

\begin{prop}
\label{GaloisClos}
Let $\widetilde{E}/E$ denote the Galois closure of a ($p$-adic) CM field $E$. 
Let $\mathcal{G}$ be an $\OO_E$-linear CM $p$-divisible group and 
$\widetilde{\mathcal{G}}:=\mathcal{G}\otimes_{\OO_E}\OO_{\widetilde{E}}$ 
be the Serre tensor construction. Then for every subgroup scheme 
$\mathcal{H}\subseteq\mathcal{G}$, there is a subgroup scheme 
$\widetilde{\mathcal{H}}\subseteq \widetilde{\mathcal{G}}$ which pulls back 
to $\mathcal{H}$ under the embedding 
$\mathcal{G}\hookrightarrow \widetilde{\mathcal{G}}$ and 
$$v_u(\det\phi_{\mathfrak{M}(\widetilde{\mathcal{H})}}) = \rho v_u(\det\phi_{\mathfrak{M}(\mathcal{H})})$$
where $\rho = [\widetilde{E}:E]$.
\end{prop}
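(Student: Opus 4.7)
The approach is to exploit the fact that any choice of $\OO_E$-basis of $\OO_{\widetilde{E}}$ gives a non-canonical decomposition $\widetilde{\mathcal{G}} \simeq \mathcal{G}^\rho$ as $p$-divisible groups. Fix such a basis $\{e_1 = 1, e_2, \ldots, e_\rho\}$. Since $\OO_{\widetilde{E}}$ is finite free of rank $\rho$ over $\OO_E$, the Serre tensor construction gives $\widetilde{\mathcal{G}} = \mathcal{G} \otimes_{\OO_E} \OO_{\widetilde{E}} \simeq \mathcal{G}^\rho$, and the natural embedding $\mathcal{G} \hookrightarrow \widetilde{\mathcal{G}}$ via $x \mapsto x \otimes 1$ corresponds (under the chosen identification) to the inclusion of the first factor.

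Given this, I would define $\widetilde{\mathcal{H}} \subseteq \widetilde{\mathcal{G}}$ to be the image of $\mathcal{H}^\rho \subseteq \mathcal{G}^\rho$ under this identification — that is, $\mathcal{H}$ inside each copy of $\mathcal{G}$. This is manifestly a finite flat subgroup, and its scheme-theoretic pullback along the first-factor inclusion $\mathcal{G} \hookrightarrow \widetilde{\mathcal{G}}$ is $\mathcal{H}$, verifying the required pullback condition. Note that this construction works even when $\mathcal{H}$ is not $\OO_E$-stable, since we only use the product structure of $\widetilde{\mathcal{G}}$ as an abstract $p$-divisible group.

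For the determinant computation, applying the Kisin functor (which is compatible with products by Theorem \ref{Mfunc}, and contravariantly sends the subgroup inclusion $\mathcal{H}^\rho \hookrightarrow \mathcal{G}^\rho$ to a surjection of Kisin modules) gives $\mathfrak{M}(\widetilde{\mathcal{H}}) \simeq \mathfrak{M}(\mathcal{H})^{\oplus \rho}$ as $\mathfrak{S}$-modules, with Frobenius acting diagonally as $\phi_{\mathfrak{M}(\mathcal{H})}$ on each summand. Hence
$$\det \phi_{\mathfrak{M}(\widetilde{\mathcal{H}})} = \bigl(\det \phi_{\mathfrak{M}(\mathcal{H})}\bigr)^\rho,$$
and taking $v_u$ yields the claimed identity $v_u(\det \phi_{\mathfrak{M}(\widetilde{\mathcal{H}})}) = \rho \cdot v_u(\det \phi_{\mathfrak{M}(\mathcal{H})})$.

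The main technical point is justifying the identification $\widetilde{\mathcal{G}} \simeq \mathcal{G}^\rho$ in the $p$-divisible group category in a way that makes the canonical embedding $\mathcal{G} \hookrightarrow \widetilde{\mathcal{G}}$ the first-factor inclusion. This is cleanest on the Kisin side: the identity $\mathfrak{M}(\widetilde{\mathcal{G}}) = \mathfrak{M}(\mathcal{G}) \otimes_{\OO_E} \OO_{\widetilde{E}}$ already used in the proof of Proposition \ref{SerreTens} decomposes under the chosen basis as $\mathfrak{M}(\widetilde{\mathcal{G}}) \simeq \bigoplus_{i=1}^{\rho} \mathfrak{M}(\mathcal{G}) e_i$, and the surjection dual to the natural embedding is projection onto the $e_1$-summand. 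Once this identification is in place and the contravariance of the Kisin functor on $p$-divisible groups is tracked carefully, the construction of $\widetilde{\mathcal{H}}$ and the determinant formula are immediate.
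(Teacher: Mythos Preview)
Your proposal is correct and follows essentially the same approach as the paper: decompose $\widetilde{\mathcal{G}}\simeq\mathcal{G}^\rho$ via an $\OO_E$-basis of $\OO_{\widetilde{E}}$, identify the canonical embedding with the first-factor inclusion, and set $\widetilde{\mathcal{H}}=\mathcal{H}^\rho$. The paper additionally notes that $\widetilde{E}/E$ is totally ramified (since unramified extensions of local fields are already Galois), but your justification of freeness of $\OO_{\widetilde{E}}$ over $\OO_E$ is fine without this, and your explicit determinant computation on the Kisin side is a welcome elaboration of what the paper leaves implicit.
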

\begin{proof}
Since $E/\QQ_p$ is local, and every unramified local field is Galois, 
$\widetilde{E}/E$ is totally ramified. Moreover, $\OO_{\widetilde{E}}$ is a 
free $\OO_E$-module of rank $\rho$, and thus as a $p$-divisible group, 
$\widetilde{\mathcal{G}}$ is isomorphic to $\mathcal{G}^\rho$. We can 
moreover assume that the closed immersion 
$\mathcal{G}\hookrightarrow\widetilde{\mathcal{G}}$ is just the inclusion 
of the first factor. Then $\widetilde{\mathcal{H}} = \mathcal{H}^\rho$ is a 
subgroup scheme of $\widetilde{\mathcal{G}}$ which pulls back to 
$\mathcal{H}$ with the desired property.
\end{proof}

\begin{cor}
\label{ram22}
Let $E/\QQ_p$ be a ($p$-adic) CM field and $\widetilde{E}/\QQ_p$ denote 
its Galois closure. Let $\mathcal{G}/\OO_K$ be a CM $p$-divisible group 
with ($p$-adic) CM by $(\OO_E,\Phi)$ and assume that $E$ ramifies along 
$\Phi$. We further denote by $\widetilde{\mathcal{G}}$ 
the Serre tensor construction $\mathcal{G}\otimes_{\OO_E}\OO_{\widetilde{E}}$. Then for any saturated submodule 
$\mathfrak{N}\subseteq \mathfrak{M}(\mathcal{G})$ there exists a 
saturated submodule $\widetilde{\mathfrak{N}}\subseteq\mathfrak{M}(\mathcal{G})$ 
such that
$$v_u(\mathrm{det}\phi_{\mathfrak{N}}) = \frac{1}{\rho}v_u(\mathrm{det}\phi_{\widetilde{\mathfrak{N}}}) = \frac{1}{\rho}\sum_{i\in I}v_u\left(\mathrm{det}\phi_{\widetilde{\mathfrak{N}}_i}\right).$$
\end{cor}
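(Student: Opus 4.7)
The plan is to reduce Corollary~\ref{ram22} to two ingredients already established in the paper: Proposition~\ref{GaloisClos}, which yields the factor of $\rho$ via the Serre tensor construction $\widetilde{\mathcal{G}} = \mathcal{G} \otimes_{\OO_E} \OO_{\widetilde{E}}$, and Corollary~\ref{decompCMMod}, which furnishes the isotypic decomposition of a Kisin module when the CM field is Galois---a hypothesis which $\widetilde{E}$ satisfies by construction.

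First, I would translate the statement about saturated submodules into one about finite flat subgroup schemes. By Proposition~\ref{saturation}, every saturated submodule $\mathfrak{N} \subseteq \mathfrak{M}(\mathcal{G})$ arises as $\mathfrak{M}(\mathcal{G}/\mathcal{H})$ for a finite flat subgroup scheme $\mathcal{H} \subseteq \mathcal{G}$. Apply Proposition~\ref{GaloisClos} to obtain a finite flat subgroup scheme $\widetilde{\mathcal{H}} \subseteq \widetilde{\mathcal{G}}$ with $v_u(\det \phi_{\mathfrak{M}(\widetilde{\mathcal{H}})}) = \rho \cdot v_u(\det \phi_{\mathfrak{M}(\mathcal{H})})$, and define $\widetilde{\mathfrak{N}} = \mathfrak{M}(\widetilde{\mathcal{G}}/\widetilde{\mathcal{H}})$, the saturated submodule corresponding to $\widetilde{\mathcal{H}}$. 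Multiplicativity of $\det$ on the short exact sequence from Proposition~\ref{saturation}, combined with the isomorphism $\mathfrak{M}(\widetilde{\mathcal{G}}) \simeq \mathfrak{M}(\mathcal{G})^{\rho}$ (so $v_u(\det \phi_{\mathfrak{M}(\widetilde{\mathcal{G}})}) = \rho \cdot v_u(\det \phi_{\mathfrak{M}(\mathcal{G})})$), yields the first equality $v_u(\det \phi_{\mathfrak{N}}) = \tfrac{1}{\rho} v_u(\det \phi_{\widetilde{\mathfrak{N}}})$ by subtraction.

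For the second equality, I would invoke Corollary~\ref{decompCMMod} applied to $\mathfrak{M}(\widetilde{\mathcal{G}})$, which is legitimate since $\widetilde{E}/\QQ_p$ is Galois. This produces
\[
\mathfrak{M}(\widetilde{\mathcal{G}}) \;=\; \bigoplus_{i \in I} \mathfrak{M}(\widetilde{\mathcal{G}})_i
\]
indexed by $I = \mathrm{Hom}(\widetilde{E}^{\mathrm{ur}}, K_0) = \mathrm{Hom}(E^{\mathrm{ur}}, K_0)$, the two being equal because $\widetilde{E}/E$ is totally ramified. Provided $\widetilde{\mathfrak{N}}$ is stable under the $\OO_{\widetilde{E}}^{\mathrm{ur}}$-action, it inherits a compatible decomposition $\widetilde{\mathfrak{N}} = \bigoplus_i \widetilde{\mathfrak{N}}_i$, and multiplicativity of the determinant across a direct sum delivers $v_u(\det \phi_{\widetilde{\mathfrak{N}}}) = \sum_{i \in I} v_u(\det \phi_{\widetilde{\mathfrak{N}}_i})$.

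The main obstacle is verifying this $\OO_{\widetilde{E}}^{\mathrm{ur}}$-stability of $\widetilde{\mathfrak{N}}$. Because the proof of Proposition~\ref{GaloisClos} identifies $\widetilde{\mathcal{G}} \simeq \mathcal{G}^{\rho}$ as $p$-divisible groups and takes $\widetilde{\mathcal{H}} = \mathcal{H}^{\rho}$, and because $\OO_{\widetilde{E}}^{\mathrm{ur}} = \OO_E^{\mathrm{ur}}$ (totally ramified extension) acts diagonally on the product, the stability of $\widetilde{\mathcal{H}}$ reduces to the $\OO_E^{\mathrm{ur}}$-stability of $\mathcal{H}$. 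This is automatic when $\mathcal{H}$ is $\OO_E$-stable, which is precisely the setting of the paper's application to $\OO_E$-linear CM isogenies. For arbitrary $\mathfrak{N}$, one would need to refine the construction---for instance by replacing $\widetilde{\mathcal{H}}$ with its Galois average over $\Gal(\widetilde{E}/E)$ or by passing to the smallest $\OO_{\widetilde{E}}^{\mathrm{ur}}$-stable saturated submodule containing it---and then verify that the determinantal valuation is unchanged, so that the $\rho$-scaling from Proposition~\ref{GaloisClos} still applies.
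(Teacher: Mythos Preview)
Your argument for the first equality is exactly the paper's: Proposition~\ref{GaloisClos} and Proposition~\ref{saturation} together give a saturated $\widetilde{\mathfrak{N}}$ with $v_u(\det\phi_{\widetilde{\mathfrak{N}}}) = \rho\, v_u(\det\phi_{\mathfrak{N}})$, and your subtraction via multiplicativity on the short exact sequence is a correct way to make that step explicit.

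For the second equality, however, you reach for the wrong decomposition result, and this is what generates your ``obstacle.'' The paper does not invoke Corollary~\ref{decompCMMod}; it invokes Lemma~\ref{Fontaine} (and implicitly its companion Corollary~\ref{Fontainecor}). The point is that the isotypic decomposition of $\mathfrak{M}(\widetilde{\mathcal{G}})$ along $I = \mathrm{Hom}(\widetilde{E}^{\mathrm{ur}},K_0)$ is respected by $\phi^{f}$, so after passing to the quasi-Kisin module $\mathfrak{M}^{(q)}$ every saturated $\mathfrak{S}$-submodule $\widetilde{\mathfrak{N}}$ automatically inherits a decomposition $\widetilde{\mathfrak{N}}^{(q)} = \bigoplus_{i\in I}\widetilde{\mathfrak{N}}_i^{(q)}$ --- this is precisely the content of Corollary~\ref{Fontainecor}, and no $\OO_{\widetilde{E}^{\mathrm{ur}}}$-stability hypothesis is needed as input. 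Your proposed workarounds (Galois averaging, passing to a smallest stable overmodule) are therefore unnecessary: the stability you seek is built into the quasi-Kisin formalism, because $\phi$-stability of $\widetilde{\mathfrak{N}}$ forces $\phi^{f}$-stability, which is what the decomposition requires.

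In short, your strategy is the paper's strategy; the only correction is to swap Corollary~\ref{decompCMMod} for Lemma~\ref{Fontaine}, after which the stability discussion can be deleted.
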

\begin{proof}
By Proposition \ref{GaloisClos} and Proposition \ref{saturation}, 
it immediately follows that there exists a saturated submodule 
$\widetilde{\mathfrak{N}}\subseteq\mathfrak{M}(\mathcal{G})$ 
such that 
$v_u(\mathrm{det}\phi_{\widetilde{\mathfrak{N}}}) = \rho(\mathrm{det}\phi_{\mathfrak{N}})$. 
The decomposition $\widetilde{\mathfrak{N}} = \bigoplus_{i\in I} \widetilde{\mathfrak{N}}_i$ 
is likewise an immediate application of Lemma \ref{Fontaine}.
\end{proof}

\begin{rem}
By Corollary \ref{ram11} and Corollary \ref{ram22}, whenever $E/\QQ_p$ 
is ramified along the CM type we may assume that $E/\QQ_p$ 
is totally ramified and Galois to compute the relative Hodge bundle.
\end{rem}

\subsection{HN Theory of CM Torsion Modules}
Harder-Narasimhan theory (reviewed in Section \ref{HNSect}) allows us to 
identify subgroups of $\mathcal{G}[p^n]$ where $\mathcal{G}/\OO_K$ 
is an $\OO_E$-linear CM $p$-divisible group. 
The results here are very specific to the case when $\mathcal{G}$ is a 
CM $p$-divisible group, 
as its isogeny class, and therefore its Harder-Narasimhan theory, 
is determined by the rational 
endomorphism algebra.

We first demonstrate that 
$\mathcal{G}[p^n]$ is semistable for all $n$.

\begin{lem}
\label{semistable}
Let $\mathcal{G}/\OO_K$ be an $\OO_E$-linear CM $p$-divisible group of 
dimension $d$ and height $h$. 
Then $\mathfrak{M}(\mathcal{G}[p^n])$ is 
semistable for all $n\ge 1$. Moreover,
$$\mu(\mathfrak{M}(\mathcal{G}[p^n])) = \frac{h-d}{h}.$$
\end{lem}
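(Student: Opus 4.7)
The plan is first to compute the slope $\mu(\mathfrak{M}(\mathcal{G}[p^n]))$ directly and then to deduce semistability by exploiting the fact that the Harder--Narasimhan filtration is preserved by the $\OO_E$-action, together with a classification of $\OO_E$-stable saturated submodules.

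For the slope, I will apply Proposition \ref{deg} (extended to $p^n$-torsion via the flat HN theory discussed after Definition \ref{slope}) to $\mathfrak{M}(\mathcal{G}[p^n])$: the associated Hodge bundle has $\OO_K$-length $n(h-d)$, so $\mathrm{deg}(\mathfrak{M}(\mathcal{G}[p^n])) = n(h-d)$, while the rank is $nh$. Hence $\mu(\mathfrak{M}(\mathcal{G}[p^n])) = (h-d)/h$, as claimed.

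For semistability, note that by Proposition \ref{CMMods} the module $\mathfrak{M}(\mathcal{G})$ is projective of rank $1$ over $\mathfrak{S}\otimes\OO_E$, so its torsion quotient $\mathfrak{M}(\mathcal{G}[p^n])$ carries a faithful $\OO_E$-action by endomorphisms. By the last assertion of Theorem \ref{HNthm}, the HN filtration is stable under all endomorphisms, so each graded piece must be $\OO_E$-stable. Under the bijection of Proposition \ref{saturation}, $\OO_E$-stable saturated submodules of $\mathfrak{M}(\mathcal{G}[p^n])$ correspond to $\OO_E$-stable finite flat subgroup schemes of $\mathcal{G}[p^n]$. Since $\OO_E$ is a discrete valuation ring with uniformizer $\pi_E$ and $\mathcal{G}[p^n]$ is free of rank $1$ over $\OO_E/p^n$ (again from the rank-one projectivity of $\mathfrak{M}(\mathcal{G})$), these subgroups are precisely the $\pi_E^k$-torsion groups $\mathcal{G}[\pi_E^k]$ for $0 \le k \le ne_E$, where $e_E$ is the ramification index of $E/\QQ_p$.

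A direct computation using the explicit Frobenius of Proposition \ref{CMMods} (or, equivalently, the Serre-tensor argument of Proposition \ref{SerreTens}) shows that $\mathfrak{M}(\mathcal{G}[\pi_E^k])$ has rank $kf_E$ (where $f_E$ is the residue degree of $E/\QQ_p$) and degree $k(h-d)/e_E$, so its slope is $(h-d)/h$ for every admissible $k$. Because the slopes of consecutive graded pieces of any HN filtration are strictly increasing, and every such graded piece is forced to have slope $(h-d)/h$, the filtration can consist only of the single piece $0 \subsetneq \mathfrak{M}(\mathcal{G}[p^n])$. This proves semistability. I expect the main obstacle to be the common-slope computation for the $\mathcal{G}[\pi_E^k]$: one must verify that passing from $p^n$-torsion to $\pi_E^k$-torsion scales both rank and degree by the same proportion, which is a genuine use of the $\mathfrak{S}\otimes\OO_E$-projectivity of $\mathfrak{M}(\mathcal{G})$ and not merely of the $\OO_E$-action.
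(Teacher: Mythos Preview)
Your argument is correct, and the core idea matches the paper's: both invoke the last clause of Theorem~\ref{HNthm} to conclude that every step in the HN filtration is $\OO_E$-stable, and both compute the slope from the rank~$nh$ and degree~$n(h-d)$. The difference lies in how semistability is deduced from $\OO_E$-stability. The paper argues tersely that an $\OO_E$-stable piece of the filtration must itself be rank-$1$ projective over $\mathfrak{S}_n\otimes\OO_E$ and hence becomes the whole module after applying $\mathrm{GF}$. You instead classify all $\OO_E$-stable saturated submodules explicitly as the $\pi_E^k$-torsion pieces, compute that each has slope $(h-d)/h$, and conclude that the strict-increase condition on HN slopes forces the filtration to collapse. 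Your route is longer but more transparent, and in fact anticipates the content of the subsequent Lemma~\ref{quentin}; it also makes explicit why proper $\OO_E$-stable submodules such as $\pi_E^k\mathfrak{M}$ cannot destabilize, a point the paper's phrasing leaves to the reader.

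Two minor corrections: the reference to Proposition~\ref{SerreTens} is not apt here (that proposition treats Serre tensor along an extension of CM fields, not the $\pi_E$-filtration on a single $\mathcal{G}$); the cleanest justification for your equal-slope claim is simply that $\mathcal{G}[\pi_E^k]/\mathcal{G}[\pi_E^{k-1}]\simeq\mathcal{G}[\pi_E]$ via multiplication by $\pi_E^{k-1}$, so rank and degree scale linearly in $k$ by additivity on short exact sequences. Also, Proposition~\ref{saturation} is stated only for $\mathfrak{S}_1$; its extension to $\mathfrak{S}_n$ is routine but worth flagging.
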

\begin{proof}
By Proposition \ref{CMMods}, $\mathfrak{M}(\mathcal{G}[p^n])$ is 
$\mathfrak{S}_n\otimes\mathcal{O}_E$-projective of rank $1$. Then 
since $\OO_E$ preserves the Harder-Narasimhan filtration, any 
submodule of $\mathfrak{M}(\mathcal{G}[p^n])$ in the filtration 
must be $\mathfrak{S}_n\otimes\mathcal{O}_E$-projective of rank $1$ 
as well. The inclusion of such a submodule is an isomorphism under 
the functor $\mathrm{GF}$, and therefore itself an isomorphism. 
This shows $\mathfrak{M}(\mathcal{G}[p^n])$ is semistable.

For the latter claim, we note that $\#\mathcal{G}[p^n]=p^{nh}$, 
so $\mathrm{rk}(\mathfrak{M}(\mathcal{G}[p^n]) = nh$. Moreover, 
since $\mathcal{G}$ has dimension $d$, 
$v_u(\det\phi_{\mathfrak{M}(\mathcal{G}[p^n])}) = (h-d)e$, 
and hence by Proposition \ref{deg}, 
$\mathrm{deg}(\mathfrak{M}(\mathcal{G}[p^n])) = (h-d)ne$.
\end{proof}

Using the fact now that $\mathcal{G}[p^n]$ is semistable and has an 
$\OO_E$-linear structure, we can characterize subgroups in the 
following special case.

\begin{lem}
\label{quentin}
Suppose that $E/\QQ_p$ is totally ramified, and let $\mathcal{G}/\OO_K$ 
be an $\OO_E$-linear CM $p$-divisible group of dimension $d$ and height $h$. 
Let $\pi = \pi_E$ denote a uniformizer of $\OO_E$ and let 
$\mathfrak{N}\subseteq \mathfrak{M}(\mathcal{G}[\pi^r])$ 
be a saturated submodule. Then 
$\mu(\mathfrak{N}) = \mu(\mathcal{G}[\pi^r])$ if and only if 
$\mathfrak{M}/\mathfrak{N} = \mathfrak{M}(\mathcal{G}[\pi^k])$ 
for some integer $0\le k\le r$.
\end{lem}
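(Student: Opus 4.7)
My plan is to argue the two directions separately, using the Harder--Narasimhan formalism of Section \ref{HNSect} together with the rank-one $\mathfrak{S}\otimes\OO_E/\pi^r$-structure on $\mathfrak{M}:=\mathfrak{M}(\mathcal{G}[\pi^r])$ given by Proposition \ref{CMMods}. First I would extend Lemma \ref{semistable} to show that $\mathfrak{M}$ and each $\mathfrak{M}(\mathcal{G}[\pi^k])$ is semistable of slope $\mu=(h-d)/h$: the proof there relies only on rank-one projectivity over $\mathfrak{S}_n\otimes\OO_E$ and on the preservation of the HN filtration by the $\OO_E$-action (Theorem \ref{HNthm}), both of which apply verbatim in the $\pi^r$-torsion setting. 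The ``if'' direction follows immediately from additivity of degree and rank in the short exact sequence
$$0\to\mathfrak{N}\to\mathfrak{M}\to\mathfrak{M}(\mathcal{G}[\pi^k])\to 0,$$
since both outer terms have slope $\mu$.

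For the ``only if'' direction, Proposition \ref{saturation} (in its $\mathfrak{S}_n$-analogue coming from Proposition \ref{fingroup}) lets me write $\mathfrak{M}/\mathfrak{N}=\mathfrak{M}(\mathcal{H})$ for a finite flat subgroup scheme $\mathcal{H}\subseteq\mathcal{G}[\pi^r]$. Since $\mathcal{G}[\pi^r](\overline{K})$ is a free rank-one $\OO_E/\pi^r$-module whose only $\OO_E$-stable subgroups are the $\mathcal{G}[\pi^j]$, it suffices to prove that $\mathcal{H}$ is $\OO_E$-stable, equivalently that $\mathfrak{N}$ is stable under the $\OO_E$-action on $\mathfrak{M}$. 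Each translate $a\mathfrak{N}$ for $a\in\OO_E$ is again a saturated submodule of slope $\mu$, so the $\OO_E$-span $\OO_E\cdot\mathfrak{N}$ is an $\OO_E$-stable, saturated submodule of slope $\mu$; by the rank-one $\mathfrak{S}\otimes\OO_E/\pi^r$-structure the only such submodules are the $\pi^i\mathfrak{M}$, hence $\OO_E\cdot\mathfrak{N}=\pi^i\mathfrak{M}$ for some $0\le i\le r$.

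The main obstacle is then bridging $\mathfrak{N}\subseteq\OO_E\cdot\mathfrak{N}=\pi^i\mathfrak{M}$ to the equality $\mathfrak{N}=\pi^i\mathfrak{M}$. I would carry this out by reducing via Corollary \ref{ram22} to the totally ramified Galois case and then exploiting the explicit lower-triangular presentation of $\phi_{\mathfrak{M}}$ in Example \ref{ramex}: its constant diagonal entry $P_0(u)$ makes the Frobenius-preserved flag coincide with $\{\pi^i\mathfrak{M}\}_{i=0}^r$, and for any rank-$s$ saturated $\phi$-stable submodule one gets a lower bound $v_u(\det\phi_{\mathfrak{N}})\ge s\cdot v_u(P_0)$ with equality only for flag members. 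The slope hypothesis $\mu(\mathfrak{N})=\mu(\mathfrak{M})$ forces precisely this equality and concludes the argument. This last step, where the explicit CM structure does the work, is the substantive part; everything preceding it is formal Harder--Narasimhan theory.
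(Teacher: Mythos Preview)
Your ``if'' direction is fine, but the ``only if'' direction has a genuine gap that makes the argument circular. The $\OO_E$-span step is not justified as written: for $a$ a non-unit (say $a=\pi$) the image $a\mathfrak{N}$ need not be saturated, and a sum of saturated submodules need not be saturated, so the claim that $\OO_E\cdot\mathfrak{N}$ is a saturated submodule of slope $\mu$ requires work you have not done. More importantly, even granting $\OO_E\cdot\mathfrak{N}=\pi^i\mathfrak{M}$, this does not reduce the problem at all: you are left with a saturated $\mathfrak{N}\subseteq\pi^i\mathfrak{M}$ of slope $\mu$ and must still show equality. Your proposed bridge---``$v_u(\det\phi_{\mathfrak{N}})\ge s\cdot v_u(P_0)$ with equality only for flag members''---is precisely the statement of the lemma. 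The lower bound alone is just semistability of $\mathfrak{M}$, which we already know; the ``equality only for flag members'' clause is the entire content of the ``only if'' direction, and the upper-triangular form with constant diagonal does not yield it for free (semilinearity of $\phi$ produces many saturated lines not in the flag, as the explicit computations in Section~\ref{ramcompus} show). So the detour through $\OO_E\cdot\mathfrak{N}$ accomplishes nothing and the substantive claim is simply asserted.

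The paper's proof avoids both issues by a clean d\'evissage with no explicit matrices. One inducts on $\mathrm{rk}(\mathfrak{N})$: let $k$ be minimal with $\mathfrak{N}\subseteq\pi^k\mathfrak{M}/\pi^r\mathfrak{M}$ and project $\mathfrak{N}\to\pi^k\mathfrak{M}/\pi^{k+1}\mathfrak{M}$. The target is rank one of slope $\mu$, so the nonzero image $\mathfrak{N}_2$ has slope $\ge\mu$; but $\mathfrak{N}_2$ is also a quotient of the semistable $\mathfrak{N}$, forcing slope $\le\mu$ and hence $\mathfrak{N}_2$ equals the whole graded piece. The kernel $\mathfrak{N}_1$ then again has slope $\mu$, induction identifies it as $\pi^{\ell}\mathfrak{M}/\pi^r\mathfrak{M}$, and a short torsion argument pins down $\ell=k+1$. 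No Galois-closure reduction, no Example~\ref{ramex}, and no determinant inequalities are needed.
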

\begin{proof}
We proceed by induction on the rank of $\mathfrak{N}$. If 
$\mathfrak{N} = 0$, then take $k=r$. Otherwise there is a 
unique integer $k<r$ such that $\mathfrak{N}$ is contained in 
$\pi^k\mathfrak{M}/\pi^r\mathfrak{M}$ but not in 
$\pi^{k+1}\mathfrak{M}/\pi^r\mathfrak{M}$. This induces a 
projection $\mathfrak{N}\to \pi^k\mathfrak{M}/\pi^{k+1}\mathfrak{M}$ 
with kernel $\mathfrak{N}_1$ and image $\mathfrak{N}_2$, each of which 
are objects in $\mathbf{Mod}_{/\mathfrak{S}}^{\phi,r}$. 
If we show that
$$\mathfrak{N}_1 = \pi^{k+1}\mathfrak{M}/\pi^r\mathfrak{M}$$
$$\mathfrak{N}_2 = \pi^k\mathfrak{M}/\pi^{k+1}\mathfrak{M}$$
this will imply $\mathfrak{N} = \pi^k\mathfrak{M}/\pi^r\mathfrak{M}$, 
and thus the statement.

Note that $\mathfrak{N}_2$ is a nonzero Kisin submodule of $\pi^k\mathfrak{M}/\pi^{k+1}\mathfrak{M} = \mathfrak{M}/\pi\mathfrak{M}$. The latter is of rank $1$ 
and degree $\frac{h-d}{h}$, hence semistable. Thus 
$\mathfrak{N}_2$ has rank $1$ and its slope is at least 
$\frac{h-d}{h}$. However, $\mathfrak{N}_2$ is a quotient of $\mathfrak{N}$, 
which is semistable of slope $\frac{h-d}{h}$, and thus the slope 
of $\mathfrak{N}_2$ is at most and so equal to $\frac{h-d}{h}$. 
It follows $\mathfrak{N}_2 = \pi^k\mathfrak{M}/\pi^{k+1}\mathfrak{M}$ 
as a submodule of the same degree and the same rank.

Now by induction $\mathfrak{N}_1 = \pi^\ell\mathfrak{M}/\pi^r\mathfrak{M}$ for 
some integer $k+1\le \ell\le r$. Since $\mathfrak{N}_2$ is $\pi$-torsion, 
$\pi\mathfrak{N}\subseteq \mathfrak{N}_1$. On the other hand, since 
$\mathfrak{N}_1$ is $\pi^{r-\ell}$-torsion, $\mathfrak{N}$ is 
$\pi^{r-\ell+1}$-torsion. Therefore 
$\mathfrak{N}\subseteq \pi^{\ell-1}\mathfrak{M}/\pi^r\mathfrak{M}$. 
Now by the definition of $k$, $\ell-1\le k$, but since we also have $k+1\le \ell$, 
$\ell = k+1$. Hence $\mathfrak{N}_1 = \pi^{k+1}\mathfrak{M}/\pi^r\mathfrak{M}$, 
which demonstrates the statement.
\end{proof}

Using these lemmas, we identify the torsion quotients $\mathfrak{M}$ of 
$\mathfrak{M}(\mathcal{G})$ corresponding to the exact sequence
\begin{equation}
\label{oprojtors}
0\to \mathfrak{M}(\mathcal{G}')\to \mathfrak{M}(\mathcal{G})\to \mathfrak{M}\to 0
\end{equation}
where $\mathcal{G}'$ is a CM $p$-divisible group with CM by 
$(\OO,\Phi)$ for some non-maximal order $\OO\subseteq E$ 
as those saturated Kisin modules with strictly smaller slope 
than that of $\mathfrak{M}(\mathcal{G})$.
Since it is very important, we spell it out explicitly below.

\begin{lem}
\label{correctlem}
Let $\mathcal{G}/\OO_K$ be an $\OO_E$-linear CM $p$-divisible 
group and $\mathfrak{N}\subseteq\mathfrak{M}(\mathcal{G}[p^n])$ 
be a saturated submodule such that $\mathfrak{M}(\mathcal{G}[p^n])/\mathfrak{N}$ 
is a quotient of the form (\ref{oprojtors}). Then $\mu(\mathfrak{N})<\mu(\mathfrak{M})$.
\end{lem}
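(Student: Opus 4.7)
The plan is to combine the semistability of $\mathfrak{M}(\mathcal{G}[p^n])$ from Lemma~\ref{semistable} with the explicit classification of same-slope saturated submodules in Lemma~\ref{quentin}, reducing the general case to the totally ramified setting via the Serre tensor and Galois-closure constructions of the preceding subsection. First I would invoke Lemma~\ref{semistable}, which tells us $\mathfrak{M}(\mathcal{G}[p^n])$ is semistable of slope $\mu_0 = (h-d)/h$. Applied to the short exact sequence
$$0 \to \mathfrak{N} \to \mathfrak{M}(\mathcal{G}[p^n]) \to \mathfrak{M} \to 0,$$
semistability combined with additivity of degree and rank on $\mathbf{Mod}_{/\mathfrak{S}_n}^{\phi,1}$ forces the weak comparison $\mu(\mathfrak{N}) \le \mu(\mathfrak{M})$, with equality iff both slopes equal $\mu_0$.

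To promote this to the strict inequality I would argue by contradiction: suppose $\mu(\mathfrak{N}) = \mu(\mathfrak{M}) = \mu_0$. The aim is to show that this equality forces the kernel $\mathcal{H}$ of the isogeny $\mathcal{G} \to \mathcal{G}'$ (for which $\mathfrak{M}(\mathcal{H}) = \mathfrak{M}$ via Proposition~\ref{saturation}) to be stable under the full $\OO_E$-action, making $\mathcal{G}' = \mathcal{G}/\mathcal{H}$ inherit CM by the maximal order $\OO_E$---contradicting the hypothesis that $\mathcal{G}'$ has CM by a proper sub-order $\OO \subsetneq \OO_E$. When $E/\QQ_p$ is totally ramified this is immediate from Lemma~\ref{quentin}: since $\mathcal{G}[p^n] = \mathcal{G}[\pi_E^{en}]$, same-slope saturated submodules are precisely those for which $\mathfrak{M} \simeq \mathfrak{M}(\mathcal{G}[\pi_E^k])$ for some $0 \le k \le en$, so $\mathcal{H} = \mathcal{G}[\pi_E^k]$ is manifestly $\OO_E$-stable and $\mathcal{G}/\mathcal{H} \simeq \mathcal{G}$ as $\OO_E$-linear CM $p$-divisible groups via multiplication by $\pi_E^k$. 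The general case is reduced to this totally ramified setting by applying the Serre tensor decomposition of Corollary~\ref{HodgeUnramType} when $E$ does not ramify along $\Phi$, and the Galois-closure passage of Corollary~\ref{ram22} when it does, tracking that a saturated submodule of slope $\mu_0$ together with the non-maximal-order property of the quotient transports along each reduction.

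The main obstacle I expect is precisely this last step: Corollaries~\ref{HodgeUnramType} and~\ref{ram22} are phrased in terms of matching the Hodge-bundle invariants $v_u(\det \phi_{\mathfrak{N}})$, rather than explicitly transporting saturated same-slope submodules together with the geometric datum that the quotient $p$-divisible group has non-maximal endomorphism order. Verifying that non-maximal CM upstairs corresponds to non-maximal CM after reduction---so that the contradiction via Lemma~\ref{quentin} in the totally ramified case closes uniformly---will likely require a modest unpacking of the Serre-tensor and Galois-closure constructions beyond what is formally stated in those corollaries.
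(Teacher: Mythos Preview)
Your approach is essentially the paper's: invoke semistability of $\mathfrak{M}(\mathcal{G}[p^n])$ from Lemma~\ref{semistable}, use Lemma~\ref{quentin} to show that same-slope saturated submodules are exactly the $\OO_E$-stable ones, and conclude by contradiction since the hypothesis on the quotient (non-maximal order $\OO$) forces $\mathfrak{N}$ to be non-$\OO_E$-stable. The reductions via Corollaries~\ref{HodgeUnramType} and~\ref{ram22} are also what the paper cites.

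There is one point to correct. You say the general case reduces ``to this totally ramified setting'' via Corollary~\ref{HodgeUnramType}, but that corollary does the opposite: when $E$ does not ramify along $\Phi$ it writes $\mathcal{G}\simeq\mathcal{G}^{\mathrm{ur}}\otimes_{\OO_{E^{\mathrm{ur}}}}\OO_E$, so the reduction lands in the \emph{unramified} case, where Lemma~\ref{quentin} as stated does not apply (its proof uses the filtration by powers of the uniformizer $\pi_E$, which needs $E/\QQ_p$ totally ramified). The paper therefore does not funnel everything through Lemma~\ref{quentin}; it treats the unramified case separately, arguing directly that a non-$\OO_E$-stable $\mathfrak{N}$ cannot lie in the Harder--Narasimhan filtration (by the proof of Lemma~\ref{semistable}) and that a modification of the Lemma~\ref{quentin} argument still shows only $\OO_E$-stable submodules preserve the slope. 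Your anticipated obstacle is real, and the resolution is exactly this: handle the unramified case on its own terms rather than trying to transport it into the totally ramified framework.
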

\begin{proof}
By Corollary \ref{HodgeUnramType} and Corollary \ref{ram22}, we may assume 
that $E/\QQ_p$ is either unramified or totally ramified (and Galois). 
The latter case follows by Lemma \ref{quentin} and the semistability 
of $\mathcal{G}[p^n]$ proved in Lemma \ref{semistable}. 
In the former case, the argument follows similarly: since $\mathfrak{N}$ 
is not $\OO_E$-stable the proof of Lemma \ref{semistable} shows it 
cannot lie in the Harder-Narasimhan filtration of $\mathfrak{M}(\mathcal{G}[p^n])$, 
and a modification of the argument in Lemma \ref{quentin} 
demonstrates that only the $\OO_E$-stable submodules preserve the 
HN slope. Then since $\mathcal{G}[p^n]$ is semistable by 
Lemma \ref{semistable}, the statement follows.
\end{proof}

The corollary combined with the preceding two lemmas show \emph{a priori} that 
the change in Faltings height in the formula given by Lemma \ref{FaltingsIsog} 
must be \emph{positive} when $A_1$ is taken to have CM by a maximal order. 
Positivity of the Faltings height variation is not strong enough to prove 
a Northcott property, since the contribution by the relative Hodge bundle 
to the variation 
must be shown to not compete with the degree term. 
However, it identifies a ``minimal'' element within 
an isogeny class of CM abelian varieties, namely the CM abelian variety 
corresponding to the 
maximal order (or the CM abelian variety over the minimal 
field of definition), which is a result of intrinsic interest.

\section{Differential Computations}
\label{ramcompus}
\subsection{Unramified Case}
\label{inert}
Fix $E$ to be a ($p$-adic) CM field such that $E/\QQ_p$ is unramified, and 
let $\OO_E\subseteq E$ be its ring of integers with choice of uniformizer $\pi_E$. 
We bound slopes of submodules of 
$\mathfrak{M}(\mathcal{G}[p^n])$, 
where $\mathcal{G}/\OO_K$ is an $\OO_E$-linear CM $p$-divisible group with ($p$-adic) CM 
type $(\OO_E,\Phi)$. We denote by $h = [E:\QQ_p]$ the height of 
$\mathcal{G}$ and by $d$ its dimension.

We first compute saturated submodules of 
$\mathfrak{M}_{\mathfrak{S}_1}(\mathcal{G}[p])$. By Proposition \ref{saturation}, 
these correspond to finite flat subgroup schemes $\mathcal{H}\subseteq \mathcal{G}[p]$ 
of order $p^{k}$ where $k<h$. 
Since $p\in \OO_E$ is prime, we have \emph{a priori} 
that all such saturated submodules correspond to torsion quotients 
of the form in (\ref{oprojtors}). It will 
be evident from 
the computations that this is the minimal degree for the isogeny to such a CM 
$p$-divisible group $\mathcal{G}'$.

Recall the presentation in Example \ref{inertcase} for 
$\mathfrak{M}(\mathcal{G})$. Here we assumed that $K_0$ contains 
all embeddings of $E$ into $\overline{\QQ}_p$ so we can choose an 
$\mathfrak{S}$-basis $\{e_1,\ldots,e_h\}$ such that 
$\phi_{\mathfrak{M}}$ has the presentation
\begin{equation}
\label{unrampresent}
\phi_{\mathfrak{M}}:e_i\mapsto\left\{\begin{array}{ll}
\frac{1}{c}\textsf{Eis}(u)e_{i+1}&\text{if }i\in \Phi\\
e_{i+1}&\text{if }i\notin \Phi\end{array}\right.
\end{equation}
where addition on the indexing set is performed modulo $h$. 
Here we consider the set $\{1,\ldots,h\}$ to be a torsor 
under the action of $\mathrm{Gal}(E/\QQ_p)$, where the 
action is by addition modulo $h$ and induced from 
the corresponding cyclic permutaton of the basis $(e_1,\ldots,e_h)$.

\begin{lem}
\label{quasiquasi}
Let $\mathfrak{M} = \mathfrak{M}(\mathcal{G})$ 
where $\mathcal{G}$ has CM type $\Phi = \{\alpha_1,\ldots,\alpha_{h-d}\}\subseteq \{1,\ldots,h\}$ as an ordered set. If 
we let $\mathfrak{M}^{(q)}\simeq \mathfrak{M}$ as modules with a 
$\phi^{(q)}$-Frobenius $\phi_{\mathfrak{M}^{(q)}}^{(q)} = \phi_{\mathfrak{M}}^q$, then
$$\mathfrak{M}^{(q)}\simeq \bigoplus_{\tau\in \Hom(E^{\mathrm{ur}},K_0)}\mathfrak{M}^{(q)}_\tau$$
where each $\mathfrak{M}^{(q)}_\tau$ is a $\mathfrak{S}^{(q)}$-module 
of rank $1$ with $\phi^{(q)}$-Frobenius
$$\phi_{\mathfrak{M}^{(q)}_\tau}^{(q)}(x) = \left(u^{e\sum_{s=1}^{h-d} p^{h-\tau^{-1}(\alpha_s)}} + pg_\tau(u)\right)\cdot x$$
for some polynomial $g_\tau(u)\in \mathfrak{S}^{(q)}$ with degree 
strictly less than $e\sum_{s=1}^{h-d} p^{h-\tau^{-1}(\alpha_s)}$.
\end{lem}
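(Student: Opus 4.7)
The plan is to combine the quasi-Kisin decomposition supplied by Lemma \ref{Fontaine} with the explicit presentation (\ref{unrampresent}) and then iterate the Frobenius $h$ times. Because $E/\QQ_p$ is unramified of degree $h$, the inertia degree is $f = h$, so $q = p^h$ and $\phi_{\mathfrak{M}^{(q)}}^{(q)} = \phi_\mathfrak{M}^h$ by construction. Lemma \ref{Fontaine} supplies the decomposition $\mathfrak{M}^{(q)} \simeq \bigoplus_{\tau \in \Hom(E,K_0)} \mathfrak{M}^{(q)}_\tau$ into $\tau$-isotypic components for the $\OO_E$-action, and since $\OO_E = \OO_{E^{\mathrm{ur}}}$ each summand is free of rank one over $\mathfrak{S}^{(q)}$.

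Identifying $\{1,\ldots,h\}$ with $\Hom(E,K_0)$ via the Galois-torsor convention recalled right after (\ref{unrampresent}), the basis vector $e_i$ spans $\mathfrak{M}^{(q)}_{\tau_i}$. Because $\phi_\mathfrak{M}$ shifts $i\mapsto i+1$ and $\sigma^h$ acts trivially on $W$, $\phi_\mathfrak{M}^h$ preserves each $\mathfrak{M}^{(q)}_{\tau_i}$, and a short induction from (\ref{unrampresent}) yields
$$\phi_\mathfrak{M}^h(e_i) \;=\; \Bigl(\prod_{j=0}^{h-1} \phi^{h-1-j}(c_{i+j})\Bigr)\, e_i, \qquad c_{i+j} = \begin{cases} \tfrac{1}{c}\mathsf{Eis}(u), & i+j \in \Phi, \\ 1, & \text{otherwise,} \end{cases}$$
with indices modulo $h$. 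Thus $\phi^{(q)}_{\mathfrak{M}^{(q)}_{\tau_i}}$ is multiplication by this scalar.

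The remaining step is to extract the leading $u$-monomial. Since every non-leading coefficient of $\mathsf{Eis}(u)$ lies in $pW$, one has $\mathsf{Eis}(u) \equiv u^e \pmod{p}$, and therefore $\phi^{h-1-j}(\mathsf{Eis}(u)) = u^{ep^{h-1-j}} + pR_{h-1-j}(u)$ for some $R_{h-1-j} \in W[u]$ with $\deg R_{h-1-j} < ep^{h-1-j}$. Expanding the product, every term that draws at least one factor from some $pR_{h-1-j}$ is divisible by $p$ and has $u$-degree strictly less than $e \sum_{j:\, i+j \in \Phi} p^{h-1-j}$; these contributions constitute $pg_{\tau_i}(u)$. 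The unique non-$p$-divisible term is, up to a unit in $W^\times$ arising from the constant-term factors $\phi^{h-1-j}(1/c)$ (trivially $1$ in the Lubin-Tate normalization $c=1$ of Remark \ref{LT}), the monomial $u^{e\sum_{j:\, i+j\in \Phi} p^{h-1-j}}$. Translating the summation index via the torsor bijection $i+j \leftrightarrow \alpha_s \pmod{h}$, which by construction of $\tau_i$ sends $j$ to $h - \tau_i^{-1}(\alpha_s)$, converts the exponent to $e\sum_{s=1}^{h-d} p^{h - \tau_i^{-1}(\alpha_s)}$, matching the claim.

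The main difficulty is purely bookkeeping: one must verify that the identification $\{1,\ldots,h\} \leftrightarrow \Hom(E,K_0)$ used here matches the one in Corollary \ref{decompCMMod}, so that $\tau^{-1}(\alpha_s)$ really records the number of Frobenius iterations from $\tau$ needed to arrive at a position of $\Phi$. The only genuinely algebraic input is $\mathsf{Eis}(u) \equiv u^e \pmod{p}$; everything else is a straightforward expansion of the product and a tracking of indices through the iteration (\ref{unrampresent}), and the leading unit coefficient is harmless for every subsequent application to HN-slope computations in Section \ref{ramcompus}.
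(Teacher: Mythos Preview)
Your proof is correct and follows essentially the same approach as the paper: apply the quasi-Kisin decomposition of Lemma \ref{Fontaine}, use the explicit presentation (\ref{unrampresent}), and invoke the Eisenstein property $\mathsf{Eis}(u)\equiv u^e\pmod p$ to extract the leading $u$-monomial. You have in fact supplied considerably more detail than the paper's one-line proof, including the explicit iterated-Frobenius product and a careful discussion of the index translation, while correctly flagging the torsor bookkeeping (and the harmless $W^\times$-unit) as the only points requiring care.
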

\begin{proof}
We apply the quasi-Kisin decomposition in Lemma \ref{Fontaine}. The 
precise form follows from the presentation (\ref{unrampresent}) and the 
theory of Eisenstein polynomials.
\end{proof}

Let $I = \mathrm{Hom}(E^{\mathrm{ur}},K_0)$ and $S_r\subseteq I$ denote a subset of 
size $r$.

\begin{prop}
\label{unramp}
Let $\mathcal{G}/\OO_K$ be an $\OO_E$-linear $p$-divisible group 
of type $(\OO_E,\Phi)$ and assume that $E/\QQ_p$ is unramified. Then 
for any subgroup $\mathcal{H}\subseteq \mathcal{G}[p]$ of height $k<h$,
$$v_u\left(\mathrm{det}(\phi_{\mathfrak{M}(\mathcal{H})})\right) = \min_{S_{k}\subseteq I}\left\{\sum_{\tau\in S_{k}}\left(\sum_{s=1}^{h-d} p^{h-\tau^{-1}(\alpha_s)}\right)\right\}\cdot \left(\frac{p-1}{p^h-1}\right)e.$$
\end{prop}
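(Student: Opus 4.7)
The plan is to pass to a saturated $\mathfrak{S}_1$-submodule via Proposition \ref{saturation}, exploit the quasi-Kisin decomposition of Lemma \ref{Fontaine} and Corollary \ref{Fontainecor} to turn the computation into a combinatorial sum over $I$, and finally pin down which subsets $S_k \subseteq I$ can actually arise from a height-$k$ subgroup.

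Concretely, $\mathcal{H}$ corresponds to a saturated $\mathfrak{S}_1$-submodule $\mathfrak{N} \subseteq \mathfrak{M}_{\mathfrak{S}_1}(\mathcal{G}[p])$ of corank $k$, with $\mathfrak{M}_{\mathfrak{S}_1}(\mathcal{H}) = \mathfrak{M}_{\mathfrak{S}_1}(\mathcal{G}[p])/\mathfrak{N}$. Multiplicativity of the determinant on the resulting short exact sequence, combined with the dimension computation $v_u(\det\phi_{\mathfrak{M}(\mathcal{G}[p])}) = (h-d)e$ from the proof of Lemma \ref{semistable}, reduces everything to determining $v_u(\det\phi_{\mathfrak{N}})$ via
\[
v_u(\det\phi_{\mathfrak{M}(\mathcal{H})}) = (h-d)e - v_u(\det\phi_{\mathfrak{N}}).
\]

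Next I would apply Lemma \ref{Fontaine} and Corollary \ref{Fontainecor} (with $f=h$ in the unramified case) to produce a decomposition $\mathfrak{N}^{(q)} \simeq \bigoplus_{\tau \in I}\mathfrak{N}^{(q)}_\tau$ with each $\mathfrak{N}^{(q)}_\tau \subseteq \mathfrak{M}^{(q)}_\tau$ saturated. Since each $\mathfrak{M}^{(q)}_\tau$ is rank one over $\mathfrak{S}^{(q)}_1$, saturation forces $\mathfrak{N}^{(q)}_\tau \in \{0,\mathfrak{M}^{(q)}_\tau\}$, so the datum of $\mathfrak{N}$ is encoded by a subset $T = \{\tau : \mathfrak{N}^{(q)}_\tau = \mathfrak{M}^{(q)}_\tau\} \subseteq I$ of size $h-k$. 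Plugging in the explicit Frobenius from Lemma \ref{quasiquasi} and the normalization of Corollary \ref{Fontainecor},
\[
v_u(\det\phi_{\mathfrak{N}}) = \frac{p-1}{p^h-1}\sum_{\tau \in T} e \sum_{s=1}^{h-d} p^{h - \tau^{-1}(\alpha_s)}.
\]
Using the telescoping identity $\sum_{j=1}^{h} p^{h-j} = (p^h-1)/(p-1)$ to rewrite $(h-d)e$ as the analogous full sum over all of $I$, and setting $S_k := I \setminus T$, one recovers the shape of the right-hand side with $\mathcal{H}$ selecting the particular $S_k$.

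The hardest step will be showing that the $S_k$ cut out by any height-$k$ subgroup $\mathcal{H}$ in fact realizes the minimum. My plan is to combine two ingredients. First, $\phi$-stability of $\mathfrak{N}$ is strictly stronger than $\phi^{(q)}$-stability of $\mathfrak{N}^{(q)}$: the original Frobenius cycles the factors through the $\sigma$-action on $I$, forcing a compatibility between $\mathfrak{N}^{(q)}_\tau$ and $\mathfrak{N}^{(q)}_{\sigma\tau}$ that constrains the admissible $T$. Second, the semistability of $\mathfrak{M}(\mathcal{G}[p])$ from Lemma \ref{semistable}, together with a modification of the argument in Lemma \ref{quentin} that handles non-$\mathcal{O}_E$-stable submodules, should pin the remaining admissible $T$ to those whose complement minimizes $\sum_{\tau \in S_k} \sum_s p^{h-\tau^{-1}(\alpha_s)}$. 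Equivalently, the resulting $S_k$ will be characterized as picking out the top segment of the Harder--Narasimhan polygon of $\mathfrak{M}_{\mathfrak{S}_1}(\mathcal{H})$, whose slopes are controlled by the quasi-Kisin data from Lemma \ref{quasiquasi}.
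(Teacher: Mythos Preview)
Your proposal follows essentially the same route as the paper: translate $\mathcal{H}$ to a saturated corank-$k$ submodule $\mathfrak{N}$ via Proposition~\ref{saturation}, pass to the quasi-Kisin side via Lemma~\ref{Fontaine} and Corollary~\ref{Fontainecor}, read off the explicit Frobenius from Lemma~\ref{quasiquasi}, and then argue that the resulting subset of $I$ is forced to be the extremal one. Your reduction to a subset $T\subseteq I$ of size $h-k$ and the passage to its complement $S_k$ is exactly what the paper is doing implicitly when it speaks of the sum over $S_{h-k}$ being maximized.

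The one place you diverge is in the justification of the extremality step. The paper does not invoke semistability, Lemma~\ref{quentin}, or any HN-polygon argument here; it simply appeals a second time to Corollary~\ref{Fontainecor} (specifically the converse direction, which says that a saturated $\mathfrak{S}^{(q)}$-submodule comes from a genuine saturated $\mathfrak{S}$-submodule precisely when its Frobenius respects the quasi-Kisin decomposition of $\mathfrak{M}$) together with the explicit presentation of Lemma~\ref{quasiquasi}. That is the paper's entire mechanism for pinning down $T$. Your planned detour through Lemma~\ref{quentin} is misplaced in any case, since that lemma is stated and proved for the totally ramified situation, and the HN-polygon repackaging, while not wrong, adds nothing the paper needs. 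If you strip out that last paragraph and replace it by the direct appeal to the converse part of Corollary~\ref{Fontainecor}, your argument is the paper's argument.
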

\begin{proof}
Let $\mathfrak{M}^{(q)}$ denote the quasi-Kisin module 
corresponding to $\mathfrak{M}(\mathcal{G}[p])$ by Lemma 
\ref{Fontaine}. 
Then by Proposition \ref{saturation} and Corollary \ref{Fontainecor}, there 
exists a saturated quasi-Kisin submodule $\mathfrak{N}^{(q)}\subseteq \mathfrak{M}^{(q)}$ 
of rank $h-k$ corresponding to the subgroup $\mathcal{H}\subseteq \mathcal{G}[p]$ 
and by which one can compute 
$v_u\left(\mathrm{det}(\phi_{\mathfrak{M}(\mathcal{H})})\right)$. 
Then using the presentation in Lemma \ref{quasiquasi} 
and again Corollary \ref{Fontainecor}, 
a submodule of rank $h-k$ is saturated whenever the iterated sum
$$\sum_{\tau\in S_{h-k}}\left(\sum_{s=1}^{h-d} p^{h-\tau^{-1}(\alpha_s)}\right)$$
is maximized over different sets $S_{h-k}\subseteq I$.
\end{proof}

We now compute the relative Hodge bundle for saturated submodules of 
$\mathfrak{M}(\mathcal{G}[p^n])$. Any $p^n$-torsion 
finite flat subgroup scheme $\mathcal{H}\subseteq \mathcal{G}[p^n]$ 
is characterized by the non-increasing tuple 
$(\lambda_1,\ldots,\lambda_h)$ where $0\le \lambda_i\le n$ are all 
integers, $\lambda_1 = n$, and $\lambda_{k+1},\ldots,\lambda_h = 0$ for some 
$k<h$ determined by $\mathcal{H}$ (this corresponds to 
the assumption that $\mathcal{H}\cap \mathcal{G}[p] = 0$, 
for otherwise we get the same quotient $p$-divisible group). 
Let $T\subseteq\{0,\ldots,h-1\}$ 
consist of those elements $i$ such that $\lambda_{k-i}-\lambda_{k-i+1}>0$ 
(in particular, $0\in T$).

\begin{prop}
\label{formulaunram}
Let $\mathcal{G}/\OO_K$ be an $\OO_E$-linear CM $p$-divisible group 
with ($p$-adic) CM type $(\OO_E,\Phi)$, and assume that $E/\QQ_p$ is unramified. 
Let $\mathcal{H}\subseteq\mathcal{G}[p^n]$ be of type $(\lambda_1,\ldots,\lambda_h)$ 
and have $p$-height $k<h$. Then letting
$$v_u\left(\mathrm{det}(\phi_{\mathfrak{M}(\mathcal{H})})\right) = \sum_{i\in T}\sum_{j=\lambda_{k-i+1}+1}^{\lambda_{k-i}}\left(\min_{S_{d(i)}\subseteq I}\left\{\sum_{\tau\in S_{d(i)}}\left(\sum_{s=1}^{h-d} p^{h-\tau^{-1}(\alpha_s)}\right)\right\}\cdot\left(\frac{p-1}{p^j(p^h-1)}\right)\right)e$$
where $d(i) =\max\{j:\lambda_{j}=\lambda_{k-i}\}$ (in particular, $d(0) = k$).
\end{prop}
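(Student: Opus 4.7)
The plan is to reduce the $p^n$-torsion case to the $p$-torsion result of Proposition \ref{unramp} via a natural filtration of $\mathcal{H}$. First I would set $\mathcal{H}_j := \mathcal{H}\cap \mathcal{G}[p^j]$ for $0\le j\le n$, so that $\mathcal{H}_0=0$ and $\mathcal{H}_n=\mathcal{H}$, and consider the successive quotients $\mathcal{Q}_j:=\mathcal{H}_j/\mathcal{H}_{j-1}$. Each $\mathcal{Q}_j$ is a finite flat $p$-torsion group scheme, and multiplication by $p^{j-1}$ identifies it with a subgroup scheme of $\mathcal{G}[p]$. The height of $\mathcal{Q}_j$ equals $\#\{m:\lambda_m\ge j\}$, and for $j$ in the range $(\lambda_{k-i+1},\lambda_{k-i}]$ with $i\in T$, this height is precisely $d(i)$. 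This is the combinatorial key that explains why the outer sum in the formula is indexed by $i\in T$ with inner range $\lambda_{k-i+1}+1\le j\le \lambda_{k-i}$.

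Next I would appeal to the exactness of the functor $\mathfrak{M}$ on short exact sequences of finite flat group schemes (Proposition \ref{saturation} together with Theorem \ref{equivfgroup}) and the multiplicativity of the determinant across short exact sequences in $\mathbf{Mod}_{/\mathfrak{S}_n}^{\phi,1}$ to obtain the telescoping identity
\[
v_u\bigl(\mathrm{det}\,\phi_{\mathfrak{M}(\mathcal{H})}\bigr) \;=\; \sum_{j=1}^{n} v_u\bigl(\mathrm{det}\,\phi_{\mathfrak{M}(\mathcal{Q}_j)}\bigr).
\]
Each summand is the contribution of a $p$-torsion subgroup of $\mathcal{G}[p^j]$, which after identification with a subgroup of $\mathcal{G}[p]$ may be computed via Proposition \ref{unramp}. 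The minimization over $S_{d(i)}\subseteq I$ appearing in the formula reproduces verbatim the one in Proposition \ref{unramp}: the $\OO_E$-linear structure and the quasi-Kisin decomposition of $\mathcal{Q}_j$ (Lemma \ref{Fontaine}, Corollary \ref{Fontainecor}) are inherited from those of $\mathcal{G}[p]$, so the same optimization over quasi-Kisin components governs which saturated submodule realizes $\mathcal{Q}_j$.

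The remaining ingredient is the weighting factor $\tfrac{p-1}{p^j(p^h-1)}$ attached to level $j$. I would extract this by passing to the level-$j$ model $\mathfrak{M}^j$ from Definition \ref{smodeldef} whose Frobenius is governed by the Lubin-Tate polynomial $h_{\pi,j}(u)=\pi+[\pi^{j-1}](u)^{q-1}$ (Remark \ref{LT}). Writing the Frobenius in the explicit basis supplied by Corollary \ref{decompCMMod} and analyzing the quasi-Kisin components as in the proof of Proposition \ref{unramp}, one finds that the $u$-adic valuation of the relevant block of $\det\phi$ associated to the $j$-th layer differs from the level-$1$ expression by a factor of $p^{-j}$, coming from the iterated substitution $u\mapsto[\pi^{j-1}](u)$ applied to the Eisenstein piece of the Frobenius.

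The principal obstacle is this last step: verifying, via a careful bookkeeping of the Lubin-Tate substitutions, that each jump from level $j-1$ to level $j$ contributes precisely the factor $1/p^j$ and that the minimizing subsets $S_{d(i)}\subseteq I$ at successive levels can be chosen compatibly. The $\OO_E$-linearity of $\mathcal{G}$ and the cyclic action of $\mathrm{Gal}(E/\QQ_p)$ on $I$ should force these choices to align, so that the telescoped sum yields an equality and not merely an upper bound on $v_u(\mathrm{det}\,\phi_{\mathfrak{M}(\mathcal{H})})$; producing the sharp identity is where the hands-on computation will live.
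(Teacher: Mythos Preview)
Your overall architecture---filter $\mathcal{H}$ by $\mathcal{H}_j=\mathcal{H}\cap\mathcal{G}[p^j]$, telescope the determinant, and reduce to the $p$-torsion Proposition~\ref{unramp}---is exactly the paper's strategy. The gap is in how you handle the successive quotients $\mathcal{Q}_j$.

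Your claim that multiplication by $p^{j-1}$ identifies $\mathcal{Q}_j$ with a subgroup scheme of $\mathcal{G}[p]$ is false at the level of finite flat group schemes over $\OO_K$: multiplication by $p$ on a $p$-divisible group of positive dimension is not \'etale, so the induced map $\mathcal{Q}_j\to\mathcal{G}[p]$ is a monomorphism on generic fibers but not an isomorphism of $\OO_K$-group schemes onto its image. If it were, then $v_u(\det\phi_{\mathfrak{M}(\mathcal{Q}_j)})$ would be independent of $j$ by Proposition~\ref{unramp}, and no $p^{-j}$ would ever appear. Your attempt to recover the factor via level-$j$ models and Lubin--Tate substitutions does not work either: passing to $K^j$ rescales both $v_u$ and $e$ by the same ramification index, so the normalized quantity is unchanged. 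The level-$s$ models in the paper are used only in the ramified case to obtain an explicit presentation, not to produce the $p^{-j}$.

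The correct mechanism, which the paper uses, is to work with the \emph{quotient} $p$-divisible groups $\mathcal{G}_r:=\mathcal{G}/\mathcal{H}_r$ rather than with $\mathcal{G}$ itself. The piece $\mathcal{H}_{r+1}/\mathcal{H}_r$ is naturally a subgroup scheme of $\mathcal{G}_r[p]$, and one then applies Proposition~\ref{unramp} to $\mathcal{G}_r$ instead of $\mathcal{G}$. The point is that $\mathfrak{M}^{(q)}(\mathcal{G}_r)$ sits inside $\mathfrak{M}^{(q)}(\mathcal{G})$ with index $p$ on each of the $k$ quasi-Kisin components indexed by the minimizing set $\mathcal{S}_k$; rewriting the Frobenius in a basis adapted to this sublattice divides the exponent on each of those components by $p$. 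Iterating, the Frobenius of $\mathfrak{M}^{(q)}(\mathcal{G}_r)$ on the $\mathcal{S}_k$-components carries the exponent $p^{-r}\sum_s p^{h-\tau^{-1}(\alpha_s)}$, and this is precisely the source of the $p^{-j}$ weight. The compatibility of the minimizing subsets across levels is then automatic, because you are always minimizing inside the \emph{same} quasi-Kisin decomposition of $\mathcal{G}_r$, whose combinatorics are inherited from $\mathcal{G}$.
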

\begin{proof}
We assume at first that $\lambda_1 = \lambda_k = n$ so that 
$T$ consists of a single element and $d(i) = k$. Then 
the formula simplifies to the inner sum and we iterate on $n$. 

When $n=1$ this is precisely 
Proposition \ref{unramp}. When $n>1$, for any 
$0\le r\le n$ the finite flat group scheme $\mathcal{H}$ 
factors by a short exact 
sequence of finite flat group schemes
\begin{equation}
\label{SESr}
0\to \mathcal{H}_r\to \mathcal{H}\to \mathcal{H}_{n-r}\to 0
\end{equation}
where $\mathcal{H}_r\subseteq \mathcal{G}[p^r]$. Geometrically, 
means the isogeny $\mathcal{G}\to \mathcal{G}/\mathcal{H}$ 
decomposes into a sequence of isogenies 
$\mathcal{G}\to \mathcal{G}/\mathcal{H}_r\to \mathcal{G}/\mathcal{H}$. 
Iterating on $r$ gives a sequence of length $n$ where each isogeny in 
the sequence has $p$-torsion kernel.

Let $r = 1$. Then (\ref{SESr}) transforms under $\mathfrak{M}$ to 
the short exact sequence of Kisin modules
$$0\to \mathfrak{M}(\mathcal{G}_1)\to \mathfrak{M}(\mathcal{G})\to \mathfrak{M}(\mathcal{H}_1)\to 0$$
which by Lemma \ref{Fontaine} corresponds to a short exact sequence 
of quasi-Kisin modules
$$0\to \mathfrak{M}^{(q)}(\mathcal{G}_1)\to \mathfrak{M}^{(q)}(\mathcal{G})\to\mathfrak{M}^{(q)}(\mathcal{H}_1)\to 0.$$
By Lemma \ref{quasiquasi} and the proof of Proposition \ref{unramp}, we can 
write 
$$\phi_{\mathfrak{M}^{(q)}(\mathcal{H}_1)}^{(q)} = \bigoplus_{i\in \mathcal{S}_k}\phi_{\mathfrak{M}_i^{(q)}}^{(q)},$$ 
where $\mathcal{S}_k\subseteq I$ is a 
subset of size $k$ which minimizes the sum 
$\sum_{\tau\in \mathcal{S}_{k}}\left(\sum_{s=1}^{h-d} p^{h-\tau^{-1}(\alpha_s)}\right)$. 
Then since $\mathfrak{M}(\mathcal{H}_1)$ is the Kisin module of 
a $p$-torsion group of height $k$, 
$\mathfrak{M}^{(q)}(\mathcal{G}_1)$ must have index $p$ in 
$\mathfrak{M}^{(q)}(\mathcal{G})$ on $k$ generators. In other words, 
there is a presentation of $\mathfrak{M}^{(q)}(\mathcal{G}_1)$ such that
$$\phi_{\mathfrak{M}^{(q)}(\mathcal{G}_1)}^{(q)} \equiv \bigoplus_{\tau\in \mathcal{S}_k} u^{\frac{1}{p}\sum_{s=1}^{h-d} p^{h-\tau^{-1}(\alpha_s)}}\oplus \bigoplus_{\tau\notin\mathcal{S}_k}u^{\sum_{s=1}^{h-d} p^{h-\tau^{-1}(\alpha_s)}}\mod p.$$
Iterating on $r$, this provides a presentation of 
$\mathfrak{M}^{(q)}(\mathcal{G}_r)$ such that
$$\phi_{\mathfrak{M}^{(q)}(\mathcal{G}_r)}^{(q)} \equiv \bigoplus_{\tau\in \mathcal{S}_k} u^{\frac{1}{p^r}\sum_{s=1}^{h-d} p^{h-\tau^{-1}(\alpha_s)}}\oplus \bigoplus_{\tau\notin\mathcal{S}_k}u^{\sum_{s=1}^{h-d} p^{h-\tau^{-1}(\alpha_s)}}\mod p.$$
Replacing $\mathcal{G}$ by $\mathcal{G}_r$ in Proposition \ref{unramp} 
allows us to compute the relative Hodge 
bundle on $\mathcal{H}/\mathcal{H}_r\cap \mathcal{G}_r[p]$. Then the formula 
follows from the additivity of the relative Hodge bundle on exact sequences 
of finite flat group schemes.

When $\lambda_1> \lambda_k$ we argue in the same way, only at each step of 
the iteration on $r$ the height of $\mathcal{H}/\mathcal{H}_r\cap \mathcal{G}_r[p]$ 
can change. We account for it by constructing the appropriate set $T$ in 
the statement.
\end{proof}

\begin{thm}
\label{unrambound}
Let $\mathcal{G}$ be an $\OO_E$-linear CM $p$-divisible group of 
type $(\OO_E,\Phi)$ and assume that $E/\QQ_p$ is unramified. Then 
for any subgroup $\mathcal{H}\subseteq \mathcal{G}[p^n]$ of $p$-height 
$k<h$,
\begin{align*}\# \frac{1}{[K:\QQ_p]}\log s^*\Omega_{\mathcal{H}/\OO_K}^k &\le \left(\frac{p-1}{p^{\delta}-1}\right)\left(\frac{1-p^{-k}}{1-p^{-1}}\right)\left(\frac{1-p^{-n}}{1-p^{-1}}\right)\log p\end{align*}
where $\delta = \frac{h}{h-d}>1$.
\end{thm}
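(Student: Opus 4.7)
The plan is to combine Proposition \ref{formulaunram}, which gives an exact formula for $v_u(\det\phi_{\mathfrak{M}(\mathcal{H})})$, with a sharp combinatorial estimate of the minimum appearing there. Since $[K:\QQ_p] = ef$ and $\#k = p^f$, Proposition \ref{deg} (extended to $p^n$-torsion via Proposition \ref{blah}) yields
$$\frac{1}{[K:\QQ_p]}\log \#s^*\Omega^1_{\mathcal{H}/\OO_K} = \frac{v_u(\det\phi_{\mathfrak{M}(\mathcal{H})})}{e}\log p,$$
so the theorem reduces to bounding $v_u(\det\phi_{\mathfrak{M}(\mathcal{H})})/e$ by the bracketed factor on the right.

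Next I would apply Proposition \ref{formulaunram} to rewrite this quantity as a double sum over $i\in T$ and $j\in\{\lambda_{k-i+1}+1,\ldots,\lambda_{k-i}\}$ of $\min_{|S|=d(i)}\{\sum_{\tau\in S}S_\tau\}\cdot(p-1)/(p^j(p^h-1))$, where $S_\tau := \sum_{s=1}^{h-d}p^{h-\tau^{-1}(\alpha_s)}$. A direct count over the cyclic $\mathrm{Gal}(E/\QQ_p)$-torsor $I$ gives $\sum_{\tau\in I}S_\tau = (h-d)(p^h-1)/(p-1)$, because each power $p^{h-i}$ occurs exactly $h-d$ times as $\tau$ varies. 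A first averaging then yields
$$\min_{|S|=d(i)}\sum_{\tau\in S}S_\tau \;\le\; \frac{d(i)(h-d)(p^h-1)}{h(p-1)}.$$
Substituting and using $\frac{h-d}{h}=\frac{1}{\delta}$ collapses the expression to $\frac{1}{\delta}\sum_{i\in T}d(i)\sum_{j=\lambda_{k-i+1}+1}^{\lambda_{k-i}}p^{-j}$. Since $d(i)=k-i$ for $i\in T$, a Young-diagram transpose identity rewrites this as $\frac{1}{\delta}\sum_{s=1}^k\sum_{j=1}^{\lambda_s}p^{-j}$; bounding by $\lambda_s\le n$ and factoring the geometric sums produces a product of the target shape.

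The hard part will be sharpening the averaging bound to obtain the exact prefactor $\frac{p-1}{p^\delta-1}$ rather than the weaker $\frac{1}{\delta}$ coming from pure counting. I expect this refinement to exploit the cyclic structure of $I$ together with the explicit Lubin--Tate description from Remark \ref{LT}: the $h$ values $S_\tau$ are cyclic translates of a single $(h-d)$-element pattern of distinct powers of $p$ drawn from $\{p^0,\ldots,p^{h-1}\}$, so their sorted values satisfy a constrained geometric decay with density $(h-d)/h$, and a rearrangement inequality for such cyclic $p$-adic subset sums produces the exponent $\delta = h/(h-d)$ in the denominator. Establishing this constrained combinatorial lemma is the technical heart of the proof; once it is in hand, substitution into the chain of identities above closes the argument.
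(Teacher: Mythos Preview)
Your setup is correct: reducing via Proposition~\ref{deg} and Proposition~\ref{formulaunram} to bounding
\[
\min_{|S|=d(i)}\sum_{\tau\in S}S_\tau,\qquad S_\tau=\sum_{s=1}^{h-d}p^{h-\tau^{-1}(\alpha_s)},
\]
is exactly what the paper does. The gap is that you never actually prove the sharp bound. Your averaging step gives only the prefactor $1/\delta$, and you then defer the passage from $1/\delta$ to $(p-1)/(p^\delta-1)$ to an unstated ``constrained combinatorial lemma'' about cyclic $p$-adic subset sums, invoking Lubin--Tate and rearrangement heuristics. None of that is carried out, and Lubin--Tate (Remark~\ref{LT}) is in any case irrelevant in the unramified setting.

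The paper's argument for this step is much more direct and avoids any general lemma. One simply asks: over all possible CM types $\Phi=\{\alpha_1,\ldots,\alpha_{h-d}\}\subset\{1,\ldots,h\}$, for which $\Phi$ is the minimum $\min_{|S|=d(i)}\sum_{\tau\in S}S_\tau$ largest? The answer is the evenly spaced type $\Phi=\{1,1+\delta,\ldots,1+\delta(h-d-1)\}$ with $\delta=h/(h-d)$. For that extremal $\Phi$ the $h$ values $S_\tau$ are explicitly computable and the bottom $d(i)$ of them sum to
\[
M(d(i))=\sum_{\ell=0}^{d(i)-1}\sum_{s=1}^{h-d}p^{h-\delta s-\ell}=\frac{p^h-1}{p^\delta-1}\cdot\frac{1-p^{-d(i)}}{1-p^{-1}}.
\]
Since $M$ is increasing one may replace $d(i)$ by $k$ throughout, and the remaining geometric sum over the $j$-index in Proposition~\ref{formulaunram} collapses to $\frac{1-p^{-n}}{1-p^{-1}}$. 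This yields the stated bound in two lines, with no Young-diagram transposition and no auxiliary inequality needed. Your identity $d(i)=k-i$ is correct but ultimately unnecessary: the paper simply uses the cruder $d(i)\le k$, which already suffices once one has the sharp $M(d(i))$.
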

\begin{proof}
By Proposition \ref{deg}, the inequality is obtained by 
computing upper bounds on the formula for 
$v_u(\mathrm{det}\phi_{\mathfrak{M}(\mathcal{H})})$ 
in Proposition \ref{formulaunram}. 
First, the quantity
$$\min_{S_{d(i)}\subseteq I}\left\{\sum_{\tau\in S_{d(i)}}\left(\sum_{s=1}^{h-d} p^{h-\tau^{-1}(\alpha_s)}\right)\right\}$$
is maximal when the CM type is $\{1,1+\delta,\ldots,1+\delta(h-d-1)\}$, 
where $\delta = \frac{h}{h-d}$. 
For a given $d(i)$, this maximal value is
$$M(d(i)) = \sum_{\ell = 0}^{d(i)-1}\left(\sum_{s=1}^{h-d} p^{h-\delta s -\ell}\right) = \left(\frac{p^h-1}{p^{\delta}-1}\right)\left(\frac{1-p^{-d(i)}}{1-p^{-1}}\right).$$
The inner sum iterating this quantity in Proposition \ref{formulaunram} 
is maximal when $d(i) = k$ for all $i$. 
Then
\begin{align*}
v_u\left(\mathrm{det}(\phi_{\mathfrak{M}(\mathcal{H})})\right) &\le M(k)\left(\frac{p-1}{p^h-1}\right)\left(\frac{1-p^{-n}}{1-p^{-1}}\right)e\\
& = \left(\frac{p-1}{p^{\delta}-1}\right)\left(\frac{1-p^{-k}}{1-p^{-1}}\right)\left(\frac{1-p^{-n}}{1-p^{-1}}\right)e.
\end{align*}
\end{proof}

\subsection{Totally Ramified Case}
\label{ram}
Fix $E$ to be a ($p$-adic) CM field such that $E/\QQ_p$ is 
totally ramified, and let $\OO_E\subseteq E$ be its ring of 
integers with choice of uniformizer $\pi_E$. As in Section \ref{inert}, 
we bound slopes of saturated Kisin submodules of 
$\mathfrak{M} = \mathfrak{M}_{\mathfrak{S}_1}(\mathcal{G}[p])$, 
where $\mathcal{G}/\OO_K$ is an $\OO_E$-linear CM $p$-divsible group 
with ($p$-adic) CM type $(\OO_E,\Phi)$. We denote by $h = [E:\QQ_p]$ 
the height of $\mathcal{G}$ and by $d$ its dimension.

We first make the computation explicit in the case $h = 2$. 

\begin{prop}
\label{lubintate}
Assume that $[E:\QQ_p]=2$ and $p$ ramifies in $E$, and let 
$\mathcal{G}/\OO_K$ be an $\OO_E$-linear CM $p$-divisible group. 
Denote by $\mathfrak{M}:=\mathfrak{M}_{\mathfrak{S}_1}(\mathcal{G}[p])$. 
Then a saturated $\mathfrak{S}_1$-line $\mathfrak{L}\subseteq \mathfrak{M}$ 
has either
$$v_u(\mathrm{det}(\phi_{\mathfrak{L}})) =\left\{\begin{array}{cl}
\frac{e}{2}&\text{if }\mathfrak{M}/\mathfrak{L}\simeq\mathfrak{M}(\mathcal{G}[\pi_E]),\\
 \left(\frac{2p-1}{p}\right)\frac{e}{2}&\text{otherwise.}\end{array}\right.$$
\end{prop}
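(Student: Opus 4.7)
Since $[E:\QQ_p]=2$ and $p$ ramifies in $E$, the extension $E/\QQ_p$ is automatically totally ramified Galois, so I apply the explicit presentation of Example \ref{ramex}. Writing $\pi=\pi_E$ and
\begin{equation*}
\mathsf{Eis}(u)=P_\Phi(u)\,P_{\Phi^c}(u)=(u^{e/2}-c_1\pi\,h_1(u))(u^{e/2}-c_2\pi\,h_2(u)),
\end{equation*}
with $c_i\in\OO_E^\times$ and $h_i\in W_E[u]$ of degree less than $e/2$ and with $h_i(0)$ a unit (forced by $v_p(\text{const}(\mathsf{Eis}))=1$), Proposition \ref{CMMods} presents $\mathfrak{M}$ as rank one free over $\mathfrak{S}_1\otimes(\OO_E/p\OO_E)$ with $\phi_\mathfrak{M}(e)=c^{-1}P_{\Phi^c}(u)\,e$. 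Since $\pi^2\equiv 0\pmod{p}$, in the $\mathfrak{S}_1$-basis $\{e,\pi e\}$ the Frobenius takes the upper-triangular form $\phi_\mathfrak{M}(e)=A(u)\,e+B(u)\,\pi e$ and $\phi_\mathfrak{M}(\pi e)=A(u)\,\pi e$, with $A(u)=u^{e/2}/c$ of $u$-valuation $e/2$ and $B(u)=-c_1h_1(u)/c$ of $u$-valuation $0$.

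For Case 1, I identify $\mathfrak{L}_0:=\mathfrak{S}_1\cdot\pi e$ as the unique Frobenius-stable saturated $\mathfrak{S}_1$-line. By Proposition \ref{saturation} applied to the $\OO_E$-stable subgroup scheme $\mathcal{G}[\pi_E]\subseteq\mathcal{G}[p]$, the quotient is $\mathfrak{M}/\mathfrak{L}_0\simeq\mathfrak{M}(\mathcal{G}[\pi_E])$, matching the first case of the statement, and the restriction of $\phi$ to $\mathfrak{L}_0$ is multiplication by $A(u)$, immediately giving $v_u(\det\phi_{\mathfrak{L}_0})=v_u(A)=e/2$.

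For Case 2, every other saturated line has the form $\mathfrak{L}=\mathfrak{S}_1\cdot(e+\beta\pi e)$ for some $\beta\in\mathfrak{S}_1$. A direct check shows such $\mathfrak{L}$ is not Frobenius-stable over $\mathfrak{S}_1$: the condition $A(\phi(\beta)-\beta)=-B$ has no solution since $B/A$ has $u$-valuation $-e/2$ while $\phi(\beta)-\beta\in\mathfrak{S}_1$ has non-negative valuation. By Lemma \ref{correctlem}, such $\mathfrak{L}$ corresponds to an isogeny $\mathcal{G}\to\mathcal{G}/\mathcal{H}$ to a non-maximal-order CM $p$-divisible group, and $\mathcal{H}$ becomes defined only after base change to the level-$1$ Lubin-Tate extension $K^1/K$ of degree $p(p-1)$ (Lemma \ref{smodel} and Remark \ref{LT}, with Eisenstein tower polynomials $h_{\pi,s}(u)=\pi+([\pi^{s-1}](u))^{p-1}$). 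Passing to the level-$1$ model $\mathfrak{M}^1\simeq\mathfrak{M}(\mathcal{G}\otimes_{\OO_K}\OO_{K^1})$, all $p+1$ saturated lines become Frobenius-stable, with the non-$\OO_E$-stable ones forming a single $\Gal(K^1/K)$-orbit; computing $\phi$ on such a line in $\mathfrak{M}^1$ and rescaling the $u$-valuation by the ramification factor $1/(p(p-1))$ via Theorem \ref{unifindep} (relating the uniformizer choices for $K$ and $K^1$) yields $v_u(\det\phi_\mathfrak{L})=(2p-1)e/(2p)$.

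The main obstacle is this last base-change step: while the Frobenius calculation in the level-$1$ frame is straightforward (the Frobenius on a non-trivial line picks up an additional $u$-twist beyond the base $u^{e/2}$), transporting the valuation back to $\mathfrak{M}$ over the original $\mathfrak{S}_1$ requires careful normalization between the Eisenstein polynomials. As a cross-check I will apply Lemma \ref{dettrick} with $g-d=1$: since $v_u(\det\phi_\mathfrak{M})=e$ (from $\dim\mathcal{G}=1$), Case 2 is equivalent to $v_u(\det\phi_{\mathfrak{M}(\mathcal{H})})=e/(2p)$, matching the expected Hodge contribution of a non-$\OO_E$-stable order-$p$ subgroup of $\mathcal{G}[p]$ in its Lubin-Tate realization.
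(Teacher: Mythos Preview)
Your Case 1 is correct and matches the paper. The gap is in Case 2, and it is conceptual rather than technical.

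You show that in the level-$0$ presentation the only $\phi$-stable saturated line is $\mathfrak{L}_0=\mathfrak{S}_1\cdot\pi e$, and you are right: over the original $K$ the ``otherwise'' case of the proposition is vacuous. But your proposed remedy---compute in the level-$1$ model and then ``rescale the $u$-valuation by $1/(p(p-1))$ via Theorem \ref{unifindep}''---does not make sense. Theorem \ref{unifindep} concerns two choices of uniformizer for the \emph{same} field $K$; it says nothing about transporting valuations across a ramified base change $K^s/K$. There is in fact nothing to transport: the $e$ in the statement is simply the ramification index of whichever field one is working over, and the normalization $\tfrac{1}{[K:\QQ_p]}$ in the Faltings-height application (Theorem \ref{EllRamComp}) absorbs any dependence on the choice of model. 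So the correct move is to pass once to a sufficiently large $K$ and compute both cases there, not to compute upstairs and rescale.

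This is exactly what the paper does, working in the level-$2$ model from the start. The point you are missing is that the Lubin-Tate form of the Eisenstein factor $h_{\pi,2}(u)=\pi+(\pi u+u^p)^{p-1}$ gives the off-diagonal entry of the Frobenius matrix the specific shape $-u^{(p-1)e/(2p)}+1$, not merely a unit. It is precisely this intermediate monomial $u^{(p-1)e/(2p)}$ that allows the second $\phi$-stability equation to be solved by $(f_1,f_2)=(-u^{e/(2p)},1)$, yielding $\mu=\tfrac{2p-1}{2p}\cdot\tfrac{e}{2}$ directly. Your level-$0$ matrix, with $B$ a bare unit, cannot see this structure; and since you never actually carry out the level-$1$ computation, your argument for Case 2 is incomplete. (Your invocation of Lemma \ref{correctlem} is also misplaced: that lemma compares slopes of existing submodules, it does not produce $\phi$-stable lines that are absent over the base.)
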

\begin{proof}
We let $\pi = \pi_E$ and use Lemma \ref{smodel} and 
Remark \ref{LT} to construct the 
level-$2$ model $\mathfrak{M}^2(\mathcal{G})$ using 
the Eisenstein polynomial
$$\text{2-}\mathsf{Eis}(u) = (\pi + (\pi u + u^{p})^{p-1})(\overline{\pi} + (\overline{\pi} u + u^{p})^{p-1}) = u^e + p h(u)$$
so that
$$P_{K,\pi_K,K_0\otimes E}(u) = \pi + (\pi u + u^{p})^{p-1} = u^{\frac{e}{2}} + \pi h_2(u).$$
Then on the $\mathfrak{S}$-basis $\{1,\pi\}$, 
the Frobenius of $\mathfrak{M}_{\mathfrak{S}_1}^2(\mathcal{G}[p])$ 
has the explicit presentation
$$\phi_{\mathfrak{M}_{\mathfrak{S}_1}^2(\mathcal{G}[p])} = \left(\begin{array}{cc}
u^{\frac{e}{2}}&-u^{\left(\frac{p-1}{p}\right)\frac{e}{2}}+1\\
0&u^{\frac{e}{2}}\end{array}\right).$$

Since $\mathfrak{L}$ has dimension $1$, 
$\phi_{\mathfrak{L}}$ acts as semi-linear 
multiplication by a monomial $u^\mu$ on a 
choice of generating element $e$ of $\mathfrak{L}$. By the 
commutation of the diagram

\bigskip
\centerline{\begin{xy}
(0,15)*+{\mathfrak{L}}="a";
(20,15)*+{\mathfrak{M}_{\mathfrak{S}_1}^2(\mathcal{G}[p])}="b";
(0,0)*+{\mathfrak{L}}="c";
(20,0)*+{\mathfrak{M}_{\mathfrak{S}_1}^2(\mathcal{G}[p])}="d";
{\ar^{\phi_{\mathfrak{M}_{\mathfrak{S}_1}^2(\mathcal{G}[p])}} "b";"d"};{\ar@{^{(}->} "c";"d"};
{\ar@{^{(}->} "a";"b"};{\ar_{u^{\mu} = \phi_\mathfrak{L}} "a";"c"};
\end{xy}}
\bigskip

\noindent an element 
$v = (f_1,f_2)\in \mathfrak{M}_{\mathfrak{S}_1}^2(\mathcal{G}[p])$ lies in $\mathfrak{L}$ 
if and only if
\begin{align*}
u^\mu f_1 &= u^{\frac{e}{2}} f_1^p\\
u^\mu f_2 &= -u^{\left(\frac{p-1}{p}\right)\frac{e}{2}}f_1^p + f_1^p + u^{\frac{e}{2}} f_2^p.\end{align*}
There are exactly two solutions to this system of equations, which 
may be found and verified empirically:

\begin{enumerate}
\item $f_1 = 0$ and $f_2 = 1$: this satisfies $\mu = \frac{e}{2}$ 
and corresponds to the group scheme $\mathcal{G}[\pi]$ 
(the latter conclusion is also seen from Lemma \ref{quentin}).

\item $f_1 = -u^{\frac{e}{2p}}$ and $f_2 = 1$: this satisfies 
$\mu = \left(\frac{2p-1}{p}\right)\frac{e}{2}$ and by 
Lemma \ref{quentin} is a solution to (\ref{oprojtors}).
\end{enumerate}
\end{proof}

\begin{thm}
\label{EllRamComp}
Assume that $[E:\QQ_p] = 2$ and that $p$ ramifies in $E$, and let 
$\mathcal{G}/\OO_K$ be an $\OO_E$-linear CM $p$-divisible group. 
Then for any subgroup $\mathcal{H}\subseteq \mathcal{G}[p^n]$ 
such that $\mathcal{H}\cap \mathcal{G}[\pi_E^r] = 1$ for all 
$r\ge 0$,
$$\#\frac{1}{[K:\QQ_p]}\log s^*\Omega_{\mathcal{H}/\OO_K}^2 = \frac{1}{2p}\left(\frac{1-p^{-n}}{1-p^{-1}}\right)\log p.$$
\end{thm}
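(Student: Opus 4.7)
The strategy is to iterate Proposition \ref{lubintate} along the $p$-power filtration of $\mathcal{H}$, using the level-$s$ models of Definition \ref{smodeldef} together with the explicit Lubin-Tate description in Remark \ref{LT} to handle the higher torsion. By Proposition \ref{deg} together with the fact that for $E/\QQ_p$ totally ramified of degree $2$ the residue field of $K$ satisfies $|k| = p^f$ and $[K:\QQ_p] = ef$ (where $f,e$ are the residue and ramification degrees of $K/\QQ_p$), the claim reduces to showing
\[v_u(\det \phi_{\mathfrak{M}(\mathcal{H})}) \;=\; \sum_{s=1}^n \frac{e}{2p^s} \;=\; \frac{e}{2p}\cdot\frac{1-p^{-n}}{1-p^{-1}}.\]

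First I would filter $\mathcal{H}$ by setting $\mathcal{H}_s := \mathcal{H}\cap \mathcal{G}[p^s]$. The genericity hypothesis $\mathcal{H}\cap\mathcal{G}[\pi_E^r]=1$ ensures each successive quotient $\mathcal{H}_s/\mathcal{H}_{s-1}$ has order exactly $p$ and corresponds to a saturated line in the Kisin module of $(\mathcal{G}/\mathcal{H}_{s-1})[p]$ distinct from the $\pi_E$-torsion line. By the contravariant exactness of $\mathfrak{M}$ and multiplicativity of $\det\phi$ on short exact sequences of Kisin modules, this yields the decomposition
\[v_u(\det \phi_{\mathfrak{M}(\mathcal{H})}) \;=\; \sum_{s=1}^n v_u(\det \phi_{\mathfrak{M}(\mathcal{H}_s/\mathcal{H}_{s-1})}),\]
reducing the problem to showing each summand equals $\frac{e}{2p^s}$. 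The base case $s=1$ is precisely the non-$\mathcal{G}[\pi_E]$ case of Proposition \ref{lubintate}: the generic saturated line has $v_u$-valuation $\frac{(2p-1)e}{2p}$, so the complementary quotient has $v_u = e - \frac{(2p-1)e}{2p} = \frac{e}{2p}$.

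For $s\ge 2$, I would pass to the level-$s$ model $\mathfrak{M}^s(\mathcal{G})$ over the extension $K^s = K(\mathcal{G}[p^s])$, whose Frobenius is governed by the polynomial $P_{\Phi^c,\pi_{K^s},K_0\otimes E}(u)$ described via the iterated Lubin-Tate series $h_{\pi_E,s}(u) = \pi_E + [\pi_E^{s-1}](u)^{p-1}$ of Remark \ref{LT}. The plan is to mimic the computation in the proof of Proposition \ref{lubintate}: write $\mathfrak{M}^s_{\mathfrak{S}_1}(\mathcal{G}[p])$ in the $\mathfrak{S}_1$-basis $\{1,\pi_E\}$, express $\phi$ as an upper-triangular $2\times 2$ matrix with diagonal entry $u^{e_s/2}$ and off-diagonal term involving $[\pi_E^{s-1}](u)^{p-1}$ (where $e_s := \deg s\text{-}\mathsf{Eis}(u)$), and solve the resulting system of equations for saturated $\mathfrak{S}_1$-lines. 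This should yield two solutions: the $\pi_E$-torsion line, excluded by the hypothesis on $\mathcal{H}$, and a generic line with $v_u$-valuation $(1-p^{-s})\cdot e_s/2$, so the complementary quotient has $v_u = e_s/(2p^s)$. Descending to $\mathfrak{M}$ over $K$ via the ratio $e_s/e$ of the level-$s$ to base ramification then yields the desired $\frac{e}{2p^s}$ for $v_u(\det\phi_{\mathfrak{M}(\mathcal{H}_s/\mathcal{H}_{s-1})})$. Summing the geometric series gives the stated formula.

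The main obstacle will be the explicit solution of the level-$s$ linear-algebraic system. One must carry out the analogue of the base-case calculation from Proposition \ref{lubintate}, but with the more intricate iterated Lubin-Tate polynomial $h_{\pi_E,s}(u)$ in place of $h_{\pi_E,2}(u) = \pi_E + (\pi_E u + u^p)^{p-1}$, and verify that the generic solution's valuation scales by exactly the factor $p^{-(s-1)}$ relative to the base case. Care is also required to check that at each level of the filtration the hypothesis on $\mathcal{H}$ consistently selects the generic (non-$\pi_E$-torsion) line in the quotient, so that the iterative scheme applies uniformly across all $s$.
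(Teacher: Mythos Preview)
Your reduction via the filtration $\mathcal{H}_s = \mathcal{H}\cap\mathcal{G}[p^s]$ and the additivity of $v_u(\det\phi)$ is exactly right, and the base case $s=1$ is indeed Proposition~\ref{lubintate}. The gap is in the inductive step. Passing to the level-$s$ model $\mathfrak{M}^s(\mathcal{G})$ and solving for saturated lines in $\mathfrak{M}^s_{\mathfrak{S}_1}(\mathcal{G}[p])$ does not compute $\mathcal{H}_s/\mathcal{H}_{s-1}$: that quotient is a subgroup of $(\mathcal{G}/\mathcal{H}_{s-1})[p]$, not of $\mathcal{G}[p]$. The level-$s$ model is just a base change of $\mathcal{G}$ to $K^s$, so by compatibility with base change (Theorem~\ref{Mfunc}(3)) the saturated lines in $\mathfrak{M}^s_{\mathfrak{S}_1}(\mathcal{G}[p])$ are the same two lines you already found at $s=1$, with valuations rescaled by $e_s/e$. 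Your proposed computation would therefore return $e_s/(2p)$, not $e_s/(2p^s)$; descending to $K$ gives $e/(2p)$ at every step and the sum becomes $ne/(2p)$ rather than the geometric series. The factor $p^{-(s-1)}$ you need does not come from iterating the Lubin--Tate polynomial $h_{\pi_E,s}$.

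What the paper does instead is track how the Frobenius of the \emph{quotient} $\mathcal{G}_r := \mathcal{G}/\mathcal{H}_r$ changes. After the first step, one changes basis to $\{1,\,-u^{e/2p}+\pi\}$ so that the generic line becomes a coordinate axis; then the fact that $\mathfrak{M}(\mathcal{G}_1)\hookrightarrow\mathfrak{M}(\mathcal{G})$ has index $p$ on one generator forces the matrix of $\phi_{\mathfrak{M}_{\mathfrak{S}_1}(\mathcal{G}_1)}$ to have its off-diagonal and lower-right exponents divided by $p$ relative to those of $\mathcal{G}$. Iterating, $\phi_{\mathfrak{M}_{\mathfrak{S}_1}(\mathcal{G}_r)}$ is upper-triangular with entries $u^{e/2p}$, $-u^{(p-1)e/2p^{r+1}}+1$, $u^{(2p-1)e/2p^{r+1}}$, and rerunning the line computation of Proposition~\ref{lubintate} on \emph{this} matrix gives the generic quotient valuation $e/(2p^{r+1})$. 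The missing ingredient in your argument is precisely this d\'evissage on the quotient $p$-divisible group, not a passage to higher torsion levels of the original $\mathcal{G}$.
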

\begin{proof}
When $n=1$, by our hypothesis on 
$\mathcal{H}$, the equality is a translation of Proposition \ref{lubintate} 
using Proposition \ref{deg}.

When $n>1$, for any $0\le r\le n$, $\mathcal{H}$ factors  
into the short exact sequence of finite flat group schemes
$$0\to \mathcal{H}_r\to \mathcal{H}\to \mathcal{H}_{n-r}\to 0$$
where $\mathcal{H}_r\subseteq \mathcal{G}[p^r]$. Iterating this 
factorization, $\mathcal{H}$ is constructed as an extension of 
$n$ groups each of $p$-torsion.

Let $r=1$. Then the following resolution on $\mathcal{H}_1$
$$0\to \mathcal{H}_1\to \mathcal{G}\to \mathcal{G}_1\to 0$$
induces the exact sequence of Kisin modules
$$\mathfrak{M}(\mathcal{G}_1)\to \mathfrak{M}(\mathcal{G})\to \mathfrak{M}(\mathcal{H}_1)\to 0.$$
By Proposition \ref{saturation}, and 
Lemma \ref{smodel} and Remark \ref{LT} to construct the 
level-$2$ model $\mathfrak{M}^2(\mathcal{G})$, there exists a saturated line  
$\mathfrak{L}\subseteq \mathfrak{M}^2_{\mathfrak{S}_1}(\mathcal{G})$ 
so that 
$\mathfrak{M}^2(\mathcal{H}_1) = \mathfrak{M}^2_{\mathfrak{S}_1}(\mathcal{G}[p])/\mathfrak{L}$ and 
which satisfies 
\begin{align*}
\phi_{\mathfrak{L}}&\equiv u^{\left(\frac{2p-1}{p}\right)\frac{e}{2}}\mod p\\
\phi_{\mathfrak{M}(\mathcal{H}_1)}&\equiv u^{\frac{e}{2p}}\mod p.\end{align*}
Choosing the $\mathfrak{S}$-basis $\{1,-u^{\frac{e}{2p}} + \pi\}$, 
we may then write
$$\phi_{\mathfrak{M}_{\mathfrak{S}_1}^2(\mathcal{G})}\equiv \left(\begin{array}{cc}
u^{\frac{e}{2p}} &-u^{\left(\frac{p-1}{p}\right)\frac{e}{2} }+1\\
0&u^{\frac{2p-1}{2p}e}\end{array}\right)\mod p.$$
Since $\mathcal{H}_1$ has $p$-torsion, the image of 
$\mathfrak{M}(\mathcal{G}_1)$ in $\mathfrak{M}^2(\mathcal{G})$ 
has index $p$, so that we may write
$$\phi_{\mathfrak{M}_{\mathfrak{S}_1}(\mathcal{G}_1)}\equiv 
\left(\begin{array}{cc}
u^{\frac{e}{2p}} &-u^{\left(\frac{p-1}{p^2}\right)\frac{e}{2} }+1\\
0&u^{\frac{2p-1}{2p^2}e}\end{array}\right)\mod p.$$
Iterating on $r$,
$$\phi_{\mathfrak{M}_{\mathfrak{S}_1}(\mathcal{G}_r)}\equiv 
\left(\begin{array}{cc}
u^{\frac{e}{2p}} &-u^{\left(\frac{p-1}{p^{r+1}}\right)\frac{e}{2} }+1\\
0&u^{\frac{2p-1}{2p^{r+1}}e}\end{array}\right)\mod p.$$
Replacing $\mathfrak{M}^2(\mathcal{G})$ by 
$\mathfrak{M}(\mathcal{G}_r)$ and $\mathcal{H}_1$ by 
$\mathcal{H}_r/\mathcal{H}_{r-1}$ in Proposition \ref{lubintate}, 
the result follows by the additivity of the relative Hodge bundle on short 
exact sequences of finite flat group schemes.
\end{proof}

Consider now $E/\QQ_p$ totally ramified and Galois, 
and assume 
that $K$ contains all 
embeddings of $E$ into $\overline{\QQ}_p$. Then there exists a factorization
$$P_{K,\pi_K,K_0\otimes E}(u)e = \prod_{i\in \Phi^c}(u^{\frac{e}{h}}-c_i\pi h_i(u))e$$
such that $c_i\in \OO_E^\times$, $\pi = \pi_E$, 
and $h_i(u)\in W_E[[u]]$ is a polynomial of degree strictly smaller 
than $e/h$, where $W_E = W\otimes_{W\cap E}E$. 
Then as in Example \ref{ramex} we can choose the 
$\mathfrak{S}$-basis $\{e_1,\ldots,e_h\}$ with 
$e_i = \pi_E^{i-1}$
so that 
$\phi_{\mathfrak{M}_{\mathfrak{S}_1(\mathcal{G}[p])}}$ has the presentation
$$\phi_{\mathfrak{M}_{\mathfrak{S}_1(\mathcal{G}[p])}}(e_{i}) = \sum_{j=0}^{h-1-i}\left[P_j(u)\right]e_{j+i}$$
where the polynomials $P_j(u)$ are defined in Example \ref{ramex}. 
Note that, by the condition on the degree of $h_i(u)$, 
the degrees of $P_j(u)$ are strictly dominated by the degree of  
$P_1(u)$ for $j>1$.

For a general saturated 
submodule, we compute only the following (weak) bound 
which will be sufficient for establishing the Northcott property, as 
within any isogeny class there will always only be finitely many primes 
exhibiting this ramification behavior. Precise formulas are computed 
in special cases of totally ramified, Galois fields $E/\QQ_p$ in 
Section \ref{lastsect}.

\begin{thm}
\label{rambound}
Let $\mathcal{G}/\OO_K$ be an $\OO_E$-linear CM 
$p$-divisible group and assume that $E/\QQ_p$ is totally 
ramified. Then for any subgroup $\mathcal{H}\subseteq \mathcal{G}[p^n]$ 
of $p$-height $k$ such that $\mathcal{H}\cap \mathcal{G}[\pi_E^r] = 1$ for 
all $r\ge 0$,
$$\#\frac{1}{[K:\QQ_p]}\log s^*\Omega^k_{\mathcal{H}/\OO_K} < k\left(\frac{h-d}{h}\right)\left(\frac{1-p^{-n}}{1-p^{-1}}\right)\log p.$$
\end{thm}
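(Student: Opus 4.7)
My plan is to reduce to the totally ramified Galois case, establish the base case $n=1$ via the semistability of $\mathfrak{M}(\mathcal{G}[p])$ together with the strict form of Lemma \ref{quentin}, and then iterate on $n$ via the Lubin--Tate tower, paralleling the argument of Theorem \ref{EllRamComp} for $h=2$. First I would apply Corollary \ref{ram22} (together with Proposition \ref{GaloisClos}) to reduce to the case where $E/\QQ_p$ is totally ramified \emph{and} Galois: passing to the Galois closure $\widetilde{E}$ multiplies both the height of the $p$-divisible group and $v_u(\det\phi_{\mathfrak{N}})$ by the same factor $\rho = [\widetilde{E}:E]$, so the desired inequality is equivalent. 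Under this reduction I take the explicit upper-triangular presentation of $\phi_{\mathfrak{M}_{\mathfrak{S}_1}(\mathcal{G}[p])}$ derived in Example \ref{ramex}, whose diagonal monomials reduce to $u^{e/h}$ mod $p$.

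For the base case $n=1$, Lemma \ref{semistable} gives that $\mathfrak{M}(\mathcal{G}[p])$ is semistable of slope $(h-d)/h$. A saturated submodule $\mathfrak{N} \subseteq \mathfrak{M}_{\mathfrak{S}_1}(\mathcal{G}[p])$ of rank $h-k$ corresponds via Proposition \ref{saturation} to the subgroup $\mathcal{G}[p]/\mathcal{H}$. The hypothesis $\mathcal{H}\cap \mathcal{G}[\pi_E^r] = 1$ forces the quotient $\mathfrak{M}/\mathfrak{N}$ to fail to be of the form $\mathfrak{M}(\mathcal{G}[\pi_E^j])$, so by Lemma \ref{quentin} in conjunction with semistability one obtains $\mu(\mathfrak{N}) > (h-d)/h$ \emph{strictly}. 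This yields
$$v_u(\det\phi_{\mathfrak{N}}) > (h-k)(h-d)e/h,$$
and hence
$$v_u(\det\phi_{\mathfrak{M}(\mathcal{H})}) = v_u(\det\phi_{\mathfrak{M}(\mathcal{G}[p])}) - v_u(\det\phi_{\mathfrak{N}}) < k(h-d)e/h,$$
which by Proposition \ref{deg} is exactly the $n=1$ instance of the theorem.

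For $n > 1$ I would iterate as in Theorem \ref{EllRamComp}: factor $\mathcal{H}$ through a filtration $\mathcal{H}_1 \subseteq \mathcal{H}_2 \subseteq \cdots \subseteq \mathcal{H}_n = \mathcal{H}$ with $\mathcal{H}_r \subseteq \mathcal{G}[p^r]$, and replace $\mathcal{G}$ by the quotient $\mathcal{G}_r = \mathcal{G}/\mathcal{H}_r$ at each stage. Using the level-$r$ model of Lemma \ref{smodel} together with the Lubin--Tate description in Remark \ref{LT}, the diagonal $u$-exponents of the reduced Frobenius matrix shrink by a factor of $p$ at each stage (this is the content of the matrix computation for $h=2$ in the proof of Theorem \ref{EllRamComp}). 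Applying the $n=1$ bound to each successive $p$-torsion quotient $\mathcal{H}_r/\mathcal{H}_{r-1} \subseteq \mathcal{G}_{r-1}[p]$ and summing via the additivity of $\det\phi$ on short exact sequences of Kisin modules produces the geometric series $\sum_{r=0}^{n-1} p^{-r} = \frac{1-p^{-n}}{1-p^{-1}}$. The strict inequality from the base case is inherited.

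The main obstacle will be managing the Lubin--Tate tower in general height $h$: the $h=2$ case reduces to a tractable two-by-two matrix computation, but for general $h$ the upper-triangular entries $P_j(u)$ from Example \ref{ramex} could \emph{a priori} interact with the Lubin--Tate scaling of the diagonal in a way that destroys either the $1/p$-scaling of diagonal exponents at each level or the strictness of the slope inequality at each iteration. Verifying that the $P_j(u)$ for $j \ge 1$ remain strictly subleading in $u$-valuation after each Lubin--Tate reduction—so that saturated submodules violating the $\OO_E$-stability condition must have strictly greater slope at every stage—is the technical core of the argument.
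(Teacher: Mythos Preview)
Your outline is correct and matches the paper's argument: reduce to the Galois case, invoke Lemma~\ref{quentin} and semistability for the strict $n=1$ bound (this is the content of Lemma~\ref{correctlem}), and then run the Lubin--Tate d\'evissage of Theorem~\ref{EllRamComp} to produce the geometric series. The paper resolves precisely the obstacle you flag at the end: rather than tracking a rank $h-k$ submodule directly through the tower, one passes to the top exterior power and replaces $\mathfrak{N}\subseteq\mathfrak{M}_{\mathfrak{S}_1}(\mathcal{G})$ by the corresponding saturated \emph{line} $\mathfrak{L}\subseteq\bigwedge^{h-k}\mathfrak{M}_{\mathfrak{S}_1}(\mathcal{G})$. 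Since the upper-triangular presentation of Example~\ref{ramex} is preserved under exterior powers (with the off-diagonal entries still strictly dominated in $u$-degree by the diagonal), one can change basis to put the generator of $\mathfrak{L}$ into the frame and then run the $h=2$ d\'evissage verbatim on this rank-one object. This is what guarantees the $1/p$ scaling at each level without having to control the subleading $P_j(u)$ individually.
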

\begin{proof}
By Lemma \ref{correctlem} \emph{a priori} one has
$v_u(\mathrm{det}(\phi_{\mathfrak{M}_{\mathfrak{S}_1}(\mathcal{H})}))<ek\left(\frac{h-d}{h}\right)$ 
by our hypothesis on $\mathcal{H}$. 
To deduce the stronger bound, we lose nothing 
to assume that 
$\mathcal{H}$ has constant $p^r$-height $k$ 
for all $1\le r\le n$. 
Saturated rank $h-k$ submodules of $\mathfrak{M}_{\mathfrak{S}_1}(\mathcal{G})$ 
have a corresponding saturated rank $1$ submodule 
$\mathfrak{L}\subseteq\bigwedge^{h-k}\mathfrak{M}_{\mathfrak{S}_1}(\mathcal{G})$ 
with Frobenius 
$$\phi_{\bigwedge^{h-k}\mathfrak{M}_{\mathfrak{S}_1}(\mathcal{G})} = \bigwedge^{h-k}\phi_{\mathfrak{M}_{\mathfrak{S}_1}(\mathcal{G})}.$$ 
Since $\phi_{\mathfrak{M}_{\mathfrak{S}_1}(\mathcal{G})}$ 
has a representation by an 
upper triangular matrix as in Example \ref{ramex}, 
$\phi_{\bigwedge^{h-k}\mathfrak{M}_{\mathfrak{S}_1}(\mathcal{G})}$ 
also has a representation 
by an upper triangular matrix by taking the appropriate tensor power. 
Moreover, the degree of the polynomial entries in the resulting matrix 
are strictly bounded by the degree of the entries on the diagonal, a condition 
which is imposed from Example \ref{ramex}.

Let 
$v = (f_1,\ldots,f_{\binom{h}{h-k}})\in \mathfrak{L}\subseteq \bigwedge^{h-k}\mathfrak{M}_{\mathfrak{S}_1}(\mathcal{G})$ 
generate a saturated $\mathfrak{S}_1$-line. 
Then we may assume $f_\lambda =1$ for some 
$1\le \lambda\le \binom{h}{h-k}$.
By Proposition \ref{saturation}, $\mathfrak{L}$ corresponds to a subgroup 
$\mathcal{H}\subseteq\mathcal{G}[p^n]$ of rank 
$k$, and by the degree of the entries of 
$\phi_{\bigwedge^{h-k}\mathfrak{M}_{\mathfrak{S}_1}(\mathcal{G})}$, 
we are guaranteed it satisfies our hypothesis as 
long as $\mathrm{deg}_u(f_i)>0$ for some $1\le i \le \binom{h}{h-k}$.

Replace 
the $\lambda$th basis element representing $\phi_{\bigwedge^{h-k}\mathfrak{M}_{\mathfrak{S}_1}(\mathcal{G})}$ by $v$ for the above 
choice of $\lambda$. Then we may perform 
the same devissage argument in Theorem \ref{EllRamComp} on the module  
$\bigwedge^{h-k}\mathfrak{M}_{\mathfrak{S}_1}(\mathcal{G})$ 
using this new basis under the assumption that 
$\mathcal{H}$ has constant $p^r$-height $k$. On applying 
Proposition \ref{deg}, this gives the bound.
\end{proof}

\subsection{Subgroups of Products}
\label{prod}
Fix $P = \prod_{i=1}^N E_i$ to be a ($p$-adic) CM algebra, and let 
$\OO_{E_i}\subseteq E_i$ be the ring of integers of the CM field 
$E_i$ with choice of uniformizer $\pi_{E_i}$. Define 
$\mathcal{G}/\OO_K$ to be a CM $p$-divisible group with 
($p$-adic) CM by $(P=\prod_i \OO_{E_i},\Phi_P =\coprod_i \Phi_i)$. 
Then $\mathcal{G}\simeq \prod_i \mathcal{G}_i$, where 
each $\mathcal{G}_i$ is a 
($p$-adic) CM $p$-divisible group of dimension $d_i$ with ($p$-adic) CM by 
$(\OO_{E_i}.\Phi_i)$ corresponding to precisely those pairs 
found in the decomposition of $(P,\Phi_P)$. 
We let $h_i = [E_i:\QQ_p]$ denote the height 
of each group and $h_P = \sum_i h_i$ denote the height of $\mathcal{G}$. 
The goal is to extend 
the differential computations in Section \ref{inert} and 
Section \ref{ram} to 
subgroups of $\mathcal{G}/\OO_K$. Note that those sections 
concern the case when $\mathcal{G} = \mathcal{G}_i$, and 
classification of their subgroups is not sufficient as not all subgroups 
of $\mathcal{G}[p^n]$ decompose to 
a product of the subgroups of $\mathcal{G}_i$.

We first demonstrate that this problem can be formulated 
up to isotypicity, and separate the cases when $E/\QQ_p$ is 
unramified and when $E/\QQ_p$ is totally ramified and Galois to do so.

\begin{prop}
\label{prodpropinert}
Let $\mathcal{G} = \prod_i\mathcal{G}_i$ be a CM $p$-divisible group 
having ($p$-adic) CM by $(\prod_{i=1}^N\OO_{E_i},\coprod_i\Phi_i)$ 
such that $(E_i,\Phi_i) = (E,\Phi)$ for all $1\le i\le N$, where $E/\QQ_p$ 
is unramified and 
$\Phi = \{\alpha_1,\ldots,\alpha_{h-d}\}\subseteq\mathrm{Hom}(E,\overline{\QQ}_p)$ is a ($p$-adic) CM type on $E$. 
Then if $\mathfrak{N}\subseteq \mathfrak{M}_{\mathfrak{S}_1}(\mathcal{G})$ 
is a simple, saturated submodule of rank $h-k$, 
there exists a saturated submodule 
$\mathfrak{N}_1\subseteq\mathfrak{M}_{\mathfrak{S}_1}(\mathcal{G}_1)$ 
of rank $h-k$ such that
$$v_u(\mathrm{det}(\phi_\mathfrak{N})) = v_u(\mathrm{det}(\phi_{\mathfrak{N}_1})).$$
\end{prop}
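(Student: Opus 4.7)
The plan is to exploit that, since $(E_i,\Phi_i) = (E,\Phi)$ for all $i$, uniqueness up to unramified twist of $\OO_E$-linear CM $p$-divisible groups of a given type (Corollary \ref{cristallinerep}) yields an $\OO_E$-linear isomorphism $\mathcal{G} \simeq \mathcal{G}_1^N$, and therefore a $\phi$-equivariant isomorphism
$$\mathfrak{M}_{\mathfrak{S}_1}(\mathcal{G}) \;\simeq\; \mathfrak{M}_{\mathfrak{S}_1}(\mathcal{G}_1)^{\oplus N}$$
of $\mathfrak{S}_1 \otimes \OO_E$-modules on which the Frobenius acts diagonally, i.e.\ by the same copy of $\phi_{\mathfrak{M}_{\mathfrak{S}_1}(\mathcal{G}_1)}$ on each summand. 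The endomorphism ring $\mathrm{End}(\mathcal{G})$ then contains $M_N(\OO_E)$, so $GL_N(\OO_E)$ acts on $\mathfrak{M}_{\mathfrak{S}_1}(\mathcal{G})$ by $\phi$-equivariant automorphisms that preserve both the saturation of submodules and the valuation $v_u(\det \phi_{(-)})$.

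First I would apply the quasi-Kisin decomposition of Lemma \ref{Fontaine} in the explicit form given by Lemma \ref{quasiquasi}: for each $\tau \in I = \mathrm{Hom}(E^{\mathrm{ur}},K_0)$,
$$\mathfrak{M}^{(q)}_\tau \;\simeq\; \bigl(\mathfrak{M}_1^{(q)}\bigr)_\tau^{\oplus N},$$
and $\phi^{(q)}$ acts on each of the $N$ copies by the \emph{same} scalar polynomial $u^{e\sum_s p^{h-\tau^{-1}(\alpha_s)}} + p\,g_\tau(u)$. Corollary \ref{Fontainecor} then transports $\mathfrak{N}$ to a saturated $\mathfrak{N}^{(q)} = \bigoplus_\tau \mathfrak{N}^{(q)}_\tau$ and relates $v_u(\det \phi_{\mathfrak{N}})$ to the $v_u(\phi_{\mathfrak{N}^{(q)}_\tau})$ via the same formula.

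Next I would invoke simplicity of $\mathfrak{N}$, interpreted as $\mathrm{GF}(\mathfrak{N})$ being an indecomposable $\phi$-stable $\OO_\mathcal{E}$-submodule of $\mathrm{GF}(\mathfrak{M}_1)^{\oplus N}$ (equivalently, $\mathfrak{N}$ corresponding to an indecomposable saturated subobject). Because $\phi$ acts diagonally with identical scalar on each of the $N$ copies, the $N$ projections $\mathfrak{N} \to \mathfrak{M}_{\mathfrak{S}_1}(\mathcal{G}_1)$ are all $\phi$-equivariant morphisms into the same Kisin module; indecomposability together with this diagonal structure forces them to be $\OO_E$-proportional, so all of $\mathfrak{N}$ lies along a single direction $v \in \OO_E^{\oplus N}$. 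Choosing $M \in GL_N(\OO_E)$ sending $v$ to the first basis vector, $M \cdot \mathfrak{N}$ lies in the first factor $\mathfrak{M}_{\mathfrak{S}_1}(\mathcal{G}_1)$, and this $\mathfrak{N}_1 := M \cdot \mathfrak{N}$ is saturated of rank $h-k$ with $v_u(\det \phi_{\mathfrak{N}_1}) = v_u(\det \phi_{\mathfrak{N}})$ by $GL_N(\OO_E)$-equivariance.

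The main obstacle will be making rigorous the notion of ``simple'' in this setting and ruling out the possibility that different quasi-Kisin pieces $\mathfrak{N}^{(q)}_\tau$ sit in $(\mathfrak{M}_1^{(q)})_\tau^{\oplus N}$ along incompatible directions, which would obstruct a simultaneous global $GL_N(\OO_E)$-transport. The key technical input is that $\phi$ permutes the $\tau$-pieces cyclically while acting as the identity on the $\OO_E$-structure and identically on all $N$ diagonal copies, so $\phi$-equivariance of any saturated submodule forces coherence of the directions across $\tau$, turning a pointwise-in-$\tau$ $GL_N$-adjustment into a single global one.
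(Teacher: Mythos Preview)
Your approach is essentially the same as the paper's: both pass to the quasi-Kisin decomposition where $\phi^{(q)}$ acts on each $\mathfrak{M}^{(q)}_{N,\tau}\simeq(\mathfrak{S}_1^{(q)})^N$ as a scalar times the identity, and both use simplicity to conclude that the saturated submodule sits diagonally inside the $N$ copies. The paper phrases this conclusion by asserting that the corresponding quasi-Kisin submodule is the image of a diagonal embedding $\Delta_{\lambda,\iota,\tau}:\mathfrak{M}_\tau^{(q)}\hookrightarrow \mathfrak{M}^{(q)}_{\lambda,\iota,\tau}$; you phrase it as the existence of a single $M\in GL_N(\OO_E)$ transporting $\mathfrak{N}$ into the first factor. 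These are two descriptions of the same phenomenon, and your $GL_N(\OO_E)$-framing is a perfectly good (arguably cleaner) way to organise it.

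Two remarks. First, the intermediate claim that ``the $N$ projections are $\OO_E$-proportional'' is not quite the right formulation, since $\mathfrak{N}$ carries no $\OO_E$-action. The precise statement, which your final paragraph essentially gives, lives at the quasi-Kisin level: simplicity together with the cyclic $\phi$-permutation of the $\tau$-pieces forces every nonzero $\mathfrak{N}^{(q)}_\tau$ to have rank one, generated by some $v_\tau\in k^N$, and $\phi$-equivariance yields $v_{\sigma\tau}=\sigma(v_\tau)$. It is this Galois-coherence of the $v_\tau$ that allows a \emph{single} $M\in GL_N(\OO_E)$ (rather than one per $\tau$) to do the transport, since $\tau(M)v_\tau=e_1$ for one $\tau$ automatically propagates along the $\sigma$-orbit. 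Second, the paper's proof is terse on exactly this point---it allows the pair $(\lambda,\iota)$ to vary with $\tau$ but does not spell out why the determinant is independent of that variation or why simplicity forces the diagonal form. Your explicit identification of this as ``the main obstacle'' and your sketch of its resolution via the cyclic $\phi$-action is therefore not a gap relative to the paper but rather a point where you are being more careful than the paper itself.
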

\begin{proof}
By Theorem \ref{GroupEquiv}, 
$\mathfrak{M}_{\mathfrak{S}_1}(\mathcal{G}) = \prod_{i=1}^N\mathfrak{M}(\mathcal{G}_i)$ 
so that $\phi_{\mathfrak{M}(\mathcal{G})} = \prod_{i=1}^N\phi_{\mathfrak{M}(\mathcal{G}_i)}$. Then 
$\mathfrak{M}(\mathcal{G})$ has a 
quasi-Kisin decomposition induced by the quasi-Kisin decomposition 
of each $\mathfrak{M}(\mathcal{G}_i)$ from Lemma \ref{quasiquasi} as
$$\mathfrak{M}^{(q)}(\mathcal{G}) = \bigoplus_{\tau\in \mathrm{Hom}(E^{\mathrm{ur}},K_0)}\mathfrak{M}_{N,\tau}^{(q)}$$
where each $\mathfrak{M}_{N,\tau}^{(q)}$ is a 
$\mathfrak{S}^{(q)}$-module of rank $N$ with $\phi^{(q)}$-Frobenius
$$\phi_{\mathfrak{M}^{(q)}_{N,\tau}}^{(q)}(f_1,\ldots,f_N) = (f_1^q,\ldots,f_N^q)\left(u^{e\sum_{s=1}^{h-d}p^{h-\tau^{-1}(\alpha_s)}} + pg_\tau(u)\right)\mathrm{Id}_N$$
for some polynomials $g_\tau(u)\in \mathfrak{S}^{(q)}$ with 
degree strictly less than $e\sum_{s=1}^{h-d}p^{h-\tau^{-1}(\alpha_s)}$ and $\mathrm{Id}_N$ 
denoting the identity matrix of rank $N$.

Let $\mathfrak{M}_{\lambda,\iota,\tau}^{(q)}\subseteq \mathfrak{M}_{N,\tau}^{(q)}$ 
be the quasi-Kisin submodule of rank $\lambda$ defined so that 
$$\phi_{\mathfrak{M}^{(q)}_{\lambda,\iota,\tau}}^{(q)}(f_1,\ldots,f_\lambda) = (f_1^q,\ldots,f_\lambda^q)\left(u^{e\sum_{s=1}^{h-d}p^{h-\tau^{-1}(\alpha_s)}} + pg_\tau(u)\right)\mathrm{Id}_{\lambda,\iota}$$
where $\mathrm{Id}_{\lambda,\iota}$ is the identity matrix of rank $\lambda$ 
together with an embedding (i.e., a specified selection of basis elements) 
$\iota:\mathrm{Id}_\lambda\hookrightarrow \mathrm{Id}_N$. Then 
by a slight modification of Corollary \ref{Fontainecor} and Proposition 
\ref{unramp}, 
every simple, saturated quasi-Kisin submodule corresponding to 
a saturated Kisin submodule 
$\mathfrak{N}\subseteq \mathfrak{M}_{\mathfrak{S}_1}(\mathcal{G})$ 
of rank $h-k$ takes the form 
$$\bigoplus_{\tau\in S_{h-k,\mathrm{max}}} \mathfrak{M}_\tau^{(q)}\xhookrightarrow{\prod_{\tau\in S_{h-k,\mathrm{max}}}\Delta_{\lambda,\iota,\tau}} \bigoplus_{\tau\in S_{h-k,\mathrm{max}}}\mathfrak{M}^{(q)}_{\lambda,\iota,\tau}\subseteq\mathfrak{M}^{(q)}(\mathcal{G})$$
for some collection of pairs $(\lambda,\iota)$, 
where $\Delta_{\lambda,\iota,\tau}:\mathfrak{M}_\tau^{(q)}\hookrightarrow \mathfrak{M}^{(q)}_{\lambda,\iota,\tau}\subseteq \mathfrak{M}^{(q)}(\mathcal{G})$ denotes the diagonal embedding and 
$S_{h-k,\mathrm{max}}\subseteq \mathrm{Hom}(E,\overline{\QQ}_p)$ 
is the subset of size $h-k$ which maximizes the sum 
$$\sum_{\tau\in S_{h-k}}\left(\sum_{s=1}^{h-d} p^{h-\tau^{-1}(\alpha_s)}\right)$$
over all subsets $S_{h-k}\subseteq \mathrm{Hom}(E,\overline{\QQ}_p)$ 
of size $h-k$.  
The statement now follows as 
the determinant for every collection of pairs $(\lambda,\iota)$ is computed by 
$\bigoplus_{\tau\in S_{h-k,\mathrm{max}}} \mathfrak{M}_\tau^{(q)}$.
\end{proof}

\begin{thm}
Let $\mathcal{G} = \mathcal{G}_1^N$ be a 
CM $p$-divisible group having ($p$-adic) CM 
by $(\OO_{E}^N,\Phi^N)$, 
where $E/\QQ_p$ is unramified of degree $h$ and 
$\Phi\subseteq\mathrm{Hom}(E,\overline{\QQ}_p)$ is a ($p$-adic) CM type 
on $E$ with $h-d$ elements. Let $\mathcal{H}\subseteq \mathcal{G}[p^n]$ 
be an irreducible subgroup with $p$-height $k<h$. Then
$$\#\frac{1}{[K:\QQ_p]}\log s^*\Omega^k_{\mathcal{H}/\OO_K} \le \left(\frac{p-1}{p^{\delta}-1}\right)\left(\frac{1-p^{-{k}}}{1-p^{-1}}\right)\left(\frac{1-p^{-n}}{1-p^{-1}}\right)\log p$$
where $\delta = \frac{h}{h - d}$.
\end{thm}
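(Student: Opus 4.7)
The plan is to reduce this product-case statement to the single-factor bound already established in Theorem~\ref{unrambound}, using the product-decomposition result Proposition~\ref{prodpropinert} and a devissage on $p$-torsion layers modeled on the proof of Proposition~\ref{formulaunram}. By Proposition~\ref{saturation} and Proposition~\ref{deg}, it suffices to produce a bound on $v_u(\det\phi_{\mathfrak{M}_{\mathfrak{S}_1}(\mathcal{H})})$ matching (up to the factor of $e$ divided out when passing from $v_u$ to $\log$) the right-hand side of the claim; the geometric-series shape of that right-hand side is exactly the shape produced by the devissage.

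First I would unpack the product structure: under $\mathcal{G}=\mathcal{G}_1^N$ and Theorem~\ref{GroupEquiv} we have $\mathfrak{M}_{\mathfrak{S}_n}(\mathcal{G})\simeq\mathfrak{M}_{\mathfrak{S}_n}(\mathcal{G}_1)^{\oplus N}$ with Frobenius acting diagonally. The hypothesis that $\mathcal{H}\subseteq\mathcal{G}[p^n]$ is irreducible is interpreted at the level of Kisin modules as saying the saturated submodule $\mathfrak{N}\subseteq \mathfrak{M}_{\mathfrak{S}_n}(\mathcal{G})$ cut out by $\mathcal{H}$ via Proposition~\ref{saturation} does not split along the product decomposition; in the terminology of Proposition~\ref{prodpropinert}, $\mathfrak{N}$ is \emph{simple}.

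Next I would stratify $\mathcal{H}$ by its intersections $\mathcal{H}_r=\mathcal{H}\cap\mathcal{G}[p^r]$, producing the short exact sequences $0\to\mathcal{H}_r\to\mathcal{H}\to\mathcal{H}/\mathcal{H}_r\to 0$ exactly as in the proof of Proposition~\ref{formulaunram}. At each level $r$, the subquotient $\mathcal{H}_r/\mathcal{H}_{r-1}$ sits inside a $p$-torsion group $\mathcal{G}_{r-1}[p]$ (where $\mathcal{G}_{r-1}=\mathcal{G}/\mathcal{H}_{r-1}$), and the Kisin module of $\mathcal{G}_{r-1}$ still carries the product decomposition across the $N$ factors of $\mathcal{G}_1$, with its Frobenius diagonal up to the $u$-adic shift induced by the previous isogeny step (just as in the matrix computation in the proofs of Theorem~\ref{EllRamComp} and Proposition~\ref{formulaunram}). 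The irreducibility hypothesis passes to each subquotient in the sense that at each step the selected saturated submodule remains simple over the product. Applying Proposition~\ref{prodpropinert} at every level therefore yields a saturated submodule of $\mathfrak{M}_{\mathfrak{S}_1}(\mathcal{G}_{1,r-1}[p])$ of the same rank and with the same determinant valuation.

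Finally I would reassemble: summing the contributions level by level, the total $v_u(\det\phi_{\mathfrak{M}_{\mathfrak{S}_1}(\mathcal{H})})$ is bounded by the corresponding sum for a subgroup of $\mathcal{G}_1[p^n]$ of $p$-height $k$, and the maximizing CM-type combinatorics from the proof of Theorem~\ref{unrambound} (the bound
\[
\max_{S_k\subseteq I}\sum_{\tau\in S_k}\sum_{s=1}^{h-d}p^{h-\tau^{-1}(\alpha_s)} \le \left(\tfrac{p^h-1}{p^\delta-1}\right)\left(\tfrac{1-p^{-k}}{1-p^{-1}}\right)
\]
with $\delta=h/(h-d)$) gives the claimed inequality after dividing by $e$ via Proposition~\ref{deg}. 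The principal obstacle is justifying carefully that Proposition~\ref{prodpropinert}, which is stated only for $p$-torsion Kisin modules, applies at each stage of the devissage: one must verify that the intermediate $p$-divisible quotients $\mathcal{G}_r$ retain the product structure in a form compatible with the quasi-Kisin decomposition and the diagonal embedding analysis of Proposition~\ref{prodpropinert}, so that simplicity of $\mathfrak{N}$ propagates to simplicity of each subquotient. Once that propagation is verified, the remainder is a direct combination of the computations already performed.
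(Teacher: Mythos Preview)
Your proposal is correct and follows essentially the same approach as the paper: translate to Kisin modules via Theorem~\ref{GroupEquiv} and Proposition~\ref{deg}, perform a devissage into $\mathfrak{S}_1$-layers, apply Proposition~\ref{prodpropinert} at each layer to reduce to a single factor, and then invoke Theorem~\ref{unrambound}. The paper's proof is terser and does not spell out the propagation-of-simplicity point you flag as the principal obstacle, but the overall strategy is the same.
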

\begin{proof}
By Theorem \ref{GroupEquiv} and Proposition \ref{deg}, 
we translate the problem to computing 
the Harder-Narasimhan slope of irreducible, saturated submodules 
$\mathfrak{N}\subseteq \mathfrak{M}_{\mathfrak{S}_n}(\mathcal{G}[p^n])$. 
Performing a devissage on $\mathfrak{N}$ to write it 
as a composition series of $\mathfrak{S}_1$-modules, 
we may apply Proposition \ref{prodpropinert} 
and subsequently Theorem \ref{unrambound} for the 
relevant computation.
\end{proof}

\begin{prop}
\label{prodpropram}
Let $\mathcal{G} = \prod_i\mathcal{G}_i$ be a CM $p$-divisible group 
having ($p$-adic) CM by $(\prod_{i=1}^N\OO_{E_i},\coprod_i\Phi_i)$ 
such that $(E_i,\Phi_i) = (E,\Phi)$ for all $1\le i\le N$, where $E/\QQ_p$ 
is totally ramified and Galois and 
$\Phi \subseteq \mathrm{Hom}(E,\overline{\QQ}_p)$ 
is a ($p$-adic) CM type on $E$. 
Then if $\mathfrak{N}\subseteq \mathfrak{M}_{\mathfrak{S}_1}(\mathcal{G})$ 
is a simple, saturated submodule of rank $h-k$, 
there exists a saturated submodule 
$\mathfrak{N}_1\subseteq\mathfrak{M}_{\mathfrak{S}_1}(\mathcal{G}_1)$ 
of rank $h-k$ such that
$$v_u(\mathrm{det}(\phi_\mathfrak{N})) = v_u(\mathrm{det}(\phi_{\mathfrak{N}_1})).$$
\end{prop}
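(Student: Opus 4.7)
The plan is to imitate the proof of Proposition \ref{prodpropinert}, substituting a direct exploitation of the product structure on $\mathcal{G}$ for the quasi-Kisin decomposition, which is trivial when $E/\QQ_p$ is totally ramified: since $E^{\mathrm{ur}} = \QQ_p$, the indexing set $I = \mathrm{Hom}(E^{\mathrm{ur}},K_0)$ of Lemma \ref{Fontaine} is a singleton. By Theorem \ref{GroupEquiv}, writing $\mathfrak{M}_1 := \mathfrak{M}_{\mathfrak{S}_1}(\mathcal{G}_1)$ we have
$$\mathfrak{M}_{\mathfrak{S}_1}(\mathcal{G}) \simeq \mathfrak{M}_1^{\oplus N}$$
as $\mathfrak{S}_1 \otimes \OO_E$-modules, with Frobenius $\phi_{\mathfrak{M}(\mathcal{G})} = \phi_{\mathfrak{M}_1}^{\oplus N}$. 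In the basis $\{\pi_E^{i-1}\}_{i=1}^h$ of Example \ref{ramex}, each block $\phi_{\mathfrak{M}_1}$ is the upper-triangular matrix whose diagonal entries are determined by the Lubin--Tate polynomials and whose off-diagonal entries have strictly smaller $u$-degree.

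First I would classify simple, saturated submodules $\mathfrak{N} \subseteq \mathfrak{M}_1^{\oplus N}$ of rank $h-k$. For each $j$, the projection $\pi_j(\mathfrak{N}) \subseteq \mathfrak{M}_1$ is $\phi$- and $\OO_E$-stable, and its $u$-saturation $\overline{\pi_j(\mathfrak{N})}$ is a saturated submodule. Simplicity of $\mathfrak{N}$---that it admits no decomposition into a direct sum of saturated submodules aligned with a nontrivial partition of the $N$ factors---forces all nonzero saturations $\overline{\pi_j(\mathfrak{N})}$ to coincide with a common saturated submodule $\mathfrak{N}_1 \subseteq \mathfrak{M}_1$ of rank $h-k$.

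Next I would exploit the $\GL_N(\OO_E)$-action on $\mathfrak{M}_1^{\oplus N}$. Since $\phi_{\mathfrak{M}_1}$ is $\OO_E$-linear and acts identically on each factor, $\GL_N(\OO_E)$ acts $\mathfrak{S}_1$-linearly and commutes with $\phi_{\mathfrak{M}_1}^{\oplus N}$. An appropriate transformation in $\GL_N(\OO_E)$ should bring $\mathfrak{N}$ onto the diagonal embedding $\mathfrak{N}_1 \hookrightarrow \mathfrak{M}_1 \oplus 0 \oplus \cdots \oplus 0$ into the first factor. Under this identification $\phi_\mathfrak{N}$ coincides with $\phi_{\mathfrak{M}_1}|_{\mathfrak{N}_1} = \phi_{\mathfrak{N}_1}$, yielding
$$v_u\bigl(\det\phi_\mathfrak{N}\bigr) = v_u\bigl(\det\phi_{\mathfrak{N}_1}\bigr).$$

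The main obstacle is this base-change step: the ring $\OO_E/p\OO_E \simeq \OO_E/\pi_E^h\OO_E$ is a chain ring rather than a field, so the $\GL_N$-normal form of the tuple of $\OO_E$-coefficients defining the diagonal-like embedding is subtler than in the inert case. One needs to verify, using simplicity, that this tuple has at least one coordinate which is a unit modulo $\pi_E$---otherwise $\mathfrak{N}$ would factor through $\pi_E \mathfrak{M}_1^{\oplus N}$, which by the chain structure of $\mathfrak{M}_1$ would force either a further decomposition of $\mathfrak{N}$ or a replacement of $\mathfrak{N}_1$ by a larger saturated submodule within the same $\OO_E$-filtration step. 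Once a unit coordinate is secured, Smith normal form over $\OO_E/p\OO_E$ reduces the tuple to $(1, 0, \ldots, 0)$, completing the straightening.
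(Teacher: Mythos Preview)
Your approach diverges from the paper's: the paper passes to the exterior power $\bigwedge^{h-k}\mathfrak{M}_{\mathfrak{S}_1}(\mathcal{G})$, reducing the question to saturated \emph{lines}, and then uses the block upper-triangular structure of $\bigwedge^{h-k}\phi_{\mathfrak{M}(\mathcal{G})}$ together with the degree analysis from Theorem~\ref{rambound} to pin every simple saturated line inside a diagonal copy of $\bigwedge^{h-k}\mathfrak{M}_{\mathfrak{S}_1}(\mathcal{G}_1)$. You instead work directly with the rank-$(h-k)$ submodule $\mathfrak{N}$ and try to rotate it into a single factor via $\GL_N(\OO_E)$.

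The gap is the straightening step. Your Smith-normal-form argument over the chain ring $\OO_E/p\OO_E$ normalizes a single vector $v\in(\OO_E/p)^N$; it would apply if $\mathfrak{N}$ had the tensor shape $\{x\cdot v : x\in\mathfrak{N}_1\}$ for some fixed $v$. But $\mathfrak{N}$ is only an $\mathfrak{S}_1$-submodule, \emph{not} an $\mathfrak{S}_1\otimes\OO_E$-submodule---that is precisely the point, since the quotient $p$-divisible group has CM by a non-maximal order---so there is no reason it factors this way. For instance, $\mathfrak{N}$ could be the graph of a $\phi$-equivariant $\mathfrak{S}_1$-linear endomorphism of $\mathfrak{N}_1$ not given by multiplication by any element of $\OO_E$, and no element of $\GL_N(\OO_E)$ can rotate such a graph into the first factor. (Relatedly, your claim that simplicity forces all $\overline{\pi_j(\mathfrak{N})}$ to coincide as submodules of $\mathfrak{M}_1$ is unjustified: indecomposability does not rule out distinct but $\phi$-isomorphic projections.) The exterior-power reduction is exactly what sidesteps this difficulty: for a line, $\phi$-stability is a single scalar equation, and the upper-triangular block structure then forces the line into a diagonal block where the determinant is computed by a single-factor $\mathfrak{L}\subseteq\bigwedge^{h-k}\mathfrak{M}_{\mathfrak{S}_1}(\mathcal{G}_1)$.
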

\begin{proof}
By Theorem \ref{GroupEquiv}, $\mathfrak{M}_{\mathfrak{S}_1}(\mathcal{G}) = \prod_{i=1}^N\mathfrak{M}(\mathcal{G}_i)$ 
so that $\phi_{\mathfrak{M}(\mathcal{G})} = \prod_{i=1}^N\phi_{\mathfrak{M}(\mathcal{G}_i)}$. 
Then $\phi_\mathfrak{M}(\mathcal{G})$ has a representation by a 
block upper triangular matrix where each of the blocks correspond to 
$\phi_\mathfrak{M}(\mathcal{G}_i)$ using the matrix representation 
of Example \ref{ramex}.

Saturated rank 
$h-k$ submodules of $\mathfrak{M}_{\mathfrak{S}_1}(\mathcal{G})$ 
correspond to saturated $\mathfrak{S}_1$-lines $\mathfrak{L}\subseteq \bigwedge^{h-k}\mathfrak{M}_{\mathfrak{S}_1}(\mathcal{G})$ with Frobenius
$$\phi_{\bigwedge^{h-k}\mathfrak{M}_{\mathfrak{S}_1}(\mathcal{G})} = \bigwedge^{h-k}\phi_{\mathfrak{M}_{\mathfrak{S}_1}(\mathcal{G})}.$$
Thus $\phi_{\bigwedge^{h-k}\mathfrak{M}_{\mathfrak{S}_1}(\mathcal{G})}$ 
has a matrix representation by a block upper triangular matrix where 
each block representing $\phi_{\bigwedge^{h-k}\mathfrak{M}_{\mathfrak{S}_1}(\mathcal{G}_i)}$ occurs with $N^2$ multiplicity (and there are additional blocks).
Define $\mathfrak{M}_{N^2,k}\subseteq\bigwedge^{h-k}\mathfrak{M}_{\mathfrak{S}_1}(\mathcal{G})$ 
to be the saturated $\mathfrak{S}_1$-submodule of rank $N^2$ with Frobenius
$$\phi_{\mathfrak{M}_{N^2,k}}= \prod_{i=1}^{N^2}\phi_{\bigwedge^{h-k}\mathfrak{M}(\mathcal{G}_1)}.$$

Let $\mathfrak{M}_{\lambda,\iota,k}\subseteq \mathfrak{M}_{N^2,k}$ 
be the $\mathfrak{S}_1$-submodule of rank $\lambda$ with Frobenius
$$\phi_{\mathfrak{M}_{\lambda,\iota,k}}= \prod_{i=\iota(1)}^{\iota(\lambda)}\phi_{\bigwedge^{h-k}\mathfrak{M}(\mathcal{G}_1)}$$
where $\iota:\{1,\ldots,\lambda\}\hookrightarrow \{1,\ldots,N^2\}$ 
represents the embedding 
$\mathfrak{M}_{\lambda,\iota,k}\hookrightarrow \mathfrak{M}_{N^2,k}$. 
Then by the proof of Theorem \ref{rambound}, every simple, 
saturated Kisin submodule of 
$\mathfrak{M}(\mathcal{G})$ of rank $h-k$ corresponds to a module 
of the form
$$\mathfrak{L}\subseteq \bigwedge^{h-k}\mathfrak{M}_{\mathfrak{S}_1}(\mathcal{G}_1)\xhookrightarrow{\Delta_{\lambda,\iota,k}}\mathfrak{M}_{\lambda,\iota,k}\subseteq \bigwedge^{h-k}\mathfrak{M}(\mathcal{G})$$
for some pair $(\lambda,\iota)$, where 
$\Delta_{\lambda,\iota,k}:\mathfrak{M}_{\mathfrak{S}_1}(\mathcal{G}_1)\hookrightarrow \mathfrak{M}_{\lambda,\iota,k}$ 
denotes the diagonal embedding and 
$\mathfrak{L}$ is saturated of rank $1$. The statement now follows as 
the determinant for every pair $(\lambda,\iota)$ is computed by 
$\mathfrak{L}$.
\end{proof}

\begin{thm}
Let $\mathcal{G} = \mathcal{G}_1^N$ be a 
CM $p$-divisible group having ($p$-adic) CM 
by $(\OO_{E}^N,\Phi^N)$, 
where $E/\QQ_p$ is  
totally ramified and Galois of degree $h$ and 
$\Phi\subseteq\mathrm{Hom}(E,\overline{\QQ}_p)$ is a ($p$-adic) CM type 
on $E$ with $h-d$ elements. Let $\mathcal{H}\subseteq \mathcal{G}[p^n]$ 
be an irreducible subgroup with $p$-height $k<h$ 
such that 
$\mathcal{H}\cap \prod'_i\mathcal{G}_1[\pi_{E}^{r_i}] = 1$ for all 
$r_i\ge 0$ and subproducts 
$\prod'\mathcal{G}_1[\pi_{E}^{r_i}]\subseteq \mathcal{G}_1[\pi_{E}^{ne}]^N$. Then
$$\#\frac{1}{[K:\QQ_p]}\log s^*\Omega^k_{\mathcal{H}/\OO_K} <k\left(\frac{h-d}{h}\right)\left(\frac{1-p^{-n}}{1-p^{-1}}\right)\log p.$$
\end{thm}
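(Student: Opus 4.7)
The plan is to mirror the proof of the immediately preceding theorem (the unramified analogue), only replacing the unramified ingredients by their totally ramified counterparts. First I would use Theorem \ref{GroupEquiv} and Proposition \ref{deg} (together with its wedge-power reformulation in the remark following it) to translate the statement into the estimation
$$v_u\bigl(\mathrm{det}(\phi_{\mathfrak{L}})\bigr), \quad \mathfrak{L}\subseteq \bigwedge^{h-k}\mathfrak{M}_{\mathfrak{S}_n}(\mathcal{G}[p^n]),$$
where $\mathfrak{L}$ is the saturated rank $1$ submodule corresponding to $\mathcal{H}$ via the correspondence of Proposition \ref{saturation}. The hypotheses on $\mathcal{H}$ (that it meets no subproduct $\prod'_i \mathcal{G}_1[\pi_E^{r_i}]$ trivially for $r_i\ge 0$) guarantee, via Lemma \ref{correctlem}, that the torsion quotient $\mathfrak{M}(\mathcal{G})/\mathfrak{L}^\perp$ is of the minimal-slope type in (\ref{oprojtors}), so the strict inequality from Lemma \ref{correctlem} is available.

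Next I would perform a devissage on $\mathfrak{L}$, filtering $\mathfrak{M}_{\mathfrak{S}_n}(\mathcal{G}[p^n])$ by its $p^r$-torsion subgroups so that $\mathfrak{L}$ decomposes into a composition series of $\mathfrak{S}_1$-submodules, each sitting inside $\bigwedge^{h-k}\mathfrak{M}_{\mathfrak{S}_1}(\mathcal{G}_r[p])$ for successive quotients $\mathcal{G}_r = \mathcal{G}/\mathcal{H}_r$. This is exactly the iterated devissage carried out in the proof of Theorem \ref{EllRamComp} and re-used in Theorem \ref{rambound}; the upshot is that the total $u$-valuation of the determinant on $\mathfrak{L}$ is the sum over $r$ of the $u$-valuations attached to $p$-torsion saturated lines in $\bigwedge^{h-k}\mathfrak{M}_{\mathfrak{S}_1}(\mathcal{G}_r)$. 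On each $p$-torsion slice, the hypothesis that $\mathcal{H}$ is an \emph{irreducible} subgroup passes to irreducibility of the corresponding rank $1$ Kisin submodule, so Proposition \ref{prodpropram} applies and lets me replace the product $\mathcal{G}=\mathcal{G}_1^N$ by a single factor $\mathcal{G}_1$ without changing the $u$-valuation of the determinant.

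Having reduced to the case of one $\OO_E$-linear CM $p$-divisible group $\mathcal{G}_1$ with $E/\QQ_p$ totally ramified and Galois, I then invoke Theorem \ref{rambound} at each step of the devissage; combined with the devissage pattern $1+p^{-1}+\cdots+p^{-(n-1)} = \tfrac{1-p^{-n}}{1-p^{-1}}$ coming from the successive $p^r$-torsion quotients (as in the proof of Theorem \ref{EllRamComp}), this yields
$$\#\frac{1}{[K:\QQ_p]}\log s^*\Omega^k_{\mathcal{H}/\OO_K} \;<\; k\left(\frac{h-d}{h}\right)\left(\frac{1-p^{-n}}{1-p^{-1}}\right)\log p.$$
The main obstacle I anticipate is bookkeeping at the devissage step: one needs to verify that the irreducibility hypothesis on $\mathcal{H}$ and the condition $\mathcal{H}\cap\prod'_i\mathcal{G}_1[\pi_E^{r_i}]=1$ are preserved on passing to each $p$-torsion subquotient $\mathcal{H}_{r+1}/\mathcal{H}_r$, so that Proposition \ref{prodpropram} (which requires simple/irreducible saturated submodules) and Theorem \ref{rambound} (which requires the trivial-intersection hypothesis with $\pi_E$-power torsion of $\mathcal{G}_1$) can both be legitimately invoked at every stage. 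This is essentially a translation between trivial intersection with $\pi_E$-torsion on the product $\mathcal{G}_1^N$ and on each successive quotient $\mathcal{G}_r$, and is the one genuine content beyond reassembling the earlier ingredients.
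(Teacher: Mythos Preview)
Your proposal is correct and follows essentially the same route as the paper: translate via Theorem~\ref{GroupEquiv} and Proposition~\ref{deg}, devissage the saturated submodule into a composition series of $\mathfrak{S}_1$-modules, apply Proposition~\ref{prodpropram} on each $p$-torsion slice to reduce to a single factor $\mathcal{G}_1$, and then invoke Theorem~\ref{rambound}. The paper's proof is a terse one-paragraph sketch of exactly this chain, and your additional remarks on bookkeeping at the devissage step and the role of Lemma~\ref{correctlem} only make explicit what the paper leaves implicit.
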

\begin{proof}
By Theorem \ref{GroupEquiv} and Proposition \ref{deg}, we translate the 
problem to computing the Harder-Narasimhan slope of irreducible, saturated 
submodules $\mathfrak{N}\subseteq \mathfrak{M}_{\mathfrak{S}_n}(\mathcal{G}[p^n])$. 
Performing a devissage on $\mathfrak{N}$ to write it as a 
composition series of $\mathfrak{S}_1$-modules, we may apply 
Proposition \ref{prodpropram} and subsequently Theorem \ref{rambound} 
for the relevant computation.
\end{proof}

Now we consider more generally when the $E_i$ in the 
decomposition of $P$ are allowed to either be 
unramified or totally ramified and Galois. We further let $I_{\mathrm{ur},k}$, 
resp. $I_{\mathrm{ram},k}$, denote the set of indices $i$ for which 
$E_i/\QQ_p$ are unramified, resp. ramified, and 
$[E_i:\QQ_p] > h-k$.

\begin{prop}
\label{prodpropmix}
Let $\mathcal{G} = \prod_i\mathcal{G}_i$ be a CM $p$-divisible group 
having ($p$-adic) CM by $(\prod_{i=1}^N\OO_{E_i},\coprod_i\Phi_i)$ 
such that each pair $(E_i,\Phi_i)$ is distinct and $E_i/\QQ_p$ is either 
unramified or totally ramified and Galois. 
Then if $\mathfrak{N}\subseteq \mathfrak{M}_{\mathfrak{S}_1}(\mathcal{G})$ 
is a simple, saturated submodule of rank $h-k$, 
there exists a saturated submodule 
$\mathfrak{N}_i\subseteq\mathfrak{M}_{\mathfrak{S}_1}(\mathcal{G}_i)$ 
of rank $h-k$ for some $i$ such that
$$v_u(\mathrm{det}(\phi_\mathfrak{N})) = v_u(\mathrm{det}(\phi_{\mathfrak{N}_i})).$$
\end{prop}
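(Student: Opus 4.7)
The plan is to reduce this mixed-case statement to Propositions \ref{prodpropinert} and \ref{prodpropram} by showing that distinctness of the CM pairs $(E_i,\Phi_i)$ forces any simple saturated submodule $\mathfrak{N}$ to be concentrated in a single factor $\mathfrak{M}_{\mathfrak{S}_1}(\mathcal{G}_i)$. First, I would decompose $\mathfrak{M}_{\mathfrak{S}_1}(\mathcal{G}) \simeq \prod_i \mathfrak{M}_{\mathfrak{S}_1}(\mathcal{G}_i)$ via Theorem \ref{GroupEquiv}, with $\phi_{\mathfrak{M}(\mathcal{G})}$ acting block-diagonally, and use Corollaries \ref{HodgeUnramType} and \ref{ram22} to reduce each factor to the case where $E_i/\mathbb{Q}_p$ is either unramified or totally ramified Galois. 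Applying the quasi-Kisin decomposition of Lemma \ref{Fontaine} then produces
$$\mathfrak{M}^{(q)}(\mathcal{G}) \;\simeq\; \bigoplus_{i=1}^N \bigoplus_{\tau \in \mathrm{Hom}(E_i^{\mathrm{ur}},K_0)} \mathfrak{M}^{(q)}(\mathcal{G}_i)_\tau,$$
where by Lemma \ref{quasiquasi} (unramified case) or the polynomial presentation of Example \ref{ramex} (totally ramified Galois case) the Frobenius on each summand acts through a polynomial whose leading $u$-monomial is determined by $(E_i,\Phi_i,\tau)$.

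Second, the key step is to verify that distinctness of the pairs $(E_i,\Phi_i)$ implies incompatibility of the Frobenius valuation patterns on the quasi-Kisin summands across distinct factors. As in the analysis of Propositions \ref{prodpropinert} and \ref{prodpropram}, every simple saturated embedding of a Kisin module into the product must be realized as a diagonal embedding along summands whose Frobenius endomorphisms are governed by identical leading monomials. In Propositions \ref{prodpropinert} and \ref{prodpropram} this was permitted precisely because all the $(E_i,\Phi_i)$ coincided; here the distinctness hypothesis ensures that for $i \neq j$ no such diagonal can cross between $\mathfrak{M}^{(q)}(\mathcal{G}_i)$ and $\mathfrak{M}^{(q)}(\mathcal{G}_j)$. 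If it could, the projection of $\mathfrak{N}^{(q)}$ onto the summands sharing a given leading-monomial valuation would define a proper $\phi$-stable subobject of $\mathfrak{N}$, contradicting simplicity.

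Third, once $\mathfrak{N}$ is shown to be concentrated in a single $\mathfrak{M}_{\mathfrak{S}_1}(\mathcal{G}_i)$, it defines a saturated submodule $\mathfrak{N}_i \subseteq \mathfrak{M}_{\mathfrak{S}_1}(\mathcal{G}_i)$ of the same rank $h-k$, and since the restriction of $\phi_{\mathfrak{M}(\mathcal{G})}$ to $\mathfrak{M}(\mathcal{G}_i)$ agrees with $\phi_{\mathfrak{M}(\mathcal{G}_i)}$, we deduce
$$v_u\bigl(\det \phi_{\mathfrak{N}}\bigr) = v_u\bigl(\det \phi_{\mathfrak{N}_i}\bigr),$$
which is the claim. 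The determinant identity then also follows through Corollary \ref{Fontainecor} at the quasi-Kisin level, by the same conversion factor as in the two prior propositions.

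The main obstacle is the second step: making precise that distinct CM pairs yield incompatible Frobenius data on quasi-Kisin pieces. When the fields $E_i$ differ, the distinction already shows up in the rank and the exponent denominators of the quasi-Kisin summands, since the inertia degrees and ramification indices differ, so the reduction step above must be performed carefully to keep track of which summands can support a common leading monomial. When $E_i = E_j$ but $\Phi_i \neq \Phi_j$, the distinction arises from the differing exponents $\sum_{s=1}^{h_i-d_i} p^{h_i - \tau^{-1}(\alpha_s)}$ in Lemma \ref{quasiquasi}, respectively from the factorization data of $P_{\Phi,\pi_K,K_0\otimes E}(u)$ in the ramified case. Either way, one must argue that any nontrivial projection of a simple $\mathfrak{N}$ onto two distinct factors produces a proper $\phi$-stable saturated subobject, which will require combining the diagonal-embedding arguments of Propositions \ref{prodpropinert} and \ref{prodpropram} with the explicit polynomial descriptions above to rule out diagonals between factors with distinct leading-monomial data.
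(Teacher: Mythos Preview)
Your overall strategy matches the paper's: decompose $\mathfrak{M}_{\mathfrak{S}_1}(\mathcal{G})$ as a product with block-diagonal Frobenius, pass to quasi-Kisin pieces, and use the distinctness of the Frobenius data on the summands to pin down where a simple saturated $\mathfrak{N}$ can live. However, your second and third steps overreach in a way the paper explicitly avoids.

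You aim to show that $\mathfrak{N}$ is \emph{literally concentrated} in a single factor $\mathfrak{M}_{\mathfrak{S}_1}(\mathcal{G}_i)$, arguing that distinctness of the pairs $(E_i,\Phi_i)$ forces the leading-monomial valuations on the quasi-Kisin summands to be pairwise distinct, so that any cross-factor diagonal would admit a proper $\phi$-stable subobject. This is not true in general: the paper singles out the case where some $\mathcal{G}_i$ is a Serre tensor construction of some $\mathcal{G}_j$ (with $i\neq j$), in which case the relevant Frobenius valuation patterns can coincide even though the CM pairs are formally distinct, and a simple diagonal submodule crossing factors \emph{does} exist. Your simplicity argument then collapses, because the projection onto the summands sharing that valuation is all of $\mathfrak{N}^{(q)}$, not a proper subobject. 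The paper does not try to prove concentration in this situation; instead it takes $\mathfrak{N}_i$ to be the \emph{projection} of the diagonal to one factor and observes that the determinant valuation is computed already by that projection, exactly as in Propositions \ref{prodpropinert} and \ref{prodpropram}. So the conclusion $v_u(\det\phi_{\mathfrak{N}})=v_u(\det\phi_{\mathfrak{N}_i})$ holds, but not via your route.

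Two smaller points. First, invoking Corollaries \ref{HodgeUnramType} and \ref{ram22} here is misplaced: the hypothesis already has each $E_i/\QQ_p$ unramified or totally ramified Galois, so there is no further reduction to perform on the individual factors. Second, the paper treats the purely unramified and purely ramified subcases by different mechanisms (quasi-Kisin decomposition with $q=\mathrm{lcm}_i\{p^{h_i}\}$ versus saturated lines in $\bigwedge^{h-k}\mathfrak{M}$), and then asserts that the general mixed case is computed by one of these two; your proposal conflates the two into a single quasi-Kisin argument, which obscures how the ramified factors are actually handled.
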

\begin{proof}
By Theorem \ref{GroupEquiv}, 
$\mathfrak{M}_{\mathfrak{S}_1}(\mathcal{G}) = \prod_{i=1}^N\mathfrak{M}(\mathcal{G}_i)$ 
so that $\phi_{\mathfrak{M}(\mathcal{G})} = \prod_{i=1}^N\phi_{\mathfrak{M}(\mathcal{G}_i)}$. 
Then $\phi_{\mathfrak{M}(\mathcal{G})}$ has a representation of a block 
diagonal matrix. We may assume that $[E_i:\QQ_p]>h-k$ for 
every index $i$ and write by $I_{\mathrm{ur}}$ and $I_{\mathrm{ram}}$ 
the set of indices for which 

Suppose first that 
$I_{\mathrm{ram}}$ is empty. Then 
for $q = \mathrm{lcm}_{i\in I_{\mathrm{ur}}}\{p^{h_i}\}$, 
$\mathfrak{M}(\mathcal{G})$ has a 
quasi-Kisin decomposition induced by the quasi-Kisin decomposition of 
each $\mathfrak{M}(\mathcal{G}_i)$ for $i\in I_{\mathrm{ur}}$ from 
Lemma \ref{quasiquasi} as
$$\mathfrak{M}^{(q)}(\mathcal{G}) = \bigoplus_{i\in I_{\mathrm{ur}}}\bigoplus_{\tau\in \mathrm{Hom}(E_i,K_0)}\mathfrak{M}_{i,\tau}^{(q)}(\mathcal{G}_i).$$
Whenever $\mathcal{G}_i$ is not a Serre 
tensor construction of $\mathcal{G}_j$ for any distinct pair of indices $i$ and $j$, 
the slopes in the decomposition are all unique, and a modification of 
Corollary \ref{Fontainecor} gives $\mathfrak{N} = \mathfrak{N}_i\subseteq \mathfrak{M}(\mathcal{G}_i)$ for some saturated submodule. 
Otherwise, the method of considering 
diagonal submodules for Serre tensor constructions works 
identically to isotypicity in Proposition \ref{prodpropinert}. 
Then $\mathfrak{N}_i$ is the projection to a factor in a diagonal 
embedding if necessary.

If $I_{\mathrm{ur}}$ is empty, then $\mathfrak{N}$ corresponds to 
a saturated line in $\bigwedge^{h-k}\mathfrak{M}_{\mathfrak{S}_1}(\mathcal{G})$ 
with Frobenius
$$\phi_{\bigwedge^{h-k}\mathfrak{M}_{\mathfrak{S}_1(\mathcal{G})}} = \bigwedge^{h-k}\phi_{\mathfrak{M}_{\mathfrak{S}_1}(\mathcal{G})}.$$
Thus $\phi_{\bigwedge^{h-k}\mathfrak{M}_{\mathfrak{S}_1}(\mathcal{G})}$
has a matrix representation by a block upper triangular matrix. 
Again whenever $\mathcal{G}_i$ is not a Serre 
tensor construction of $\mathcal{G}_j$ for any distinct pair of indices $i$ and $j$, 
all potential are unique and $\mathfrak{N} = \mathfrak{N}_i$ 
for some $i$. Otherwise, the method of considering diagonal 
submodules for Serre tensor constructions works identically to 
isotypicity in Proposition \ref{prodpropram}. Then $\mathfrak{N}_i$ 
is the projection to a factor in a diagonal embedding if necessary.

In general, the valuation of $\mathfrak{N}$ can be 
computed by one of these two instances.
\end{proof}

\begin{thm}
\label{projection}
Let $\mathcal{G}\simeq \prod_{i=1}^N\mathcal{G}_i$ where 
$\mathcal{G}$ has ($p$-adic) CM by $(E,\Phi)$ and 
$\mathcal{G}_i$ are each $\OO_{E_i}$-linear CM $p$-divisible groups 
of type $(\OO_{E_i},\Phi_i)$, where each $E_i$ is either 
unramified or totally ramified and Galois. Then for any simple subgroup 
$\mathcal{H}\subseteq \mathcal{G}[p^n]$ of $p$-height $k<h$, 
such that $\mathcal{H}\cap \prod'\mathcal{G}_{i}[\pi_{E_i}^{r_i}] = 0$ for all $1\le r_i\le n$ and subproducts 
$\prod'\mathcal{G}_{i}[\pi_{E_i}^{r_i}]\subseteq 
\prod_i\mathcal{G}_{i}[\pi_{E_i}^{r_i}]$, 
$$\#\frac{1}{[K:\QQ_p]}\log s^*\Omega^k_{\mathcal{H}/\OO_K} = \#\frac{1}{[K:\QQ_p]}\log s^*\Omega^k_{\mathcal{H}_i/\OO_K}.$$
\end{thm}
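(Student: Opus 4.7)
The plan is to reduce the statement to the submodule computation of Proposition \ref{prodpropmix}, via the dictionary between finite flat group schemes and saturated Kisin modules. First I would translate the problem into Kisin modules: by Theorem \ref{GroupEquiv} and Proposition \ref{saturation}, the subgroup $\mathcal{H} \subseteq \mathcal{G}[p^n]$ corresponds to a saturated submodule $\mathfrak{N} \subseteq \mathfrak{M}_{\mathfrak{S}_n}(\mathcal{G}[p^n])$ with $\mathfrak{M}_{\mathfrak{S}_n}(\mathcal{H}) = \mathfrak{M}_{\mathfrak{S}_n}(\mathcal{G}[p^n])/\mathfrak{N}$, and by Proposition \ref{deg} (and the exterior-power reformulation in the remark following it) the quantity $\frac{1}{[K:\QQ_p]}\log\#s^*\Omega^k_{\mathcal{H}/\OO_K}$ is read off from $v_u(\det \phi_\mathfrak{N})$ (up to a uniform normalization by $e\log p$). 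The product decomposition $\mathcal{G} \simeq \prod_i \mathcal{G}_i$ corresponds, by Theorem \ref{GroupEquiv}, to a product decomposition $\mathfrak{M}_{\mathfrak{S}_n}(\mathcal{G}[p^n]) = \bigoplus_i \mathfrak{M}_{\mathfrak{S}_n}(\mathcal{G}_i[p^n])$ with block-diagonal Frobenius.

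Next, because the simplicity hypothesis on $\mathcal{H}$ means $\mathfrak{N}$ is irreducible, I would perform a devissage of $\mathfrak{N}$ as a composition series of $\mathfrak{S}_1$-modules, precisely as in the proofs of the two preceding theorems on products (the isotypic cases). On each graded piece, Proposition \ref{prodpropmix} applies: each simple saturated rank-$(h-k)$ submodule of $\mathfrak{M}_{\mathfrak{S}_1}(\mathcal{G})$ has the same valuation of determinant as a saturated submodule sitting inside some single factor $\mathfrak{M}_{\mathfrak{S}_1}(\mathcal{G}_i)$. The hypothesis $\mathcal{H} \cap \prod'\mathcal{G}_i[\pi_{E_i}^{r_i}] = 0$ across subproducts is exactly what rules out that the devissage could be assembled from pieces supported on distinct subproducts (which would otherwise force $\mathfrak{N}$ to live in a diagonal embedding inside a Serre-tensor-style Kisin factor with a different $i$ from piece to piece, and would produce cross-terms at the exterior-power level). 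Together with irreducibility, this forces the index $i$ furnished by Proposition \ref{prodpropmix} to be the same at every step of the devissage.

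Having pinned down a single $i$, I would then observe that the saturated submodule $\mathfrak{N}_i \subseteq \mathfrak{M}_{\mathfrak{S}_n}(\mathcal{G}_i[p^n])$ obtained by assembling the graded pieces is precisely the one corresponding under Theorem \ref{equivfgroup} and Proposition \ref{saturation} to the projection $\mathcal{H}_i := \mathrm{pr}_i(\mathcal{H}) \subseteq \mathcal{G}_i[p^n]$ (equivalently, $\mathcal{H} \cap \mathcal{G}_i[p^n]$ under the isolation hypothesis). Proposition \ref{prodpropmix} applied term-by-term yields
\begin{equation*}
v_u(\det \phi_\mathfrak{N}) = v_u(\det \phi_{\mathfrak{N}_i}),
\end{equation*}
and translating back through Proposition \ref{deg} gives the claimed equality of relative Hodge bundle contributions. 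The main obstacle is the second step, namely verifying that the isolation hypothesis on $\mathcal{H}$ truly forces a single factor index $i$ throughout the devissage; this requires checking carefully that in the mixed setting of Proposition \ref{prodpropmix}, switching factors between graded pieces would produce a nontrivial intersection of $\mathcal{H}$ with some subproduct $\prod' \mathcal{G}_i[\pi_{E_i}^{r_i}]$, contradicting the hypothesis. All other ingredients (devissage, exterior-power reformulation, exactness of $\mathfrak{M}$) are already packaged in the cited results.
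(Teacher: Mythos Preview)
Your proposal is essentially the same approach as the paper's: reduce to Proposition \ref{prodpropmix} via the Kisin-module dictionary, then pull back the resulting $\mathfrak{N}_i$ to a subgroup $\mathcal{H}_i\subseteq\mathcal{G}_i$ via Theorem \ref{GroupEquiv}. The paper's argument is terser---it first invokes Propositions \ref{prodpropinert} and \ref{prodpropram} to reduce to the case of pairwise distinct $(E_i,\Phi_i)$ before applying Proposition \ref{prodpropmix}, and it works directly at the $\mathfrak{S}_1$ level without spelling out the devissage over $n$---whereas you make the composition-series reduction from $\mathfrak{S}_n$ to $\mathfrak{S}_1$ explicit and flag the consistency of the index $i$ across graded pieces as the point requiring care; both arrive at the same conclusion by the same mechanism.
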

\begin{proof}
We may assume that each 
each pair $(E_i,\Phi)$ in the decomposition 
of the ($p$-adic) CM pair $(P,\Phi_P)$ is distinct 
(so the fields $E_i$ can repeat, but then their CM types must differ) 
by Proposition \ref{prodpropinert} and Proposition \ref{prodpropram}. 
Then Proposition \ref{prodpropmix}, there is a saturated submodule 
$\mathfrak{N}_i\subseteq \mathfrak{M}_{\mathfrak{S}_1}(\mathcal{G}_i)$ 
of rank $h_i-k$ by which we can compute the determinant. 
This module corresponds to a subgroup $\mathcal{H}_i\subseteq \mathcal{G}_i$ 
for some $i$ by Theorem \ref{GroupEquiv}, by which we can conclude.
\end{proof}

\section{Main Theorems}

\subsection{CM Northcott Property}
\label{general}

We prove the two versions of the CM Northcott property stated as 
Theorem \ref{CMIsog} and Theorem \ref{CMNorth} in the introduction.

Fix a CM field $E$ with $[E:\QQ] = 2g$ and 
maximal totally real subfield $F\subseteq E$. Denote by 
$\OO_E\subseteq E$ the corresponding ring of integers and 
$\OO_n\subseteq E$ an order of index $n$ with 
no $\OO_E$-stable submodule. 
Then denote by $A$ and $A_n$ abelian varieties defined 
over a number field $K$ 
of dimension $g$ having, respectively, 
CM by $(\OO_E,\Phi)$ and $(\OO_n,\Phi)$ and by 
$\mathcal{A}$ and $\mathcal{A}_n$ their corresponding 
N\'eron models over $\OO_K$ 
Then one may construct an isogeny $\phi:\mathcal{A}\to \mathcal{A}_n$ 
over $\OO_K$ whose kernel is a finite flat group scheme $\mathcal{G}_n$ 
of size $n$.

\begin{lem}
\label{combinatorics}
Let $A$ and $A_{p^r}$ be abelian varieties over a 
number field $K$ as above, and let $\phi:A\to A_{p^{r}}$ be 
an isogeny of degree $p^r$ over $K$ whose kernel is $p^n$-torsion
(so $r\ge n$). Assume moreover that $p$ factors in $E$ as 
$p = \prod_i \mathfrak{p}^{\nu_i}\subseteq E$ and 
$\mathrm{Nm}(\mathfrak{p}_i) = p^{f_i}$.

\begin{enumerate}
\item If $p$ splits completely in $E$, i.e., $\nu_i = 1$ and $f_i=1$ 
for each $i$, then
$$h_{\mathrm{Fal}}(A_{p^r}) - h_{\mathrm{Fal}}(A) = 
\frac{r}{2}\log p.$$

\item\label{2} If all primes above $p$ are unramified along the CM extension 
$E/F$ and otherwise we allow $\nu_i\ge 1$ and $f_i\ge 1$ for each $i$,
$$h_{\mathrm{Fal}}(A_{p^r}) - h_{\mathrm{Fal}}(A) \ge 
\left[\frac{r}{2} -\sum_{i: f_i>1} \nu_i\left(\frac{p-1}{p^{f_i}-1}\right)\left(\frac{1-p^{-k_i}}{1-p^{-1}}\right)\left(\frac{1-p^{-n_i}}{1-p^{-1}}\right)\right]\log p$$
where $n = \sum_i n_i$ and $k = \sum_i k_i$ are determined by 
the decomposition $A[p^n] \simeq \prod_i A[\mathfrak{p}_i^{\nu_in}]$.

\item\label{3} If all primes above $p$ are ramified along 
the CM extension $E/F$ and otherwise we allow $\nu_i\ge 1$ and $f_i\ge 1$, 
and we define the Galois closure 
$\widetilde{E}_{\mathfrak{p}_i}$ of $E_{\mathfrak{p}_i}$ to have 
relative degree $\rho_i = [\widetilde{E}_{\mathfrak{p}_i}:E_{\mathfrak{p}_i}]$, 
then
$$h_{\mathrm{Fal}}(A_{p^r}) - h_{\mathrm{Fal}}(A) > 
\left[\frac{r}{2} -\sum_i \frac{1}{\rho_i}\left(\frac{p-1}{p^{f_i}-1}\right)\left(\frac{h_i-d_i}{h_i}\right)\left(\frac{1-p^{-n_i}}{1-p^{-1}}\right)\right]\log p$$
where $n = \sum_i n_i$, $g = \sum_i d_i$, and $h = \sum_i h_i$ 
are determined by 
the decomposition $A[p^n] \simeq \prod_i A[\mathfrak{p}_i^{\nu_in}]$.

\item For a general prime $p$, let $I_2$ (resp., $I_3$) denote 
the set of indices $i$ such that $\mathfrak{p}_i$ lies over a 
prime that is unramified 
(resp., ramified) along the CM extension $E/F$. Then, retaining 
the notation from cases (2) and (3), 
\begin{align*}
h_{\mathrm{Fal}}(A_{p^r}) - h_{\mathrm{Fal}}(A) &> \frac{m}{2} -
\left[\sum_{i\in I_2: f_i>1} \nu_i\left(\frac{p-1}{p^{f_i}-1}\right)\left(\frac{1-p^{-k_i}}{1-p^{-1}}\right)\left(\frac{1-p^{-n_i}}{1-p^{-1}}\right)\right.\\
&\quad\quad\quad\quad\quad\quad\left.+\sum_{i\in I_3} \frac{1}{\rho_i}\left(\frac{p-1}{p^{f_i}-1}\right)\left(\frac{h_i-d_i}{h_i}\right)\left(\frac{1-p^{-n_i}}{1-p^{-1}}\right)\right]\log p\end{align*}
where the terms are determined by the decomposition $A[p^n] \simeq \prod_i A[\mathfrak{p}_i^{\nu_in}]$ as in statements (2) and (3).

\end{enumerate}

\end{lem}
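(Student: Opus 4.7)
The plan is to begin with the Faltings Isogeny Lemma (Lemma \ref{FaltingsIsog}), which gives
$$h_{\mathrm{Fal}}(A_{p^r}) - h_{\mathrm{Fal}}(A) = \frac{r}{2}\log p - \frac{1}{[K:\QQ]}\log\#s^*\Omega^1_{\mathcal{G}_{p^r}/\OO_K},$$
and reduce the task to bounding the ramification term $\frac{1}{[K:\QQ]}\log\#s^*\Omega^1_{\mathcal{G}_{p^r}/\OO_K}$ in each of the four cases. Since $\mathcal{G}_{p^r}$ is a finite flat group scheme supported at primes of $\OO_K$ above $p$, the computation localizes, and at each such prime $v$ we may work with the $p$-divisible group $\mathcal{A}[p^\infty]_{v}$, which splits via the idempotents of $\OO_E\otimes_\ZZ\ZZ_p = \prod_i \OO_{E_{\mathfrak{p}_i}}$ into a product of $\OO_{E_{\mathfrak{p}_i}}$-linear CM $p$-divisible groups $\mathcal{A}[\mathfrak{p}_i^\infty]$ of type $(\OO_{E_{\mathfrak{p}_i}},\Phi_{\mathfrak{p}_i})$.

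For case (1), when $p$ splits completely in $E$, each local factor $E_{\mathfrak{p}_i}=\QQ_p$ is of height $1$ and so $\mathcal{A}[\mathfrak{p}_i^\infty]$ is either étale or multiplicative according to $\Phi_{\mathfrak{p}_i}$; the kernel of any isogeny of $p^r$-power degree has trivial $s^*\Omega^1$ since the non-multiplicative components are étale, giving the exact equality. For case (2), at each unramified prime $\mathfrak{p}_i$ with $f_i>1$, the local kernel $\mathcal{H}_i$ is a subgroup of height $k_i$ of $p^{n_i}$-torsion in an $\OO_{E_{\mathfrak{p}_i}}$-linear CM $p$-divisible group, and Theorem \ref{unrambound} applied with the local invariants $(h_i,d_i) = (\nu_i f_i, \nu_i \cdot \#\Phi_{\mathfrak{p}_i}^c)$ produces the bound with factor $\nu_i \bigl(\frac{p-1}{p^{f_i}-1}\bigr)$ (the $\nu_i$ prefactor arising from the Serre tensor description in Corollary \ref{HodgeUnramType}); summing over $i$ yields the stated inequality, and the unramified primes with $f_i=1$ contribute nothing since their local pieces are again étale-multiplicative. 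Case (3) proceeds analogously using Theorem \ref{rambound}: at each ramified prime one passes to the Galois closure $\widetilde{E}_{\mathfrak{p}_i}/E_{\mathfrak{p}_i}$ of degree $\rho_i$ via Corollary \ref{ram22}, which introduces the factor $\frac{1}{\rho_i}$, and combines with the quasi-Kisin factor $\frac{p-1}{p^{f_i}-1}$ from Corollary \ref{Fontainecor}.

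For case (4), the main point is that $\mathcal{G}_{p^r}$ need not respect the product decomposition of $\mathcal{A}[p^n]$, since $\mathcal{G}_{p^r}$ is only $\OO_n$-stable (not $\OO_E$-stable). Here one invokes Theorem \ref{projection}, whose hypothesis that $\mathcal{H}\cap \prod' \mathcal{G}_i[\pi_{E_i}^{r_i}]=0$ corresponds precisely to the assumption that $\OO_n$ has no $\OO_E$-stable submodule (for otherwise we could factor $\phi$ through a genuinely $\OO_E$-linear isogeny first, contradicting the minimality of the chosen isogeny). Once this hypothesis is verified, the theorem reduces the simple saturated submodule computation to a single local factor $\mathcal{H}_i$, so that the bound splits as a sum over the indices $i\in I_2\cup I_3$, with each summand coming from the corresponding case (2) or (3) bound.

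The principal obstacle is the bookkeeping in case (4): one must carefully reconcile the decomposition $A[p^n]\simeq \prod_i A[\mathfrak{p}_i^{\nu_i n}]$ with the Harder-Narasimhan structure of the kernel, and verify that the hypothesis of Theorem \ref{projection} holds. This is where the assumption on $\OO_n$ having no $\OO_E$-stable submodule is essential — it guarantees irreducibility of the Kisin submodule corresponding to $\mathcal{G}_{p^r}$, which is needed to apply Proposition \ref{prodpropmix} and extract a single local contribution per prime. The remaining arithmetic of collecting the $\nu_i$, $f_i$, $\rho_i$, $k_i$, $n_i$, $h_i$, $d_i$ into the stated inequality is then a direct summation.
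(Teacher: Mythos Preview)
Your approach is essentially the same as the paper's: both start from Lemma~\ref{FaltingsIsog}, localize at $p$, decompose along the factorization $p=\prod_i\mathfrak{p}_i^{\nu_i}$, invoke Theorem~\ref{projection} to reduce non-split kernels to single factors, and then apply Corollary~\ref{HodgeUnramType} with Theorem~\ref{unrambound} for case~(2), Corollary~\ref{ram22} (together with Corollary~\ref{ram11}) and Theorem~\ref{rambound} for case~(3), combining these for case~(4).

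There is one genuine imprecision in your case~(1). You write that ``the kernel of any isogeny of $p^r$-power degree has trivial $s^*\Omega^1$ since the non-multiplicative components are \'etale.'' But this does not follow: a kernel that contains a multiplicative piece (e.g.\ a copy of $\mu_p$) would have \emph{nontrivial} $s^*\Omega^1$. What you need is that the kernel \emph{cannot} meet the multiplicative factors under the hypothesis that $\OO_n$ has no $\OO_E$-stable submodule. The paper supplies this step via Lemma~\ref{correctlem}: when each local factor has height~$1$, the only saturated submodules preserving the HN slope are the $\OO_E$-stable ones, so a kernel leading to a genuinely non-maximal order must avoid the multiplicative pieces entirely---forcing the isogeny to be \'etale. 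You should make this explicit rather than assert the vanishing of the Hodge bundle directly.
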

\begin{proof}
After a reduction to a local problem, the proof is an 
exercise in piecing together the decomposition 
theorems in Section 6 and the 
computations in Section 7. First, $\phi$ decomposes along the 
factorization $p=\prod_i\mathfrak{p}_i^{\nu_i}$ into 
a composition of isogenies $\phi_i$ each of degree $p^{f_i\nu_i}$. Since the 
$\mathfrak{p}_i$ in this decomposition are relatively prime, there 
is no canonical sequence for this decomposition, so the order may be 
permuted arbitrarily and we get a composition series for the finite flat group 
scheme kernel of $\phi$. We lose nothing in this computation to 
assume that the kernel of $\phi$ splits along the decomposition of 
$p$, otherwise apply Theorem \ref{projection}, which allows us to project 
along this factorization and compute based on an 
appropriate projection. 
Then as Theorem \ref{FaltingsIsog} 
is additive when evaluated on exact sequences of finite flat group schemes, 
we may compute everything for finite flat subgroup schemes of 
each of the $A[\mathfrak{p}_i^{\nu_in}]$.

Suppose first that $p$ splits completely as in statement $(1)$. 
Then by Lemma \ref{correctlem}, the isogeny $\phi$ must be 
\'etale for otherwise we contradict our hypothesis on $A_{p^n}$. 
Thus, the relative Hodge bundle is trivial and has no contribution to 
the variation of the Faltings height.

Assume now that no prime above $p$ ramifies 
in the CM extension $E/F$ as in 
statement $(2)$. Then the relative Hodge bundle 
at each $\phi_i$ may be computed by the general 
formula in Corollary \ref{HodgeUnramType}, which demonstrates how to
incorporate the ramification degree, and the bound in 
Theorem \ref{unrambound}.

Now assume that all primes above $p$ do ramify 
in the CM extension $E/F$ as in statement $(3)$. Then the 
relative Hodge bundle at each $\phi_i$ may be computed by 
the general formula in Corollary \ref{ram22}, which demonstrates how 
to incorporate the degree of the Galois extension, Corollary \ref{ram11}, 
which incorporates the inertia degree, and the bound in 
Theorem \ref{rambound}.

For a general $p$ as in statement $(4)$ we combine preceding cases.
\end{proof}

We prove first Theorem \ref{CMIsog}.

\begin{thm}[CM Northcott Property for Isogeny Classes]
\label{NorthcottIsog}
Let $C$ be a fixed positive constant and $g\ge 1$ be a fixed integer. Then 
the number of isomorphism classes of abelian varieties $A$ of dimension $g$ 
with $h_{\mathrm{Fal}}(A)<C$ within an isogeny class is finite.
\end{thm}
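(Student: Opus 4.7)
The plan is to reduce finiteness inside an isogeny class to a finiteness statement about possible CM endomorphism orders. Fix an isogeny class; every CM abelian variety in it shares the same CM algebra $P$ with CM type $\Phi$, and a Serre tensor construction provides a distinguished element $A_0$ in the class whose endomorphism ring is the maximal order $\OO_P\subseteq P$. Every other CM abelian variety $A$ in the class is isogenous to $A_0$ via an isogeny $\phi\colon A_0\to A$ whose minimal degree $N_\phi$ is controlled by the index $[\OO_P:\mathrm{End}(A)]$; moreover by Lemma \ref{correctlem} one has the positivity $h_{\mathrm{Fal}}(A)>h_{\mathrm{Fal}}(A_0)$ whenever this index is non-trivial. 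Since for each fixed sub-order $\OO\subseteq \OO_P$ the CM abelian varieties with endomorphism ring $\OO$ form a torsor under a finite class group, and since only finitely many sub-orders $\OO\subseteq \OO_P$ satisfy $[\OO_P:\OO]\leq B$ for any given $B$, it suffices to prove that $h_{\mathrm{Fal}}(A)<C$ forces $N_\phi\le B(C,g)$ for some bound depending only on $C$ and $g$.

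To produce this bound, apply the Faltings Isogeny Lemma \ref{FaltingsIsog} to $\phi$ and decompose along the prime factorization $N_\phi=\prod_p p^{r_p}$. Lemma \ref{combinatorics}(4) then gives
\[
h_{\mathrm{Fal}}(A)-h_{\mathrm{Fal}}(A_0)\;>\;\sum_p\left(\tfrac{r_p}{2}-K_p\right)\log p,
\]
where $K_p$ is the explicit combinatorial coefficient of $\log p$ appearing there, depending only on the splitting of $p$ in $P$ and on the $p$-part of the kernel of $\phi$. Two estimates are crucial: (i) $K_p$ is bounded uniformly by a constant $C_g$ depending only on $g$, via the inequalities $\tfrac{p-1}{p^{f_i}-1}\leq 1$, $\tfrac{h_i-d_i}{h_i}\leq 1$, $\tfrac{1}{\rho_i}\leq 1$, and $\tfrac{1-p^{-n_i}}{1-p^{-1}}\leq\tfrac{p}{p-1}$; (ii) for all but the finitely many primes ramifying in $P$, either $p$ splits completely in $P$ and $K_p=0$ by Lemma \ref{combinatorics}(1), or some $f_i\ge 2$ and $K_p=O_g(1/p)$ via the factor $\tfrac{p-1}{p^{f_i}-1}$.

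Suppose for contradiction that $\{A_n\}$ is a sequence in the isogeny class with $h_{\mathrm{Fal}}(A_n)<C$ and $N_{\phi_n}\to\infty$. Either some fixed prime $p$ satisfies $r_p^{(n)}\to\infty$, in which case the single term $\bigl(\tfrac{r_p^{(n)}}{2}-K_p\bigr)\log p$ forces the right side of the displayed inequality to diverge; or else the support of $N_{\phi_n}$ involves primes $p_j\to\infty$, whence for large $p_j$ each corresponding term contributes at least $\bigl(\tfrac{1}{2}-O_g(1/p_j)\bigr)\log p_j\ge \tfrac{1}{4}\log p_j$, and again the sum diverges. Either alternative contradicts $h_{\mathrm{Fal}}(A_n)<C$, so $N_{\phi_n}$ is uniformly bounded and the reduction in the first paragraph yields the theorem. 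The main technical obstacle is assertion (ii) in the non-simple case, where $P$ is a product of CM fields and the $p$-part of $\ker\phi$ need not factor isotypically; this is exactly the situation controlled by Theorem \ref{projection}, which replaces a mixed-isotypic subgroup by an isotypic projection at the cost of a diagonal embedding and so bootstraps the single-field estimates of Section \ref{ramcompus} into the uniform bound on $K_p$ used above.
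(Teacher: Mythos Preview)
Your proof is correct and follows essentially the same strategy as the paper: reduce to a base point $A_0$ with maximal CM order, invoke finiteness of the class group for each fixed order, and then use Lemma~\ref{combinatorics} (together with Theorem~\ref{projection} for the product/non-simple case) to force a bound on the minimal isogeny degree. The paper's proof is terser, simply stating that the computations of Sections~\ref{inert}--\ref{prod} summarized in Lemma~\ref{combinatorics} ``allow us to conclude''; you have made this conclusion explicit via the dichotomy argument on whether some $r_p^{(n)}\to\infty$ or the prime support drifts to infinity, which is exactly the quantitative content the paper leaves implicit. One minor remark: your reduction step routes through the index $[\OO_P:\mathrm{End}(A)]$ and class groups of non-maximal orders, whereas the paper argues more directly that bounding $N_\phi$ bounds the number of finite flat subgroup schemes of $\mathcal{A}_0$ of that order; both reductions are valid, but the latter avoids the (true but unneeded) assertion that $N_\phi$ and the index control one another.
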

\begin{proof}
By the Poincar\'e reducibility theorem, every CM abelian variety is isogenous 
to one that is a product of simple abelian varieties each with CM by 
$(\OO_{E_i},\Phi_i)$ for some CM field $E_i/\QQ$. Therefore we 
fix $A\simeq \prod_i A_i$ where each factor $A_i$ is simple with 
CM by $(\OO_{E_i},\Phi_i)$. By Lemma \ref{quentin}, the Faltings 
height variation is zero precisely when the group scheme 
kernel of the isogeny is $\prod_i \OO_{E_i}$-stable, i.e., when the isogeny 
is between two CM abelian varieties both with CM by 
$(\prod_i \OO_{E_i},\prod_i \Phi_i)$. Since there are finitely 
many such isomorphism classes of CM abelian varieties by a theorem of 
Brauer-Siegel, the theorem reduces to counting pairs $(A,\mathcal{G}_n)$ where 
$\mathcal{G}_n\subseteq \mathcal{A}$ is a finite flat subgroup scheme 
which is not $\OO_E$-stable and has prescribed order $n$.

The computations in Sections \ref{inert}, \ref{ram}, and \ref{prod} demonstrate 
the theorem. Indeed, since the Hodge bundle is additive on short exact sequences 
and the isogeny decomposes along the primary decomposition of $n$ in 
$\prod_i E_i$, we may always assume that $n = p^s$ for some prime $p$ 
and $s\ge 1$. Lemma \ref{combinatorics}, which may be strengthened 
to all subgroups products of simple abelian varieties by Theorem \ref{projection}, 
summarizes the computations within each $p$-isogeny class and 
allows us to conclude.
\end{proof}

We now prove the (conditional) Theorem \ref{CMNorth}.

\begin{thm}[CM Northcott Property]
\label{Northcott}
Let $C$ be a fixed positive constant and $g\ge 1$ be a fixed integer. Then 
assuming Conjecture \ref{colmez} and the Artin Conjecture, the number of 
CM abelian varieties of dimension $g$ with $h_{\mathrm{Fal}}(A)<C$ is finite.
\end{thm}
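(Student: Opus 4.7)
The plan is to combine Corollary \ref{primitive} (the conditional primitive Northcott for simple CM abelian varieties with maximal order) with Theorem \ref{NorthcottIsog} (the unconditional Northcott within isogeny classes), bridging the two via Poincar\'e reducibility and the minimality of the maximal-order representative within any isogeny class. Since Theorem \ref{NorthcottIsog} already controls $\overline{\QQ}$-isomorphism classes within each $\overline{\QQ}$-isogeny class, it suffices to show that only finitely many $\overline{\QQ}$-isogeny classes of dimension-$g$ CM abelian varieties contain some $A$ with $h_{\mathrm{Fal}}(A) < C$.

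Given such an $A/\overline{\QQ}$, I would use Poincar\'e reducibility to $\overline{\QQ}$-isogenize $A$ to a product $A^{\max} = \prod_{i=1}^k A_i^{\max}$ where each $A_i^{\max}$ is simple of dimension $g_i$, with $\sum_i g_i = g$, having CM by the maximal order of a CM field $E_i$ of degree $2g_i$. The remark following Lemma \ref{correctlem}, together with Theorem \ref{projection} (which reduces finite-flat-subgroup computations in a product of simple CM factors to one factor at a time), implies that the relative Hodge bundle contribution in Lemma \ref{FaltingsIsog} is non-negative along any isogeny $A^{\max}\to A$, so $h_{\mathrm{Fal}}(A^{\max}) \le h_{\mathrm{Fal}}(A) < C$. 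By additivity of Faltings height over products, $\sum_i h_{\mathrm{Fal}}(A_i^{\max}) < C$. Invoking the Colmez lower bound (Theorem \ref{colmezbound}), which gives a uniform $h_{\mathrm{Fal}}(A_j^{\max}) \ge C_0 := \tfrac{1}{2}\bigl(-\log 2\pi - \log \pi + \Gamma'(1/2)/\Gamma(1/2)\bigr)$ (since the Artin-conductor term is non-negative), we obtain the upper bound $h_{\mathrm{Fal}}(A_j^{\max}) < C - (g-1)C_0$ on each simple factor using $k\le g$.

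Applying Corollary \ref{primitive} for each possible dimension $g_j \in \{1,\ldots,g\}$ yields finitely many choices for each $A_j^{\max}$; and since the set of partitions of $g$ is finite, there are only finitely many possibilities for $A^{\max}$ up to isomorphism, hence finitely many $\overline{\QQ}$-isogeny classes. Combined with Theorem \ref{NorthcottIsog}, this proves the theorem. The main obstacle is the minimality step $h_{\mathrm{Fal}}(A^{\max})\le h_{\mathrm{Fal}}(A)$ for non-simple $A$: Lemma \ref{correctlem} is formulated for a single $\OO_E$-linear CM $p$-divisible group, so to handle products one must appeal to Theorem \ref{projection} and the decomposition results of Section \ref{prod}, piecing together the non-negativity of the Hodge-bundle contribution at each prime-power step of the isogeny $A^{\max}\to A$.
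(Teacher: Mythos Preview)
Your proposal is correct and follows essentially the same approach as the paper: reduce to isogeny classes via Theorem \ref{NorthcottIsog}, pick the maximal-order product representative via Poincar\'e reducibility, and then invoke Corollary \ref{primitive} on each simple factor using additivity of the Faltings height. Your write-up is in fact more explicit than the paper's, which compresses the argument to a few lines; in particular, you spell out the minimality step $h_{\mathrm{Fal}}(A^{\max})\le h_{\mathrm{Fal}}(A)$ (using Lemma \ref{correctlem} and Theorem \ref{projection}) and the uniform lower bound from Theorem \ref{colmezbound} needed to bound each factor individually, both of which the paper leaves implicit.
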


\begin{proof}
Theorem \ref{NorthcottIsog} demonstrates finiteness within an isogeny class. 
By the Poincar\'e reducibility theorem, every isogeny class contains an abelian 
variety $A\simeq \prod_i A_i$ unique up to isomorphism 
where each factor $A_i$ is simple with CM by 
$(\OO_{E_i},\Phi_i)$. Since the Faltings height is additive on products of 
abelian varieties, we conclude by Corollary \ref{primitive}, which relies 
on the conjectures of Colmez and Artin.
\end{proof}

\begin{rem}
When we consider only the category of \emph{simple} CM abelian 
varieties, we can describe this growth in terms of the \emph{conductor} 
of the order. We recall that the conductor is the smallest ideal that is 
an ideal of both $\OO_E$ and $\OO$. In particular, the conductor always 
divides the degree, so to count along the conductor, we can count 
along those finite flat group schemes whose order divides the conductor with 
the smallest HN slope.
\end{rem}

\subsection{Colmez-Type Formulae}
\label{lastsect}
\label{RM}
We make the computations precise to give a few intrinsic cases of 
Colmez-type formulae for abelian varieties of low dimension. Our 
first result is to recover the formula of Nakkajima-Taguchi \cite{japanese} in the case of 
elliptic curves. Here we again are in a global setting, and will let 
$E$ denote a CM field with ring of integers $\OO_E$ and order 
$\OO_n\subseteq \OO_E$ of index $n$ such that $\OO_E/\OO_n$ 
has no $\OO_E$-stable fixed piece.

\begin{thm}[Colmez Formula for CM Elliptic Curves]
\label{ColmezEll}
Let $A_n$ be an elliptic curve which has CM by $(\OO_n,\Phi)$ 
and let $A$ be an elliptic curve which has CM by $(\OO_E,\Phi)$. Then
$$h_{\mathrm{Fal}}(A_n) = h_\mathrm{Fal}(A) + \frac{1}{2}\left(\sum_{p|n}\log p\left(r_p - \left(\frac{1-\left(\frac{p}{d}\right)}{p-\left(\frac{p}{d}\right)}\right)\left(\frac{1-p^{-r_p}}{1-p^{-1}}\right)\right)\right)$$
where $r_p$ is the largest exponent such that $p^{r_p}|n$. 

Combining this with Conjecture \ref{colmez} (a theorem 
in the case of elliptic curves), we obtain the 
formula
\begin{align*}
h_{\mathrm{Fal}}(A_n) &=  -\frac{L'(\chi_{-d},0)}{L(\chi_{-d},0)} - \frac{1}{2}\log d + \frac{1}{2}\log 2\pi  \\
&\quad\,+\frac{1}{2}\left(\sum_{p|n}\log p\left(r_p - \left(\frac{1-\left(\frac{p}{d}\right)}{p-\left(\frac{p}{d}\right)}\right)\left(\frac{1-p^{-r_p}}{1-p^{-1}}\right)\right)\right)\end{align*}
where $d$ is the discriminant of $E$ and $\chi_{-d}$ the 
corresponding quadratic Artin character.
\end{thm}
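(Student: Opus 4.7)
The plan is to invoke the Faltings Isogeny Lemma (Lemma \ref{FaltingsIsog}) on a canonical isogeny $\phi : \mathcal{A} \to \mathcal{A}_n$ of degree $n$, defined over a number field $K$ over which both elliptic curves attain everywhere good reduction (such a $K$ exists by Shimura--Taniyama, Theorem \ref{shimtan}). Since the Faltings height is base-change invariant, this gives
\begin{equation*}
h_{\mathrm{Fal}}(A_n) - h_{\mathrm{Fal}}(A) = \frac{1}{2}\log n - \frac{1}{[K:\QQ]}\log\#s^*\Omega^1_{\mathcal{G}_n/\OO_K},
\end{equation*}
where $\mathcal{G}_n = \ker(\phi)$ is a finite flat group scheme of order $n$. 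The entire content of the theorem therefore resides in the explicit evaluation of the second term.

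Next, I decompose $\mathcal{G}_n$ along the primary decomposition $n = \prod_{p \mid n} p^{r_p}$ into the product $\mathcal{G}_n \simeq \prod_{p\mid n} \mathcal{G}_{n,p}$ with $\mathcal{G}_{n,p} \subseteq \mathcal{A}[p^{r_p}]$. The relative Hodge bundle is additive on such factorizations, reducing the task to a prime-by-prime local computation. Moreover, the hypothesis that $\OO_E/\OO_n$ has no $\OO_E$-stable submodule forces the saturated Kisin submodule of $\mathfrak{M}(\mathcal{A}[p^{r_p}])$ corresponding to $\mathcal{G}_{n,p}$ to be non-$\OO_E$-stable, so Lemma \ref{correctlem} applies.

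I then split into three cases according to the Kronecker symbol $\left(\frac{p}{d}\right)$. When $p$ splits completely, Lemma \ref{combinatorics}(1) gives Hodge contribution $0$. When $p$ is inert, Proposition \ref{formulaunram} specialized to $h = 2$, $d = 1$, $k = 1$ (where the minimum over $S_{d(i)} \subseteq I$ trivializes because $|I| = 2$ and $h-d = 1$) yields the exact equality
\begin{equation*}
\frac{1}{[K:\QQ_p]}\log\#s^*\Omega^1_{\mathcal{G}_{n,p}/\OO_K} = \frac{1}{p+1}\left(\frac{1-p^{-r_p}}{1-p^{-1}}\right)\log p.
\end{equation*}
When $p$ ramifies, Theorem \ref{EllRamComp} directly gives
\begin{equation*}
\frac{1}{[K:\QQ_p]}\log\#s^*\Omega^1_{\mathcal{G}_{n,p}/\OO_K} = \frac{1}{2p}\left(\frac{1-p^{-r_p}}{1-p^{-1}}\right)\log p.
\end{equation*}
The three coefficients $0$, $\frac{1}{p+1}$, $\frac{1}{2p}$ uniformize to $\frac{1}{2}\bigl(\frac{1 - (p/d)}{p - (p/d)}\bigr)$, and summing over $p\mid n$ assembles the first displayed equation. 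For the second formula, I then invoke Conjecture \ref{colmez}, which for imaginary quadratic $E = \QQ(\sqrt{-d})$ is a theorem equivalent to the classical Chowla--Selberg identity (alternatively it falls within the abelian case settled in \cite{Colmez} and \cite{Obus}). Expanding $A_{E,\Phi}$ in the irreducible characters of $\Gal(\overline{\QQ}/\QQ)$, using $\zeta'(0)/\zeta(0) = \log 2\pi$ for the trivial character contribution and $f_{\chi_{-d}} = d$ for the nontrivial one, and combining with the first part yields the stated formula.

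The main obstacle is verifying that the \emph{inequalities} of Theorems \ref{unrambound} and \ref{EllRamComp} become \emph{equalities} in the elliptic setting. This rests on the uniqueness, up to $\OO_E$-action, of the relevant saturated line in the Kisin module when $h = 2$ and $d = 1$: since $h - d = 1$ the extremal subset $S_{d(i)}$ collapses to a single choice, so the non-$\OO_E$-stable hypothesis on $\mathcal{G}_{n,p}$ determines the Hodge contribution exactly. The compact repackaging of the three splitting cases into a single Kronecker-symbol expression is then a straightforward bookkeeping operation.
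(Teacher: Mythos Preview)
Your proposal is correct and follows essentially the same approach as the paper: reduce to one prime at a time via the Faltings Isogeny Lemma and additivity of the Hodge bundle, then handle the split, inert, and ramified cases separately using Lemma \ref{combinatorics}(1) (the paper invokes Deuring's theorem directly), Proposition \ref{formulaunram}/Theorem \ref{unrambound} with $h=2$, $d=1$, $k=1$, and Theorem \ref{EllRamComp}, respectively. Your explicit verification that the bounds become equalities when $h=2$ because the relevant saturated line is unique up to the $\OO_E$-action is exactly the point the paper makes by noting the CM type is unique up to complex conjugation, and your repackaging via the Kronecker symbol and appeal to the abelian case of Conjecture \ref{colmez} for the second formula are likewise what the paper does (implicitly).
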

\begin{proof}
By the same argument as in Theorem \ref{Northcott}, we may 
assume that $n = p^{r_p}$ and study the problem locally. 
Since $A$ is an elliptic curve, the trichotomy of the splitting behavior 
of $p$ in the CM field $E$ (encoded by the quadratic residue character) 
determines the structure of the finite flat group scheme kernel 
for the minimal degree isogeny between $A$ and $A_n$, and 
therefore the Faltings height. 

When $p$ splits, Deuring's theorem shows that the isogeny is 
necessarily \'etale and the 
only contribution to the Faltings height variation is the degree of the 
isogeny. When $p$ is inert, the differential in this variation calculation 
attains the maximum along the bound in Theorem \ref{unrambound} with 
$k = 1$ and $\delta = 2$, since $A$ is an elliptic curve and, up to 
complex conjugation, admits a unique CM type. When $p$ is 
ramified the computation has already been done in 
Theorem \ref{EllRamComp}.
\end{proof}

We extend this formula to simple abelian surfaces for which $E/\QQ$ is 
Galois. We note that by this hypothesis 
$E/\QQ$ must be cyclic, and 
there can even only be one CM type (up to complex conjugation) on these abelian 
surfaces, given by $\Phi = \{1,2\}$. These assertions 
are checked explicitly in \cite{HabPaz}. The local computations 
for this theorem are found in Section \ref{ramcompus} and Section \ref{9}.

\begin{thm}[Colmez Formula for Cyclic CM Abelian Surfaces]
\label{LASTTHING}
Let $A$ be a simple CM abelian surface over a number 
field $K$ with CM by $(\OO_E,\Phi)$ 
where $E/\QQ$ is a cyclic, Galois CM field of degree $4$ 
and $F\subseteq E$ its totally real subfield. Let 
$A_{p^n}/K$ be a simple CM abelian surface with CM by $(\OO,\Phi)$ 
where $\OO\subseteq \OO_E$ is a non-maximal order with conductor 
divisible by $p$, and an isogeny 
$\phi:A\to A_{p^n}$ has minimal degree $p^n$. There are nine cases:

\begin{enumerate}
\item If $p = \mathfrak{p}_1\mathfrak{p}_2\mathfrak{p}_3\mathfrak{p}_4$ 
is totally split, then
$$h_{\mathrm{Fal}}(A_{p^n}) = h_{\mathrm{Fal}}(A) + \frac{n}{2}\log p.$$
\item If $p = \mathfrak{p}_1\mathfrak{p}_2$ such that $p$ remains inert in $F/\QQ$ 
and the prime above $p$ splits in $E/F$, 
then
$$h_{\mathrm{Fal}}(A_{p^n}) = h_{\mathrm{Fal}}(A) + \frac{n}{2}\log p.$$
\item $p = \mathfrak{p}_1^{2}\mathfrak{p}_2^{2}$ such that $p$ ramifies in $F/\QQ$ and the prime above $p$ splits in $E/F$, then
$$h_{\mathrm{Fal}}(A_{p^n}) = h_{\mathrm{Fal}}(A) + \frac{n}{2}\log p.$$
\item If $p = \mathfrak{p}_1\mathfrak{p}_2$ such that $p$ splits in $F/\QQ$ and 
each $\mathfrak{p}_i$ remains inert in $E/F$, 
$$h_{\mathrm{Fal}}(A_{p^n}) = h_{\mathrm{Fal}}(A) + \left[\frac{n}{2} - \frac{1}{p+1}\left(\frac{1-p^{-n}}{1-p^{-1}}\right)\right]\log p.$$
\item If $p = \mathfrak{p}_1^2\mathfrak{p}_2^{2}$ such that $p$ splits in $F/\QQ$ 
and each $\mathfrak{p}_i$ ramifies in $E/F$, 
then
$$h_{\mathrm{Fal}}(A_{p^n}) = h_{\mathrm{Fal}}(A) + \left[\frac{n}{2} -  \frac{1}{2p}\left(\frac{1-p^{-n}}{1-p^{-1}}\right)\right]\log p.$$
\item If $p = \mathfrak{p}^2$ such that $p$ ramifies in $F/\QQ$ 
and $\mathfrak{p}$ remains inert in $E/F$,
$$h_{\mathrm{Fal}}(A_{p^n}) = h_{\mathrm{Fal}}(A) + \left[\frac{n}{2} - \frac{2}{p+1}\left(\frac{1-p^{-n}}{1-p^{-1}}\right)\right]\log p.$$
\item If $p = \mathfrak{p}^2$ such that $p$ remains inert in $F/\QQ$ and 
the prime above $p$ ramifies in $E/F$,
$$h_{\mathrm{Fal}}(A_{p^n}) = h_{\mathrm{Fal}}(A) + \left[\frac{n}{2} -  \frac{1}{p(p+1)}\left(\frac{1-p^{-n}}{1-p^{-1}}\right)\right]\log p.$$
\item If $p = \mathfrak{p}$ is inert, let 
$(\lambda_1,\lambda_2,\lambda_3)$ be a non-increasing tuple 
characterizing $\mathcal{G}$ satisfying
$\lambda_1 + \lambda_2 + \lambda_3 = n$, 
$\mathcal{G}$ is $p^{\lambda_1}$-torsion, and the $p$-height 
$\iota\in \{1,2,3\}$ of $\mathcal{G}$ is the largest integer 
such that $\lambda_{\iota}\neq 0$. 
Then
$$h_{\mathrm{Fal}}(A_{p^n}) = h_{\mathrm{Fal}}(A) + \left[\frac{n}{2} - \frac{p-1}{p^4-1}\left(\sum_{i=\lambda_2+1}^{\lambda_1}\frac{p + 1}{p^i}\right)\left(\sum_{j=\lambda_3 +1}^{\lambda_2}\frac{(p+1)^2}{p^j}\right)\left(\sum_{k = 1}^{\lambda_3}\frac{(p^2 + 2)(p+1)}{p^k}\right)\right]\log p$$
where the sums are evaluated if and only if the difference in bounds is at 
least zero.
\item If $p = \mathfrak{p}^4$ is totally ramified, 
let 
$(\lambda_1,\lambda_2,\lambda_3)$ be a non-increasing tuple 
characterizing $\mathcal{G}$ satisfying
$\lambda_1 + \lambda_2 + \lambda_3 = n$, 
$\mathcal{G}$ is $p^{\lambda_1}$-torsion, and the $p$-height 
$\iota\in \{1,2,3\}$ of $\mathcal{G}$ is the largest integer 
such that $\lambda_{\iota}\neq 0$. 
Then
$$h_{\mathrm{Fal}}(A_{p^n}) = h_{\mathrm{Fal}}(A) + \left[\frac{n}{2} - \frac{1}{4(p^4-p^3)}\left(\sum_{i=\lambda_2 + 1}^{\lambda_1}\frac{R_1(p)}{p^i}\right)\left(\sum_{j=\lambda_3+1}^{\lambda_2}\frac{R_2(p)}{p^j}\right)\left(\sum_{k=1}^{\lambda_3}\frac{R_3(p)}{p^k}\right)\right]\log p$$
where the sums are evaluated if and only if the difference in bounds 
is at least zero, and $R_i(p)$ are defined in Theorem \ref{ramcyc4fin}.
\end{enumerate}
\end{thm}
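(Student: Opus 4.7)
\medskip

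\noindent\textbf{Proof proposal for Theorem \ref{LASTTHING}.} The plan is to reduce each case to a local computation of the relative Hodge term in the Faltings Isogeny Lemma (Lemma \ref{FaltingsIsog}), namely
$$h_{\mathrm{Fal}}(A_{p^n}) - h_{\mathrm{Fal}}(A) = \tfrac{n}{2}\log p - \tfrac{1}{[K:\QQ]}\log\#s^*\Omega^1_{\mathcal{G}/\OO_K},$$
where $\mathcal{G} = \ker(\phi)$ is a finite flat $p$-power group scheme inside $\mathcal{A}[p^n]$. By Proposition \ref{deg} and the Kisin-module formalism, the Hodge term is controlled by $v_u(\det \phi_{\mathfrak{M}(\mathcal{G})})$, and by the decomposition theorems of Section 7 (Corollary \ref{HodgeUnramType}, Corollary \ref{ram22}, and Theorem \ref{projection}) the computation decomposes along the primes of $E$ above $p$. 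In each of the 9 cases, we first classify $A[\mathfrak{p}_i^\infty]$ as an $\OO_{E,\mathfrak{p}_i}$-linear CM $p$-divisible group of height $h_i$ and dimension $d_i$ (read off from the splitting behavior and the fact that the unique CM type $\Phi = \{1,2\}$ on the cyclic degree-$4$ field $E$ forces either $d_i \in \{0, h_i, h_i/2\}$ depending on how $\Phi$ restricts), then compute the maximal $v_u$ of the saturated submodule of $\mathfrak{M}(A[\mathfrak{p}_i^{\infty}][p^n])$ which fails to be $\OO_E$-stable (by Lemma \ref{correctlem}, minimal-degree isogenies force non-$\OO_E$-stable kernels).

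Cases (1)--(3) are immediate: in each, every local factor $A[\mathfrak{p}_i^\infty]$ has $h_i = d_i$ or $h_i - d_i = 0$ at the primes in the CM type (i.e., the group is either multiplicative or étale up to twist), so by Lemma \ref{correctlem} the only non-$\OO_E$-stable kernels are \'etale, contributing no Hodge term. Cases (4) and (6) both reduce to the unramified and ramified two-dimensional computations in Section 7: case (4) is a product of two inert elliptic-type $p$-divisible groups of $(h_i,d_i)=(2,1)$, and its Hodge bound comes from Theorem \ref{unrambound} with $\delta = 2$; case (6) applies the ramified formula in Theorem \ref{EllRamComp} to a single $\mathfrak{p}$ of degree $2$, but with an extra factor of $2$ since the unique prime carries both CM type elements. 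Case (5) uses the ramified degree-$2$ formula (\ref{EllRamComp}) at each of the two primes, while case (7) combines a degree-$2$ ramified and unramified computation via the Serre tensor reduction in Corollary \ref{ram22}.

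The two substantive cases are (8) and (9). For case (8), $E_\mathfrak{p}/\QQ_p$ is unramified cyclic of degree $4$ with $(h,d)=(4,2)$ and CM type cyclically placed; the claim then follows by directly evaluating the quantity
$$\min_{S_{d(i)}\subseteq I}\sum_{\tau\in S_{d(i)}}\Bigl(\sum_{s=1}^{h-d}p^{h-\tau^{-1}(\alpha_s)}\Bigr)$$
of Proposition \ref{formulaunram} for the three possible ranks $d(i) \in \{1,2,3\}$ of the HN-extremal submodule. Summing over the partition $(\lambda_1,\lambda_2,\lambda_3)$ and collecting powers of $p$ using the cyclic structure of $\Phi$ produces the three nested sums involving $p+1$, $(p+1)^2$, and $(p^2+2)(p+1)$, which encode the number of valid choices of $S_{d(i)}$ at each slope layer. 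For case (9), $E_\mathfrak{p}/\QQ_p$ is totally ramified cyclic Galois of degree $4$, and the analogous computation is carried out via the explicit Lubin-Tate presentation of Example \ref{ramex} together with Remark \ref{LT}, generalizing Proposition \ref{lubintate} to rank $4$. Here one writes the matrix of $\phi_\mathfrak{M}$ on the basis $\{1,\pi,\pi^2,\pi^3\}$ and solves, at each torsion level and each height $k\in\{1,2,3\}$, for the saturated $\mathfrak{S}_1$-line in $\bigwedge^{4-k}\mathfrak{M}_{\mathfrak{S}_1}(\mathcal{G}[p])$ of minimal $v_u(\phi_\mathfrak{L})$; iterating the d\'evissage in Theorem \ref{EllRamComp} over the partition $(\lambda_1,\lambda_2,\lambda_3)$ then gives the stated formula with polynomials $R_i(p)$.

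The main obstacle will be case (9): although the strategy (explicit Eisenstein factorization and Lubin-Tate presentation of $\phi_\mathfrak{M}$) is clear, extracting closed forms for the polynomials $R_i(p)$ requires careful bookkeeping of which monomial solutions to the system $\phi_\mathfrak{L}(v) = u^\mu v$ persist through the $n$-fold d\'evissage, and of how the height jumps $\lambda_i \to \lambda_{i+1}$ modify the leading Eisenstein-type factor at each step. Once the rank-$4$ analogue of Proposition \ref{lubintate} is established, the remaining argument is the same induction on $n$ (and on the partition) used to upgrade Proposition \ref{lubintate} to Theorem \ref{EllRamComp}.
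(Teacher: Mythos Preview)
Your overall architecture matches the paper's proof exactly: reduce to the local Hodge term via Lemma \ref{FaltingsIsog} and Proposition \ref{deg}, decompose $\mathcal{A}[p^\infty]$ along the primes of $E$ over $p$, and in each case identify the local CM $p$-divisible group $(h_i,d_i)$ and invoke the appropriate computation from Section \ref{ramcompus}. Cases (1)--(3), (5), (8), (9) are handled essentially as in the paper; in particular your description of (8) via Proposition \ref{formulaunram} and of (9) via the rank-$4$ Lubin--Tate computation (which is the content of Theorem \ref{ramcyc4fin}) is correct.

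However, you have misrouted cases (6) and (7). In case (6) the prime $\mathfrak{p}$ is \emph{inert} in $E/F$, so $E_\mathfrak{p}$ does \emph{not} ramify along the CM type; the correct reduction is Corollary \ref{HodgeUnramType} (Serre tensor $\mathcal{G}_\mathfrak{p}\simeq \mathcal{G}^{\mathrm{ur}}\otimes_{\OO_{E^{\mathrm{ur}}}}\OO_{E_\mathfrak{p}}$ with $\rho=2$), after which one applies the \emph{unramified} elliptic formula, giving $2\cdot\frac{1}{p+1}$. Your proposed route through Theorem \ref{EllRamComp} would produce $2\cdot\frac{1}{2p}=\frac{1}{p}$, which disagrees with the stated $\frac{2}{p+1}$. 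Conversely, in case (7) the prime ramifies in $E/F$, so $E_\mathfrak{p}$ \emph{does} ramify along the CM type; the paper uses Corollary \ref{ram11} (the quasi-Kisin pieces are isomorphic) together with Corollary \ref{Fontainecor}, not Corollary \ref{ram22} (which concerns passage to the Galois closure and is irrelevant here since $E/\QQ$ is already Galois). A smaller point: in case (4) you cite the bound Theorem \ref{unrambound}, but an exact formula is needed; the paper instead reduces via Proposition \ref{prodpropinert} to a single factor and invokes Theorem \ref{ColmezEll}, where the inert elliptic computation is exact.
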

\begin{proof}
We will denote by $\mathcal{A}/\OO_K$ the N\'eron model of $A/K$, and, 
for any prime $\mathfrak{p}\subseteq \mathcal{O}_E$, 
$\mathcal{G}_\mathfrak{p} = \mathcal{A}[\mathfrak{p}^\infty]$ 
the corresponding CM $p$-divisible group with ($p$-adic) CM type  
$(E_{\mathfrak{p}},\Phi_{\mathfrak{p}})$. We will let 
$\mathcal{G}$ denote the finite flat group scheme kernel of 
$\phi : \mathcal{A}\to \mathcal{A}_{p^n}$ and 
$\omega_\mathcal{G} = s^{*}\Omega^1_{\mathcal{G}/\OO_K}$ denote 
the relative Hodge bundle. By Theorem \ref{FaltingsIsog}, 
we then obtain the change in Faltings height as a 
computation of $\omega_{\mathcal{G}}$.

In (1), $\mathcal{A}[p^\infty]\simeq \mathcal{G}_{\mathfrak{p}_1}\times \mathcal{G}_{\mathfrak{p}_2}\times \mathcal{G}_{\mathfrak{p}_3}\times \mathcal{G}_{\mathfrak{p}_4}$ and $\mathcal{G}_{\mathfrak{p}_1}\simeq \mathcal{G}_{\mathfrak{p}_2}^\vee$ and $\mathcal{G}_{\mathfrak{p}_3}\simeq \mathcal{G}_{\mathfrak{p}_4}^\vee$. Since $p$ splits in $E/F$, we 
may suppose $\mathcal{G}_{\mathfrak{p}_1}$ and $\mathcal{G}_{\mathfrak{p}_3}$ are CM $p$-divisible groups with $d = 0$ and $h = 1$, and 
$\mathcal{G}_{\mathfrak{p}_2}$ and $\mathcal{G}_{\mathfrak{p}_4}$ 
are CM $p$-divisible groups with $d = 1$ and $h = 1$. 
Then $\mathcal{G}$ must be a subgroup of 
$\mathcal{G}_{\mathfrak{p}_1}[p^n]\times \mathcal{G}_{\mathfrak{p}_3}[p^n]$, 
otherwise by Lemma \ref{correctlem} $A_{p^n}$ 
cannot have CM by an order whose conductor 
divides $p$. Since $\mathcal{G}_{\mathfrak{p}_1}$ 
and $\mathcal{G}_{\mathfrak{p}_3}$ are \'etale, $\omega_{\mathcal{G}} = 0$.

In (2), $\mathcal{A}[p^\infty] \simeq \mathcal{G}_{\mathfrak{p}_1}\times\mathcal{G}_{\mathfrak{p}_2}$ and 
$\mathcal{G}_{\mathfrak{p}_1}\simeq \mathcal{G}_{\mathfrak{p}_2}^\vee$. 
Since $p$ is inert in $F/\QQ$, $E_{\mathfrak{p}_i}/\QQ_p$ is unramified 
for each $\mathfrak{p}_i$. 
However, since the prime above $p$ splits in $E/F$, we may suppose 
$\mathcal{G}_{\mathfrak{p}_1}$ is a CM $p$-divisible group with 
$d = 0$ and $h = 2$, and $\mathcal{G}_{\mathfrak{p}_2}$ is a 
CM $p$-divisible group with $d = 2$ and $h = 2$. 
Then $\mathcal{G}$ must be a subgroup of 
$\mathcal{G}_{\mathfrak{p}_1}[p^n]$, 
otherwise by Lemma \ref{correctlem} $A_{p^n}$ 
cannot have CM by an order whose conductor 
divides $p$. Since $\mathcal{G}_{\mathfrak{p}_1}$ 
is \'etale, $\omega_{\mathcal{G}} = 0$.

In (3), $\mathcal{A}[p^\infty] \simeq \mathcal{G}_{\mathfrak{p}_1}\times\mathcal{G}_{\mathfrak{p}_2}$ and 
$\mathcal{G}_{\mathfrak{p}_1}\simeq \mathcal{G}_{\mathfrak{p}_2}^\vee$. 
Since $p$ is ramified in 
$F/\QQ$, $E_{\mathfrak{p}_i}/\QQ_p$ is totally ramified and Galois 
for each $\mathfrak{p}_i$. 
However, since the prime above $p$ splits in $E/F$, we may suppose 
$\mathcal{G}_{\mathfrak{p}_1}$ is a CM $p$-divisible group with 
$d = 0$ and $h = 2$, and $\mathcal{G}_{\mathfrak{p}_2}$ is a 
CM $p$-divisible group with $d = 2$ and $h = 2$. 
Then $\mathcal{G}$ must be a subgroup of 
$\mathcal{G}_{\mathfrak{p}_1}[p^n]$, 
otherwise by Lemma \ref{correctlem} $A_{p^n}$ 
cannot have CM by an order whose conductor 
divides $p$. Since $\mathcal{G}_{\mathfrak{p}_1}$ 
is \'etale, $\omega_{\mathcal{G}} = 0$.

In (4), $\mathcal{A}[p^\infty] \simeq \mathcal{G}_{\mathfrak{p}_1}\times\mathcal{G}_{\mathfrak{p}_2}$ 
and $\mathcal{G}_{\mathfrak{p}_1}\simeq \mathcal{G}_{\mathfrak{p}_2}$. 
Since the primes above $p$ are inert in $E/F$, 
$E_{\mathfrak{p}_i}/\QQ_p$ is unramified for each $\mathfrak{p}_i$, 
and $\mathcal{G}_{\mathfrak{p}_i}$ are each CM 
$p$-divisible groups 
with $d=1$ and $h=2$. By Proposition \ref{prodpropinert}, 
we lose nothing to assume  
$\mathcal{G}\subseteq \mathcal{G}_{\mathfrak{p}_1}[p^n]$ 
in order to compute $\omega_{\mathcal{G}}$, 
and thus may directly apply Theorem \ref{ColmezEll}.

In (5), $\mathcal{A}[p^\infty] \simeq \mathcal{G}_{\mathfrak{p}_1}\times\mathcal{G}_{\mathfrak{p}_2}$ and 
$\mathcal{G}_{\mathfrak{p}_1}\simeq \mathcal{G}_{\mathfrak{p}_2}$. 
Since the primes above $p$ are ramified in 
$E/F$, $E_{\mathfrak{p}_i}/\QQ_p$ is totally ramified and Galois 
for each $\mathfrak{p}_i$, and 
$\mathcal{G}_{\mathfrak{p}_i}$ are each CM 
$p$-divisible groups with $d=1$ and $h=2$. By Proposition \ref{prodpropram}, 
we lose nothing to assume 
$\mathcal{G}\subseteq \mathcal{G}_{\mathfrak{p}_1}[p^n]$, 
and thus may directly apply Theorem \ref{ColmezEll}.

In (6), $\mathcal{A}[p^\infty]\simeq \mathcal{G}_{\mathfrak{p}}$. 
Since the prime above $p$ is inert in $E/F$ and 
$p$ is ramified in $F/\QQ$, there exists 
a degree $2$ unramified subfield $E^{\mathrm{ur}}\subseteq E_{\mathfrak{p}}$  
such that $\mathcal{G}_{\mathfrak{p}}\simeq \mathcal{G}^{\mathrm{ur}}_{\mathfrak{p}}\otimes_{\OO_{E^{\mathrm{ur}}}}\OO_{E_{\mathfrak{p}}}$ and $\mathcal{G}^{\mathrm{ur}}_{\mathfrak{p}}$ is a CM $p$-divisible 
group with $d = 1$ and $h = 2$. Then by Corollary \ref{HodgeUnramType}, 
we lose nothing to study subgroups of $\mathcal{G}^{\mathrm{ur}}_{\mathfrak{p}}$ 
in order to compute $\omega_\mathcal{G}$, and thus may apply 
Theorem \ref{ColmezEll} in conjunction with Corollary \ref{HodgeUnramType}.

In (7), $\mathcal{A}[p^\infty]\simeq \mathcal{G}_{\mathfrak{p}}$. 
Since $p$ is ramified in $E/F$ and inert in $F/\QQ$, by Corollary \ref{ram11} 
we lose nothing to study subgroups of a CM $p$-divisible group 
$\mathcal{G}'_\mathfrak{p}$ with $d = 1$ and $h=2$ having ($p$-adic) CM 
by a totally ramified degree $2$ extension of $\QQ_p$, as by 
Corollary \ref{Fontainecor} we may recover this information 
to compute $\omega_\mathcal{G}$. We thus apply Theorem \ref{ColmezEll} 
to study the subgroups of $\mathcal{G}'_p[p^n]$. 

In (8), $\mathcal{A}[p^\infty]\simeq \mathcal{G}_{\mathfrak{p}}$. 
Since $p$ remains inert in $E/\QQ$, $E_{\mathfrak{p}}/\QQ_p$ is unramified 
and $\mathcal{G}_{\mathfrak{p}}$ is a CM $p$-divisible group with 
$d = 2$ and $h = 4$. By the assumption that $E/\QQ$ is cyclic, 
we may further deduce that $\Phi = \{1,2\}$ as the decomposition 
group embeds into the Galois group, and the unique involutive element 
of $\mathrm{Gal}(E/\QQ)$ must conjugate $\Phi$ to the opposite 
set of embeddings in $\mathrm{Hom}(E,\overline{\QQ})$. Then we can directly 
apply Proposition \ref{formulaunram} 
using $(E_\mathfrak{p},\Phi_\mathfrak{p} = \{1,2\})$.

In (9), $\mathcal{A}[p^\infty]\simeq \mathcal{G}_{\mathfrak{p}}$. 
Since $p$ is totally ramified in $E$, $E_{\mathfrak{p}}/\QQ_p$ is totally 
ramified and Galois, 
and $\mathcal{G}_{\mathfrak{p}}$ is a CM $p$-divisible group with 
$d = 2$ and $h = 4$. By the assumption that $E/\QQ$ is cyclic, 
we may further deduce that $\Phi_{\mathfrak{p}} = \{1,2\}$. 
Then this computation is done in Theorem \ref{ramcyc4fin}.
\end{proof}


\appendix

\section{Ramification Computations for Cyclic Abelian Surfaces}
\label{9}

The computations presented here provide the 
analogue of Theorem \ref{EllRamComp} for $\OO_E$-linear 
Kisin modules which correspond to abelian surfaces. 

Here we let $k$ be a finite field of characteristic $p>0$ 
and $W = W(k)$ be its ring of Witt vectors. We construct 
the ring $\mathfrak{S} = W[[u]]$ which has a natural 
Frobenius homomorphism $\phi$ extending the Frobenius 
on $W$ by sending $u\mapsto u^p$. We also let 
$\mathfrak{S}_n = W_n[[u]]$, where $W_n$ is the 
$n$th truncation of the Witt construction of $W$; 
$\mathfrak{S}_n$ is likewise equipped with a Frobenius 
which we also denote by $\phi$ extending the Frobenius 
on $W_n$ by $u\mapsto u^p$. We refer the reader to 
Definition \ref{BTDef} for the 
definition of an $\mathfrak{S}$-module and to Proposition \ref{fingroup} 
for their reduction to $\mathfrak{S}_n$-modules.

Let $(E,\Phi)$ be a ($p$-adic) CM pair with 
$E/\QQ_p$ a totally ramified, Galois field of degree $h$ 
and $\Phi$ a ($p$-adic) CM type. In most of this section we 
will let $h = 4$ and $E/\QQ_p$ be cyclic with ($p$-adic) CM type 
$\{1,2\}$. In Definition \ref{OEDef} we introduced the notion of 
an $\OO_E$-linear CM Kisin module, where $\OO_E\subseteq E$ 
is the ring of integers, a definition which is fundamental to 
the computations in this appendix. We invite the reader to 
first look at the 
fundamental structure theorems regarding these modules, 
most namely Proposition \ref{CMMods} and Lemma \ref{smodel}.

The following lemma is fundamental.

\begin{lem}
\label{cyc4}
$\QQ_p$ admits a totally ramified cyclic extension of degree $4$ 
only if $p \equiv 1 \mod 4$ or $p=2$.
\end{lem}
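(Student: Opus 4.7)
The plan is to invoke local class field theory. Via the local reciprocity map, finite abelian extensions of $\QQ_p$ correspond bijectively to open subgroups of finite index in $\QQ_p^\times$, and under this correspondence the totally ramified extensions are precisely those for which the corresponding subgroup $H$ is contained in $\ZZ_p^\times$, with $\mathrm{Gal}(L/\QQ_p) \simeq \ZZ_p^\times/H$. Thus a totally ramified cyclic extension of degree $4$ exists if and only if $\ZZ_p^\times$ admits a cyclic quotient of order $4$.

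For $p = 2$ there is nothing to prove, so assume $p$ is odd. I would then recall the standard decomposition
\[
\ZZ_p^\times \;\simeq\; \mu_{p-1} \times (1 + p\ZZ_p),
\]
where $\mu_{p-1}$ is cyclic of order $p-1$ (the Teichm\"uller lift of the residue field units) and $1 + p\ZZ_p$ is a pro-$p$ group, topologically isomorphic to $(\ZZ_p,+)$ via the $p$-adic logarithm. Consequently every finite quotient of $\ZZ_p^\times$ is a product $C_1 \times C_2$ with $C_1$ a cyclic quotient of $\mu_{p-1}$ (hence of order dividing $p-1$) and $C_2$ a finite cyclic $p$-group.

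Now suppose $\ZZ_p^\times/H$ is cyclic of order $4$. Its $p$-primary and prime-to-$p$ parts must each be cyclic, and for odd $p$ the $p$-part of $4$ is trivial, forcing $C_2 = 1$ and therefore $C_1 \simeq \ZZ/4\ZZ$. Since $C_1$ is a quotient of the cyclic group $\mu_{p-1}$, this requires $4 \mid p-1$, i.e., $p \equiv 1 \mod 4$, as claimed.

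The argument is essentially a one-step application of local class field theory combined with the structure theorem for $\ZZ_p^\times$; there is no real obstacle, only the minor point of verifying that a cyclic quotient of $\mu_{p-1} \times (1+p\ZZ_p)$ decomposes compatibly with the direct-product decomposition, which follows from coprimality of the orders of the two factors for odd $p$.
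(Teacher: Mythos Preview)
Your argument is essentially identical to the paper's: both invoke local class field theory to identify the Galois group of a totally ramified abelian extension with a finite quotient of $\ZZ_p^\times$, and then use the decomposition $\ZZ_p^\times \simeq \mu_{p-1} \times (1+p\ZZ_p)$ for odd $p$ together with $(4,p)=1$ to force $4 \mid p-1$. One small slip: the norm subgroup $H \subseteq \QQ_p^\times$ for a totally ramified extension is not \emph{contained in} $\ZZ_p^\times$ (that would give infinite index); rather, $H$ contains a uniformizer, so $\ZZ_p^\times$ surjects onto $\QQ_p^\times/H$ and the Galois group is $\ZZ_p^\times/(H \cap \ZZ_p^\times)$ --- but this does not affect the rest of your reasoning.
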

\begin{proof}
The Galois group of a finite, ramified, cyclic extension of $\QQ_p$ 
is isomorphic to a finite quotient of $\ZZ_p^\times$. 
If $p\neq 2$, $\ZZ_p^\times \simeq \mu_{p-1}\times 1+p\ZZ_p$, 
and as $(4,p) = 1$, it follows that this quotient must be 
a quotient of $\mu_{p-1}$.
\end{proof}

\begin{lem}
\label{computation}
Let $\mathfrak{M}$ be an $\mathfrak{S}_1$-module of rank $h$ and 
$\{e_1,\ldots,e_h\}$ be a choice of basis such that 
$$\phi_{\mathfrak{M}}(f_1,\ldots,f_h) = (f_1^p,\ldots,f_h^p)
\left(\begin{array}{ccccc}
a_1&a_2&\cdots&a_{h-1}&a_h\\
0&a_1&\cdots&a_{h-2}&a_{h-1}\\
&&\ddots&\vdots&\vdots\\
\vdots&&&a_1&a_2\\
\\
0&&\cdots&0&a_1\end{array}\right)$$
where the entries satisfy the inequalities
\begin{align*}
\mathrm{deg}_u(a_i)&>\mathrm{deg}_u(a_j),\\
\mathrm{deg}_u (a_i^p a_{k-i+1})  &> \mathrm{deg}_u\left(a_j^pa_{k-j+1}\right),\end{align*}
for all $1\le i<j\le k\le h$. Then, up to $\mathfrak{S}_1^\times$-multiples, 
there are at most $h$ distinct 
saturated lines $\mathfrak{L}\subseteq \mathfrak{M}$ with 
$$v_u(\mathrm{det}(\phi_{\mathfrak{L}}))\in\left\{\mathrm{deg}_u(a_1) + (p-1)\mathrm{deg}_u\left(\frac{a_1}{a_k}\right)\right\}_{1\le k\le h}.$$
\end{lem}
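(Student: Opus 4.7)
The plan is to solve the semi-linear eigenvector equation $\phi(e) A = \lambda e$ directly by exploiting the upper-triangular Toeplitz form of $A$ together with the degree inequalities (i), (ii) to enumerate saturated solutions by their ``leading support.''

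I would first unpack the eigenvalue equation coordinate-wise. With $e = (f_1, \ldots, f_h) \in \mathfrak{M}$ and $\phi_\mathfrak{L}(e) = \lambda e$, the Toeplitz structure of $A$ yields the triangular system
\begin{equation*}
\sum_{i=1}^{j} f_i^p\, a_{j-i+1} \;=\; \lambda f_j, \qquad j = 1, \ldots, h.
\end{equation*}
Saturation of $\mathfrak{L}$ means at least one coordinate of $e$ is a unit in $\mathfrak{S}_1 = k[[u]]$; letting $\ell$ denote the smallest index with $f_\ell \neq 0$, the row-$\ell$ equation reads $f_\ell^p a_1 = \lambda f_\ell$, giving $\lambda = f_\ell^{p-1} a_1$ and hence $v_u(\lambda) = (p-1)\, v_u(f_\ell) + \deg_u(a_1)$. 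Thus every saturated eigenvalue is determined by the valuation of the smallest nonvanishing coordinate of its generator.

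Next I would stratify solutions by the integer $k \in \{1, \ldots, h\}$ measuring the ``effective tail length'' of $e$: for each $k$, I seek a solution with $f_1 = \cdots = f_{h-k} = 0$ and $f_h \in \mathfrak{S}_1^\times$, normalized to $f_h = 1$, so that the nonzero part of $e$ lives in positions $h-k+1, \ldots, h$. The row-$(h-k+1)$ equation fixes $\lambda = f_{h-k+1}^{p-1} a_1$, while the intermediate rows $h-k+2, \ldots, h-1$ determine the intermediate coordinates $f_{h-k+2}, \ldots, f_{h-1}$ recursively; the bottom row $j = h$ then produces a single equation $\sum_{i} f_{h-i+1}^p a_i = \lambda = f_{h-k+1}^{p-1} a_1$ in $f_{h-k+1}$. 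Hypothesis (ii) is exactly the statement that, among the summands $f_{h-k+1}^p a_k$ and $f_{h-i+1}^p a_i$ for $i < k$, exactly one dominates in $u$-valuation; matching leading $u$-valuations isolates $v_u(f_{h-k+1})$ uniquely, and substitution into $\lambda = f_{h-k+1}^{p-1} a_1$ yields $v_u(\det \phi_\mathfrak{L}) = \deg_u(a_1) + (p-1)\deg_u(a_1/a_k)$. Hypothesis (i) ensures that the $h$ values so obtained, as $k$ ranges over $1, \ldots, h$, are pairwise distinct.

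The main technical obstacle is twofold. First, I must verify that the intermediate coordinates $f_{h-k+2}, \ldots, f_{h-1}$ can indeed be consistently solved in $\mathfrak{S}_1$ with the valuations dictated by the dominant-term balance; I plan to argue by descending induction on the row index, showing at each step that the residual equation for $f_j$ is of the form $a_1 f_j = (\text{expression of controlled } u\text{-valuation})$, solvable uniquely in $\mathfrak{S}_1$ because the strict dominance built into (ii) prevents cancellations. Second, for the counting I must check that every saturated eigenline falls, up to $\mathfrak{S}_1^\times$-scaling, into exactly one of these $h$ strata: this will follow from the uniqueness of the dominant-term balance under (ii) combined with the distinctness of the resulting $v_u(\lambda)$ under (i), so that different strata cannot give the same line. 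Together these yield the asserted bound of at most $h$ distinct saturated lines with valuations in the specified set.
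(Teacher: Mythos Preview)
Your approach is essentially the same as the paper's: both stratify by the position of the first nonvanishing coordinate and use the degree hypotheses to pin down $v_u(\lambda)$ via dominant-term balance. The paper organizes this as an induction on $h$ (with base case $h=2$ done in Proposition~\ref{lubintate}), simultaneously establishing three facts at each step: (1) if $f_j = 0$ then $f_i = 0$ for all $i<j$, so the support is always a tail; (2) $\deg_u(f_i) \ge \deg_u(f_j)$ for $i<j$, so one may normalize $f_h = 1$; and (3) the value of $\mu$ in the full-support case, from which the other strata follow by shrinking $h$.

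Two places in your plan need sharpening. First, the ``recursive solving'' for the intermediate $f_j$ is not literally available: the row-$j$ equation $\sum_{i\le j} f_i^p a_{j-i+1} = \lambda f_j$ is a degree-$p$ equation in $f_j$, not of the form $a_1 f_j = (\text{known})$, so you cannot solve for $f_j$ uniquely in $\mathfrak{S}_1$ by descending induction. What \emph{is} determined at each step is $\deg_u(f_j)$, via the dominant-term balance that hypothesis (ii) guarantees, and that is all you need to compute $\mu$; but you should make explicit that you are tracking degrees, not values. The paper does exactly this, carrying the relation $\deg_u(f_i/f_{h-1}) = \deg_u(a_i/a_{h-1})$ through the induction.

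Second, and more substantively, your justification that every saturated line lands in one of the $h$ strata is too thin. You must rule out supports with holes (say $f_1\neq 0$, $f_j = 0$, $f_{j+1}\neq 0$). The paper handles this as its property (1), and the argument is not automatic: when $f_j=0$ the row-$j$ equation becomes $\sum_{i<j} f_i^p a_{j-i+1} = 0$, and one needs the degree information from the induction hypothesis, together with hypothesis (ii), to see that the summands have pairwise distinct $u$-degrees and hence cannot cancel. Your appeal to ``uniqueness of dominant-term balance under (ii) combined with distinctness of $v_u(\lambda)$ under (i)'' does not supply this; distinctness of eigenvalues across strata says nothing about non-existence of solutions with gapped support. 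The inductive organization is what makes this step clean, so either adopt it or furnish a direct no-cancellation argument for such sums.
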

\begin{proof}
Let $\mathfrak{L}\subseteq \mathfrak{M}$ be a 
saturated line with Frobenius $\phi_\mathfrak{L} = u^\mu$. 
By the commutation of the diagram

\bigskip
\centerline{\begin{xy}
(0,15)*+{\mathfrak{L}}="a";
(15,15)*+{\mathfrak{M}}="b";
(0,0)*+{\mathfrak{L}}="c";
(15,0)*+{\mathfrak{M}}="d";
{\ar^{\phi_{\mathfrak{M}}} "b";"d"};{\ar@{^{(}->} "c";"d"};
{\ar@{^{(}->} "a";"b"};{\ar_{u^{\mu} = \phi_\mathfrak{L}} "a";"c"};
\end{xy}}
\bigskip

\noindent an element $v = (f_1,\ldots,f_h)\in \mathfrak{M}$ is 
also in $\mathfrak{L}$ if and only if it satisfies the system of 
equations
\begin{equation}
\label{inline1}
a_kf_1^p + \cdots + a_1f_k^p = u^\mu f_k\;\text{  for  }\; 1\le k\le h.
\end{equation}
We may assume $f_1\neq 0$ (otherwise let $1\le s\le h$ be the 
smallest integer such that $f_s \neq 0$ and rephrase the problem 
with $h$ replaced by $h-s+1$) so that $u^{\mu} = a_1f_1^{p-1}$.

We proceed by induction and show at each step that the following 
three conditions are met:

\begin{enumerate}
\item If $v = (f_1,\ldots,f_h)\in\mathfrak{L}$ then if 
$f_j = 0$ for some $1\le j\le h$, then $f_i = 0$ for all 
$1\le i<j$.
\item If $v = (f_1,\ldots,f_h)\in\mathfrak{L}$ 
then $\mathrm{deg}_u(f_i)\ge \mathrm{deg}_u(f_j)$ for $1\le i< j\le h$. 
In particular, since $\mathfrak{L}\subseteq \mathfrak{M}$ is 
a saturated submodule, we may assume that $f_h = 1$.
\item Compute $v_u(\mathrm{det}(\phi_\mathfrak{L}))$ for all potential 
solutions to the problems above.
\end{enumerate}

For the base case $h=2$, 
(1)--(3) have already been verified explicitly in Proposition 
\ref{lubintate}.

To show the induction step for (1), we assume that $f_h = 0$ and 
by the induction hypothesis 
that $f_{h-1} \neq 0$, otherwise $f_k = 0$ for all $1\le k \le h-1$. 
A solution to the equation
$$a_hf_{1}^p + \cdots + a_2f_{h-1}^p = 0,$$
which is satisfied since $f_{h} = 0$, then exists if and only 
if 
$$\mathrm{deg}_u(a_if_{h-i+1}^p) = \mathrm{deg}_u(a_jf_{h-j+1}^p)$$
for some pairs $i>j$.
Since $f_{h-1}\neq 0$ by hypothesis, $f_k\neq 0$ for some 
$1\le k<h-1$, and hence for all $f_i$ for $k\le i\le h-1$ by 
the induction hypothesis. We may also safely assume $k = 1$, 
for otherwise replace the $h$ equations by $h-k$ equations in 
(\ref{inline}). But then by the induction hypothesis, the equality of 
degrees above becomes either 
$$\mathrm{deg}_u(a_ia_{h-i+1}^p) = \mathrm{deg}_u(a_ja_{h-j+1}^p)$$
for some pairs $i>j$. This contradicts our inequality hypotheses, 
and hence the only solution exists when $f_i = 0$ for all 
$1\le i\le h$.

Similarly to show the induction step for (2), 
suppose the statement holds for $h-1$ and yet fails at $h$, so 
that, in particular, $\mathrm{deg}_u(f_h)>\mathrm{deg}_u(f_{h-1})$. 
By the induction hypothesis, 
we may thus assume that $f_{h-1} = 1$, so that we obtain the equation
$$a_1f_h^p + \cdots + a_hf_1^p = \left(a_{h-1}f_1^{p} + \cdots + a_2f_{h-2}^p + a_1\right)f_h.$$
We may moreover assume by the induction hypothesis that 
$$\mathrm{deg}_u(f_1) - \mathrm{deg}_u(f_k) = \mathrm{deg}_u(a_1) - \mathrm{deg}_u(a_k)$$ 
for all $1\le k\le h-1$ so that there exists a solution to the equation 
if and only if 
$$\mathrm{deg}_u\left(a_1f_h^p\right) = \mathrm{deg}_u(a_{h-1}f_1^pf_h).$$
This last assertion follows from the inequalities of our hypothesis, 
which show that $\mathrm{deg}_u(a_{h-1}f_1^pf_h)$ 
dominates all the excluded terms. Then by our induction hypothesis, 
we may conclude from this that $\mathrm{deg}_u(f_h) = \mathrm{deg}_u(f_1)$. 
But then $a_hf_1^p + \cdots + a_2f_{h-1}^p = 0$, which by the 
induction hypothesis has a solution if and only if $f_i = 0$ for all 
$1\le i\le h-1$.

Finally, for (3) we may now \emph{a priori} assume by (1) and (2) 
that $f_k\neq 0$ for all $1\le k\le h$ and 
$\mathrm{deg}_u(f_i)\ge \mathrm{deg}_u(f_j)$ for 
$i<j$. We can then rewrite the system (\ref{inline1}) as
\begin{equation}
\label{inline}
\left(\frac{f_1}{f_k}\right)^p + \frac{a_{k-1}}{a_k}\left(\frac{f_2}{f_k}\right)^p+\cdots + \frac{a_1}{a_k} = \frac{a_1}{a_k}\left(\frac{f_1}{f_k}\right)^{p-1}\;\text{  for  }\; 1\le k\le h
\end{equation}
and note that the solutions for the first $k$ set of equations 
for $1\le k\le h-1$ is given by the induction hypothesis. 
Thus, for all $1\le k\le h-1$,
$$\mathrm{deg}_u\left(f_k\right) = \mathrm{deg}_u\left(\frac{f_k}{f_{h-1}}\right) + \mathrm{deg}_u\left(f_{h-1}\right)$$
and 
$v = \left(\frac{f_1}{f_{h-1}},\ldots,\frac{f_{h-2}}{f_{h-1}}\right)$ is 
a solution for the system (\ref{inline}) on 
$h-1$ equations when for all $1\le k\le h-1$,
$$\mathrm{deg}_u\left(\frac{f_k}{f_{h-1}}\right) = \mathrm{deg}_u\left(\frac{a_k}{a_{h-1}}\right).$$
This has a solution if and only if
$$\mathrm{max}_{1\le k\le h}\left\{\mathrm{deg}_u\left(\frac{a_{h-k+1}}{a_h}\right)\left(\frac{a_k}{a_{h-1}}\right)^p + \mathrm{deg}_u\left(\frac{f_{h-1}}{f_{h}}\right)^p\right\} = \mathrm{deg}_u\left[\frac{a_1}{a_h}\left(\frac{f_1}{f_h}\right)^{p-1}\right].$$
By our hypothesis that $\mathrm{deg}_u(a_i^pa_{h-i+1})>\mathrm{deg}_u(a_j^pa_{h-j+1})$ for $1\le i<j\le h$, the unique solution (if it exists) is 
given precisely when the maximum on the left is attained by $k = 1$. 
Since $\mathfrak{L}$ is saturated, we may assume by (2) that 
$f_h = 1$. This provides solutions 
for $f_k$ and $\mu$ uniquely, for all $1\le k\le h$.
\end{proof}

We recall here Example \ref{ramex}, which provides
for $E/\QQ_p$ any Galois and totally ramified field 
of degree $h$ the representation of 
the (reduced) Kisin module associated to a CM $p$-divisible group $\mathcal{G}$ 
with ($p$-adic) CM by $(\OO_E,\Phi)$. For this, we choose 
the $\mathfrak{S}_1$-basis $\{\pi^0,\ldots,\pi^{h-1}\}$, where 
$\pi$ is a uniformizer for $E$, so that
$$\phi_{\mathfrak{M}_{\mathfrak{S}_1(\mathcal{G}[p])}}(e_i) = \sum_{j=0}^{h-1-i}[P_j(u)]e_{j+i}$$
where the polynomials $P_j(u)$ are precisely defined in Example \ref{ramex}. 
We note the important property that the degree of $P_j(u)$ for $j>1$ 
is strictly less than the degree of $P_1(u)$.

\begin{prop}
\label{ramlines}
Assume that $p\ge h$ and $E/\QQ_p$ is totally ramified and 
cyclic of degree $4$, and let 
$\mathcal{G}/\OO_K$ be an $\OO_E$-linear CM 
$p$-divisible group of type $(\OO_E,\Phi)$. Denote by 
$\mathfrak{M}:=\mathfrak{M}_{\mathfrak{S}_1}(\mathcal{G})$ 
and assume that $\sum_{i\in \Phi^c} c_i\neq 0$ where 
$c_i$ are the units in Example \ref{ramex}. 
Then for a saturated $\mathfrak{S}_1$-line 
$\mathfrak{L}\subseteq \mathfrak{M}$, 
$$v_u(\det(\phi_{\mathfrak{L}}))\in \left\{[(h-d+p-1)(p^h-p^{h-1}) - (p-1)(p^h - 2p^{h-1} + p^{h-k})]e
\right\}.$$
\end{prop}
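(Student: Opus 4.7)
The proof will mimic the strategy of Proposition~\ref{lubintate} (the case $h=2$), but now organized around the abstract computation of Lemma~\ref{computation}. The plan is to first cast $\phi_\mathfrak{M}$ into the explicit upper-triangular shape of that lemma, verify its two degree hypotheses, and then read off the possible slopes.

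First, I would use Example~\ref{ramex} with the basis $\{1,\pi,\pi^{2},\pi^{3}\}$ to represent $\phi_\mathfrak{M}$ as an upper-triangular matrix whose diagonal entries are $P_{0}(u)=u^{(h-d)e/h}$ and whose super-diagonal entries are $P_{1}(u),P_{2}(u),P_{3}(u)$, of strictly decreasing $u$-degree. To control these polynomials more precisely, I would pass to the level-$s$ model $\mathfrak{M}^{s}$ of Definition~\ref{smodeldef} and use the Lubin--Tate description in Remark~\ref{LT}, which for the cyclic quartic case replaces each $h_{i}(u)$ by an explicit polynomial built from $\pi+u^{p-1}$ and its Galois conjugates. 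The hypothesis $\sum_{i\in\Phi^{c}}c_{i}\neq 0$ guarantees that the would-be leading term of $P_{1}(u)$ really is nonzero, and the cyclic Galois action on the $c_{i}$ together with $p\geq h=4$ propagate this non-vanishing to the leading terms of $P_{2}(u)$ and $P_{3}(u)$.

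Next, identifying $a_{i}:=P_{i-1}(u)$, I would verify the two degree inequalities in the hypothesis of Lemma~\ref{computation}: the single-entry inequality $\deg_{u}(a_{i})>\deg_{u}(a_{j})$ for $i<j$ is immediate from the decreasing-degree structure above, while the cross inequality $\deg_{u}(a_{i}^{p}a_{k-i+1})>\deg_{u}(a_{j}^{p}a_{k-j+1})$ for $i<j\le k\le h$ reduces to a numerical check on the Lubin--Tate leading exponents; it is precisely here that $p\geq h$ is used, to ensure that the drop in $u$-degree from $P_{j-1}$ to $P_{j}$ is dominated by the factor $p$ appearing in the Frobenius. With both inequalities in hand, Lemma~\ref{computation} applies directly and produces at most $h=4$ saturated $\mathfrak{S}_{1}$-lines $\mathfrak{L}_{k}\subseteq\mathfrak{M}$, indexed by $k\in\{1,\dots,4\}$, with
\[
v_{u}(\det\phi_{\mathfrak{L}_{k}})=\deg_{u}(a_{1})+(p-1)\deg_{u}(a_{1}/a_{k}).
\]

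Finally, I would evaluate these valuations explicitly. Using $\deg_{u}(a_{1})=(h-d)\,e_{K^{s}}/h$ and a corresponding explicit expression for $\deg_{u}(a_{k})$ coming from the Lubin--Tate expansion, together with the identity $e_{K^{s}}=(p^{h}-p^{h-1})\,e$ (the extension $K^{s}/K$ trivializes the $p$-torsion of $\mathcal{G}$ and has degree $\#(\mathcal{O}_{E}/p\mathcal{O}_{E})^{\times}=p^{h}-p^{h-1}$), one rearranges to obtain the claimed expression $[(h-d+p-1)(p^{h}-p^{h-1})-(p-1)(p^{h}-2p^{h-1}+p^{h-k})]\,e$.

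The main obstacle is the second set of degree inequalities in Lemma~\ref{computation}: these are the only place where the cyclic quartic structure and the assumptions $p\geq h$, $\sum_{i\in\Phi^{c}}c_{i}\neq 0$ all combine. Without them, one could have a coincidence of leading exponents that would create extraneous saturated lines (and spoil the clean list indexed by $k\in\{1,\dots,h\}$); with them, the lemma's conclusion applies verbatim and gives the formula.
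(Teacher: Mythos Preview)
Your proposal is correct and follows essentially the same approach as the paper: cast $\phi_{\mathfrak{M}}$ in the upper-triangular form of Example~\ref{ramex}, use the Lubin--Tate expansion from Lemma~\ref{smodel} and Remark~\ref{LT} together with the hypotheses $p\ge h$ and $\sum_{i\in\Phi^c}c_i\neq 0$ to verify the degree inequalities of Lemma~\ref{computation}, and then read off the slopes. The paper carries out one step you leave as a sketch, namely the explicit combinatorial identification of $\deg_u(a_k)$: expanding $(\pi^{h-1}u+\cdots+u^{p^{h-1}})^{p-1}$ and indexing monomials by non-increasing tuples $(i_1,\dots,i_{h-1})$, it shows the maximal $u$-degree of a $\pi^k$-coefficient is $p^h-2p^{h-1}+p^{h-k}$, whence $\deg_u(a_k)=(p^h-2p^{h-1}+p^{h-k})+(h-d-1)(p^h-p^{h-1})$; this is the missing ingredient in your final paragraph and makes the verification of the second inequality in Lemma~\ref{computation} routine.
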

\begin{proof}
This follows if we can apply Lemma \ref{computation} to 
the presentation of $\mathfrak{M}_{\mathfrak{S}_1}(\mathcal{G})$ 
of Example \ref{ramex}. We therefore check the conditions of 
the hypotheses, namely the conditions on the 
degree of the polynomials which 
appear in the entries of $\phi_{\mathfrak{M}_{\mathfrak{S}_1}(\mathcal{G})}$.

Recall that by Lemma \ref{smodel} and Remark \ref{LT} we can express the 
polynomials $u^{\frac{e}{h}} + \pi h_i(u)$ (here we 
absorb the unit $c_i$ into our choice of uniformizer $\pi$) 
by using the $h$th iteration of the Lubin-Tate polynomial. By the 
hypothesis that $p>h$, we may therefore write
\begin{align*}
u^{\frac{e}{h}}+\pi h_i(u)&\equiv\pi + (\pi^{h-1}u +\pi^{h-2}u^p + \cdots  + u^{p^{h-1}})^{p-1}\mod p\\
&\equiv\pi + \sum_{i_1 = 0}^{p-1}\left(\pi^{i_1}(u^{p^{h-1}})^{p-1-i_1}\cdot\sum_{i_2 = 0}^{i_1}\left(\pi^{i_2}(u^{p^{h-2}})^{i_1-i_2}\cdot\left(\sum_{i_3 = 0}^{i_2} \pi^{i_3}(u^{p^{h-3}})^{i_2-i_3}\cdot\right.\right.\right.\\
&\quad\quad\quad\quad\quad\quad\quad\quad\quad\left.\left.\left.\cdots\cdot\sum_{i_{h-1}=0}^{i_{h}}\pi^{i_{h-1}}(u^p)^{i_{h-2}-i_{h-1}}u^{i_{h-1}}\right)\right)\right)\mod \pi^h.\end{align*}
Each monomial 
comprising $u^{\frac{e}{h}}+\pi h_i(u)$ is thus identified by a 
tuple $(i_1,\ldots,i_{h-1})$ such that $i_1\ge i_2\ge \cdots \ge i_{h-1}$. 
Moreover, if $\sum_{j=1}^{j-1}i_j = k$, the coefficient 
of the corresponding monomial (up to unit multiplication) is $\pi^k$. 
One readily checks that 
the maximal degree a monomial with coefficient $\pi^k$ takes is 
$p^h - 2p^{h-1} + p^{h-k}$, which 
occurs precisely when $i_1 = i_k=1$ and $i_{k+1} = i_{h-1} = 0$.

By the hypothesis that $\sum_{i\in \Phi^c}c_i\neq 0$, the 
previous computation gives
$$\mathrm{deg}_u(a_k) = (p^h - 2p^{h-1} + p^{h-k}) + (h-d-1)(p^{h} - p^{h-1}).$$
Then both hypotheses on the degree of the entries $a_k$ in 
Lemma \ref{computation} are satisfied, 
and we compute the potential degrees from these 
values for $\mathrm{deg}_u(a_k)$.
\end{proof}

When $h=4$, the case of interest for an abelian surface, 
this proposition gives for all $p>4$,
$$v_u(\mathrm{det}(\phi_{\mathfrak{L}}))\in \{2p^4 - 2p^3,\;3p^4 - 4p^3 + p^2,\;3p^4 - 3p^3 - p^2 + p,\;3p^4 - 3p^3 - p+1\}.$$
By Lemma \ref{cyc4} this covers solutions for all potentially 
ramified primes except $p = 2$. We 
consider this separately below.

\begin{lem}
\label{ramline2}
Let $E/\QQ_2$ be totally ramified and cyclic of degree $h=4$, and let 
$\mathcal{G}/\OO_K$ be an $\OO_E$-linear CM $p$-divisible group 
of type $(\OO_E,\Phi)$ with $\Phi = \{1,2\}$. Denote by 
$\mathfrak{M}:=\mathfrak{M}_{\mathfrak{S}_1}(\mathcal{G})$. 
Then for a saturated $\mathfrak{S}_1$-line 
$\mathfrak{L}\subseteq \mathfrak{M}$,
$$v_u(\mathrm{det}(\phi_{\mathfrak{L}}))\in \{2(2^4-2^3),2(2^4-2^3) + 2^3\}.$$
\end{lem}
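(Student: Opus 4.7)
The plan is to carry out an explicit computation in the spirit of Proposition \ref{lubintate}, since Proposition \ref{ramlines} does not apply when $p = 2 < h = 4$: the degree-comparison hypotheses of Lemma \ref{computation} break down, and the four putative values listed in that proposition need not all be realized (nor need they be the only ones).

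First I would exploit the special form of Lubin-Tate at $p=2$. With $[\pi](u) = \pi u + u^2$, the exponent $q - 1 = 1$ collapses the recursion to $[\pi^s](u)/[\pi^{s-1}](u) = \pi + [\pi^{s-1}](u)$, so each of the four conjugate factors of the Eisenstein polynomial at the appropriate level model has a simple explicit form in $\pi$ and $u$. Substituting these into the formulas of Example \ref{ramex} and using the CM type $\Phi = \{1,2\}$ (so that $\Phi^c = \{3,4\}$ selects exactly two of the four Lubin-Tate factors in $P_{\Phi^c, \pi_K, K_0 \otimes E}(u)$), I would write the matrix of $\phi_{\mathfrak{M}}$ modulo $2$ in the basis $\{1,\pi,\pi^2,\pi^3\}$ as an explicit upper-triangular matrix whose entries are short polynomials in $u$.

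Next, following the strategy of Proposition \ref{lubintate}, I would solve the system (\ref{inline1}), specialized to $h = 4$ and $p = 2$, for a saturated generator $v = (f_1,f_2,f_3,f_4)$. Lemma \ref{quentin} immediately produces the $\OO_E$-stable line $\pi^3 \mathfrak{M}$, contributing the first value $\mu = 2(2^4 - 2^3)$, which matches the $k=1$ branch in the proof of Proposition \ref{ramlines}. A direct perturbation of the $\pi^3$-generator by a cross-term controlled by the leading off-diagonal entry of $\phi_\mathfrak{M}$ then produces a second, non-$\OO_E$-stable solution with $\mu = 2(2^4 - 2^3) + 2^3$; the correction $2^3$ is exactly the $u$-valuation of the dominant off-diagonal monomial at this level when $p = 2$.

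The main obstacle is to show that no further saturated line exists. At $p = 2$ the strict degree-inequalities used in Lemma \ref{computation} degenerate: several monomials attain equal $u$-valuation, and the four branches $k = 2,3,4$ predicted by the formula in Proposition \ref{ramlines} for $p \geq h$ must be re-examined one by one. The plan is to substitute the candidate leading terms into the system modulo $2$ and show that the required congruences force $f_1 = f_2 = f_3 = 0$ (reducing back to the $\OO_E$-stable case) or produce an inconsistency, so that only the two stated valuations survive.
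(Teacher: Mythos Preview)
Your approach is essentially the paper's: compute the matrix of $\phi_{\mathfrak{M}}$ explicitly via Lubin--Tate at $p=2$ in the basis $\{1,\pi,\pi^2,\pi^3\}$, then solve the resulting system for saturated lines. The paper carries this out and finds the same two valuations.

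Where you diverge is in anticipating difficulty that does not materialize. The paper's explicit computation shows that at $p=2$ the entries satisfy $a_1 = u^{16}$, $a_3 = u^8 + 1$, and crucially $a_2 = a_4 = 0$. This vanishing is a characteristic-$2$ phenomenon: in the product of the two $\Phi^c$-factors, the $\pi$-coefficient is $(c_3+c_4)u^8(u^4+1)$, and since the units $c_i$ reduce to $1$ in $\mathbb{F}_2$ one has $c_3+c_4 \equiv 0$; similarly for the $\pi^3$-coefficient. With $a_2=a_4=0$ the $4\times 4$ system decouples into two identical $2\times 2$ blocks of the Proposition~\ref{lubintate} shape, and the solutions are immediately $(0,0,x,y)$ with $\mu=16$ and $(u^8x,u^8y,x,y)$ with $\mu=24$, for $x,y\in\mathbb{F}_2$ not both zero. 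So the ``main obstacle'' you flag --- ruling out the extra branches $k=2,3,4$ by substitution and case-checking --- dissolves once you observe the vanishing of the odd-indexed entries; no degree-inequality analysis in the style of Lemma~\ref{computation} is needed at all. Note also that the slope-$16$ case is not just the single $\OO_E$-stable line $\pi^3\mathfrak{M}$ but the whole family $(0,0,x,y)$.
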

\begin{proof}
By Example \ref{ramex}, we can choose in each case the 
basis $(1,\pi,\pi^2,\pi^3)$ for a choice of uniformizer 
$\pi$ of $E$ such that
$$\phi_{\mathfrak{M}}(f_1,f_2,f_3,f_4) = (f_1^p,f_2^p,f_3^p,f_4^p)\left(\begin{array}{cccc}
a_1&a_2&a_3&a_4\\
&a_1&a_2&a_3\\
&&a_1&a_2\\
&&&a_1\end{array}\right).$$

By Lemma \ref{smodel} and 
Remark \ref{LT}, we can express the polynomials 
$u^{\frac{e}{4}} + c_i\pi h_i(u)$ as
$$u^{\frac{e}{4}} + c_i\pi h_i(u)\equiv u^8 + c_i\pi(u^4 + 1) + c_i^2\pi^2(u^4 + u^2) + c_i^3\pi^3(u^2 + u)\mod \pi^4$$
so that
$$a_1 = u^{16},\quad a_2 = 0,\quad a_3= u^8+1, \quad a_4 = 0.$$
We then compute that the only solutions to this system 
of equations are
\begin{align*}
(0,0,x,y)&\quad\quad \phi_{\mathfrak{L}} = u^{16},\\
(u^8x,u^8y,x,y)&\quad\quad \phi_{\mathfrak{L}} = u^{24},
\end{align*}
where $x,y\in \mathbb{F}_2$ are not both $0$.
\end{proof}

Now we consider the situation of submodules of higher rank. 
Saturated submodules of rank $h-k$ 
in a Kisin module $\mathfrak{M}$ of rank $h$ correspond to 
saturated lines in $\bigwedge^{h-k}\mathfrak{M}$. Moreover, since 
$\phi_{\bigwedge^{h-k}\mathfrak{M}} = \bigwedge^{h-k}\phi_{\mathfrak{M}}$, 
we can compute these modules by taking partial determinants.

Specializing to $h = 4$, we invoke Example \ref{ramex} 
to write a representation of the module 
$\mathfrak{M} := \mathfrak{M}_{\mathfrak{S}_1}(\mathcal{G})$ on 
the explicit basis $\{1,\pi,\pi^2,\pi^3\}$ for a choice of 
uniformizer $\pi$ of $E$. We make the assumption that 
$E/\QQ_p$ is cyclic and the ($p$-adic) CM type is $\Phi = \{1,2\}$. Then

$$\phi_{\bigwedge^2\mathfrak{M}}=\left(\begin{array}{cccccc}
a_1^2&a_1a_2&a_2^2-a_1a_3&a_1a_3&a_2a_3-a_1a_4&a_3^2-a_2a_4\\
&a_1^2&a_1a_2&a_1a_2&a_2^2&a_2a_3-a_1a_4\\
&&a_1^2&0&a_1a_2&a_2^2-a_1a_3\\
&&&a_1^2&a_1a_2&a_1a_3\\
&&&&a_1^2&a_1a_2\\
&&&&&a_1^2
\end{array}\right)$$
and

$$\phi_{\bigwedge^3\mathfrak{M}}=\left(\begin{array}{cccc}
a_1^3&a_1^2a_2&a_1a_2^2 - a_1^2a_3&a_2^3 - 2 a_1a_2a_3+a_1^2a_4\\
&a_1^3&a_1^2a_2&a_1a_2^2 - a_1^2a_3\\
&&a_1^3&a_1^2a_2\\
&&&a_1^3
\end{array}\right),$$
where $a_i = P_i(u)$ (defined in Example \ref{ramex}) 
and we recall from Proposition \ref{ramlines} that
$$\mathrm{deg}_u(P_i(u)) = 2p^4 - 3p^3 + p^{4-i}$$
when $p>4$.

\begin{prop}
\label{rammods}
Assume that $p >4$ and $E/\QQ_p$ is totally ramified and cyclic 
of degree $4$, and let 
$\mathcal{G}/\OO_K$ be an $\OO_E$-linear CM 
$p$-divisible group of type $(\OO_E,\Phi)$. Denote by 
$\mathfrak{M}:=\mathfrak{M}_{\mathfrak{S}_1}(\mathcal{G})$ 
and assume that $\sum_{i\in \Phi^c} c_i\neq 0$ where 
$c_i$ are the units in Example \ref{ramex}. 
Then:
\begin{enumerate}
\item For a saturated $\mathfrak{S}_1$-line 
$\mathfrak{L}\subseteq \bigwedge^2\mathfrak{M}$, 
\begin{align*}
v_u(\det(\phi_{\mathfrak{L}}))\in &\left\{4p^4-4p^3,\;5p^4-6p^3+ p^2,\;5p^4-5p^3 - p^2 + p,\;5p^4-5p^3-p+1,\right.\\
&\left.\:\:6p^4-8p^3+2p^2,\;6p^4-7p^3 + p^2-p  + 1
\right\}.\end{align*}
\item For a saturated $\mathfrak{S}_1$-line 
$\mathfrak{L}\subseteq \bigwedge^3\mathfrak{M}$, 
$$v_u(\det(\phi_{\mathfrak{L}}))\in \left\{6p^4 - 6p^3, \;
7p^4 - 8p^3 +p^2,\;7p^4 - 7p^3 - p^2 +p,\; 7p^4 - 7p^3 - p + 1\right\}.$$
\end{enumerate}
\end{prop}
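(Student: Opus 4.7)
The plan is to apply the framework of Lemma \ref{computation} directly to the explicit upper-triangular matrix representations of $\phi_{\bigwedge^2 \mathfrak{M}}$ and $\phi_{\bigwedge^3 \mathfrak{M}}$ exhibited immediately before the proposition. The key input I will need throughout is the dominant-degree data $\deg_u P_i(u) = 2p^4 - 3p^3 + p^{4-i}$ established in the proof of Proposition \ref{ramlines} (and which uses the hypothesis $\sum_{i\in\Phi^c} c_i \neq 0$ and $p>4$). For each off-diagonal entry $b_k$ of the two wedge matrices, which is a polynomial expression in $a_1,\dots,a_4$, I will first identify the unique monomial whose $u$-degree dominates.

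For part (2), the matrix $\phi_{\bigwedge^3\mathfrak{M}}$ is exactly of the Toeplitz form required by Lemma \ref{computation}, with $b_1 = a_1^3$, $b_2 = a_1^2 a_2$, $b_3 = a_1 a_2^2 - a_1^2 a_3$, $b_4 = a_2^3 - 2 a_1 a_2 a_3 + a_1^2 a_4$. A direct comparison shows that for $p>4$ the dominant term of $b_3$ is $a_1^2 a_3$ (degree $6p^4-7p^3+p$, beating $a_1 a_2^2$'s degree $6p^4-8p^3+2p^2$) and the dominant term of $b_4$ is $a_1^2 a_4$ (degree $6p^4-7p^3+1$, beating both $a_2^3$ and $a_1 a_2 a_3$). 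After verifying the two degree inequalities in the hypothesis of Lemma \ref{computation} from these explicit formulas, the lemma outputs the four valuations $\deg b_1 + (p-1)\deg(b_1/b_k)$, which a brief arithmetic check shows match the four values $6p^4-6p^3$, $7p^4-8p^3+p^2$, $7p^4-7p^3-p^2+p$, $7p^4-7p^3-p+1$ claimed in part (2).

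For part (1), I cannot apply Lemma \ref{computation} verbatim because the 6x6 matrix $\phi_{\bigwedge^2\mathfrak{M}}$ fails the Toeplitz hypothesis: position $(3,4)$ is $0$ rather than $a_1 a_2$. I will instead reprove the inductive argument of Lemma \ref{computation} tailored to this matrix. Given a saturated line spanned by $v = (f_1,\ldots,f_6)$, let $s$ be the smallest index with $f_s \neq 0$, rescale so $f_s = 1$, and work in the lower-right $(7-s)\times(7-s)$ block. The arguments (1)--(3) in the induction of Lemma \ref{computation} transfer directly: in each block, one identifies the unique dominant monomial entry and finds the unique valuation of $\phi_\mathfrak{L}$ compatible with the Frobenius-equivariance diagram. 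Carrying this out for each $s \in \{1,\dots,6\}$, and in each case computing the dominant terms of the relevant $b_k$'s (for instance $b_5 = a_2^2$ in row 2 and $b_5 = a_1 a_2$ in rows 4,5, giving different dominant behaviors depending on $s$), produces exactly the six valuations stated in part (1).

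The main obstacle will be the bookkeeping in the $\bigwedge^2$ case: because of the zero at $(3,4)$, the ``diagonal entry'' $a_1^2$ governs row $3$ without contribution from $f_4$, which breaks the uniform substitution used in Lemma \ref{computation}. However, when $s \geq 4$ this entry is irrelevant, and when $s \leq 2$ the dominant-term analysis in row $3$ is controlled by $a_1 a_2 f_s^p$ (from column $s+1$ or later), so the zero does not alter which monomial dominates. The only delicate case is $s=3$, where one must verify directly that the line $(0,0,1,0,f_5,f_6)$ satisfies the system and yields a valuation from the claimed list; the entries $a_1 a_2$ and $a_2^2 - a_1 a_3$ in column 6 then control which of the values $6p^4-8p^3+2p^2$ or $6p^4-7p^3+p^2-p+1$ is realized. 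Once these case distinctions are handled, the enumeration of six saturated lines and their valuations is complete.
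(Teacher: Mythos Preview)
Your treatment of part (2) is correct and matches the paper's approach: direct application of Lemma \ref{computation} to the $4\times 4$ Toeplitz matrix $\phi_{\bigwedge^3\mathfrak{M}}$, after identifying the dominant monomial in each off-diagonal entry.

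For part (1) there is a genuine gap. Your case analysis by the smallest index $s$ with $f_s\neq 0$ does not respect the actual structure of the system. In particular, for $s=3$ you write the candidate line as $(0,0,1,0,f_5,f_6)$, but $s=3$ only says $f_1=f_2=0$ and $f_3\neq 0$; it gives no information about $f_4$. The case $f_1=f_2=0$, $f_3\neq 0$, $f_4\neq 0$ is simply missing from your scheme. The paper handles this (and the cases $f_1=0$, $f_2\neq 0$ and $f_1\neq 0$) by the key observation that equations 3 and 4 of the system force $f_3=f_4$ once $f_1=0$ (and $\deg_u f_3=\deg_u f_4$ in general), after which a substitution collapses the $6\times 6$ system to one where the method of Lemma \ref{computation} applies. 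Without this observation you cannot correctly enumerate the saturated lines, and your attribution of which valuations arise from which cases is off: for instance, $6p^4-8p^3+2p^2$ comes from the $f_3=f_4$ reduction, and $6p^4-7p^3+p^2-p+1$ arises only when $f_1\neq 0$, not from $s=3$.

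There is also a secondary issue: the $6\times 6$ system genuinely fails the degree hypotheses of Lemma \ref{computation} (because $\deg_u(a_1a_4)>\deg_u(a_2^2)$), so even after the $f_3=f_4$ substitution one has to argue directly, as the paper does, rather than invoke the lemma wholesale. Your claim that for $s\le 2$ ``the dominant-term analysis in row 3 is controlled by $a_1a_2 f_s^p$'' presupposes degree inequalities among the $f_i$ that have not yet been established at that stage.
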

\begin{proof}
For (1), let $v = (f_1,\ldots,f_6)$ generate $\mathfrak{L}$. 
Then it satisfies the system of equations
\begin{align}
\label{rank2sys}
\begin{split}
a_1^2f_1^p &= u^\mu f_1,\\
a_1a_2f_1^p + a_1^2f_2^p &= u^\mu f_2,\\
(a_2^2 - a_1a_3)f_1^p + a_1a_2f_2^p + a_1^2f_3^p &=u^\mu f_3,\\
a_1a_3 f_1^p+ a_1a_2f_2^p + a_1^2f_4^p &= u^\mu f_4,\\
(a_2 a_3 - a_1a_4)f_1^p + a_2^2f_2^p + a_1a_2 f_3^p + a_1a_2f_4^p + a_1^2f_5^p &=u^\mu f_5,\\
(a_3^2 - a_2a_4)f_1^p + (a_2a_3 - a_1a_4)f_2^p + (a_2^2 - a_1a_3)f_3^p + a_1a_3f_4^p + a_1a_2f_5^p + a_1^2 f_6^p&=u^\mu f_6.
\end{split}
\end{align}

If $f_1 = f_2 = f_3 = 0$ or $f_1 = f_2 = f_4 = 0$, the argument 
from Proposition \ref{ramlines} shows that the resulting system 
of equations satisfies the conditions of Lemma \ref{computation} 
and we obtain a corresponding set of solutions
$$v_u(\mathrm{det}(\phi_{\mathfrak{L}}))\in \left\{2\mathrm{deg}_u(a_1) + (p-1)\left(\mathrm{deg}_u(a_1)-\mathrm{deg}_u(a_k)\right)\right\}_{1\le k\le 3}.$$ 

If $f_1 = f_2 = 0$ and $f_3,f_4\neq 0$, then by the third and fourth 
equations in the system (\ref{rank2sys}) we 
have $f_3 = f_4$, and making this substitution the remaining 
system satisfies the conditions of Lemma 
\ref{computation}. This gives the corresponding set of solutions
\begin{align*}
v_u(\mathrm{det}(\phi_{\mathfrak{L}}))\in &\left\{2\mathrm{deg}_u(a_1),\;2\mathrm{deg}_u(a_1) + (p-1)\left(\mathrm{deg}_u(a_1)-\mathrm{deg}_u(a_2)\right),\;\right.\\
&\:\:\left.2\mathrm{deg}_u(a_1) + 2(p-1)\left(\mathrm{deg}_u(a_1)-\mathrm{deg}_u(a_2)\right)\right\}.
\end{align*}

If $f_1 = 0$ and $f_2\neq 0$, then again $f_3 = f_4$ by the third and 
fourth equations 
in the system (\ref{rank2sys}). However, 
the remaining system does not satisfy Lemma \ref{computation}, 
as $\mathrm{deg}_u(P_1(u)P_4(u))>\mathrm{deg}_u(P_2^2(u))$. 
We note that if $f_2\neq 0$ then $f_i\neq 0$ for $i>2$. 
Moreover, using the second through fifth equations of the system 
(\ref{rank2sys}), the proof of Lemma \ref{computation} applies to 
show that $\mathrm{deg}_u(f_2)>\mathrm{deg}_u(f_3)>\mathrm{deg}_u(f_4)>\mathrm{deg}_u(f_5)$. Hence, 
up to multiplication by a unit, either $f_5 = 1$ or 
$f_6 = 1$. These correspond to the solutions
\begin{align*}v_u(\mathrm{det}(\phi_{\mathfrak{L}}))\in &\left\{2\mathrm{deg}_u(a_1) + 2(p-1)\left(\mathrm{deg}_u(a_1)-\mathrm{deg}_u(a_2)\right),\right.\\
&\left.\:\:2\mathrm{deg}_u(a_1) + (p-1)\left(\mathrm{deg}_u(a_1)-\mathrm{deg}_u(a_4)\right)\right\}.\end{align*}

Finally, if $f_1\neq 0$, then it still follows that 
$\mathrm{deg}_u(f_3) = \mathrm{deg}_u(f_4)$ by the third 
equation in the system (\ref{rank2sys}). 
Moreover, $f_i\neq 0$ for all $1\le i\le 6$. While again 
(\ref{rank2sys}) does not satisfy Lemma \ref{computation}, 
the proof of \emph{loc. cit.} demonstrates that 
$\mathrm{deg}_u(f_1)>\mathrm{deg}_u(f_2)>\mathrm{deg}_u(f_3)>\mathrm{deg}_u(f_4)>\mathrm{deg}_u(f_5)$ by 
analyzing the first five equations in (\ref{rank2sys}). Hence either 
$f_5 = 1$ or $f_6 = 1$, which correspond to the solutions
\begin{align*}v_u(\mathrm{det}(\phi_{\mathfrak{L}}))\in &\left\{2\mathrm{deg}_u(a_1) + (p-1)\left(\mathrm{deg}_u(a_1)-\mathrm{deg}_u(a_4)\right),\right.\\
&\left.\:\:2\mathrm{deg}_u(a_1) + (p-1)\left(2\mathrm{deg}_u(a_1)-\mathrm{deg}_u(a_2)-\mathrm{deg}_u(a_4)\right)\right\}.\end{align*}

For (2), the argument from 
Proposition \ref{ramlines} on the degrees of $a_i = P_i(u)$ 
shows that the entries of 
$\phi_{\bigwedge^3 \mathfrak{M}}$ satisfy Lemma \ref{computation}. 
Thus, after a simplification using the known degrees 
of the entries $a_i = P_i(u)$, we obtain
$$v_u(\det(\phi_{\mathfrak{L}}))\in \left\{3\mathrm{deg}_u(a_1)+(p-1)\left(\mathrm{deg}_u(a_1)-\mathrm{deg}_u(a_k)\right)\right\}_{1\le k\le 4}.$$
\end{proof}

By Lemma \ref{cyc4}, we again cover by this proposition all 
potentially ramified primes except $p=2$. We again consider this 
separately below.

\begin{lem}
\label{rammod2}
Let $E/\QQ_2$ be totally ramified and cyclic of degree $h=4$, 
and let $\mathcal{G}/\OO_K$ be an $\OO_E$-linear CM 
$p$-divisible group of type $(\OO_E,\Phi)$. Denote by 
$\mathfrak{M}:=\mathfrak{M}_{\mathfrak{S}_1}(\mathcal{G})$. 
Then:
\begin{enumerate}
\item For a saturated $\mathfrak{S}_1$-line $\mathfrak{L}\subseteq\bigwedge^2\mathfrak{M}$,
$$v_u(\mathrm{det}(\phi_{\mathfrak{L}})\in\{4(2^4-2^3),\;4(2^4-2^3)+2^3\}.$$
\item For a saturated $\mathfrak{S}_1$-line $\mathfrak{L}\subseteq \bigwedge^3\mathfrak{M}$,
$$v_u(\mathrm{det}(\phi_{\mathfrak{L}})\in\{6(2^4-2^3),\;6(2^4-2^3) + 2^3\}.$$
\end{enumerate}
\end{lem}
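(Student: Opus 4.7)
The plan is to imitate Lemma \ref{ramline2} exactly, now applied to $\bigwedge^2\mathfrak{M}$ and $\bigwedge^3\mathfrak{M}$ in place of $\mathfrak{M}$. The input is the explicit data already computed in Lemma \ref{ramline2}: in characteristic $2$ one has $a_1 = u^{16}$, $a_2 = a_4 = 0$, and $a_3 = u^{8}+1$. Substituting this into the matrix representations of $\phi_{\bigwedge^2\mathfrak{M}}$ and $\phi_{\bigwedge^3\mathfrak{M}}$ displayed immediately before Proposition \ref{rammods}, a huge number of entries collapse. On $\bigwedge^2\mathfrak{M}$ one is left with a diagonal of $u^{32}$, immediate super-diagonal all $0$, and only the entries $a_1a_3 = u^{24}+u^{16}$ and $a_3^2 = (u^8+1)^2 = u^{16}+1$ (using $\mathrm{char}=2$) appearing nontrivially above the diagonal; on $\bigwedge^3\mathfrak{M}$ the result is even sharper, with diagonal $u^{48}$ and only the entries $a_1^2 a_3 = u^{40}+u^{32}$ appearing on the second super-diagonal.

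Next, for a saturated line $\mathfrak{L}$ with generator $v=(f_1,\ldots,f_r)$, I write out the system of Frobenius equations $u^\mu f_j = \sum_{i\le j} f_i^2 M_{ij}$ coming from the commutative square exactly as in Lemma \ref{ramline2}, and normalize one nonzero $f_j$ to be a unit (possible since $\mathfrak{L}$ is saturated). A short case analysis on which subset of the $f_i$ vanish then pins down all solutions. For part (1), the case where only diagonal terms act gives $\mu = 32 = 4(2^4-2^3)$, while solutions in which the entries $u^{24}+u^{16}$ act nontrivially (analogous to the second solution in Lemma \ref{ramline2}, obtained by setting $f_i = u^{8}$ at the appropriate index) yield the shifted value $\mu = 40 = 4(2^4 - 2^3) + 2^3$. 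For part (2), the diagonal solutions give $\mu = 48 = 6(2^4-2^3)$, and the single off-diagonal entry $u^{40}+u^{32}$ forces the unique other possibility $\mu = 56 = 6(2^4 - 2^3) + 2^3$.

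The only real obstacle is to rule out extra saturated lines arising from characteristic-$2$ collapses that could violate the degree-strict-inequality hypothesis of Lemma \ref{computation} (applied term-by-term to the collapsed matrices). This is handled directly: after substitution, a row-by-row inspection shows that for each candidate normalization the remaining polynomial equations enforce $\deg_u(f_i) > \deg_u(f_{i+1})$ at every step, so the same inductive pruning used in the proofs of Lemma \ref{computation} and Lemma \ref{ramline2} carries over and closes the case analysis, producing no further valuations. Since the nonzero coefficients of the collapsed matrices are very few, this bookkeeping is routine rather than delicate.
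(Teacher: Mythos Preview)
Your proposal is correct and follows essentially the same route as the paper: substitute the values $a_1=u^{16}$, $a_2=a_4=0$, $a_3=u^8+1$ from Lemma \ref{ramline2} into the displayed matrices for $\phi_{\bigwedge^2\mathfrak{M}}$ and $\phi_{\bigwedge^3\mathfrak{M}}$, observe that only $a_1a_3$ and $a_3^2$ (resp.\ $a_1^2a_3$) survive off the diagonal, and solve the resulting Frobenius system directly. One small caution: the paper does not invoke the degree hypotheses of Lemma \ref{computation} here (they fail since $a_2=a_4=0$), and its explicit solutions for $\bigwedge^2\mathfrak{M}$ include families like $(0,x,w,w,y,z)$ where the strict inequality $\deg_u(f_i)>\deg_u(f_{i+1})$ does not hold, so your closing remark about inductive pruning via strict degree drops is a slight overstatement---but the collapsed system is small enough that a direct case analysis (as the paper does) is routine and yields exactly the two valuations claimed in each part.
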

\begin{proof}
By the computation of $P_i(u)$ in the proof of Lemma \ref{ramline2},
$$\phi_{\bigwedge^2\mathfrak{M}}=\left(\begin{array}{cccccc}
u^{32}&0&u^{16}(u^8+1)&u^{16}(u^8+1)&0&u^{16}+1\\
&u^{32}&0&0&0&0\\
&&u^{32}&0&0&u^{16}(u^8+1)\\
&&&u^{32}&0&u^{16}(u^8+1)\\
&&&&u^{32}&0\\
&&&&&u^{32}
\end{array}\right)$$
and

$$\phi_{\bigwedge^3\mathfrak{M}}=\left(\begin{array}{cccc}
u^{48}&0& u^{32}(u^8+1)&0\\
&u^{48}&0& u^{32}(u^8+1)\\
&&u^{48}&0\\
&&&u^{48}
\end{array}\right).$$

For (1), let $v = (f_1,\ldots,f_6)\in \mathfrak{M}$ generate 
$\mathfrak{L}$. Then we easily compute 
that all saturated lines are of the form
\begin{align*}
(0,0,u^8,0,0,1),\;(0,0,0,u^8,0,1) &\quad\quad \phi_{\mathfrak{L}} = u^{40}\\
(0,x,w,w,y,z)&\quad\quad \phi_{\mathfrak{L}} = u^{32},
\end{align*}
where $x,y,z,w\in \mathbb{F}_2$ are not all $0$.

For (2), let $v = (f_1,\ldots,f_4)$ generate 
$\mathfrak{L}$. Then again we easily compute that 
all saturated lines are of the form
\begin{align*}
(0,0,x,y)&\quad\quad \phi_{\mathfrak{L}} = u^{48}\\
(u^8x,u^8y,x,y)&\quad\quad\phi_{\mathfrak{L}} = u^{56},\end{align*}
where $x,y\in \mathbb{F}_2$ are not both $0$.
\end{proof}

\begin{thm}
\label{ramcyc4fin}
Assume that $[E:\QQ_p] = 4$ and that $p$ is totally 
ramified in $E$, and let $\mathcal{G}/\OO_K$ be 
an $\OO_E$-linear CM $p$-divisible group. Let 
$\mathcal{H}\subseteq \mathcal{G}[p^n]$ be a subgroup such 
that $\mathcal{H}\cap\mathcal{G}[\pi_E^r] = 1$ for all 
$r\ge 0$, and characterized by the non-increasing tuple 
$(\lambda_1,\lambda_2,\lambda_3)$ satisfying 
$\lambda_1 + \lambda_2 + \lambda_3 = \log \#\mathcal{H}$, 
$\mathcal{H}$ is $p^{\lambda_1}$-torsion, and the $p$-height 
$\iota\in \{1,2,3\}$ of $\mathcal{H}$ is the largest integer 
such that $\lambda_{\iota}\neq 0$. 
Then
$$\#\frac{1}{[K:\QQ_p]}\log s^*\Omega^4_{\mathcal{H}/\OO_K} = \frac{1}{4p^4-4p^3}\left(\sum_{i=\lambda_2 + 1}^{\lambda_1}\frac{R_1(p)}{p^i}\right)\left(\sum_{j=\lambda_3+1}^{\lambda_2}\frac{R_2(p)}{p^j}\right)\left(\sum_{k=1}^{\lambda_3}\frac{R_3(p)}{p^k}\right)\log p,$$
where the sums are evaluated if and only if the difference in bounds is at 
least zero, and for $p\equiv 1\mod 4$,
$$R_1(p) = \left\{\begin{array}{l}
p^4  -p^2\\
p^4 - p^3 + p^2 - p\\
p^4 - p^3 + p-1\end{array}\right.$$
$$R_2(p) = \left\{\begin{array}{l}
3p^4 -2p^3 -p^2\\
3p^4 -3p^3 + p^2 - p\\
3p^4 - 3p^3 + p-1\\
2p^4-2p^2\\
2p^4 - p^3 - p^2 + p - 1
\end{array}\right.$$
$$R_3(p) = \left\{\begin{array}{l}
5p^4 - 4p^3 -p^2\\
5p^4 - 5p^3 + p^2 - p\\
5p^4 - 5p^3 + p-1\end{array}\right.$$
and for $p=2$,
$$R_1(2) = 2(2^4-2^3) - 2^3$$
$$R_2(2) = 4(2^4-2^3) - 2^3$$
$$R_3(2) = 6(2^4-2^3) - 2^3.$$
\end{thm}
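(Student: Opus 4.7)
The plan is to mirror the devissage arguments of Proposition \ref{formulaunram} and Theorem \ref{EllRamComp}, now in the totally ramified cyclic degree $4$ setting. By Proposition \ref{deg} and Proposition \ref{saturation}, computing $\#\omega_{\mathcal{H}}$ reduces to computing $v_u(\det \phi_{\mathfrak{N}})$ for the saturated submodule $\mathfrak{N}\subseteq \mathfrak{M}(\mathcal{G}[p^n])$ corresponding to $\mathcal{H}$ under the equivalence of Theorem \ref{equivfgroup}. The hypothesis $\mathcal{H}\cap \mathcal{G}[\pi_E^r]=1$ combined with Lemma \ref{correctlem} guarantees that at each level of the devissage we only encounter saturated submodules that are not $\OO_E$-stable, so that we may restrict attention to the non-trivial values enumerated in Proposition \ref{rammods} and Lemma \ref{rammod2}.

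First, I would use the tuple $(\lambda_1,\lambda_2,\lambda_3)$ to decompose $\mathcal{H}$ as an iterated extension of $p$-torsion subgroups: there are $\lambda_1-\lambda_2$ stages at which the $p$-torsion quotient has $p$-height $1$, then $\lambda_2-\lambda_3$ stages of height $2$, and finally $\lambda_3$ stages of height $3$. Under $\mathfrak{M}$, this corresponds to a filtration of $\mathfrak{N}$ whose successive quotients are saturated $\mathfrak{S}_1$-submodules of $\mathfrak{M}_{\mathfrak{S}_1}(\mathcal{G}[p])$ of rank $h-k$ for $k=1,2,3$ respectively. By the remark following Proposition \ref{deg}, the valuation $v_u(\det\phi)$ of such a rank $h-k$ submodule equals the valuation of the corresponding saturated line in $\bigwedge^{h-k}\mathfrak{M}_{\mathfrak{S}_1}(\mathcal{G})$. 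Proposition \ref{rammods} (for $p>4$, which covers $p\equiv 1\bmod 4$ by Lemma \ref{cyc4}) and Lemma \ref{rammod2} (for $p=2$) enumerate the possible values for these saturated lines; after subtracting off the common contribution $k(h-d)e/h = k(p^4-p^3)/2$ coming from an $\OO_E$-stable submodule of the same rank, the residual non-$\OO_E$-stable contribution is precisely the set of values given by $R_k(p)$ in the statement.

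Next, I would replicate the level-$s$ devissage of Theorem \ref{EllRamComp}: at the $r$-th iteration, applying Lemma \ref{smodel} and Remark \ref{LT} to pass to the level-$(r+1)$ model, the matrix presentation of $\phi_{\mathfrak{M}(\mathcal{G}_r)}$ acquires the off-diagonal corrections of Example \ref{ramex} rescaled by $1/p^r$ in the $u$-exponent, while the diagonal term $u^{(h-d)e/h}$ is unchanged. Running through the argument of Lemma \ref{computation} and tracking how the $R_k(p)$ pieces depend on $\deg_u(a_i)$, one sees that at the $i$-th stage of the initial height-$1$ block, the correction to the diagonal term contributes $R_1(p)/p^i$; similarly for the height-$2$ block at position $j$ the contribution is $R_2(p)/p^j$, and at the height-$3$ block at position $k$ it is $R_3(p)/p^k$. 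The additivity of the relative Hodge bundle on short exact sequences (Lemma \ref{FaltingsIsog}) then produces the claimed product of the three geometric sums, with the overall normalization $1/(4p^4-4p^3)$ arising from the computation of $[K:\QQ_p]=4e$ together with the factor $(p-1)/p$ inherent to Proposition \ref{deg}.

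The main obstacle, as in Theorem \ref{EllRamComp}, will be verifying that after passing to the level-$(r+1)$ model, the new generator of the saturated line produced at the previous stage is compatible with the matrix presentation of $\phi_{\mathfrak{M}(\mathcal{G}_r)}$, so that Lemma \ref{computation} can be applied inductively. For the $p=2$ case this requires a separate verification using the explicit presentations computed in Lemma \ref{ramline2} and Lemma \ref{rammod2}, since Proposition \ref{ramlines} and Proposition \ref{rammods} only apply for $p>4$; however, because $\QQ_2$ admits only one cyclic totally ramified extension of degree $4$ up to isomorphism, this is a finite case-analysis.
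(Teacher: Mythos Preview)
Your approach is essentially the same as the paper's: the paper's proof is a single sentence stating that the theorem compiles Proposition \ref{ramlines}, Lemma \ref{ramline2}, Proposition \ref{rammods}, and Lemma \ref{rammod2} via Proposition \ref{deg}, and your sketch spells out exactly this compilation together with the devissage over $n$ modeled on Theorem \ref{EllRamComp} and Proposition \ref{formulaunram}. One small slip: you write that additivity of the relative Hodge bundle ``produces the claimed product of the three geometric sums,'' but additivity yields a \emph{sum} of contributions across the devissage stages, not a product; the three bracketed sums in the displayed formula should be read as summands indexed by the three height regimes rather than as factors, so adjust your phrasing accordingly.
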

\begin{proof}
This compiles the computations in Proposition \ref{ramlines}, 
Lemma \ref{ramline2}, Proposition \ref{rammods}, and 
Lemma \ref{rammod2} using Proposition \ref{deg}.
\end{proof}

\pagebreak

\end{document}